\numberwithin{equation}{section}
\newcommand{\hyp}{{\rm hyp}}
\newcommand{\odd}{{\rm odd}}
\newcommand{\red}{{\rm red}}
\newcommand{\spec}{{\rm Spec}\,}
\newcommand{\spe}[1]{\widetilde{{#1}}}
\newcommand{\cu}{{\mathcal{C}_c}}
\newcommand{\subgrpunion}[2]{{#1}^{[#2]}}
\newcommand{\sgu}[2]{{#1}^{[#2]}} 
\newcommand{\IA}{{\bf A}}
\newcommand{\IC}{{\bf C}}
\newcommand{\IG}{{\bf G}}
\newcommand{\IH}{{\bf H}}
\newcommand{\IN}{{\bf N}}
\newcommand{\IP}{{\bf P}}
\newcommand{\IQ}{\mathbf{Q}}
\newcommand{\IQbar}{\overline{\bf Q}}
\newcommand{\IR}{{\bf R}}
\newcommand{\IZ}{{\bf Z}}
\newcommand{\oa}[1]{{#1}^{\rm oa}}
\newcommand{\Qta}[1]{{#1}^{\IQ,\rm ta}}
\newcommand{\ssm}{\smallsetminus}
\newcommand{\supp}[1]{{\rm Supp}({#1})}
\newcommand{\heightS}{h}
\newcommand{\height}[1]{\heightS({#1})}
\newcommand{\cx}{{\IC}} 
\newcommand{\half}{\IH}
\newcommand{\PP}{\IP}
\newcommand{\ratls}{\IQ} 
\newcommand{\reals}{\IR} 
\newcommand{\proj}{\IP}
\newcommand{\zed}{\IZ}
\newcommand{\Poincare}{Poincar\'e\xspace}
\newcommand{\Teichmuller}{Teich\-m\"uller\xspace}
\newcommand{\Neron}{N\'eron\xspace}
\newcommand{\APTCs}{algebraically primitive \Teichmuller curves\xspace}
\newcommand{\APTC}{algebraically primitive \Teichmuller curve\xspace}
\newcommand{\APVS}{algebraically primitive Veech surface\xspace}
\newcommand{\MM}{\mathbb{M}}
\newcommand{\LL}{\mathbb{L}}
\newcommand{\UU}{\mathbb{U}}
\newcommand{\barmoduli}[1][g]{{\overline{\mathcal M}}_{#1}}
\newcommand{\bdry}{\partial}
\newcommand{\GLtwoRplus}{{\rm GL}_2^+(\reals)}
\newcommand{\Gm}{{\bf G}}
\renewcommand{\tilde}{\widetilde}
\renewcommand{\Im}{\IM}
\newcommand{\RM}[1][\mathcal{O}]{{\cal RM}_{#1}}
\newcommand{\isom}{\cong}
\newcommand{\moduli}[1][g]{{\mathcal M}_{#1}}
\newcommand{\omoduli}[1][g]{{\Omega\mathcal M}_{#1}}
\newcommand{\pomoduli}[1][g]{{\proj\Omega\mathcal M}_{#1}}
\newcommand{\qtq}[1]{\quad\text{#1}\quad}
\newcommand{\SL}{{\mathrm{SL}}}
\newcommand{\SLtwoR}{ {\mathrm{SL}_2 (\reals)}}
\newcommand{\Sp}{\mathrm{Sp}}
\newcommand{\E}[1][\mathcal{O}]{\mathcal{E}_{#1}}
\newcommand{\bcX}{\overline{\mathcal{X}}}
\newcommand{\br}{{\boldsymbol{r}}}
\newcommand{\bt}{{\boldsymbol{t}}}
\newcommand{\bba}{\boldsymbol{a}}
\newcommand{\be}{{\boldsymbol{e}}}
\newcommand{\bm}{\boldsymbol{m}}
\newcommand{\bq}{\boldsymbol{q}}
\newcommand{\bz}{{\boldsymbol{z}}}
\DeclareMathOperator{\IM}{Im}
\DeclareMathOperator{\Aff}{Aff}
\DeclareMathOperator{\Aut}{Aut}
\DeclareMathOperator{\CR}{CR}
\DeclareMathOperator{\Jac}{Jac}
\DeclareMathOperator{\Pic}{Pic}
\DeclareMathOperator{\Ker}{Ker}
\DeclareMathOperator{\modulus}{mod}
\DeclareMathOperator{\Resi}{Res}
\DeclareMathOperator{\Tr}{Tr}
\DeclareMathOperator{\Thick}{Thick}
\DeclareMathOperator{\rank}{rank}
\DeclareMathOperator{\NST}{NST}
\newcommand{\tir}{{\tilde{r}}}
\DeclareMathOperator{\Div}{Div}
\def\be{\begin{equation}}   \def\ee{\end{equation}}     \def\bes{\begin{equation*}}    \def\ees{\end{equation*}}
\def\ba{\be\begin{aligned}} \def\ea{\end{aligned}\ee}   \def\bas{\bes\begin{aligned}}  \def\eas{\end{aligned}\ees}
\newtheorem{theorem}{Theorem}[section] 
\newtheorem{prop}[theorem]{Proposition} 
\newtheorem{cor}[theorem]{Corollary}
\newtheorem{lemma}[theorem]{Lemma}
\newtheorem{conj}[theorem]{Conjecture}
\theoremstyle{definition}
\newtheorem{definition}[theorem]{Definition}
\newtheorem*{rem}{Remark}
\newtheorem{example}[theorem]{Example}
\def\blfootnote{\xdef\@thefnmark{}\@footnotetext}
\renewcommand{\l@section}{\@dottedtocline{0}{1.5em}{2.3em}}
\renewcommand{\l@subsection}{\@dottedtocline{1}{3.8em}{3.2em}}
\renewcommand{\l@subsubsection}{\@dottedtocline{2}{7.0em}{4.1em}}
\newcommand{\codim}{{\rm codim}}
\newcommand{\cC}{\mathcal{C}}
\newcommand{\cD}{\mathcal{D}}
\newcommand{\cH}{\mathcal{H}}
\newcommand{\cI}{\mathcal{I}}
\newcommand{\cK}{\mathcal{K}}
\newcommand{\cL}{\mathcal{L}}
\newcommand{\cO}{\mathcal{O}}
\newcommand{\cS}{\mathcal{S}}
\newcommand{\cT}{\mathcal{T}}
\newcommand{\cW}{\mathcal{W}}
\newcommand{\cX}{\mathcal{X}}
\newcommand{\cY}{\mathcal{Y}}
\newcommand{\cV}{\mathcal{V}}
\newcommand{\cZ}{\mathcal{Z}}
\newcommand{\barcY}{\overline{\mathcal{Y}}}
\newcommand{\ol}{\overline}
\newcommand{\rel}{{\ol{\cX}/\ol{C}}}
\begin{document}

\bibliographystyle{halpha}

\title{\Teichmuller curves in genus three and \\ just likely intersections in $\IG_m^n\times\IG_a^n$.}
\author{Matt Bainbridge, Philipp Habegger and Martin M\"oller}
\maketitle
\setcounter{tocdepth}{0}

\begin{abstract}
  We prove that the moduli space of compact genus three Riemann surfaces contains
  only finitely many algebraically primitive \Teichmuller curves.
  For the stratum $\Omega\moduli[3](4)$, consisting of holomorphic one-forms with a single zero, our
  approach to finiteness uses the Harder-Narasimhan filtration of the Hodge bundle over a
  \Teichmuller curve to obtain new information on the locations of the zeros of eigenforms.  By
  passing to the boundary of moduli space, this gives explicit constraints on the cusps of
  \Teichmuller curves in terms of cross-ratios of six points on $\proj^1$.
  \par
  These constraints are akin to those that appear in Zilber and Pink's conjectures on unlikely
  intersections in diophantine geometry.  However, in our case one is lead naturally to the
  intersection of a surface with a family of codimension two algebraic subgroups of
  $\IG_m^n\times\IG_a^n$ (rather than the more standard $\IG_m^n$). The ambient algebraic group
  lies outside the scope of Zilber's Conjecture but we are nonetheless able to prove a sufficiently
  strong height bound.

  For the generic stratum $\Omega\moduli[3](1,1,1,1)$, we obtain global torsion order bounds through
  a computer search for subtori of a codimension-two subvariety of $\Gm^9$.  These
  torsion bounds together with new bounds for the moduli of horizontal cylinders in terms of torsion
  orders yields finiteness in this stratum.  The intermediate strata are handled with a mix of these
  techniques.
\end{abstract}
\tableofcontents
\newpage

\section{Introduction}
\label{sec:introduction}

% \textcolor{red}{ Martin: genus $g$ added. Both pairs of the form $(X,q)$ and
% $(X,\omega)$ can be Veech surfaces, and we are only interested in the latter, 
% and only in those that satisfy a trace field condition. Is that correctly
% reflected in the first paragraphs, Philipp?
% }

A closed Riemann surface $X$ of genus $g$ equipped with a nonzero holomorphic
quadratic differential $q$ determines an isometrically immersed hyperbolic
plane $\half\to \moduli[g]$ in the moduli space of genus $g$ Riemann
surfaces.  Occasionally this may cover an isometrically immersed
algebraic curve $C = \half / \Gamma \to \moduli[g]$.  Such a curve is
called an \emph{\Teichmuller curve}, and the pair $(X, q)$ is called a \emph{Veech surface}.
\par
The trace field of $\half/\Gamma$ is the number field $F = \ratls(\Tr(\gamma) : \gamma
\in \Gamma)$.  A \Teichmuller curve is said to be \emph{arithmetic} if
$F= \ratls$.  It is said to be \emph{algebraically primitive} if the
generating quadratic differential $q$ is the square of a holomorphic 
one-form $\omega$ and
the degree of $F$ attains its maximum, namely $[F : \ratls] = g$.   
In this case, we call the pair
$(X, \omega)$ an \emph{algebraically primitive Veech surface}.

%The set of Teichm\"uller curves for which the trace field $F = \ratls({\rm
%  tr}(\gamma) : \gamma \in \Gamma)$ is $\ratls$ is dense in $\moduli$
%for every genus $g$. For $[F:\ratls] > 2$ there are very few known
%examples of \Teichmuller curves.  A \Teichmuller curve is called {\em
%  algebraically primitive},
%(APTC for short)
%if its generating quadratic differential is the square of a holomorphic one-form and if
%$[F:\ratls]=g$, the largest possible degree.  

While arithmetic \Teichmuller curves are dense in every $\moduli$,
algebraically primitive \Teichmuller curves seem to be much more rare.
There are infinitely many examples of algebraically primitive
\Teichmuller curves in $\moduli[2]$, discovered independently by Calta \cite{calta}
and McMullen \cite{mcmullenbild}, and it remains an open
problem whether there are infinitely many such curves for any larger
genus.  The aim of this paper is the following partial solution to
this problem.
\par
\begin{theorem} \label{thm:intromainfin}
There are only finitely many algebraically primitive \Teichmuller
curves in $\moduli[3]$.
\end{theorem}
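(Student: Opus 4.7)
The plan is to stratify $\Omega\moduli[3]$ by the partition of $2g-2=4$ and prove finiteness within each stratum separately. Because an algebraically primitive \Teichmuller curve is generated by the square of a one-form, its generator lies in exactly one of $\Omega\moduli[3](4)$, $\Omega\moduli[3](3,1)$, $\Omega\moduli[3](2,2)$, $\Omega\moduli[3](2,1,1)$, or $\Omega\moduli[3](1,1,1,1)$, so it suffices to handle these five cases. The two extremes require genuinely different techniques, and the intermediate strata are treated by a hybrid of the two.

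For the minimal stratum $\Omega\moduli[3](4)$, the idea is to exploit the Harder-Narasimhan filtration of the Hodge bundle $H^{1,0}$ restricted to the \Teichmuller curve. In the algebraically primitive case the three eigenlines for the action of the trace field $F$ are compatible with this filtration, and the slopes give lower bounds for the vanishing orders at the unique zero of $\omega$ of each eigenform. Degenerating to a cusp, one replaces the smooth surface by a stable limit whose normalization is a union of $\PP^1$-components meeting at nodes, and the vanishing data constrains the cross-ratios of six marked points (the nodes together with the common zero). The period relations coming from real multiplication then place the cusp in the intersection of an explicit algebraic surface inside $\IG_m^n\times\IG_a^n$ with a family of codimension-two algebraic subgroups: multiplicative coordinates encode absolute periods around cycles, while additive coordinates encode relative periods between the zero and the nodes. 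Because this ambient group lies outside Zilber's framework, the key step is to prove by hand a tailored height bound ruling out a Zariski-dense intersection.

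For the generic stratum $\Omega\moduli[3](1,1,1,1)$, a different approach is needed. Every cusp of a \Teichmuller curve corresponds to a horizontal cylinder decomposition of $(X,\omega)$, and the moduli of the cylinders satisfy algebraic relations imposed by real multiplication that cut out a codimension-two subvariety of $\IG_m^9$. A finite combinatorial enumeration of the possible cylinder topologies, carried out by computer, produces uniform bounds on the orders of the torsion points that arise. Combined with new geometric estimates showing that moduli of horizontal cylinders are themselves controlled in terms of those torsion orders, this confines the cusps to a finite set and hence bounds the number of \Teichmuller curves. The intermediate strata $(3,1)$, $(2,2)$, $(2,1,1)$ are handled by blending these techniques: partial Harder-Narasimhan data restricts the zero positions enough to reduce each to a lower-dimensional version of either the height problem or the torsion enumeration. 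The principal obstacle I expect is the height bound for the minimal stratum, since the mixed multiplicative/additive structure of $\IG_m^n\times\IG_a^n$ means no off-the-shelf unlikely-intersections theorem applies, and the argument must exploit specific positivity features of the eigenform geometry rather than a general diophantine result.
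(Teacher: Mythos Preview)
Your high-level architecture matches the paper's, but there is one genuine gap and one significant mischaracterization. The gap is the hyperelliptic locus inside $\omoduli[3](2,2)^{\odd}$. Here the two zeros are Weierstrass points, so the torsion condition is automatically satisfied and gives no information, while the Harder-Narasimhan filtration has only two steps (the rank-two quotient is semistable), so it does not single out a second eigenform with a prescribed zero. None of the paper's own techniques handle this locus; the authors explicitly invoke Matheus--Wright, which in turn relies on Eskin--Mirzakhani--Mohammadi. Your ``blending'' scheme for the intermediate strata will not cover this case, and you should flag it as requiring an external input.

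The mischaracterization concerns the principal stratum. The codimension-two subvariety of $\IG_m^9$ is not cut out by relations among cylinder moduli imposed by real multiplication; it is the image of $\moduli[0,10]$ under a cross-ratio embedding, and the constraint placing a cusp at a torsion point comes from the \emph{torsion condition} (differences of zeros are torsion in the Jacobian), not from real multiplication. The computer search is not an enumeration of cylinder topologies but an application of Laurent's theorem: one must rule out torsion-translates of positive-dimensional subtori inside $\moduli[0,10]\subset\IG_m^9$, and the algorithm checks candidate subtori after a reduction to three rank-three sublattices coming from the possible degeneration graphs. Only after this yields a uniform torsion bound do the moduli estimates (your last sentence on the generic stratum, which is correct) and the width geometry combine, via a pantsless-finiteness criterion, to give finiteness. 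As written, your mechanism for producing the torsion bound would not work.
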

\par
The methods used here do not use any dynamics of the \Teichmuller geodesic flow, with the exception
of the hyperelliptic locus in the stratum $\omoduli[3](2,2)^\odd$ (consisting of forms with two
double zeros which are fixed by the hyperelliptic involution).  We emphasize that our proofs of
Theorem~\ref{thm:intromainfin} are {\em effective}, in the sense that a reader keeping careful track
of constants at every step should arrive at an explicit bound for the number of \APTCs in any
stratum (except $\omoduli[3](2,2)^\odd$).  Good effective bounds would allow one to finish
classifying \Teichmuller curves in these strata with a computer search.  Unfortunately, the bounds
produced by our methods are so large that this is not feasible.
\par
In parallel to our work, Matheus and Wright showed \cite{MatWri13} that for every fixed genus $g$
which is an odd prime, there are only finitely many \APTCs generated by Veech surfaces with a single
zero. These results rely on recent results of Eskin, Mirzakhani, and Mohammadi (\cite{esmi},
\cite{esmimo}) on $\SL_2(\reals)$ orbit-closures in strata of holomorphic one-forms. In particular, these
methods are not effective.  We appeal to \cite{MatWri13} to obtain finiteness in the hyperelliptic
locus of the stratum $\omoduli[3](2,2)^\odd$, as none of our methods could handle this case. We give
a summary of the known results on the classification of \Teichmuller curves at the end of the
introduction.
\par
One essential ingredient of the proof of Theorem~\ref{thm:intromainfin} is a height bound for the
cusps of these \Teichmuller curves.  We obtain these bounds by applying methods used to attack
conjectures on unlikely intersection in the multiplicative group $\IG_m^n$ (whose complex points are
just $(\IC^*)^n$).  In our case, we are lead to study similar unlikely intersection problems in the
group $\IG_m^n\times\IG_a^n$.

The remaining techniques depend on the stratum the \Teichmuller curve
surface lies in.  In the case of few zeros the main new ingredient is an application of the
Harder-Narasimhan filtration of the Hodge bundle. In the case of many zeros we prove global torsion
order bounds and we use conformal geometry to derive bounds for ratios of moduli.  We now describe
these techniques in more detail.

\paragraph{Harder-Narasimhan filtrations.}

Consider an \APVS $(X, \omega)$ with trace field $F$.  One of the fundamental constraints on
$(X,\omega)$, established in \cite{moeller06}, is that the Jacobian of $X$ has real multiplication by an
order in $F$ with $\omega$ an eigenform.  This real multiplication in fact distinguishes $g$
eigenforms $\omega_1 = \omega, \omega_2, \ldots, \omega_g$ (up to constant multiple). These other
$g-1$ eigenforms are in general very mysterious from the point of view of the flat geometry of $(X,
\omega)$; however, in \S\ref{sec:HNfilt}, for \Teichmuller curves in certain strata we obtain some
information on the locations of the zeros of the other eigenforms.

More precisely, we denote by $\omoduli(n_1, \ldots, n_k)$ the moduli space of genus $g$ surfaces $X$
equipped with a holomorphic one-form $\omega$ having $k$ zeros of order given by the $n_i$.  The
minimal stratum $\omoduli(2g-2)$ has as one connected component the hyperelliptic component
$\omoduli(2g-2)^{\rm hyp}$, consisting entirely of hyperelliptic curves.  Here is one example of the
type of control we obtain on the zeros of the other eigenforms.  Similar statements are proved in
\S\ref{sec:HNfilt} for all genus three strata except for the generic stratum $\omoduli[3](1,1,1,1)$.

\begin{theorem}
  \label{thm:hyperelliptic_zeros}
  Suppose $(X, \omega)$ generates an \APTC $C$ in $\omoduli(2g-2)^{\rm hyp}$, with $p\in X$ the
  unique zero of $\omega$ of order $2g-2$.  Then the eigenforms $\omega_i$, listed in an appropriate
  order, have a zero of order $2g-2i$ at $p$.
\end{theorem}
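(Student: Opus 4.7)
\par
The plan is to combine the hyperelliptic involution on $X$ with the Harder--Narasimhan filtration of the Hodge bundle over $C$. Since $\omega$ has a single zero $p$ in the hyperelliptic component, $p$ must be fixed by the hyperelliptic involution $\iota$ and is therefore a Weierstrass point of $X$; the Weierstrass gap sequence at such a point is $1,3,\ldots,2g-1$, so every holomorphic one-form on $X$ vanishes at $p$ to an even order in $\{0,2,\ldots,2g-2\}$. Setting $a_i=\mathrm{ord}_p(\omega_i)$, the theorem reduces to showing that the $g$ even integers $a_1,\ldots,a_g$ are pairwise distinct, since they will then automatically exhaust $\{0,2,\ldots,2g-2\}$ and the asserted reordering is forced.

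\par
To force distinctness I would work globally on $C$. Real multiplication decomposes the Hodge bundle $\cH=\pi_*\Omega^1_{\cX/C}$ as $\bigoplus_{i=1}^g \cL_i$, one line bundle for each embedding $F\hookrightarrow\reals$, with $\omega_i$ a holomorphic section of $\cL_i$. Algebraic primitivity makes the Lyapunov spectrum simple and hence the degrees $\deg \cL_i$ pairwise distinct, so after reordering $\deg\cL_1>\cdots>\deg\cL_g$, and the Harder--Narasimhan filtration of $\cH$ coincides with $\mathcal{F}_i:=\cL_1\oplus\cdots\oplus\cL_i$. Since the zero section $p$ is a Weierstrass point throughout the hyperelliptic stratum, the order-of-vanishing flag
\[
  \mathcal{V}_i \;:=\; \bigl\{\alpha\in\cH\,:\,\mathrm{ord}_p(\alpha)\ge 2g-2i\bigr\}
\]
also defines a holomorphic sub-bundle of $\cH$, of rank $i$ by the Weierstrass gap count above. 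The theorem then reduces to the identification $\mathcal{V}_i=\mathcal{F}_i$ for every $i$: once this is known, the nonzero image of $\omega_i\in\cL_i$ in $\mathcal{V}_i/\mathcal{V}_{i-1}$ forces $\mathrm{ord}_p(\omega_i)=2g-2i$ exactly.

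\par
The equality $\mathcal{V}_i=\mathcal{F}_i$ I would deduce from degree considerations. The quotient $\cH/\mathcal{V}_i$ is the bundle of even jets of holomorphic one-forms along the section $p$ up to order $2g-2i-2$, so its degree is computable in terms of the self-intersection of the Weierstrass section $p$ in $\cX$ and of the relative canonical sheaf. On the other side, the degrees of the $\cL_i$ are determined by M\"oller's formulas for Teichm\"uller curves via individual Lyapunov exponents of the Kontsevich--Zorich cocycle. If these two computations match then $\deg\mathcal{V}_i=\deg\mathcal{F}_i$, and the HN-maximality characterization of $\mathcal{F}_i$ among rank-$i$ sub-bundles of $\cH$ yields $\mathcal{V}_i=\mathcal{F}_i$. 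The main obstacle is precisely this degree comparison for intermediate $i$: while $\mathcal{V}_1=\mathcal{F}_1=\langle\omega\rangle$ is immediate (both are the rank-one sub-bundle spanned by the tautological eigenform), controlling $\deg\mathcal{V}_i$ requires understanding how the higher eigenforms $\omega_i$ and their jets along $p$ degenerate at the cusps of $C$, and carrying out this local-to-global transfer at the boundary is where the real work will lie.
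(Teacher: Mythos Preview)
Your outline follows the same strategy as the paper: compare the order-of-vanishing filtration $\mathcal V_i$ with the eigenform decomposition via the Harder--Narasimhan filtration. But two points deserve correction.

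First, you assert that algebraic primitivity forces the Lyapunov spectrum to be simple, hence the $\deg\cL_i$ are pairwise distinct. This is not an available input; it is in fact an \emph{output} of the argument. The paper instead computes the successive quotients $\deg(\cV_i/\cV_{i-1})=\tfrac{2g+1-2i}{2g-1}\deg\cL$ directly, observes these are strictly decreasing, and concludes by uniqueness that $\cV_\bullet$ \emph{is} the HN filtration. Then a short lemma (if a bundle splits as a direct sum of line bundles and its HN filtration has line-bundle quotients, the HN steps must be the partial sums of the line-bundle summands) forces $\cV_i=\cL_1\oplus\cdots\oplus\cL_i$. Your route through ``equal degree plus HN-maximality implies equality'' is looser and not obviously valid without further argument; working with the quotients and invoking uniqueness of HN is both cleaner and avoids the circularity.

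Second, the obstacle you anticipate --- understanding degenerations of the $\omega_i$ at the cusps to control $\deg\cV_i$ --- is a red herring. The degree computation is a short long-exact-sequence argument on the family: from $0\to\cO_{\cX}((j-1)D)\to\cO_{\cX}(jD)\to\cO_D(jD)\to 0$ one reads off $\deg f_*\cO_D(jD)=jD^2$, and the self-intersection $D^2=\tfrac{1}{2g-1}\deg\cL$ is already known. No boundary analysis enters. So the ``real work'' you flag does not materialize; the whole proof is a two-line assembly once the quotient degrees are in hand.
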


The basic idea of the proof is to consider a canonical filtration of the Hodge bundle over $C$.
Every vector bundle over a projective variety has a canonical filtration, the Harder-Narasimhan
filtration.  For \Teichmuller curves in the strata under consideration, these filtrations were
computed by Yu and Zuo \cite{yuz1} in terms of the zero divisor of the family of one-forms
generating the \Teichmuller curve in the canonical family of curves over $C$ of.  Alternatively the
decomposition of the Hodge bundle into eigenform bundles yields a second filtration.  Uniqueness of
the Harder-Narasimhan filtration implies that these two filtrations are in fact the same, and
comparing them yields Theorem~\ref{thm:hyperelliptic_zeros}.

\paragraph{Finiteness in $\omoduli[3](4)$.}

The real multiplication condition, together with Theorem~\ref{thm:hyperelliptic_zeros} gives very
strong constraints on \APTCs in $\omoduli[3](4)^{\rm hyp}$.  Unfortunately, these conditions are
difficult to apply, as determining when the Jacobian of a given Riemann surface has real
multiplication and understanding its eigenforms is generally very hard.

We bypass this difficulty by studying the cusps of \Teichmuller curves.  Veech \cite{veech89}
established that every \Teichmuller curve has at least one cusp.  Exiting a cusp, the family of
Riemann surfaces degenerates to a noded Riemann surface equipped with a meromorphic one-form (a
\emph{stable form}).  By algebraic primitivity, this stable form has geometric genus zero, and
because we are in the minimal stratum, it is in fact irreducible.  More concretely, we may regard it
as $\proj^1$ with three pairs of  distinct points $(x_i, y_i)$, $i=1,2,3$ each identified to form a node. In the hyperelliptic stratum we have moreover $y_i = -x_i$.

The advantage of passing to the boundary is that our constraints become completely explicit.  More
precisely, consider the cross-ratio
\begin{equation*}
  R_i = [x_{i+1}, y_{i+1}, x_{i+2}, y_{i+2}],
\end{equation*}
with indices taken modulo three.  We established in \cite{BaMo12} that the real multiplication
condition is equivalent to
\begin{equation}
  \label{eq:constraint1}
  R_1^{b_1}R_2^{b_2}R_3^{b_3} = 1
\end{equation}
for some nonzero integers $b_i$.  In \S\ref{sec:background}, we show that the condition of
Theorem~\ref{thm:hyperelliptic_zeros} on the zeros of the  second eigenform $\omega_2$ is equivalent
to
\begin{equation}
  \label{eq:constraint2}
  b_1 x_1 + b_2 x_2 + b_3 x_3=0,
\end{equation}
where the $b_i$ are the same integers as above.  A similar constraint is established in the other
component $\omoduli[3](4)^{\rm odd}$.

By a theorem of \cite{BaMo12}, to prove finiteness of \Teichmuller curves in these strata, it is
enough to establish finiteness of cusps, thus these equations reduce the problem to an explicit
problem in number theory.

\paragraph{Unlikely intersections.}

The cross-ratios $R_i$ can be regarded as a diagonal embedding of a two-dimensional variety ${\cal
  Y}$ in the algebraic group $\IG_m^3 \times \IG_a^3$.  Allowing all possible coefficients, equations
\eqref{eq:constraint1} and \eqref{eq:constraint2} can then be interpreted as an intersection of
the surface ${\cal Y}$ with a countable collection of codimension-two subgroups.
Our problem is then most naturally regarded in the context of unlikely intersections in diophantine
geometry.  Unlikely intersections refer to vast conjectures due to Zilber \cite{Zilber} and Pink
\cite{Pink} and partially motivated by a theorem of Bombieri, Masser, and
Zannier \cite{BMZ99}.  
This last group of authors
considered the problem of intersecting a curve $\cX\subset\IG_m^n$
with the infinite union ${\cal H}\subset\IG_m^n$ of all
proper algebraic subgroups of $\IG_m^n$ and showed -- as long as $\cX$
itself is not contained in
the translate of  a
proper subgroup -- that $\cX\cap {\cal H}$ is a set of bounded height.
By height we mean the absolute logarithmic Weil height which we recall
further down in \S\ref{sec:onheights}.
A non-empty intersection of a curve $\cX$ with a
 subgroup of codimension one is more appropriately 
called
  ``just likely'', in contrast to
what is studied by Zilber and Pink's conjectures, where
the subgroups must have codimension at least two and the intersections
are deemed ``unlikely''.  
Under this more stringent condition and when imposing an appropriate
condition on $\cX$, one  expects finiteness instead of 
 merely bounded height.
However,  boundedness of height in the ``just likely'' situation
is often a gateway to proving  finiteness in the ``unlikely'' case. 
%%  many  results
 %% in this direction  that
 %% postulate finiteness if the codimension is at least $2$.

%%We refer to \S \ref{sec:zilber} for a
%% partial overview of these diophantine questions.   
%% Conjectures on Intersections with Tori in semi-abelian varieties
%% go back to work of Bombieri, Masser, and Zannier \cite{BMZ99} and
%% Zilber \cite{Zilber}. Later, Pink \cite{Pink} proposed a far reaching
%% generalization
%% to mixed Shimura varieties.

Zilber and Pink's conjectures are open in general. But
several cases that incorporate classical results
 such as the Mordell or Manin-Mumford Conjectures are known. We provide a partial overview 
of state of this field in \S\ref{sec:zilber}. 
\par 
 One  aspect 
 that sets our work apart from previous results 
 is that it mixes  the additive group of a field
$\IG_a$, which is unipotent, with the multiplicative group $\IG_m$,
which is not. The latter appears  in the literature
\cite{BMZ99,Zilber} on these conjectures
but  the former seems to lie  outside 
the general framework of the Zilber-Pink Conjectures.
The following theorem is proved in \S\ref{sec:htbound}.
There we also provide all the necessary definitions used in the
theorem's formulation.
\par
\begin{theorem}
\label{thm:htlogbound}
Let $\cY\subset\IG_m^n\times\IG_a^n$ be an irreducible closed surface
and let $\Qta{\cY}$ denote the 
complement in $\cY$ of the union of all rational semi-torsion
anomalous subvarieties of $\cY$ (for a definition see 
\S\ref{sec:inters-with-algebr}). There is a constant $c$
with the following property. If
$P=(x_1,\ldots,x_n,y_1,\ldots,y_n)\in \Qta{\cY}$ such that there is 
$(b_1,\ldots,b_n)\in\IZ^n\ssm\{0\}$ with
\begin{equation}
\label{eq:coupledeqn}
x_1^{b_1}\cdots x_n^{b_n}=1 \quad\text{and}\quad
  b_1y_1+\cdots + b_n y_n = 0, 
\end{equation}
then $\height{P}\le c \log(2[\IQ(P):\IQ])$. 
\end{theorem}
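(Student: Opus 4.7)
The plan is to adapt the Bombieri--Masser--Zannier approach to bounded height for unlikely intersections from $\IG_m^n$ to the mixed algebraic group $\IG_m^n\times\IG_a^n$. The family of codimension-two subgroups $H_\bb := \{(\bx,\by) : \bx^\bb=1,\ \bb\cdot\by=0\}$, indexed by $\bb\in\IZ^n\ssm\{0\}$, is in the ``just likely'' regime with respect to the surface $\cY$: it meets $\cY$ generically in zero-dimensional sets, and the coupling of the two halves of \eqref{eq:coupledeqn} through a common integer vector $\bb$ is what ties the multiplicative and additive worlds together. After restricting to $\Qta{\cY}$, any positive-dimensional components of such intersections are excluded by hypothesis.

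The first step is a geometry-of-numbers reduction. Given $P=(\bx,\by)\in \Qta{\cY}$ satisfying \eqref{eq:coupledeqn}, the set of witnesses $\bb$ forms a saturated sublattice $\Lambda_P\subset\IZ^n$, cut out by the multiplicative relations among the $x_i$ and the $\IZ$-linear relations among the $y_i$. Since $\Lambda_P\neq\{0\}$, Minkowski's theorem together with standard estimates for the successive minima of lattices of multiplicative relations (as used in \cite{BMZ99}) yields a nonzero $\bb^*\in\Lambda_P$ whose Euclidean norm is bounded polynomially in $\height{P}$ and $[\IQ(P):\IQ]$. The $\IG_a^n$ factor of this bound comes from a Siegel-lemma estimate for the linear equation $\bb\cdot\by=0$, applied over the number field $\IQ(P)$, which combines cleanly with the multiplicative estimate on $\bx$.

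The second step is an arithmetic B\'ezout inequality applied to $\cY\cap H_{\bb^*}$. The hypothesis $P\in\Qta{\cY}$ ensures that this intersection is zero-dimensional at $P$, so $P$ is isolated by the equations of $\cY$ together with those of $H_{\bb^*}$. The degree of $H_{\bb^*}$, viewed as a cycle in a suitable compactification of $\IG_m^n\times\IG_a^n$, is polynomial in $|\bb^*|$, and its arithmetic height is $O(\log|\bb^*|)$; an arithmetic B\'ezout estimate in the style of Philippon or R\'emond then gives $\height{P}\le C_1(\log|\bb^*|+\log[\IQ(P):\IQ])$. Combining this with the bound for $|\bb^*|$ from the first step produces a self-referential inequality $\height{P}\le A + B\log\height{P} + D\log[\IQ(P):\IQ]$, which yields the claimed bound $\height{P}\le c\log(2[\IQ(P):\IQ])$.

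The main obstacle I anticipate is the absence of a canonical height or Dobrowolski-type lower bound on the unipotent factor $\IG_a^n$. The additive side has to be treated purely linearly via Siegel-type estimates for $\bb\cdot\by=0$, and combining these coherently with the multiplicative Bombieri--Masser--Zannier machinery on the $\IG_m^n$ side is delicate: one needs compactifications and height machines that behave well for cycles mixing the two factors. The precise notion of rational semi-torsion anomalous subvariety (deferred to \S\ref{sec:inters-with-algebr}) must be crafted so that it rules out exactly those subvarieties of $\cY$ along which infinite families satisfying \eqref{eq:coupledeqn} could accumulate and invalidate the isolation step of arithmetic B\'ezout; verifying this calibration, and that $\Qta{\cY}$ is Zariski open and dense, is the technical heart of the argument.
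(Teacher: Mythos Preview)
Your second step contains a fatal error. The multiplicative hypersurface $\{x_1^{b_1}\cdots x_n^{b_n}=1\}$ has degree of order $|\bb^*|$ in any reasonable compactification, so Arithmetic B\'ezout (Theorem~\ref{thm:ABT}) applied to $\cY\cap H_{\bb^*}$ yields $\height{P}\ll \deg(H_{\bb^*})\,\height{\cY}+\deg(\cY)\,\height{H_{\bb^*}}+c\deg(\cY)\deg(H_{\bb^*})$, which is $\ll|\bb^*|$, not $\ll\log|\bb^*|$ as you claim. Feeding this into your bootstrap with the step-one bound $|\bb^*|\ll d^{2n}\height{P}^n$ gives $\height{P}\ll d^{2n}\height{P}^n$, which is vacuous.

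The paper avoids this by exploiting an asymmetry you overlooked: the \emph{additive} hypersurface $\{b_1y_1+\cdots+b_ny_n=0\}$ has degree $1$ regardless of $\bb$, while its height is $\ll\log|\bb|$. So one first slices $\cY$ by the additive equation to get a curve $\cC_\bb$ with $\deg\cC_\bb\ll 1$ and $\height{\cC_\bb}\ll\log|\bb|$, and then applies the explicit BMZ-type height bound for \emph{curves} in $\IG_m^n$ (Theorem~\ref{thm:heightbound}) to the image $R(\cC_\bb)$; this absorbs the multiplicative relation without paying its degree. The resulting inequality $\height{P}\ll\log|\bb'|\ll\log(d\,\height{P})$ then bootstraps correctly. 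A second point you miss is the essential case split: if the $x_i$ satisfy \emph{two} independent multiplicative relations one invokes the surface bound of Theorem~\ref{le:BZbound} directly; only in the rank-one case does one need the coupling, and there the small vector $\bb'$ produced by Dirichlet plus Dobrowolski (Lemma~\ref{lem:findb}) is automatically proportional to the original $\bb$, so the additive relation transfers for free---no Siegel lemma on the $\IG_a$ side is needed or even meaningful. Finally, your expectation that $\Qta{\cY}$ is Zariski open is false: Example~\ref{ex:nonopen} exhibits a surface where $\cY\smallsetminus\Qta{\cY}$ is a countably infinite union of curves.
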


The equations (\ref{eq:coupledeqn}) define an algebraic subgroup of
$\IG_m^n\times\IG_a^n$ of codimension two. 
The fact that the coefficients on the additive side are coupled to the
exponents on the multiplicative side is essential for the proof and
for the application to Theorem~\ref{thm:intromainfin}. 
%% \begin{theorem}
%%   Let $\cY \subset \IG_a^n\times\IG_m^n$ be an irreducible closed
%%   surface defined over $\IQbar$. 
%% There exists a constant $c>0$ with the following property.
%% If $P\in \Qta{\cY} \cap \sgu{(\IG_m^n\times\IG_a^n)}{2}$, then 
%% \begin{equation*}
%%  \height{P}\le c\log(2[\IQ(P):\IQ]).
%% \end{equation*}
%% \end{theorem}

We emphasize that this height bound only applies to points in the subset
$\Qta{\cY}\subset {\cal Y}$ which we define precisely in \S\ref{sec:inters-with-algebr}.  
This set 
arises  by removing from $\cY$ certain subvarieties that have
anomalously large intersection with certain translates of algebraic subgroups.
It
bears similarities to Bombieri, Masser, and Zannier's 
\emph{open anomalous locus} $\oa{\cX}$  \cite{BMZGeometric} of a subvariety of algebraic torus
$\cX\subset \IG_m^n$. Indeed, the second named author \cite{BHC} proved a
height bound on points in $\oa{\cX}$ that are contained in an
algebraic subgroup of dimension at most $n-\dim \cX$.
However, $\Qta{\cY}$  can have a delicate structure.
We will see in Example \ref{ex:nonopen} that it need not in
general be Zariski open in $\cY$; its complement in $\cY$ can be a
countable infinite union of curves.  
%%% Added comment and the example. Unfortunately, Y^{Q,ta} is quite
%%% ugly in general. Perhaps one can shrink it to a more manageable
%%% set that still suffices for our purposes. 
%% I remove the following sentence because I don't understand what it conveys.
%%  In a result of this type, it is always necessary to restrict to an
%%  open subset analogous to our $\Qta{\cY}$.
  In general, it is difficult to determine the open anomalous locus.
The analogous problem  for a plane in $\IG_m^n$ is already a difficult problem which
was  solved by Bombieri, Masser, and Zannier
\cite{BMZPlanes}.
For our application,  $\Qta{\cY}$ will cause additional difficulties.  
  In \S\ref{sec:hypin22odd}, we compute $\Qta{\cY}$
  for the two cases arising from the two components of $\omoduli[3](4)$.
This is done via by amalgamating a theoretical analysis 
with the use of computer algebra software \cite{sage}.

\paragraph{A torus containment algorithm.}

At two places we rely on computer-assistance to establish the non-existence of tori in a given
subvariety of $\IG_m^n$.  In \S\ref{sec:torusalgo} we provide an algorithm that deals with that
problem effectively. The algorithm is designed so as to check only for tori whose character group is
contained in a specified subgroup of $\IZ^n$.  For the application in
\S\ref{sec:pres-tors-princ} we can restrict to such a situation, and only with this
restriction is the run-time of the algorithm reasonable.

\paragraph{Multiple zeros.}

The proof of finiteness for strata with multiple zeros is quite different and starts with the
torsion condition of \cite{moeller}. This states that if $(X, \omega)$ is an \APVS, and $p,q$ are distinct
zeros of $\omega$, then the divisor $p-q$ represents a torsion point of $\Jac(X)$.  As for the real
multiplication condition, this torsion condition may be interpreted explicitly at the boundary, and
we couple this with  other conditions to obtain finiteness of cusps.  We say that $(X, \omega)$ has
\emph{torsion dividing $N$} if the order of $p-q$ divides $N$ for every two zeros $p$ and $q$.
\par
A fundamental difficulty is that the limiting stable forms arising from \Teichmuller curves in these
strata may have thrice-punctured sphere components (pairs of pants), and none of our conditions give
any control on the one-form restricted to these components.  

For our approach to work, we need to know that controlling all irreducible components of limiting
stable curves, except for pants components, is enough to conclude finiteness of \Teichmuller
curves.  To formalize this,  we say that a collection of \Teichmuller curves is \emph{pantsless-finite} if the
collection of all non-pants irreducible components of limiting stable forms is finite.  In
\S\ref{sec:torsion-moduli} and \S\ref{sec:genfinthm}, we prove:

\begin{theorem}
  \label{thm:pantsless-finite}
  In any stratum of holomorphic one-forms, any pantsless-finite collection of \APTCs is in fact finite.
\end{theorem}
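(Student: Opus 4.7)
The plan is to reduce finiteness of a pantsless-finite collection of \APTCs to finiteness of the set of limiting stable forms at their cusps, via the finiteness-of-cusps criterion of \cite{BaMo12}: a collection of \APTCs in a fixed stratum is finite as soon as the set of limiting stable one-forms associated to their cusps (up to isomorphism) is finite. Since Veech \cite{veech89} showed every \Teichmuller curve has at least one cusp, it suffices to bound the set of cusp stable forms in terms of the pantsless-finite input.

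For an \APTC, every irreducible component of a limiting stable form has geometric genus zero (a consequence of algebraic primitivity), so each component is $\IP^1$; since a stable curve of arithmetic genus $g$ has at most $2g-2$ components and $3g-3$ nodes, only finitely many dual graphs occur. Combined with the pantsless-finite hypothesis, this leaves finitely many choices for the dual graph together with the assignment of non-pants components to its vertices. What remains is to control the one-form on each pants component. A pants is rigid as an abstract thrice-marked surface, and a meromorphic one-form on it is determined by its three residues (which must sum to zero); the matching of residues at internal nodes (two sides being negatives of one another), together with the fixed residues at boundary nodes adjacent to non-pants components, gives a linear system whose solution space has dimension equal to the first Betti number of the pants subgraph. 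When that subgraph is a tree this pins things down uniquely, but in general the linear analysis alone is not enough.

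The main obstacle is therefore to cut the remaining residue moduli on pants subgraphs with cycles down to a finite set. The strategy is to combine the global torsion-order bound on differences of zeros of $\omega$ proved in \S\ref{sec:pres-tors-princ} with the new bounds for moduli of horizontal cylinders in terms of these torsion orders alluded to in the introduction. The residues at collapsing nodes are, up to a cusp-direction scalar, circumferences of horizontal cylinders; the \APVS structure forces them to lie in a one-dimensional $F$-submodule of $\IC$, and the torsion and moduli constraints should force the residue assignment along each cycle of the pants subgraph into a discrete, hence finite, set. Combining the finite combinatorial data (dual graph plus non-pants components) with the now-finite residue data yields a finite set of cusp stable forms, and then the finiteness-of-cusps criterion of \cite{BaMo12} gives finiteness of the collection of \APTCs.
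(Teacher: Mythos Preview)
Your strategy---reduce to finitely many stable forms at cusps, then invoke the cusp-finiteness criterion of \cite{BaMo12}---is natural, but the central step does not go through as written.  The assertion that ``the torsion and moduli constraints should force the residue assignment along each cycle of the pants subgraph into a discrete, hence finite, set'' is where the argument breaks.  Concretely: after using the residue theorem at every vertex and the known ratios at non-pants vertices, the residues on the pants subgraph retain $2P-E$ free parameters (where $P$ is the number of pants vertices and $E$ the number of edges).  For an all-pants degeneration in the principal stratum this is $g-1>0$, and the torsion condition contributes nothing since each pants carries a single zero.  The moduli bound of Theorem~\ref{thm:moduli_bound} together with the trace-pairing normalization does pin down the Gram matrix of the width vectors $\theta(r_e)\in\IR^g$ to a finite list, but a fixed rational Gram matrix is realized by width tuples in infinitely many distinct trace fields $F$; so you do \emph{not} obtain finitely many residue tuples, hence not finitely many stable forms.

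The paper's proof avoids this obstruction by never attempting to enumerate stable forms.  Instead it works on the smooth Veech surfaces themselves: from pantsless-finiteness one first extracts a uniform torsion bound (Proposition~\ref{prop:pf_torsionbound}); then Theorem~\ref{thm:moduli_bound} bounds moduli ratios within each block, and the orthogonal-block structure of the width vectors (Propositions~\ref{prop:blocks_orthogonal} and~\ref{prop:moduli_determine_vectors}) together with the known non-pants cut vertices bounds moduli ratios across blocks (Proposition~\ref{prop:real_ratios_bounded}).  A separate argument bounds width ratios (Proposition~\ref{prop:widths_bounded}).  With all cylinder moduli and widths in bounded ratio, every triangle on the normalized surface has area bounded below, and the Smillie--Weiss ``no small triangles'' criterion (Theorem~\ref{thm:NST}) gives finiteness.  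The essential new ingredients you are missing are the orthogonal-block decomposition of width vectors and the use of Smillie--Weiss in place of the \cite{BaMo12} cusp criterion.
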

In
\S\ref{sec:pres-tors-princ} and \S\ref{sec:other-strata-genus}, we then show that for each
remaining stratum in genus three, the set of \APTCs is pantsless-finite.

%We now explain the strategy for the generic stratum $\omoduli[3](1,1,1,1)$, where the
%ingredients all appear transparently. For the remaining cases, some steps are not necessary or
%can be dealt with more quickly by \emph{ad hoc} methods. These cases are discussed in \S\ref{sec:other-strata-genus}.

\paragraph{Torsion and moduli: the Abel metric.} 

Its a simple observation that a pantsless-finite collection of \APTCs has uniform bounds on its
torsion orders (though proving these uniform bounds is in fact the difficult step in establishing
pantsless-finiteness, as we discuss below).  A Veech surface $(X, \omega)$ has a canonical flat
metric $|\omega|$.  In \S\ref{sec:torsion-moduli}, we study how this flat metric is controlled by
these torsion order bounds.  More precisely, $(X, \omega)$ has many  \emph{periodic directions} in which
the surface decomposes as a union of parallel flat cylinders whose moduli have rational ratios, and
the complement of these cylinders is a collection of parallel geodesic segments joining the zeros of
$\omega$ (called the \emph{spine} of this periodic direction).
The main ingredient in the proof of
Theorem~\ref{thm:pantsless-finite} is new bounds on the moduli of these cylinders in terms of the
torsion orders.

For a graph $\Gamma$ we define its {\em blocks} to be maximal subgraphs which cannot be disconnected
by removing any single vertex.  Every graph has a canonical decomposition into blocks, with any two
adjacent blocks meeting in a single vertex.  This notion applies in particular to the dual graph of
a periodic direction whose edges are cylinders and vertices are connected components of the spine
(equivalently, the dual graph of a corresponding stable curve over a cusp of the \Teichmuller
curve).  This induces a partition of the cylinders in any given periodic direction into blocks.  In
\S\ref{sec:torsion-moduli}, we show that bounds for torsion orders yield bounds for ratios of moduli
within any block.
\par
\begin{theorem}[Theorem~\ref{thm:moduli_bound}] \label{thm:intro_modulibound}
  Let $(X, \omega)$ be
  an algebraically primitive Veech surface, with torsion dividing $N$.  Then for any block of
  cylinders $C_1, \ldots, C_n$ of some periodic direction of $(X, \omega)$, the number of
  possibilities for the projectivized tuple $(\modulus(C_1): \ldots :\modulus(C_n))$ is bounded in
  terms of $N$ and $n$.
\end{theorem}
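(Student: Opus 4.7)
\emph{Plan.} Fix a horizontally periodic direction for the algebraically primitive Veech surface $(X,\omega)$ and a block $B$ of its dual graph, with cylinders $C_1,\ldots,C_n$ of heights $h_i$ and circumferences $w_i$, so that $\modulus(C_i)=h_i/w_i$. The strategy is to translate the torsion hypothesis --- that $p-q$ has order dividing $N$ in $\Jac(X)$ for any two zeros $p,q$ of $\omega$ --- via the Abel--Jacobi map into enough integer-coefficient linear relations on the $h_i$ (respectively $w_i$) to pin down $(h_1:\cdots:h_n)$ (respectively $(w_1:\cdots:w_n)$) to a finite set of cardinality controlled by $N$ and $n$. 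Taking coordinate-wise ratios will then bound $(\modulus(C_1):\cdots:\modulus(C_n))$.

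\emph{Step 1: From torsion to integer relations on heights.} Pick a basepoint zero $p_0$ on a cylinder in $B$. For any other zero $p$ on the boundary of a cylinder in $B$, choose a path in $X$ from $p_0$ to $p$; the $\omega$-integral splits as a horizontal part along the spine plus signed vertical crossings of cylinders, so that $\IM\int_{p_0}^p\omega = \sum_i \varepsilon_i(p)\,h_i$ for integers $\varepsilon_i(p)$ recording the crossing combinatorics. The torsion condition forces $N\int_{p_0}^p\omega\in\Lambda$, where $\Lambda$ is the absolute period lattice of $\omega$. Since $\IM(\Lambda)$ is the $\IZ$-span of sums $\sum k_C h_C$ indexed by cycles in the full dual graph, this translates into an integer relation $\sum_i \bigl(N\varepsilon_i(p) - k_i\bigr)\,h_i=0$ with coefficients of size $O(N)$. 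An analogous decomposition for $\RE\int_{p_0}^p\omega$ yields a companion relation on the widths $w_i$, with additional bounded contributions from the horizontal positions of zeros along each spine component.

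\emph{Step 2: Block structure and finiteness.} Because $B$ is a block, its dual graph $G_B$ is 2-connected: every edge lies on a cycle, and the cycle lattice has rank $n-v+1$ where $v$ counts the spine components in $B$. Running Step 1 over every zero on $B$ and over a basis of the cycle lattice of $G_B$ produces a system $M(N)\cdot(h_1,\ldots,h_n)^\top=0$ with integer entries of size $O_n(N)$. The crux is the claim that the set of rational projective solutions is finite of size bounded in $N$ and $n$: 2-connectedness prevents any single $h_i$ from being rescaled independently of the rest without violating a torsion relation, since every cylinder participates in at least one cycle and every pair of spine components in $B$ is joined by at least two internally disjoint paths. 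The same argument bounds $(w_1:\cdots:w_n)$, and dividing yields the moduli bound.

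\emph{Main obstacle.} The central technical point is the projective rank assertion in Step 2: one must show that, despite possible collusion among the torsion relations, enough genuinely independent ones exist to leave only a finite projective kernel. The natural route is an ear decomposition of $G_B$, in which each successive ear contributes one new independent cycle relation (for its absolute homology class) and one new independent zero-to-zero path relation (from torsion applied to a zero on its interior). A secondary subtlety is bounding the integers $k_i$ produced in Step 1 uniformly in $N$ and $n$; only cycles close to $B$ should enter the relevant relations, which requires some care about how $B$ sits inside the dual graph of the whole surface.
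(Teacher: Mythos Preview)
Your approach has a genuine gap: the proposed route through \emph{separate} height and width relations does not produce the constraints you need. The width half of Step~1 fails outright: the real part $\RE\int_{p_0}^p\omega$ is \emph{not} an integer combination of cylinder widths. The ``additional bounded contributions from the horizontal positions of zeros along each spine component'' are real numbers depending continuously on where the zeros sit on each spine; they are not controlled by $N$ and $n$, so the torsion condition $N\RE\int_{p_0}^p\omega\in\RE(\Lambda)$ yields no clean integer relation on the $w_i$. The height half also does not work as stated: the relation $\sum_i(N\varepsilon_i-k_i)h_i=0$ you obtain holds over \emph{all} cylinders (not just those in the block $B$), and the integers $k_i$ coming from the absolute period lattice are not bounded by $N$---the torsion condition merely says a specific element of $\Lambda$ equals $N$ times a relative period, and this does not bound its coordinates in a basis of $\Lambda$. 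Without both bounded coefficients and localization to $B$, the rank argument in Step~2 has nothing to bite on.

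The paper's proof avoids all of this with a single construction: for each divisor $D=N(z_1-z_2)$ in the Abel--Jacobi kernel, it pulls back $dz/z$ along the degree-$N$ meromorphic function $h_D$ with $(h_D)=D$, obtaining a form $\tau_D$. The periods $w_e(D)=\int_{\gamma_e}\tau_D$ are integers with $|w_e(D)|\le N$ simply because they are winding numbers of a degree-$N$ map. Because $\IM\tau_D$ is exact, a circuit integral and a comparison of cylinder moduli in the $\omega$- and $\tau_D$-metrics (which are asymptotic at the cusp) produces the relation $\sum_i\sigma_i\,w_{e_i}(D)\,m_{e_i}=0$ directly on the \emph{moduli}, with coefficients already bounded by $N$. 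Varying $D$ over all pairs of zeros and encoding these circuit relations homologically then shows that the moduli in each block are determined up to a common scalar, which is where the $2$-connectedness of blocks actually enters. The missing idea in your proposal is precisely this Abel metric $\tau_D$: it simultaneously gives bounded integer coefficients, exactness (hence circuit relations), and relations on the correct quantities $m_e$ rather than on heights or widths separately.
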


We emphasize that this theorem is only useful for strata with multiple zeros.  For strata with a
single zero, torsion orders are bounded trivially, but each block consists of one cylinder, so
Theorem~\ref{thm:intro_modulibound} gives no information.
\par
This theorem generalizes \cite[Theorem~2.4]{Mo08}, which establishes a special case of this bound
using properties of N\'eron models of the stable curves over the cusps of the \Teichmuller curve.
Our proof here uses instead conformal geometry and a new metric on $(X, \omega)$ coming from the
torsion condition.  The torsion condition determines via Abel's theorem a meromorphic function $f$
whose divisor $(f)$ is supported on the zeros of $\omega$ (in fact there are many possible functions
$f$ we can use here).  Pulling back the flat metric $dz/z$ on $\proj^1$ via this map, we obtain a
metric on $(X, \omega)$ with infinite cylinders at the zeros of $\omega$, which we call the
\emph{Abel metric}.

We show that for $(X, \omega)$ close to the boundary of moduli space, the finite cylinders of $(X, \omega)$
have moduli close to corresponding cylinders in the Abel metric.  Using the torsion condition, we
obtain constraints on the moduli of these cylinders in the Abel metric, which then give
corresponding constraints on the original cylinders of $(X, \omega)$.  Combining these constraints
for all of the possible choices of the initial function $f$, we obtain Theorem~\ref{thm:intro_modulibound}.

\paragraph{Geometry of cylinder widths.}

To complete the proof of Theorem~\ref{thm:pantsless-finite}, we need to be able to compare moduli of
cylinders in a periodic direction which do not belong to the same block.  An irreducible component
of the limiting stable curve which joins two blocks must have at least four punctures, so the
hypothesis of pantsless-finiteness means that we can control the geometry of a component connecting
two blocks.  In particular, for two cylinders which lie in different blocks but whose boundaries
share a common connected component of the spine, the ratio of the circumference of these cylinders
is an algebraic number $\lambda$ which can be assumed to be known (in particular, all of its Galois
conjugates are bounded).  We would like to use this control on ratios of circumferences to bound the
ratios of moduli of cylinders in these two blocks.  Ordinarily knowing only the widths or moduli of
cylinders gives no information about their moduli or widths, since their heights can be arbitrary.
In \S\ref{sec:genfinthm}, we see that on an \APVS the widths and moduli are intimately connected.

More precisely, consider a periodic direction of an \APVS $(X, \omega)$.  The widths of cylinders
are a collection of algebraic numbers (\emph{a priori} only defined up to scale, but in
\S\ref{sec:genfinthm}, we define a nearly canonical way to normalize them).  Considering their
different Galois conjugates, we can regard these widths as a collection of vectors $\{v_1, \ldots,
v_n\}\subset \reals^g$.  We group these vectors according to the blocks of cylinders defined above,
and call $B_i\subset\reals^g$ the span of the $i$th block.

\begin{theorem}
  \label{thm:orthogonal_blocks}
  The $B_i$ are pairwise orthogonal subspaces of $\reals^g$.  In each $B_i$, the set of width vectors
  $v_j$ it contains are determined by the moduli of the corresponding cylinders up to a similarity of $B_i$.  The norm of the width vectors in $B_i$ is inversely
  proportional to the moduli of the corresponding cylinders.
\end{theorem}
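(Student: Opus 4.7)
The plan is to interpret the width vectors via the eigenform decomposition arising from real multiplication, and then derive each of the three parts. Since $(X,\omega)$ is an \APVS with totally real trace field $F$ of degree $g$, $\Jac(X)$ carries real multiplication by an order $\mathcal{O}\subset F$, producing eigenforms $\omega=\omega_1,\omega_2,\ldots,\omega_g$ for the real embeddings $\sigma_1,\ldots,\sigma_g\colon F\to\reals$. With the normalization in the earlier part of \S\ref{sec:genfinthm} placing each width $w_i$ in $\mathcal{O}$, the width vector associated with the core $\alpha_i$ of cylinder $C_i$ equals
\[
v_i=(\sigma_1(w_i),\ldots,\sigma_g(w_i))=\left(\int_{\alpha_i}\omega_1,\ldots,\int_{\alpha_i}\omega_g\right),
\]
so $\langle v_i,v_k\rangle_{\reals^g}=\Tr_{F/\ratls}(w_iw_k)$.

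For parts (b) and (c), I would analyze a single block $B$. Biconnectedness of the dual graph, together with the residue relations $\sum_{e\ni v}\epsilon(e,v)w_e=0$ at each spine component, gives a linear system whose solution space — intersected with the $\mathcal{O}$-module structure inherited from the RM — determines the widths in $B$ up to a single overall scalar. The harmonic Poincar\'e dual of $[\alpha_i]$ is (to leading order) a constant multiple of $dy$ on $C_i$ plus harmonically-decaying tails, and a direct computation shows that its squared $L^2$-norm is proportional to $1/\modulus(C_i)$ (with the constant depending only on $\Area(X,\omega)$). Distributing this $L^2$-mass across the $g$ eigenform components yields $\|v_i\|\propto 1/\modulus(C_i)$, proving (c), and combined with the block rigidity, (b).

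For part (a), I would pass to the cusp of the \Teichmuller curve corresponding to the given periodic decomposition: each cylinder degenerates to a node and each spine component to an irreducible component $X_v$ of the limit stable curve $\bar X$, whose dual graph is the dual graph of the decomposition. Each eigenform $\omega_j$ extends to a meromorphic stable form on $\bar X$ with simple pole at the node $p_i$ from $C_i$ of residue proportional to $\sigma_j(w_i)$. The cycle space of the dual graph decomposes as a direct sum indexed by the blocks (every cycle is a union of cycles internal to individual blocks). The vector of residues of each $\omega_j$ lies in this cycle space by the residue theorem applied to each $X_v$. A residue-theorem calculation on each sub-stable-curve associated to a single block expresses the cross-block summand $\sum_{j=1}^g\Resi_{p_i}\omega_j\,\Resi_{p_k}\omega_j$ ($i\in B, k\in B'$) as a boundary integral that vanishes, giving $\Tr_{F/\ratls}(w_iw_k)=0$, hence $\langle v_i,v_k\rangle=0$.

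The main obstacle is the vanishing of this cross-block residue sum. The residues of the individual $\omega_j$ are \emph{not} supported on a single block, and the residue relation at a cut vertex couples edges from different blocks, so the claim is that despite this coupling the \emph{sum of Galois-conjugate products} decouples. To establish it, I would combine the extension of the RM action across the Deligne-Mumford boundary (intertwining the residue maps with Galois conjugation) with the Harder-Narasimhan information of \S\ref{sec:HNfilt} controlling the orders of vanishing of the higher eigenforms, and apply trace-form symmetry to isolate the block-wise contributions. Making the matching between the smooth-surface Hodge pairing and the boundary residue pairing fully rigorous — in particular controlling the tails of harmonic duals that cross cut vertices — is the delicate technical point of the proof.
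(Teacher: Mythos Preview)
Your approach diverges substantially from the paper's and contains a genuine gap you yourself flag. The paper's argument is entirely elementary and takes place on the smooth surface, with no passage to the boundary, no Harder--Narasimhan input, and no harmonic representatives or $L^2$-estimates.

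The key step you are missing is the normalization in Proposition~\ref{prop:normalization_exists}: after applying a suitable diagonal matrix, the widths $r_e$ and heights $s_e$ lie in $F$ (or in $\Ker\Tr_{K/F}$ for a fixed quadratic extension), the moduli $m_e=s_e/r_e$ are rational, and, crucially, the topological intersection pairing on $H_1(X)$ is expressed by the trace form:
\[
\alpha\cdot\beta \;=\; \Tr_{F/\ratls}\!\Bigl(\textstyle\int_\alpha\omega\cdot\Im\int_\beta\omega\Bigr).
\]
This identity transports the entire problem to the combinatorics of the dual graph $\Gamma$. The widths define $\rho_\br=\sum_e r_e[e]\in H_1(\Gamma;W)$ and $\sigma_\br=\sum_e m_e r_e[e^*]\in H^1(\Gamma;W)$, and the displayed formula becomes $(x,y)=\langle\sigma_\br^*(x),\rho_\br^*(y)\rangle$ for the canonical pairing between $H_1(\Gamma)$ and $H^1(\Gamma)$. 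Orthogonality of the $B_i$ then drops out in two lines: since $H_1(\Gamma;\ratls)=\bigoplus_i H_1(\Gamma_i;\ratls)$ and the $m_e$ are rational, $\sigma_\br^*$ maps $H_1(\Gamma_j;\ratls)$ onto $B_j$; for an edge $e$ in block $i$ and $s=\sigma_\br^*(\gamma)\in B_j$ with $\gamma$ a cycle in $\Gamma_j$, one has $\langle r_e,s\rangle=([e^*],\gamma)=0$ because a cycle in one block never crosses an edge of another. No residue computation or boundary analysis is needed.

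Parts (b) and (c) are likewise pure linear algebra (Proposition~\ref{prop:moduli_determine_vectors}): choosing a spanning tree of a block, the matrix $P=NML$ built from the incidence data and the diagonal matrix of moduli satisfies $RR^t=P^{-1}$, giving the full trace matrix $Q=(\langle r_i,r_j\rangle)$ explicitly in terms of $\Gamma_0$ and the $m_e$, with the inverse proportionality manifest.

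Your proposed route through degeneration and Harder--Narasimhan is not only more involved but problematic: the HN information of \S\ref{sec:HNfilt} is only computed for the non-principal genus-three strata, so it is unavailable exactly where Theorem~\ref{thm:orthogonal_blocks} is most needed. And the ``cross-block residue sum vanishes'' step you identify as delicate is, in the paper's framework, simply the statement that $[e^*]$ pairs trivially with cycles supported on other blocks --- a one-line combinatorial fact once the trace-pairing normalization is in place.
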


Combining Theorem~\ref{thm:intro_modulibound} and Theorem~\ref{thm:orthogonal_blocks} (for
simplicity in the case of a periodic direction with a single block), we see that 
given a bound on the torsion orders of $(X, \omega)$, there are finitely many possibilities for the
collection of width vectors up to similarity of $\reals^g$.

Theorem~\ref{thm:intro_modulibound} and Theorem~\ref{thm:orthogonal_blocks} imply that for any
pantsless-finite collection of \Teichmuller curves, there are uniform bounds for the ratios of
widths and moduli of cylinders in any periodic direction, as we can use the width-ratio $\lambda$
and Theorem~\ref{thm:orthogonal_blocks} to control the scale of the moduli of adjacent blocks of
cylinders.  From this, we conclude using the Smillie-Weiss ``no small triangles'' condition from
\cite{smillieweiss10} that our collection of \Teichmuller curves is in fact finite.

\paragraph{Bounding torsion orders in the generic stratum.}

In \S\ref{sec:pres-tors-princ}, we establish finiteness of \APTCs in the generic stratum
$\omoduli[3](1,1,1,1)$.  The key quantity governing the geometry of \Teichmuller curves in the
generic stratum is the torsion orders introduced above.  Given an \emph{a priori} bound for torsion
orders in this stratum,  we show using height estimates that the collection of \APTCs in this
stratum is pantsless-finite in the sense of \S\ref{sec:genfinthm}.  From
Theorem~\ref{thm:pantsless-finite}, we obtain:

\begin{theorem}
  \label{thm:torsion_implies_finite}
  Suppose that there is a uniform bound for the torsion orders of \APTCs in the generic stratum
  $\omoduli[g](1^{2g-2})$.  Then there are only finitely many \APTCs in this stratum.
\end{theorem}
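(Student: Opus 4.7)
The plan is to show that a uniform torsion bound forces pantsless-finiteness, and then to invoke Theorem~\ref{thm:pantsless-finite}. Fix an integer $N$ such that every \APTC in $\omoduli[g](1^{2g-2})$ has divisors $p_i-p_j$ of distinct zeros representing torsion points of order dividing $N$. For each cusp of such an \APTC $C$, the limiting stable form $(\overline X,\overline\omega)$ has, by algebraic primitivity, all irreducible components rational, so $\overline X$ is a union of copies of $\proj^1$ glued at nodes, with $\overline\omega$ having simple zeros and simple-pole nodes. The dual graph together with the distribution of the $2g-2$ simple zeros among the components admits only finitely many combinatorial possibilities, so it suffices to fix one such combinatorial type and show that only finitely many configurations of its non-pants components can arise.

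On a non-pants component, i.e.\ a $\proj^1$ with at least four marked points (nodes and zeros), the moduli are recorded by cross-ratios in $\IG_m$, and the residues of $\overline\omega$ at the nodes give additive data in $\IG_a$. The torsion condition of \cite{moeller}, applied to two zeros lying on a common non-pants component, translates on the boundary into a coupled multiplicative-additive equation of the type \eqref{eq:coupledeqn}, exactly as equations \eqref{eq:constraint1}--\eqref{eq:constraint2} arose from the minimal stratum. The hypothesis $|b_i|\le N$ forces the integer exponents into a finite set, so the non-pants configurations at cusps are confined to finitely many codimension-two algebraic subgroups of an ambient $\IG_m^n\times\IG_a^n$.

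I would then assemble the parameter space of non-pants configurations (with residue data) into an irreducible surface $\cY\subset\IG_m^n\times\IG_a^n$ and apply Theorem~\ref{thm:htlogbound} to each of the finitely many admissible $(b_1,\ldots,b_n)$, obtaining a uniform bound on $\height{P}$ in terms of $\log(2[\IQ(P):\IQ])$ for cusp points $P\in\Qta\cY$. Since algebraic primitivity fixes the trace field to have degree exactly $g$ over $\IQ$, the logarithmic degree factor is bounded a priori, so Northcott's theorem yields finiteness of the non-pants configurations. Combining over the finite set of combinatorial types produces pantsless-finiteness, and Theorem~\ref{thm:pantsless-finite} finishes the argument.

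The main obstacle is to verify that cusp configurations actually lie in $\Qta\cY$ rather than in its complement. Unlike the classical open anomalous locus, $\cY\ssm\Qta\cY$ need not be Zariski closed and may decompose as a countably infinite union of curves, so a separate geometric analysis is required to exclude accumulation of \APTC cusps along rational semi-torsion anomalous subvarieties. I expect this to hinge on an explicit description of the surface $\cY$ attached to the generic stratum, combined with the torus-containment algorithm of \S\ref{sec:torusalgo} to enumerate the candidate anomalous curves and then to rule each one out by a direct argument on the corresponding stable forms.
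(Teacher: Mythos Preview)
Your overall architecture is right: establish pantsless-finiteness and invoke Theorem~\ref{thm:pantsless-finite}. But the mechanism you propose for pantsless-finiteness is based on a misreading of where equations~\eqref{eq:constraint1}--\eqref{eq:constraint2} come from, and this leads you into the wrong toolkit.

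The coupled multiplicative--additive relations \eqref{eq:constraint1}--\eqref{eq:constraint2} do \emph{not} arise from the torsion condition. Equation~\eqref{eq:constraint1} is the cross-ratio consequence of \emph{real multiplication} at the boundary, and \eqref{eq:constraint2} comes from the Harder--Narasimhan analysis of the second eigenform; both are specific to the minimal stratum, where the torsion condition is vacuous. In the generic stratum the torsion condition yields something much more direct (Lemma~\ref{le:conseqoftorsion} and condition~iii) of Definition~\ref{def:determined_by_torsion}): for any two zeros $z_a,z_b$ on a common component and any pair of nodes $x_{i_1},x_{i_2}$ in the same part of the node-partition, the cross-ratio $[z_a,z_b,x_{i_1},x_{i_2}]$ is a root of unity of order dividing $N$. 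There are no unknown exponents $b_i$ here at all, so Theorem~\ref{thm:htlogbound} is neither needed nor applicable; in particular your parameter spaces of non-pants components are not surfaces (for instance $\moduli[0,10]$ has dimension $7$), and there is no reason the torsion bound would control the exponents in a real-multiplication cross-ratio relation.

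The paper's argument is therefore much simpler than what you sketch. Since a one-form on $\proj^1$ with $k$ simple zeros has $k+2$ simple poles, every non-pants component in the generic stratum is either of type $(4;1,1)$ (two zeros) or has $k\ge 3$ zeros. Propositions~\ref{prop:twozerosdetbytorsion} and~\ref{prop:3detbytorsion} show that in either case the component is \emph{determined by torsion}: once $N$ is fixed, the relevant cross-ratios lie in the finite set of $N$-th roots of unity, and an injective cross-ratio map (Lemma~\ref{le:CRproperties}) recovers the component from this finite data. This gives pantsless-finiteness directly, with no height bounds, no anomalous loci, and no analysis of $\Qta{\cY}$.
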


It remains to establish these torsion bounds.  Consider an irreducible component of a cusp of a
\Teichmuller curve in this stratum which contains more than one zero of the stable form.  Marking
the zeros and poles of this stable form, we can regard it as a point in $\moduli[0,n]$ for some
$n$.  A choice of two zeros and two poles determines a cross-ratio morphism $\moduli[0,n]\to \Gm$.
Choosing an appropriate collection of cross-ratios, we obtain a morphism $\moduli[0,n]\to\Gm^N$, and
the torsion condition may be interpreted as saying that our stable form maps to a torsion point of
this torus.  For example in the (most difficult) case of an irreducible stable form, we obtain an
embedding $\moduli[0,10] \to \Gm^9$.

The problem is then to show that a subvariety of a torus meets only finitely many torsion points.  
We appeal to Laurent's theorem \cite{laurent}
which says that a subvariety of $\Gm^n$ contains only finitely many torsion points unless it
contains a torsion-translate of a subtorus, so we are lead to study translates of tori in the
seven-dimensional variety $\moduli[0,10]\subset\Gm^9$.

We then use the torus-containment algorithm from \S\ref{sec:torusalgo} to study tori in this
variety.  As the algorithm is much too slow to try to classify all torus-translates in a
nine-dimensional torus,  in \S\ref{sec:proof4zeros}, we give a significant reduction that says that
only subtori of one of three three-dimensional tori need to be considered.  Applying the algorithm,
we see that there are no subtori of $\moduli[0,10]$ which could lead to \Teichmuller curves with
arbitrarily large torsion orders.  Thus we obtain:

\begin{theorem}
  \label{thm:bounded_torsion}
  There is a uniform bound for the torsion orders of \APTCs in $\omoduli[3](1,1,1,1)$.
\end{theorem}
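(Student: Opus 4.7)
My plan is to reduce the question to the finiteness of torsion points on an explicit algebraic subvariety of a torus, and then to invoke Laurent's theorem together with the torus-containment algorithm of \S\ref{sec:torusalgo}. First I would analyse the cusps of an \APTC $C\subset\omoduli[3](1,1,1,1)$: each cusp corresponds to a stable form $(\tilde X,\tilde\omega)$ with $\tilde X$ a nodal curve of arithmetic genus three. Algebraic primitivity forces every irreducible component of $\tilde X$ to have geometric genus zero, so each component is a copy of $\IP^1$ equipped with marked points coming from the zeros of $\tilde\omega$ on it together with the preimages of the nodes, which are the poles of $\tilde\omega$. Marking these points turns each component into a point in some $\moduli[0,n]$, and the hardest case is when $\tilde X$ is irreducible: then it is a single $\IP^1$ with three nodes, giving six pole-marks together with all four zero-marks, hence a point in $\moduli[0,10]$.

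Next I would encode the torsion condition of \cite{moeller} (that $p-q$ is a torsion divisor of $\Jac(X)$ for any two zeros $p,q$ of $\omega$) via cross-ratios. At the cusp, and for a suitable choice of quadruples of marked points, each such torsion relation becomes the condition that an explicit cross-ratio is a root of unity whose order divides the torsion order of $C$. Packaging a good system of such cross-ratios together yields an embedding
\[
\moduli[0,10]\hookrightarrow \Gm^9,
\]
under which every cusp of every \APTC with irreducible stable form maps to a torsion point of $\Gm^9$; reducible stable forms embed into smaller tori and are strictly easier. The problem therefore reduces to showing that the image of $\moduli[0,10]$ contains only boundedly many torsion points. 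Here I would invoke Laurent's theorem \cite{laurent}: the Zariski closure of the torsion points of $\Gm^N$ inside any subvariety is a finite union of torsion-translates of subtori contained in the subvariety. If the torsion orders of \APTCs were unbounded, Laurent's theorem would produce a positive-dimensional torus-translate inside the seven-dimensional subvariety $\moduli[0,10]\subset\Gm^9$ whose torsion points could supply such \APTCs, so it suffices to rule out any such translate.

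This final step is where the torus-containment algorithm of \S\ref{sec:torusalgo} enters, and it is the main obstacle: running the algorithm directly in $\Gm^9$ is computationally hopeless, so a preliminary geometric reduction is essential. Using the particular form of the cross-ratio equations together with algebraic-primitivity constraints that couple different factors of the embedding, my plan is to cut the search down to subtori whose character groups lie in one of three explicit rank-three sublattices of $\IZ^9$, so that only subtori of three specific three-dimensional tori remain to be considered. In this restricted search space the algorithm is tractable; executing it should then certify that no positive-dimensional torus-translate in $\moduli[0,10]$ could support \APTCs with arbitrarily large torsion, contradicting the assumption and establishing the desired uniform bound.
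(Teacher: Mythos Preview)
Your overall architecture matches the paper's: pass to cusps, interpret the torsion condition as cross-ratio roots of unity, embed into $\Gm^9$, invoke Laurent, and feed the remaining torus-translate search into the algorithm of \S\ref{sec:torusalgo}. Two points deserve comment.

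First, the reduction to three specific rank-three sublattices is the technical heart, and the paper achieves it by a mechanism different from what you sketch. You propose to extract the reduction ``using the particular form of the cross-ratio equations together with algebraic-primitivity constraints that couple different factors''. The paper instead argues by \emph{degeneration}: a hypothetical one-dimensional torsion-coset $T\subset\Gm^9$ determines, via $(\CR^{\min})^{-1}$, a one-parameter family in $\moduli[0,10]$ whose $t\to 0$ limit is a point of $\barmoduli[0,10]$. The constraints on this limiting stable curve (Lemma~\ref{le:treeconstraints}: each pinched curve has nonzero period, each component carries a zero, and no pinch separates a pole-pair from two zeros) reduce the possible decorated dual trees to the three in Figure~\ref{fig:Degeneration graphs}. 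Proposition~\ref{prop:exptuple} then reads off the exponent vector of $T$ from the Dehn-twist multiplicities along the edges of the tree, yielding the three matrices of Lemma~\ref{lem:three_matrices}. This geometric route is what makes the cut-down work; your phrasing does not yet identify it.

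Second, two side remarks. The reducible-cusp case is not ``strictly easier'' by the same mechanism: the component type $(4;1,1)$ (Proposition~\ref{prop:twozerospresctorsion}) is handled separately, and in the two-part partition invokes McMullen's ratio-of-sines theorem rather than a smaller torus argument. And at the algorithmic endpoint the paper does not rule out \emph{all} torus-translates in the closure of the image of $\moduli[0,10]$: the algorithm produces several hundred candidate subspaces, which are then filtered out by imposing the opposite-residue condition and excluding translates lying in the peripheral divisor (the complement of $\CR^{\min}(\moduli[0,10})$). Your statement ``it suffices to rule out any such translate'' should be read with these extra constraints in force.
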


The final step using the torus-containment algorithm is heavily computer-aided and was only
completed in genus three.  Finding uniform torsion order bounds is all that remains to
establish finiteness of \APTCs for the generic stratum in arbitrary genus.

Combining these two theorems yields Theorem~\ref{thm:intromainfin} in the case of the generic
stratum in genus three.

For the remaining strata with two or three zeros, we establish finiteness in
\S\ref{sec:other-strata-genus} using a mix of these techniques.

\paragraph{Classification of \Teichmuller curves: State of the art.}

\Teichmuller curves with trace field $\ratls$ all arise as branched covers of tori by
\cite{gutkinjudge} and are dense in every stratum.  More generally, any \Teichmuller curve gives
rise to many \Teichmuller curves in higher genera by passing to branched covers.  \Teichmuller
curves that do not arise from this branched covering construction are called \emph{primitive}.  The
classification of primitive \Teichmuller curves is an important guiding problem.

The first examples of primitive \Teichmuller curves were constructed by Veech \cite{veech89}.  The
constructions in \cite{bouwmoell} subsumes his examples as well as those of Ward and provides all currently
known examples of \APTCs for $g \geq 6$.  Calta \cite{calta} and McMullen \cite{mcmullenbild}
constructed an infinite sequence of primitive \Teichmuller curves in $\moduli[2]$, the first
infinite sequence in any fixed genus.  McMullen completed the classification of primitive
\Teichmuller curves in genus two \cite{mcmullentor} and showed via a Prym construction
\cite{mcmullenprym} that in genus three and four there are an infinite number of \Teichmuller curves
which are primitive but not algebraically primitive.
\par
The torsion and real multiplication conditions are strong restrictions on the existence of \APTCs.
After \cite{mcmullentor}, the torsion condition was used in \cite{Mo08} to show finiteness of \APTCs
in the hyperelliptic strata where $\omega$ has two zeros or order $g-1$.   The real multiplication
and torsion conditions  were applied in \cite{BaMo12} to
prove finiteness in the case of the stratum $\omoduli[3](3,1)$  and to give an
effective algorithm showing the non-existence of \Teichmuller curves for given discriminants.  This
used a classification of the stable forms in the boundary of the eigenform locus, and introduced the
cross-ratio equation~\eqref{eq:constraint1} which is essential in the present proof of finiteness in
the minimal strata.
\par
Independent of this work, Matheus and Wright showed in \cite{MatWri13} that for every fixed genus
$g$ which is an odd prime, there are only finitely many \APTC generated by Veech surfaces with a single
zero.  Moreover, Nguyen and
Wright showed \cite{ManhWri13} that there are only finitely many primitive \Teichmuller curves in
genus $g=3$ generated by Veech surfaces with a single zero in the hyperelliptic stratum.  Their methods are very different from ours, relying on the \Teichmuller geodesic flow and recent work
of Eskin-Mirzakhani \cite{esmi} and Eskin-Mirzakhani-Mohammadi \cite{esmimo} establishing a an
analogue of Ratner's theorem for the $\SLtwoR$ action on strata of one-forms.  
\par
\paragraph{Acknowledgments}
The second named author was partially supported by the National Science Foundation under
agreement No.~DMS-1128155. Any opinions, findings and conclusions or
recommendations expressed in this material are those of the authors
and do not necessarily reflect the views of the National Science
Foundation. 
He thanks the Institute for Advanced Study in Princeton  for providing a stimulating and
encouraging
environment. 
The third named author is supported by ERC-StG~257137.
The authors warmly thank the MPIM Bonn, where this project began and a large part of this paper
has been written, as well as the MFO Oberwolfach, where this paper was finalized.

%%% Local Variables: 
%%% mode: latex
%%% TeX-master: "g3fin_master"
%%% End: 

%%%%%%%%%%%%%%%%%%%%%%%%%%%%%%%%%%%%%%%%%%%%%%%%%%%%%%%%%%
\section{A theorem on height bounds} \label{sec:htbound}
%%%%%%%%%%%%%%%%%%%%%%%%%%%%%%%%%%%%%%%%%%%%%%%%%%%%%%%%%%

In this section, we discuss some necessary background in diophantine geometry and establish the
``unlikely intersections'' result, Theorem~\ref{thm:htlogbound}.

\subsection{Zilber's Conjecture on Intersections with Tori}
\label{sec:zilber}

Zilber's Conjecture  on Intersections with Tori \cite{Zilber} governs
the locus where a subvariety of $\IG_m^n$  meets
algebraic subgroups of sufficiently low dimension. 
Let us state a variant of the conjecture found in \cite{Zilber}.

\begin{conj}
\label{conj:CIT}
Let $\cY$ be an irreducible  subvariety of $\IG_m^n$ defined over
$\IC$.
Let us suppose that the union
\begin{equation}
\label{eq:CITunion}
  \bigcup_{\substack{H\subset \IG_m^n \\ \dim \cY + \dim H \le n-1}} \cY\cap H \quad\text{is Zariski dense
    in}\quad
\cY
\end{equation}
where $H$ runs over algebraic subgroups with the prescribed
restriction on the dimension. 
Then $\cY$ is contained in a proper algebraic subgroup of $\IG_m^n$. 
\end{conj}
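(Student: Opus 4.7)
The statement is in fact Zilber's Conjecture on Intersections with Tori, which remains open in general; even the formulation is sometimes rephrased in different non-equivalent ways. A plausible plan must therefore describe the established two-step framework of Bombieri, Masser, and Zannier, later extended by Maurin, Habegger, R\'emond, and Pila, which has succeeded in important partial cases (curves in $\IG_m^n$, and certain higher-dimensional settings under supplementary hypotheses), and indicate why these techniques fall short in full generality.

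The first step would be to establish a height bound: for $\cY$ not contained in a proper algebraic subgroup, points in the intersection \eqref{eq:CITunion}, after restricting to the non-anomalous open locus $\oa{\cY}\subset\cY$ of Bombieri--Masser--Zannier, have bounded absolute logarithmic Weil height. To do this one fixes $H$ of codimension at least $\dim\cY$, chooses a short basis of the character lattice of $H$ by the geometry of numbers, and uses these characters to pull back to $\cY$ a system of monomial equations of controlled degree. Combining a Vojta-type arithmetic inequality with Philippon's lower bound for the essential minimum of a subvariety of $\IG_m^n$ yields the desired height bound, essentially in the form $\height{P}\ll 1$ uniformly on $\oa{\cY}\cap H$.

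The second step would upgrade the height bound to Zariski non-density. Specialize: a point of bounded height lying in an algebraic subgroup of codimension $\ge \dim\cY+1$ lies in a countable union of translates of subtori by torsion points. One then applies either the Pila--Wilkie counting theorem for definable sets in $\IR_{\mathrm{an},\exp}$ to the real-analytic parametrisation of $\cY(\IC)$, together with a Galois orbit lower bound of the Masser type for subgroups of $\IG_m^n$, or the Remond--Viada descent via Bogomolov-type results, to deduce that such intersection points cannot be Zariski dense unless $\cY$ itself lies in a proper subgroup. A sufficiently strong lower bound on the size of Galois orbits of the torsion translates controls the counting estimate.

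The main obstacle, which is precisely where this strategy breaks down, is twofold: first, the description and algebraicity of $\oa{\cY}$ for $\dim\cY\ge 2$ is delicate and is only fully understood for planes by Bombieri--Masser--Zannier \cite{BMZPlanes}; second, the necessary uniform Galois orbit bounds (the Pila--Zannier arithmetic ingredient) are known unconditionally only in restricted ranges. A complete unconditional proof of Conjecture~\ref{conj:CIT} is therefore beyond present technology, and within this paper the best one can reasonably aim for is the restricted height bound of Theorem~\ref{thm:htlogbound} sufficient for the application to \Teichmuller curves.
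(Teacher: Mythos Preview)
Your response is appropriate: Conjecture~\ref{conj:CIT} is stated in the paper as an open conjecture, not as a theorem with a proof. Immediately after the statement, the paper writes ``Although the conjecture above is open, many partial results are known'' and then surveys these: the hypersurface case via Laurent's theorem, the codimension-two case of Bombieri--Masser--Zannier, and curves via Maurin and Bombieri--Masser--Zannier. There is no proof in the paper to compare against, and you correctly identify this.

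Your outline of the two-step strategy (a height bound on the non-anomalous locus $\oa{\cY}$, followed by a finiteness/non-density argument) is broadly consistent with the paper's discussion leading up to Theorem~\ref{le:BZbound}. One technical correction: for $\IG_m^n$ specifically, the second step in the known cases does not proceed via Pila--Wilkie counting in o-minimal structures; that machinery is the standard route for Shimura varieties and abelian schemes, whereas for the algebraic torus the finiteness step in Maurin's and related work goes through Lehmer-type height lower bounds (Dobrowolski, Amoroso--David) combined with a generalized Vojta inequality due to R\'emond. This does not affect your conclusion that the full conjecture is open and that the paper only uses the restricted height bound of Theorem~\ref{le:BZbound} (and its quantitative form, Theorem~\ref{thm:heightbound}) for its applications.
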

\par
 Zilber's Conjecture is stated more generally for
semi-abelian varieties and Pink \cite{Pink} has a version for mixed
Shimura varieties. 
\par
The algebraic subgroups of $\IG_m^n$ can be characterized easily, they
are in natural bijection with subgroups of $\IZ^n$, cf. Chapter 3.2 \cite{BG}. 
\par
The heuristics behind this conjecture are 
supported by the following basic observation.
Two 
subvarieties  of $\IG_m^n$ in general
position whose dimensions add up to something less than 
the dimension of the ambient group variety 
are unlikely to intersect; 
however, non-empty intersections are certainly possible.
%%  However, one of the varieties is fixed and the
%% others run over the algebraic subgroups. 
Unless we are in the trivial case
$\cY = \IG_m^n$, the union (\ref{eq:CITunion}) is over a countable
infinite set of algebraic subgroups. The content of the conjecture is
just that any non-empty intersections that arise are contained in a  sufficiently
sparse subset of $\cY$ unless $\cY$ is itself inside a proper algebraic
subgroup of $\IG_m^n$.
\par
 Although the conjecture above  is open,
many partial results are known. We will now briefly mention several ones. 
\par
If $\cY$ is a hypersurface, i.e.\ $\dim \cY = n-1$, then the algebraic subgroups
in question are finite. So the union (\ref{eq:CITunion}) is precisely
the set of points on $\cY$ whose coordinates are roots of unity.
Describing the distribution of points of finite order on subvarieties
of $\cY$ is a special case of the classical Manin-Mumford Conjecture.
In general, the Manin-Mumford Conjecture states that a subvariety of a
semi-abelian variety can only contain a Zariski dense set of torsion
points if it is an irreducible component of an algebraic subgroup. 
The first proof in this generality is due to Hindry. In the important
case of  abelian
varieties the Manin-Mumford Conjecture was proved earlier by  Raynaud. 
Laurent's Theorem \cite{laurent} contains the Manin-Mumford Conjecture
for $\IG_m^n$.

Conjecture \ref{conj:CIT}  is known also if $\dim \cY = n-2$ due to work of
Bombieri, Masser, and Zannier \cite{BMZGeometric}. 

In low dimension,
 Maurin \cite{Maurin} proved the conjecture for curves defined over
 $\IQbar$. 
 Bombieri, Masser, and Zannier \cite{BMZUnlikely} later generalized this to
 curves defined over $\cx$.  

%% In the algebraic torus $\IG_m^n$  Bombieri, Maurin, Masser,
%% Zannier, and the second author have made  progress
%% towards Conjecture \ref{conj:CIT}.
A promising line of attack of Conjecture \ref{conj:CIT} is via the theory of heights, 
which we will review in the next section. 
It is this approach that  motivates our Theorem \ref{thm:htlogbound}.
In some circumstances it is
 possible to prove instances of the conjecture by first
studying the larger union over algebraic subgroups that satisfy the weaker
 dimension inequality
$\dim \cY + \dim H \le n$.
It is no longer appropriate to call non-empty intersections $\cY\cap H$
unlikely and
one cannot expect 
 \begin{equation*}
   \bigcup_{\substack{H\subset \IG_m^n \\ \dim \cY + \dim H \le n}}\cY
   \cap H
 \end{equation*}
to be  non-Zariski dense in $\cY$.  We say that such a non-empty intersection $\cY\cap H$ 
is \emph{just likely}. 
It is sometimes possible to show that
the absolute logarithmic Weil height is bounded from above
on this union.
In fact, we will use such a bound which we state more
precisely below in Theorem \ref{le:BZbound}. 

To ease notation we abbreviate
\begin{equation*}
  \subgrpunion{(\IG_m^n)}{m} = 
  \bigcup_{\substack{H\subset \IG_m^n \\ \codim H \ge m}}  H.
\end{equation*}

Two caveats are in
order. First, in order to use the height in a meaningful way 
 we need to work with   subvarieties $\cY$ defined
over $\IQbar$, the field of algebraic numbers. Therefore, results on
height bounds usually contain an additional hypothesis on the
field of definition. Second, it is  in general false that the 
elements of 
\begin{equation*}
%%   \bigcup_{\substack{H\subset\IG_m^n\\ \dim X + \dim H \le n}}
%% \cX\cap H
\subgrpunion{(\IG_m^n)}{\dim \cY} \cap \cY
\end{equation*}
have uniformly bounded height.  Indeed, it is possible 
that  $\cY$  has positive dimensional intersection with an algebraic
subgroup of dimension $\dim \cY$. As the height is not
bounded on a positive dimensional subvariety  of $\IG_m^n$ we must
avoid such intersections. 

%%  Bombieri and Zannier
%% \cite{ZannierAppendix} treated the case where $\cX$ is defined over
%% the field $\IQbar$ of algebraic number and when the algebraic
%% subgroups are one dimensional.
%% A coset of $\IG_m^n$ will mean the translate of an algebraic subgroup
%% of $\IG_m^n$. Any coset different from $\IG_m^n$ will be called proper.
%% Indeed, let
%% $\remtor{\cX}$ denote $\cX$ deprived of all positive
%% dimensional cosets that are contained
%%  in $\mathcal{X}$. It is known that $\remtor{\mathcal{X}}$
%% is Zariski open in $\mathcal{X}$. Moreover, any point of
%% $\remtor{\mathcal{X}}$ that is contained in an algebraic subgroup of
%% dimension one has height bounded solely in terms of $\cX$. 
%% We remark that such a point is necessarily algebraic. 

%% This and related height functions play an important role in 
%% other approaches towards Zilber's Conjecture. 

We will see in moment  that there are more delicate obstructions to
boundedness of height. One must remove more as was pointed out by 
Bombieri, Masser, and Zannier \cite{BMZ99}. 
They proved the
following height-theoretic result for an
irreducible algebraic curve $\cC$ defined over $\IQbar$
and contained in $\IG_m^n$. 
A coset of $\IG_m^n$ will mean the translate of an algebraic subgroup
of $\IG_m^n$. 
If $\cC$ is not contained in a 
proper coset, then a point in $\cC$ that is contained in a proper
algebraic subgroup has height bounded in terms of $\cC$ only. 
They also proved a converse in the second remark after their Theorem
1. If $\cC$ is contained in a proper coset, 
then $ \cC\cap \subgrpunion{(\IG_m^n)}{1}$ does not have bounded
height. 
Observe that $\cC \cap \subgrpunion{(\IG_m^n)}{1}$ is always infinite. 

%% The second author \cite{Ha08} generalized the result of
%% Bombieri-Masser-Zannier and made the height bounds explicit in terms
%% of variety $\cX$ and $\cC$, respectively. 
%% The height upper bound depends on the geometric degree of the variety
%% and its height. The former is obtain by taking the Zariski closure
%% under the open immersion
%% \begin{equation*}
%% \IG_m^n\rightarrow\IP^n\quad\text{given by}\quad
%%   (x_1,\ldots,x_n) \mapsto [1:x_1:\cdots:x_n]
%% \end{equation*}
%% and then taking the usual degree in projective space. 
%% The definition of the latter is more involved and depends on the Chow
%% form of the variety in question. We refer to Philippon's paper
%% \cite{philippon95} for a precise definition. 

The second named author later in \cite{BHC}
proved a qualitative refinement of these height bounds
for the intersection of 
 a general subvariety $\cY\subset\IG_m^n$
with algebraic subgroups of complementary dimension.
An irreducible closed subvariety $\cZ\subset \cY$ is called
\label{def:anomalous}{\em anomalous} if there exists a coset $\cK\subset\IG_m^n$ with $\cZ\subset
\cK$
and
\begin{equation*}
  \dim \cZ \ge \max\{1,\dim \cY + \dim \cK -n+ 1\}.
\end{equation*}
Bombieri, Masser, and Zannier \cite{BMZGeometric} showed that the
(possibly infinite)
union of all anomalous subvarieties is Zariski closed in $\cY$. 
We write $\oa{\cY}$ for its complement in $\cY$.
The aforementioned result states that $\oa{\cY}$ is Zariski open in $\cY$. 

In the case of  a curve $\cC$, we have
$\oa{\cC}=\cC$ if and only if $\cC$ is not
contained in a proper coset. Otherwise we have $\oa{\cC}=\emptyset$.
Thus Bombieri, Masser, and Zannier's original height bound for curves \cite{BMZ99}
states that 
$\oa{\cC}\cap\subgrpunion{(\IG_m^n)}{1}$ has bounded height. 

The following bound is the  main theorem of \cite{BHC}.
\begin{theorem} \label{le:BZbound}
Let $\cY\subset\IG_m^n$ be an irreducible closed subvariety defined
over $\IQbar$. 
There exists $B\in\IR$ depending only on $\cY$  such that any
 point in  $\oa{\cY}\cap 
\subgrpunion{(\IG_m^n)}{\dim \cY}$
has absolute logarithmic Weil height bounded by $B$. 
\end{theorem}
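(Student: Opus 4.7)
I would proceed by induction on $m := \dim \cY$, with the base case $m = 1$ being the Bombieri-Masser-Zannier bounded height theorem for curves, as described after the definition of $\oa{\cC}$. The key fact to leverage is the correspondence between algebraic subgroups of $\IG_m^n$ and saturated sublattices of $\IZ^n$: a subgroup $H$ of codimension $\geq m$ corresponds to a sublattice $\Lambda_H \subset \IZ^n$ of rank $\geq m$, and after projectivizing one can parameterize these via a union of Grassmannians of rational subspaces of $\IQ^n$. For any $P\in \oa{\cY}\cap\subgrpunion{(\IG_m^n)}{m}$, I would attach the smallest algebraic subgroup $H_P$ containing $P$ (a torsion coset, of codimension at least $m$) and then seek a bound on $\height{P}$ that depends only on $\cY$, not on $H_P$.

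The local step is a Vojta/Philippon-style arithmetic intersection inequality: for a fixed subgroup $H$ of codimension exactly $m$, the intersection $\cY\cap H$ is zero-dimensional at $P$ precisely when $P$ lies in no positive-dimensional component of $\cY\cap (\text{coset of }H)$, and the open anomalous condition $P\in\oa{\cY}$ is what guarantees this transversality at $P$ uniformly. Applying an arithmetic Bézout inequality, or equivalently Zhang's essential minimum estimate on the normalized height of $\cY$ restricted to a translate of $H$, yields a bound of the shape $\height{P} \le c(\cY)\cdot \mu(H)$ where $\mu(H)$ is a ``size'' invariant of the sublattice $\Lambda_H$. This step by itself is not enough, since $\mu(H)$ is unbounded as $H$ ranges over the infinite family $\subgrpunion{(\IG_m^n)}{m}$.

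The main obstacle, and the heart of the argument, is uniformity in $H$. I would remove the dependence on $\mu(H)$ by an approximation/compactness argument: work with a one-parameter family of cosets of $H$ degenerating to a limit coset $H_\infty$ inside a fixed larger-dimensional subgroup, then use the structure theorem of Bombieri-Masser-Zannier on $\oa{\cY}$ being Zariski open to show that any sequence of $H_i$ producing points $P_i\in\oa{\cY}\cap H_i$ with unbounded height must in the limit produce a positive-dimensional component of $\cY\cap(\text{limit coset})$ of dimension exceeding the expected $\dim\cY+\dim H_\infty - n$; this is precisely an anomalous subvariety of $\cY$, contradicting $P_i\in\oa{\cY}$ for $i\gg 0$. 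Executing this limit rigorously — i.e.\ identifying the pathological configurations in a family of subgroups, applying the relative height inequality, and extracting the anomalous component — is the technical heart of the proof and is where one invokes the inductive hypothesis on the fiberwise base. Once uniformity is secured, the desired constant $B = B(\cY)$ is obtained.
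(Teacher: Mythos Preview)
The paper does not prove this theorem; it is quoted verbatim as the main theorem of \cite{BHC} and used as a black box. So there is no ``paper's own proof'' to compare against here.

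As for your sketch on its own merits: the base case and the local step are reasonable, but the crucial uniformity step has a genuine gap. Your plan is to pass to a limit of subgroups $H_i$ and argue that unbounded height forces an anomalous component of $\cY$ in the limit coset. The problem is that the set of codimension-$m$ algebraic subgroups of $\IG_m^n$ carries no compactness that would make ``$H_\infty$'' meaningful without further work, and even if one compactifies via the Grassmannian of rational subspaces, there is no mechanism by which $\height{P_i}\to\infty$ produces a positive-dimensional geometric intersection in the limit. Height going to infinity is an arithmetic phenomenon, not a geometric degeneration, so the contradiction you describe does not materialize.

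The actual argument in \cite{BHC} proceeds quite differently: given $P$ in a subgroup $H$ of codimension at least $m$, one uses geometry of numbers (Minkowski) to find a small generating set for the lattice $\Lambda_H$, then projects $\cY$ to a carefully chosen subtorus along directions controlled by these small lattice vectors. The open-anomalous hypothesis guarantees that such projections are generically finite, and a height inequality in the style of R\'emond's generalized Vojta inequality then bounds $\height{P}$ in terms of quantities depending only on $\cY$. The induction you propose is not the organizing principle; rather, it is the interplay between the lattice geometry of $\Lambda_H$ and the projection geometry of $\cY$ that eliminates the dependence on $H$.
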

\par

After introducing more notation we will cite a quantitative version of
Theorem
\ref{le:BZbound} for curves in Theorem \ref{thm:heightbound}.

The height bound can be used to recover some cases of Zilber's
Conjecture. 
The second named author later made this result completely explicit 
 \cite{habegger:effBHC}. The height bound $B$ is thus effective.

Before proceeding to the main result of this section we make, as promised, a brief detour to define
the height function mentioned above and several others.

\subsection{On Heights}
\label{sec:onheights}

We refer to the Chapter 1.5 \cite{BG}
or Parts B.1 and B.2  \cite{HiSi00}
for proofs of many basic properties of the absolute logarithmic Weil 
height that we discuss in this section.
 
Every non-trivial absolute value  $|\mathord{\cdot}|_v$  on a number field $K$
is equivalent to one of the following type.
If $|\cdot|_v$ is Archimedean, then there exists a field embedding
$\sigma\colon K\rightarrow\IC$, uniquely defined up to complex conjugation, such
that
$|x|_v = |\sigma(x)|$ for all $x\in K$, where $|\cdot|$ is the standard
complex absolute value. In this case we call $v$ infinite and write
$v|\infty$.
Depending on whether $\sigma(K)\subset \IR$ or not we define
the local degree of $v$ as $d_v = 1$ or $d_v=2$.
If $|\cdot|_v$ is non-Archimedean, then its restriction to $\IQ$ is
the $p$-adic absolute value for some rational prime $p$. 
For fixed $p$, the set of extensions of the $p$-adic valuation to $K$ is in bijection with the set 
of prime ideals in the ring of algebraic integers in $K$ that contain
the prime ideal $p\IZ$. In this case we call $v$ finite and write 
$v\nmid\infty$ or $v\mid p$. The local degree here is 
$d_v=[K_v:\IQ_p]$ where $K_v$ is a completion of $K$ with respect to
$v$. 
We write $V_K$ for the set of all absolute values $|\cdot|_v$ on $K$ as 
described above.
This set is sometimes called the set of places of $K$. 

 We note that if $x\in K\ssm\{0\}$, then 
$|x|_v =1$ for all but finitely many $v\in V_K$. 
The choice of  local degrees $d_v$ facilitates the product formula
\begin{equation}
\label{eq:productformula}
  \prod_{v\in V_K}|x|_v^{d_v} = 1. 
\end{equation}
Now we are ready to defined the absolute logarithmic Weil height, or
height for short,  of 
a tuple $x=(x_1,\ldots,x_n)\in K^n$ as
\begin{equation}
\label{eq:defineheight}
\height{x} = \frac{1}{[K:\IQ]}\sum_{v\in V_K}
d_v \log \max\{1,|x_1|_v,\ldots,|x_n|_v\} \ge 0.
\end{equation}
The normalization constants $d_v/[K:\IQ]$ guarantee that
 $\height{x}$ does not change when  replacing
$K$ by another number field containing all $x_i$. So we obtain a
well-defined function $\heightS\colon \IQbar^n\rightarrow[0,\infty)$.   
\par
Northcott's Theorem, Theorem 1.6.8 \cite{BG}, states that a subset of $\IQbar^n$
whose elements have uniformly bounded height and degree over $\IQ$ is
finite. This basic result is an important tool for proving finiteness
results in diophantine geometry. We will apply it in the proof of 
Theorem \ref{thm:intromainfin}. 
\par
In the special case $n=1$  the following estimates  will
prove useful. 
If $x,y\in\IQbar$ then both inequalities
\begin{equation*}
  \height{xy}\le \height{x}+\height{y}\quad\text{and}\quad
\height{x+y}\le \height{x}+\height{y}+\log 2,
\end{equation*}
follow from corresponding local inequalities 
applied to the definition
(\ref{eq:defineheight}).
The height, taking no negative values, does not 
restrict to a group homomorphism $\IQbar\ssm\{0\}\rightarrow\IR$.  
However, the  definition and the product formula yield
 homogenity
\begin{equation*}
  \height{x^k} = |k|\height{x}
\end{equation*}
for any integer $k$ if $x\not=0$.

It is sometimes useful to work with the  height of  algebraic
points in projective space. If $x = [x_0:\cdots:x_n]\in\IP^n$
is such a point with  representatives $x_0,\ldots,x_n$ in $K$, 
we set
\begin{equation*}
  \height{x} =  \frac{1}{[K:\IQ]}\sum_{v\in V_K}
d_v \log \max\{|x_0|_v,\ldots,|x_n|_v\}.
\end{equation*}
The  product formula (\ref{eq:productformula}) guarantees
that $\height{x}$ does not depend on the choice of projective
coordinates of $x$. 

If $f$ is a non-zero polynomial in algebraic coefficients, we set
$\height{f}$ to be the height of the point in projective space whose
coordinates are the non-zero coefficients of $f$.

We remark that different sources in the literature may employ
different norms at the Archimedean places of $K$. For example, instead
of taking the $\ell^\infty$-norm one can take the $\ell^2$-norm at the infinite
places. This leads to another height function $h_2(\cdot)$ on the algebraic points of $\IP^n$ which differs from
$\height{\cdot}$ by a bounded function.

We will make use of a result  of Silverman to control the behavior
of the height function under rational maps between varieties. 

\begin{theorem}
\label{thm:silverman}
  Let $\cX\subset\IA^m$ and $\cY\subset\IA^n$ be irreducible
  quasi-affine
 varieties defined over $\IQbar$
  with $\dim \cX = \dim \cY$. Suppose that $\varphi\colon\cX\rightarrow\cY$
  is a dominant morphism. There exist constants $c_1>0$ and $c_2$
  that depend only on $\cX,\cY$ and a Zariski open and dense subset
  $U\subset \cX$ such that
  \begin{equation*}
    \height{\varphi(P)} \ge c_1\height{P}- c_2
\quad\text{for all}\quad P\in U. 
  \end{equation*}
Moreover, this estimate holds true with 
$$U=U_0 = \{P\in \cX : \,\, \text{$P$ is isolated in
}\varphi^{-1}(\varphi(P))\},$$ 
which is Zariski open in $\cX$. 
\end{theorem}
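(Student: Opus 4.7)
The plan is to exploit that equal dimension together with dominance forces $\varphi$ to be generically finite, and then to linearise the height bound one coordinate at a time by passing to minimal polynomials over the function field of $\cY$.

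First, since $\varphi$ is dominant and $\dim\cX=\dim\cY$, the pullback $\varphi^*\colon K(\cY)\hookrightarrow K(\cX)$ of function fields is a finite extension. Consequently the set $U_0$ of points of $\cX$ at which the fibre of $\varphi$ is isolated is Zariski open by upper semicontinuity of fibre dimension, and non-empty. This already establishes the openness assertion in the ``moreover'' clause.

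Next, let me denote by $x_i$ the restriction to $\cX$ of the $i$-th coordinate on $\IA^m$ and let
$$f_i(T) = T^{d_i} + b_{i,d_i-1}T^{d_i-1}+\cdots+b_{i,0}\in K(\cY)[T]$$
be its minimal polynomial over $K(\cY)$. The coefficients $b_{i,j}$ are rational functions on $\cY$, so there exists a common Zariski open $V\subset\cY$ on which all of them are regular; put $U_1 = \varphi^{-1}(V)\cap U_0$, which is a Zariski open dense subset of $\cX$. For $P\in U_1$ the value $x_i(P)$ is a root of the monic specialised polynomial with coefficients $b_{i,j}(\varphi(P))\in\IQbar$. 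Two standard height estimates then close the argument on $U_1$: (a) a root $\alpha$ of a monic polynomial with algebraic coefficients $a_0,\ldots,a_{d-1}$ satisfies $\height{\alpha}\le \max_j\height{a_j}+\log 2$; (b) a polynomial $b\in\IQbar[\cY]$ satisfies $\height{b(Q)}\le D_b\height{Q}+D'_b$ for constants depending only on $b$, by an elementary application of the local triangle inequalities to the definition of the height. Chaining (a) and (b) for each $i$ gives $\height{x_i(P)}\le C_i\height{\varphi(P)}+C'_i$, hence $\height{P}\le C\,\height{\varphi(P)} + C'$, which rearranges to the desired bound $\height{\varphi(P)}\ge c_1\height{P}-c_2$ on $U_1$, with $c_1=1/C>0$.

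The remaining, and main, technical obstacle is to upgrade the estimate from $U_1$ to all of $U_0$: the coefficients $b_{i,j}$ may acquire poles at points of $U_0\ssm U_1$, so the global minimal polynomial is simply not available there. I would handle it by a Noetherian covering argument. At any $P\in U_0$, Zariski's main theorem applied to the quasi-finite morphism $\varphi$ near $P$ produces an affine Zariski open neighbourhood of $\varphi(P)$ over which $\varphi$ admits a local finite factorisation; this supplies a local minimal-polynomial system on some Zariski open $U_1^{(P)}\ni P$, to which steps (a) and (b) apply verbatim and yield a linear height bound with constants $c_1^{(P)}, c_2^{(P)}$. Since $U_0$, being open in the Noetherian space $\cX$, is quasi-compact, finitely many of the sets $U_1^{(P)}$ cover $U_0$; taking $c_1 = \min_k c_1^{(k)}>0$ and $c_2 = \max_k c_2^{(k)}$ over the finitely many chosen indices yields a single pair $(c_1,c_2)$ valid uniformly on $U_0$, completing the proof.
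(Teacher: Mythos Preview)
Your argument on $U_1$ is essentially a direct proof of Silverman's height inequality, and the paper simply cites Silverman for this step. (A minor quibble: the bound in your step (a) should involve $h(a_0,\ldots,a_{d-1})$ rather than $\max_j h(a_j)$, since different places may pick out different dominant coefficients, and the additive constant is $\log d$ rather than $\log 2$; neither affects the shape of the conclusion.)

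The extension from $U_1$ to $U_0$ via Zariski's main theorem, however, has a genuine gap. The finite factorisation $U_0\hookrightarrow Z\to V$ does \emph{not} make the ambient coordinates $x_i$ integral over $\cO(V)$: they are regular on the open piece $U_0\cap\varphi^{-1}(V)$ but need not extend to regular functions on $Z$, so no monic polynomial with coefficients in $\cO(V)$ annihilates them. Concretely, let $\cX=V(x_1x_2^2-x_2+x_1)\subset\IA^2$, $\cY=\IA^1$, and $\varphi(x_1,x_2)=x_1$. Every fibre is finite, so $U_0=\cX$; yet the monic minimal polynomial of $x_2$ over $K(\cY)$ is $T^2-T/y+1$, whose middle coefficient has a pole at $y=0=\varphi((0,0))$. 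The integral closure of $k[y]$ in $K(\cX)$ is generated by $w=x_1x_2$ (satisfying $w^2-w+y^2=0$), and $x_2=w/y$ fails to extend across the extra point of $Z$ lying over $y=0$. So the ``local minimal-polynomial system'' for $x_2$ that you invoke simply does not exist near $\varphi(P)$, however small a neighbourhood you choose. The paper bypasses this by Noetherian induction rather than an open cover: on each irreducible component $W$ of the complement of Silverman's open set that meets $U_0$, the restriction $\varphi|_W\colon W\to\overline{\varphi(W)}$ is again dominant between equidimensional varieties (since $W$ contains a point with finite $\varphi$-fibre), so Silverman applies afresh on a dense open of $W$, and one recurses on dimension. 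Replacing your ZMT step by this recursion on closed subvarieties repairs the argument.
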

\begin{proof}
The first statement follows from Silverman's Theorem 1 \cite{Silverman}. 

The openness of $U_0$ from the last statement  follows
from Exercise II.3.22(d) \cite{hartshorne}. 
 By restricting
to the irreducible components in the complement of  the open set provided by
Silverman's Theorem  we
may use 
 Noetherian induction to prove
the height inequality on $U_0$ with possibly worse constants.
\end{proof}
\par
A reverse inequality, i.e. %, i.e. the existence of constants $c_1,c_2$ such that
  \begin{equation} \label{eq:htupperbd}
 \height{\varphi(P)} \leq c_1^{-1}\height{P} + c_2
%\quad\text{for all}\quad P\in U. 
  \end{equation}
for any $P\in\cX$ 
%outside the indeterminacy locus of $\varphi$ also !!! \varphi is a
%morphism.
holds with possibly different constants. It requires neither $\varphi$ being dominant or $\dim \cX = \dim \cY$, and
is more elementary, see e.g.\ \cite[Theorem B.2.5]{HiSi00}. 
\par
It is also possible to assign a height to an irreducible closed subvariety $\cY$ of
$\IP^n$  defined over $\IQbar$. 
The basic idea is to consider the Chow form of $\cY$, which is
well-defined up-to scalar multiplication, as a point in some
 projective space. The height of this point 
with then be the height $\height{\cY}$ of $\cY$.
In this setting it is common to use a norm at the  Archimedean place
which is related to the Mahler measure of a polynomial.  The details
of this definition are presented in Philippon's paper
\cite{Philippon95}. 
\par
With this normalization, the height of a singleton
$\{P\}$ with $P$ an algebraic point of $\IP^n$ is the
height of $P$ with the $\ell^2$-norm at the Archimedean places. 
Beware that the height of a projective variety is by no means an
invariant of its
isomorphism class. It depends heavily on the embedding
$\cY\subset\IP^n$. 
\par
Zhang's inequalities \cite{ZhangArVar} relate the height of $\cY\subset
\IP^n$, its degree, and the  points of small height on $\cY$. In order
to state them, we require the essential minimum
\begin{equation*}
  \mu^{\rm ess}(\cY) = \inf\Bigl\{x\ge 0 : 
\{P\in \cY :  h_2(P)\le x\} \text{ is Zariski dense in $\cY$} \Bigr\}
\end{equation*}
of $\cY$. The set in the infimum is non-empty and so
$\mu^{\rm ess}(\cY)<+\infty$. 
In connection with the Bogomolov Conjecture
Zhang proved 
\begin{equation}
  \label{eq:zhangineq}
\mu^{\rm ess}(\cY) \le \frac{h(\cY)}{\deg \cY} \le (1+\dim \cY)\mu^{\rm ess}(\cY).
\end{equation}
The second inequality can be used to bound $h(\cY)$ from above if one can
exhibit a Zariski dense set of points on $\cY$ whose height is bounded
from above by a fixed value. 

The morphism
$(x_1,\ldots,x_n)\mapsto[1:x_1:\cdots:x_n]$ allows us to consider
 $\IG_m^n$ and $\IA^n$ as open subvarieties of $\IP^n$.
The height of an irreducible closed subvariety of $\IG_m^n$
or $\IA^n$ 
defined over $\IQbar$ 
is the height of its Zariski closure in $\IP^n$. 

We recall that
$\deg{\cY}$ is the cardinality of  the intersection  of $\cY$ with a linear
subvariety
of $\IP^n$ in general position  with codimension $\dim\cY$. 
By taking the Zariski closure in $\IP^n$ as in the previous paragraph
we may speak of the degree of any irreducible closed subvariety of
$\IG_m^n$ or $\IA^n$. 
\par
If $\cX$ is a second irreducible closed subvariety of $\IP^n$, then
B\'ezout's Theorem states 
\begin{equation*}
  \sum_{\cZ}\deg \cZ \le (\deg\cX)(\deg\cY)
\end{equation*}
where $\cZ$  runs over all irreducible components $\cZ$ of
$\cX\cap\cY$. For a proof we refer to Example 8.4.6 \cite{Fulton}.
\par 
We come to the  arithmetic counterpart of this classical result. 
According to Arakelov theory, $\height{\cY}$ is the arithmetic
counterpart of the geometric degree $\deg{\cY}$. 
\par
\begin{theorem}[Arithmetic B\'ezout Theorem]
\label{thm:ABT}
There exists a positive and effective constant $c>0$ that depends only
on $n$ and satisfies the following property.
  Let $\cX$ and $\cY$ be irreducible closed subvarieties
of $\IP^n$, both defined over $\IQbar$, then
\begin{equation*}
  \sum_{\cZ}\height{\cZ} \le \deg(\cX)\height{\cY} +
  \deg(\cY)\height{\cX} + c \deg(\cX)\deg(\cY)
\end{equation*}
where $\cZ$ runs over all irreducible components of $\cX\cap \cY$. 
\end{theorem}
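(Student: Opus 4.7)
The plan is to deduce the bound from the theory of Chow forms and Mahler measures, following Philippon's formalism cited in \S\ref{sec:onheights}. To each irreducible closed subvariety $\cW \subset \IP^n$ of dimension $d$ I would associate its Chow form $f_\cW$, a multihomogeneous polynomial in $d+1$ groups of $n+1$ dual variables whose vanishing encodes the tuples of hyperplanes meeting $\cW$. The multidegree of $f_\cW$ recovers $\deg(\cW)$, and the projective height of its coefficient vector --- with the Mahler measure used as the norm at Archimedean places --- recovers $\height{\cW}$.

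The main geometric input is an algebraic identity relating $f_\cX$, $f_\cY$, and the Chow forms of the irreducible components $\cZ$ of $\cX \cap \cY$. One standard route is to embed $\cX \times \cY$ via the Segre map into a larger projective space and intersect with the diagonal; alternatively, the U-resultant construction expresses the product $\prod_\cZ f_\cZ^{m_\cZ}$ (with $m_\cZ$ the intersection multiplicities) as a specialization of an auxiliary polynomial built from $f_\cX$ and $f_\cY$. Comparing multidegrees in this identity immediately recovers the classical B\'ezout inequality $\sum_\cZ \deg(\cZ) \le \deg(\cX)\deg(\cY)$ quoted above.

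For the arithmetic statement one compares heights rather than multidegrees in the same identity. At non-Archimedean places, the Gauss norm is multiplicative, so the contribution splits additively with no distortion and mirrors the geometric count exactly. The main obstacle is the Archimedean contribution: although the Mahler measure itself is additive under multiplication of polynomials, the identity also involves operations such as specialization, extraction of coefficients, and passage between $\ell^\infty$-, $\ell^2$-, and Mahler-norms on multihomogeneous forms. Each such comparison introduces an error bounded by a quantity depending only on $n$ and the multidegrees. Bundling these errors into a single constant $c(n)$ and recombining the local terms via the product formula \eqref{eq:productformula} produces the asserted inequality, with $c$ effective because every intermediate norm-comparison is explicit.

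Since this is the arithmetic B\'ezout theorem of Bost--Gillet--Soul\'e in Philippon's formulation, in practice I would simply cite it from the literature rather than reproduce the argument. The delicate part of any write-up is not the algebraic identity between Chow forms --- that is essentially the geometric B\'ezout theorem --- but the bookkeeping of Archimedean constants, which is precisely what hides inside the effective $c = c(n)$.
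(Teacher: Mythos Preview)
Your proposal is essentially the same as the paper's: the paper simply cites Philippon's Theorem~3 in \cite{Philippon95} without further argument, and you conclude the same way after sketching the Chow-form/Mahler-measure mechanism behind that reference. Your outline of the proof is consistent with Philippon's approach, so there is nothing to correct.
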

\begin{proof}
  For a proof we refer to Philippon's Theorem 3 \cite{Philippon95}. 
\end{proof}
\par
 Not surprisingly, the height of a hypersurface is
closely related to the height of a defining equation. 
%The following estimate will be useful later on.
For our purposes it suffices to have the following estimate. 
\begin{prop}
\label{prop:heighthyper}
There exists a positive and effective constant $c>0$ that depends only
on $n$ and satisfies the following property.
  Let $f\in \IQbar[X_0,\ldots,X_n]$ be a homogeneous, irreducible polynomial and
  suppose that $\cY$ is its zero set in $\IP^n$. Then 
$\height{\cY}\le \height{f} + c \deg{f}$. 
\end{prop}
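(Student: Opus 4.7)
The plan is to relate the height $\height{\cY}$ of the hypersurface $\cY$ directly to the Mahler measure of its defining polynomial $f$ place by place, using the fact that for a hypersurface the Chow form is essentially the defining equation itself. More precisely, for an irreducible closed subvariety of $\IP^n$ of codimension one with defining irreducible homogeneous polynomial $f\in K[X_0,\ldots,X_n]$ (with $K$ a number field containing the coefficients), the Chow form is, up to the standard normalization from \cite{Philippon95}, equal to $f$ itself. Consequently one has the formula
\begin{equation*}
  \height{\cY} = \frac{1}{[K:\IQ]}\sum_{v\in V_K} d_v \, M_v(f),
\end{equation*}
where $M_v(f)$ denotes the (logarithmic) Mahler measure of $f$ at an archimedean place $v$ and the Gauss norm $\log\max_I |a_I|_v$ of the coefficients of $f$ at a non-archimedean place $v$.

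At every non-archimedean place, the Mahler-type quantity $M_v(f)$ coincides with $\log\|f\|_{\infty,v}$ (the log of the maximum $v$-absolute value of the coefficients of $f$), which is exactly the contribution from $v$ to $\height{f}$ as defined in \S\ref{sec:onheights}. So at finite places the two heights contribute identically. The remaining task is an archimedean comparison: for any $v\mid\infty$ one needs an estimate of the form
\begin{equation*}
  M_v(f) \le \log\|f\|_{\infty,v} + c_0 \deg f
\end{equation*}
for a constant $c_0$ depending only on $n$. This is a standard polynomial-norm estimate: the $\ell^\infty$, $\ell^2$, $\ell^1$ norms and the Mahler measure of a homogeneous polynomial of degree $d$ in $n+1$ variables differ by multiplicative factors bounded by $\binom{n+d}{n}^{1/2}$, and $\log\binom{n+d}{n} \le c(n)\, d$. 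Summing the resulting local inequalities with weights $d_v/[K:\IQ]$ and using that the product-formula normalization of the Weil height is compatible across extensions, one obtains
\begin{equation*}
  \height{\cY} \le \height{f} + c\deg f
\end{equation*}
with $c$ depending only on $n$.

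The only delicate step is the identification of the Chow form of $\cY$ with $f$ in the precise normalization used by \cite{Philippon95}: one has to check that the global numerical constants introduced in Philippon's definition of the Chow form height do not introduce degree-dependent discrepancies beyond what is already absorbed in the $c\deg f$ term. Once this is verified, everything else is the routine archimedean norm comparison sketched above. I do not expect other obstacles; in particular no arithmetic B\'ezout is needed, since we are directly comparing heights of the same object under two definitions.
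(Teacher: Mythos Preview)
Your proposal is correct and outlines exactly the standard argument: for a hypersurface the Chow form is (up to a scalar) the defining polynomial itself, so Philippon's height reduces to a sum of local Mahler measures of $f$; at finite places these agree with the Gauss norm and hence with the local contribution to $\height{f}$, while at infinite places the comparison between the Mahler measure and $\log\|f\|_\infty$ costs at most a constant times $\deg f$ depending on $n$.

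The paper does not give a proof of its own here; it simply cites page~347 of \cite{Philippon95}, where precisely this computation is carried out (in fact with an explicit constant). So your argument is the content behind that citation rather than a different approach. Your caveat about checking Philippon's normalization is well placed: his definition involves an archimedean integral and some combinatorial factors, and verifying that these are absorbed into $c\deg f$ is the only bookkeeping required.
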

\begin{proof}
  See page 347 of Philippon's paper \cite{Philippon95} for a more precise statement. 
\end{proof}
\par
We will freely apply Zhang's inequalities and the Arithmetic B\'ezout
Theorem
to subvarieties $\IG_m^n$ and
$\IA^n$, always keeping in mind the open immersions
 $\IG_m^n\rightarrow\IP^n$ and
 $\IA^n\rightarrow\IP^n$.

\subsection{A Weak Height Bound}

In this section, we will formulate and prove a height
bound  which  is reminiscent of the result on just likely
intersections in Theorem \ref{le:BZbound}.
But instead of working in the ambient group $\IG_m^n$, we work instead in
$\IG_m^n\times\IG_a^n$. We will also restrict to  surfaces.
The results of this section  will be applied in the
proof of Theorem \ref{thm:intromainfin}.
Our new height bound will only take a certain class of algebraic
subgroups into account. 
%% Roughly speaking, the main difference to results  discussed above
%%  that 
%% we now intersect with certain algebraic subgroups 
%% of $\IG_m^n\times\IG_a^n$ where $\IG_a$ denotes the additive group. 
It  will also  no longer be uniform, as it will depend logarithmically on the
degree over $\IQ$ of the point in question. 
However, the points in  our application  are known to have bounded
degree over the rationals. Therefore, their height and degree are
bounded from above. Northcott's Theorem will imply that the number of
points
under consideration is finite. 
\par
Let us consider an irreducible, quasi-affine
surface  $\cY \subset \IA^k$ defined over $\IQbar$
with two collections of functions.
For $1\le i\le n$ let $R_i\colon  \IA^k \dashrightarrow \IG_m$ and let $\ell_i\colon  \IA^k \dashrightarrow \IG_a$
be rational maps, defined on Zariski open and dense subsets of
$\IA^k$. 
We also suppose that their  restrictions to $\cY$ (denoted by the same letter)
are regular.
The main theorem of this section is a height bound for
points on $\cY$  that satisfy both
a multiplicative relation among the $R_i$ and a linear relation among
the $\ell_i$, with the same coefficients. We write $R\colon  \cY \to \IG_m^n$
and $\ell\colon  \cY \to \IG_a^n$ for the product maps.%%  and we denote points
%% in $\IA^k$ by $y = (y_1,\ldots,y_k)$.

In the theorem below, we will suppose that $R\colon \cY\to\IG_m^n$ has finite
fibers. Then  $\cS = \overline{R(\cY)}\subset\IG_m^n$ is a surface.
Here and below $\overline{\,\cdot\,}$ 
refers to closure with respect to 
 the Zariski topology. 
%% We will also assume that $\ell_1,\ldots,\ell_n$ are $\IQ$-linearly
%% independent elements in the function field of $\cY$. 
%% From this we conclude that the vanishing locus of 
%% \begin{equation*}
%%   b_1 \ell_1 + \cdots + b_n\ell_n 
%% \end{equation*}
%% in $\cY$ is a finite union of irreducible curves
%% for all $(b_1,\ldots,b_n)\in\IZ^n\ssm\{0\}$. 

\begin{theorem}
  \label{thm:heightmain}
Let us keep the assumptions introduced before. 
%%  Suppose that $R : \cY \to \IG_m^n$ has finite fibers
%% and write $\cS = \overline{R(\cY)} \subset\IG_m^n$. 
%% If $\ell_1,\ldots,\ell_n$ are
%% $\IQ$-linearly independent in the function field of $\cY$
There is an effective constant $c>0$ depending only on $\cY$, the $\ell_i$, and the $R_i$
with the following
property. 
Suppose $y\in \cY$ is such that $R(y)\in \oa{\cS}$. 
If there is $(b_1,\ldots,b_n)\in\IZ^n\ssm\{0\}$ 
\begin{enumerate}
\item [(i)] with $b_1\ell_1+\cdots + b_n\ell_n\not=0$ in the function
  field of  $\cY$, 
\item[(ii)] such that $y$ is contained in an irreducible curve $\cC_b$
cut out on $\cY$ by $$b_1\ell_1+\cdots + b_n\ell_n=0$$ 
with $\oa{\overline{R(\cC_b)}}=\overline{R(\cC_b)}$,
\item[(iii)] and
\begin{equation}
\label{eq:relation}
  R_1(y)^{b_1}\cdots R_n(y)^{b_n} = 1,
%%  \quad\text{and}\quad
 %% b_1 \ell_1(y) + \cdots + b_n\ell_n(y) = 0,  
\end{equation}
\end{enumerate}
 then
\begin{equation*}
  \height{y} \le c\log(2[\IQ(y):\IQ]). 
\end{equation*}
%% Suppose moreover that for any  $b \in (\IZ\ssm\{0\})^3$
%% the image curve $\overline {R(\mathcal{C}_b)}$ is not contained in the translate of a 
%% proper algebraic subgroup of $\IG_m^3$.
%% \par
%% Then there are only finitely many points $y \in \cY$ with $[\IQ(y):\IQ]\le d$
%% for with there exists $b \in (\IZ\ssm\{0\})^3$ such that both the linear
%% and the multiplicative equation 
%% \be \label{eq:bothrelations}
%% \sum_{i=1}^3 b_i \ell_i =0 \quad \text{and} \quad \prod_{i=1}^3 R_i^{b_i} = 1 
%% \ee
%% hold and such that $R(y) \in \remtor{\cS}$.
\end{theorem}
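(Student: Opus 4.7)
The plan is to reduce Theorem~\ref{thm:heightmain} to the quantitative Bombieri--Masser--Zannier bound for curves in $\IG_m^n$ (Theorem~\ref{thm:heightbound}). Given $y$ and $b=(b_1,\ldots,b_n)$ satisfying hypotheses (i)--(iii), let $\cC=\cC_b\subset\cY$ be the irreducible curve through $y$ cut out by $b_1\ell_1+\cdots+b_n\ell_n=0$, and set $\cC'=\overline{R(\cC)}\subset\IG_m^n$. Since $R$ has finite fibers, $\cC'$ is a genuine curve, and by hypothesis (ii) we have $\oa{\cC'}=\cC'$, so $\cC'$ is not contained in any proper coset. Hypothesis (iii) places $R(y)$ in the intersection $\cC'\cap H_b$, where $H_b\subset\IG_m^n$ is the codimension-one subgroup $\{x^b=1\}$; this is precisely a ``just-likely'' intersection of the type to which Theorem~\ref{thm:heightbound} applies, the standing assumption $R(y)\in\oa{\cS}$ ensuring in addition that $R(y)$ does not lie in an anomalous subvariety of $\cS$ through which the argument could break down.

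The next step is to control $\deg \cC'$ and $\height{\cC'}$ as $b$ varies. The defining regular function of $\cC$ is a $\IZ$-linear combination of the fixed regular functions $\ell_i$ on $\cY$, so its polynomial degree is bounded independently of $b$; geometric B\'ezout then gives $\deg\cC$, and hence $\deg\cC'$, bounded by a constant depending only on $\cY$, the $\ell_i$, and the $R_i$. The height of this defining linear form is at most $\log\|b\|_\infty+O(1)$, so Theorem~\ref{thm:ABT} yields $\height{\cC}=O(\log\|b\|_\infty+1)$. Transferring to $\cC'$ via the elementary upper height bound~\eqref{eq:htupperbd} applied to a Zariski dense subset of $\cC$, combined with Zhang's inequalities~\eqref{eq:zhangineq}, then gives $\height{\cC'}=O(\log\|b\|_\infty+1)$.

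Applying Theorem~\ref{thm:heightbound} to $R(y)\in\oa{\cC'}\cap\subgrpunion{(\IG_m^n)}{1}$ should yield, with $D=[\IQ(y):\IQ]$ and the usual explicit dependence on the degree, height, and field of definition of $\cC'$, an estimate $\height{R(y)}=O(\height{\cC'}+\log 2D)=O(\log\|b\|_\infty+\log 2D)$. Silverman's Theorem~\ref{thm:silverman}, applied in reverse to the dominant finite-fibered morphism $R\colon\cY\to\cS$, then transports this bound back to $y$, giving $\height{y}=O(\log\|b\|_\infty+\log 2D)$.

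The remaining and most delicate step is to absorb $\log\|b\|_\infty$ into $\log 2D$. I would do this by replacing the given $b$ with a shortest nonzero vector $b'$ in the lattice $\Lambda_y=\{b\in\IZ^n : R(y)^b=1 \text{ and } b\cdot\ell(y)=0\}$, using Minkowski's theorem together with the fact that both the multiplicative group generated by the $R_i(y)$ modulo torsion and the $\IQ$-vector space spanned by the $\ell_i(y)$ have rank at most $D$ (since they lie in a number field of degree $D$); this should give $\|b'\|_\infty=D^{O(1)}$, hence $\log\|b'\|_\infty=O(\log 2D)$. The subtle point, where I expect the bulk of the work to lie, is to verify that one can arrange for such a shortest $b'$ still to satisfy the non-degeneracy hypotheses (i) and (ii): the $b\in\Lambda_y$ failing these conditions are confined to a controlled union of proper sublattices (coming, for (i), from relations that are identically zero in the function field of $\cY$, and for (ii), from the finitely many coset structures on $\cS$ giving rise to anomalous image curves), and a geometry-of-numbers argument must then produce the desired $b'$ avoiding these sublattices while still obeying the polynomial size bound.
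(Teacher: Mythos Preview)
Your proposal has a genuine gap at the final ``delicate step.'' The claim that Minkowski's theorem yields a shortest vector $b'\in\Lambda_y$ of size $D^{O(1)}$ rests on the assertion that ``the multiplicative group generated by the $R_i(y)$ modulo torsion has rank at most $D$.'' This is false: the multiplicative group of a number field of degree $D$ modulo torsion has infinite rank (think of the primes in $\IQ$). The covolume of the lattice of multiplicative relations is governed not by $D$ alone but by the heights $h(R_i(y))$ via Lehmer/Dobrowolski-type lower bounds, so you cannot decouple $\|b'\|$ from $h(R(y))$ the way you want. Your parenthetical remark about the curve $\cC'$ contributing a $\log 2D$ term is also off: $\cC_b$ is defined over the fixed field of definition of $\cY$ and the $\ell_i$ (the coefficients $b_i$ are integers), so no $D$-dependence enters there.

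The paper's argument is organized around a dichotomy you do not exploit. If the coordinates of $r=R(y)$ satisfy \emph{two} independent multiplicative relations, then $r\in\oa{\cS}\cap\sgu{(\IG_m^n)}{2}$ and the surface bound (Theorem~\ref{le:BZbound}) gives $h(r)\ll 1$ directly, without touching the additive relation or the curve $\cC_b$ at all. If instead the relation group has rank exactly $1$, then Dirichlet approximation combined with Dobrowolski (packaged as Lemma~\ref{lem:findb}) produces $b'$ with $r^{b'}$ a root of unity and $|b'|\ll D^{2n}\max\{1,h(r)\}^n$. The crucial point is that rank $1$ forces $b'$ to be \emph{proportional to} $b$, so $b'$ automatically satisfies the additive relation and cuts out the \emph{same} curve $\cC_{b'}=\cC_b$; hypotheses (i) and (ii) hold for $b'$ for free. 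Your ``subtle point'' about preserving (i)--(ii), and the proposed geometry-of-numbers argument to avoid bad sublattices, are thereby completely bypassed. Plugging the degree and height bounds for $\overline{R(\cC_b)}$ into Theorem~\ref{thm:heightbound} then gives the self-referential inequality $h(r)\ll \log(2D\max\{1,h(r)\})$, which solves to $h(r)\ll\log 2D$.
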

\par
Recall that the condition on $\overline{R(\cC_b)}$ in (ii)
stipulates that the said curve is not contained in a proper coset of $\IG_m^n$. 
\par
At the end of this section we will provide another formulation for 
 this theorem  which is more in line with known results towards Zilber's
 Conjecture. 
The formulation at hand was chosen with our application to 
\Teichmuller curves in mind.    
\par
The theorem is effective in the sense that one can explicitly express
$c$  in terms of $\cY$.
\par
The proof splits up into two cases.
\begin{enumerate}
\item  In the first case we forget about the additive relation in
  (\ref{eq:relation}) but assume that there is an additional
  multiplicative relation. This will lead to a  bound 
  for the height that is independent of $[\IQ(y):\IQ]$. 
\item Second, we assume that there is precisely one multiplicative
  relation up-to scalars. This time we need the additive equation in
  (\ref{eq:relation}) and we will obtain a height bound that depends
  on $[\IQ(y):\IQ]$. 
\end{enumerate}
\par

 We remark that Theorem \ref{thm:intromainfin} uses Theorem \ref{thm:heightmain}
applied to $n=3$. The latter relies on height bound in Theorem \ref{le:BZbound}.
In this low dimension, $\cS\ssm \oa{\cS}$ 
coincides with the union of all positive dimensional cosets contained
completely
in $\cS$.
%and $\remtor{\cS}$  coincide
It turns out that the result of Bombieri-Zannier,
cf. Appendix of \cite{Schinzel}, can be used instead of Theorem
\ref{le:BZbound}. 
One could also use Theorem 1 \cite{Ha08} in the case $n=3$,  $s=2$,
and $m=1$ to obtain a completely explicit height bound
while avoiding the crude bound of \cite{habegger:effBHC}. 
For general $n$ it seems that Theorem \ref{le:BZbound} is indispensable.

\begin{lemma} 
\label{lem:htcompare}
There exist effective constants $c_1,c_2$ depending only on $\cY$ 
with $c_1>0$
such that if $y\in \cY$ then 
\be  \label{eq:htfinitemap}
h(y) \leq c_1 h(R(y)) + c_2.
\ee
%%  the following property. 
%% Suppose $y \in \cY$ and $R(y) \in  \oa{\cS}\cap\sgu{(\IG_m^n)}{2}$,
%%  then 
%% $\height{y}\le B'$. 
\end{lemma}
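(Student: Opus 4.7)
The plan is to reduce this immediately to Silverman's Theorem~\ref{thm:silverman} applied to the map $R\colon \cY \to \cS$, where $\cS = \overline{R(\cY)} \subset \IG_m^n$.

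First I would verify that $R\colon \cY \to \cS$ satisfies the hypotheses of Theorem~\ref{thm:silverman}. Since the $R_i$ restrict to regular functions on $\cY$ with image in $\IG_m$, the product map $R$ is a morphism from the quasi-affine surface $\cY \subset \IA^k$ to the quasi-affine variety $\cS \subset \IG_m^n \subset \IA^n$. The target $\cS$ is irreducible as the Zariski closure of the image of the irreducible variety $\cY$, and $R\colon \cY\to \cS$ is dominant by construction. Because $R$ was assumed to have finite fibers, the dimension of the generic fiber is $0$, hence $\dim \cS = \dim \cY = 2$, as required for Silverman's Theorem.

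Silverman's Theorem then gives effective constants $c_1'>0$ and $c_2'$, depending only on $\cY$ and $\cS$ (hence only on $\cY$ and the $R_i$), together with the Zariski open set
\[
U_0 = \{P \in \cY : P \text{ is isolated in } R^{-1}(R(P))\},
\]
such that $h(R(P)) \ge c_1' h(P) - c_2'$ for every $P \in U_0$. The key observation is that the hypothesis ``$R$ has finite fibers'' forces every point of $\cY$ to be isolated in its fiber, so $U_0 = \cY$, not merely a Zariski open dense subset. Thus the inequality $h(R(y)) \ge c_1' h(y) - c_2'$ holds for all $y \in \cY$.

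Rearranging gives $h(y) \le (c_1')^{-1} h(R(y)) + (c_1')^{-1} c_2'$, which is the claimed bound with $c_1 = (c_1')^{-1}$ and $c_2 = (c_1')^{-1} c_2'$. Both constants depend only on $\cY$ and the $R_i$, as needed. Since the argument is a straightforward specialization of Theorem~\ref{thm:silverman}, I do not foresee any serious obstacle; the only point worth emphasizing is why $U_0 = \cY$, which follows from the finite-fiber assumption rather than from any generic argument.
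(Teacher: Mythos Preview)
Your proposal is correct and matches the paper's own proof, which is a one-line appeal to Theorem~\ref{thm:silverman} together with the observation that the finite-fiber hypothesis forces $U_0=\cY$. The only additional remark in the paper is that Silverman's argument can be made effective, which you also note.
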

\par
\begin{proof}
This statement follows  from Theorem \ref{thm:silverman} as
$R$ has finite fibers on $\cY$. 
One checks that readily that Silverman's second proof is effective.
 %% Given Lemma~\ref{le:BZbound} it suffices to check that
%% there are constants $c_1$ and $c_2$, such that the
%% height estimate 
%% \be  \label{eq:htfinitemap}
%% h(y) \leq c_1 h(R(y)) + c_2 
%% \ee
%% holds. 
%% Let $\overline{R(\cY)}$ be the Zariski closure in $\IG_m^3$ of the
%% image of $R$. Since $R$ has finite fibers we have $\dim
%% \overline{R(\cY)} = \dim \cY$.
%% APPLY SILVERMAN. LITTLE PROBLEM of REDACTION: The rational map $R$ is NOT 
%% DOMINANT in the hyp-case (that can easily dealt with by hand).
\end{proof}
\par
We use $|\cdot|$ to denote the $\ell^\infty$-norm on any power of $\IR$. 
Let us recall the following basic result called Dirichlet's Theorem on
Simultaneous Approximation. 

\begin{lemma}
\label{lem:dirichlet}
  Let $\theta\in\IR^n$ and suppose
$Q > 1$ is an integer. There exist 
$q\in\IZ$ and $p\in\IZ^n$ with $1\le q < Q^n$ and
$|q\theta - p| \le 1/Q$. 
\end{lemma}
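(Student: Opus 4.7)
The plan is the classical pigeonhole (Schubfachprinzip) argument that one finds in Dirichlet's original treatment of simultaneous approximation. First I would consider the $Q^n + 1$ integer multiples $0, \theta, 2\theta, \ldots, Q^n\theta$ and reduce each one modulo the integer lattice: writing $k\theta = p_k + \{k\theta\}$ with $p_k \in \IZ^n$ and $\{k\theta\}\in [0,1)^n$ the componentwise fractional part. This produces $Q^n + 1$ points inside the half-open unit cube.

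Next I would tile $[0,1)^n$ by the $Q^n$ pairwise disjoint half-open sub-cubes of side $1/Q$ obtained by product of intervals $[a_i/Q, (a_i+1)/Q)$ with $a_i \in \{0,1,\ldots,Q-1\}$. Since $Q^n + 1$ points are distributed among $Q^n$ boxes, two of them, say $\{k_1\theta\}$ and $\{k_2\theta\}$ with $0 \le k_1 < k_2 \le Q^n$, fall in the same box. Setting $q = k_2 - k_1$ and $p = p_{k_2} - p_{k_1} \in \IZ^n$ then gives $q\theta - p = \{k_2\theta\} - \{k_1\theta\}$, which lies componentwise in the open interval $(-1/Q, 1/Q)$, so $|q\theta - p| < 1/Q$. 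At the same time $1 \le q \le Q^n$.

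The only subtlety is the inequality $q < Q^n$ rather than $q \le Q^n$. The case $q = Q^n$ can occur only when $\{k_1\theta\} = \{0\theta\} = 0$ and $\{k_2\theta\}=\{Q^n\theta\}$ both land in the sub-cube $[0,1/Q)^n$; in this situation I would either (a) replace $(q,p)$ by $(1, p_1)$, noting that $\{\theta\}$ then lies in $[0,1/Q)^n$ so $|\theta - p_1| < 1/Q$, or (b) simply take a slightly finer partition argument. Either option suffices to obtain a $q$ in the stated range $1 \le q < Q^n$ with $|q\theta - p| \le 1/Q$. The pigeonhole argument is entirely elementary and there is no real obstacle; I would expect that in the paper this lemma is recalled only as a black box for subsequent height estimates.
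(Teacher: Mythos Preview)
Your expectation is exactly right: the paper treats this lemma as a black box and simply cites Theorem~1A in Chapter~II of Schmidt's \emph{Diophantine Approximation} (LNM~785). Your pigeonhole argument is the standard one found there, and the main body of it is fine.

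However, your treatment of the edge case $q=Q^n$ is not correct. Option~(a) claims that if $\{0\}$ and $\{Q^n\theta\}$ both lie in $[0,1/Q)^n$ then so does $\{\theta\}$; this is false. For instance with $n=1$, $Q=2$, $\theta=0.6$ one has $\{0\}=0$ and $\{2\theta\}=0.2$ both in $[0,1/2)$, while $\{\theta\}=0.6\notin[0,1/2)$; moreover this is the \emph{only} collision among $\{0\},\{\theta\},\{2\theta\}$, so your argument really does produce $q=2=Q^n$. Option~(b) is too vague to count as a fix.

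A clean way to get the strict inequality $q<Q^n$ is to run the pigeonhole on only $Q^n$ points, namely $\{k\theta\}$ for $k=0,\dots,Q^n-1$, against the same $Q^n$ half-open sub-cubes. If two land in the same cube you are done with $1\le q\le Q^n-1$ and $|q\theta-p|<1/Q$. Otherwise every cube contains exactly one point; in particular the far-corner cube $[(Q-1)/Q,1)^n$ contains some $\{k_0\theta\}$. Since $\{0\}=0$ lies in $[0,1/Q)^n$ and $Q\ge 2$, we have $k_0\ge 1$, and taking $p=\lfloor k_0\theta\rfloor+(1,\dots,1)$ gives $|k_0\theta_i-p_i|=1-\{k_0\theta\}_i\le 1/Q$ for each $i$. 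This yields $1\le q=k_0<Q^n$ with $|q\theta-p|\le 1/Q$, and also explains why the lemma states $\le 1/Q$ rather than $<1/Q$.
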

\begin{proof}
  See Theorem 1A in Chapter II, \cite{Schmidt:LNM785}. 
\end{proof}
If $r=(r_1,\ldots,r_n)\in\IG_m^n$ is any point
and $b=(b_1,\ldots,b_n)$ then we abbreviate
$r_1^{b_1}\cdots r_n^{b_n}$ by $r^b$. 
\par
\begin{lemma}
\label{lem:findb}
There is an effective constant $c>0$ depending only on $n$
 with the following property. 
Let $d\ge 1$ and
suppose $r \in\subgrpunion{(\IG_m^n)}{1}$ is algebraic
with $[\IQ(r):\IQ]\le d$. 
There exists $b\in\IZ^n$ with 
$|b|\le c  d^{2n}\max\{1,\height{r}\}^n$ such that 
$r^b$ is a root of unity. 
%%   Suppose $x=(x_1,x_2)\in U$ such that (\ref{eq:relation}) holds true
%% with $x$ algebraic and $[\IQ(x):\IQ]\le d$. 
\end{lemma}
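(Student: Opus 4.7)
The plan is to prove a Masser-type bound on short multiplicative relations by translating the conclusion into a vanishing statement in a logarithmic embedding and then applying Dirichlet's simultaneous approximation (Lemma~\ref{lem:dirichlet}) together with geometry of numbers.

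Setting $K = \IQ(r)$, Kronecker's theorem gives that $r^b$ is a root of unity if and only if $|r^b|_v = 1$ at every place $v$ of $K$. Let $\phi_i = (d_v \log|r_i|_v)_v$, viewed inside the finite-dimensional real vector space $V$ indexed by the archimedean places of $K$ together with the finite set $S$ of non-archimedean places where some $r_j$ has nonzero valuation. The product formula gives $\|\phi_i\|_1 = 2[K:\IQ] h(r_i) \le 2d\max\{1,\height{r}\}$. The task becomes finding a short nonzero $b \in \IZ^n$ in the kernel of the linear map $\phi\colon \IZ^n \to V$, $b\mapsto \sum_i b_i \phi_i$. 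The hypothesis $r \in \subgrpunion{(\IG_m^n)}{1}$ provides a nonzero $a \in \IZ^n$ with $r^a = 1$, hence $\ker \phi$ has positive rank.

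I would then split the conditions. The non-archimedean components of $\phi$ are $|S| \le c_1 d\max\{1,\height{r}\}$ integer linear forms in $b$ with coefficients of size $O(d\,\height{r})$ (standard bound for valuations of an algebraic number of given height and degree). Siegel's lemma produces a short $\IZ$-basis of the sublattice $\Lambda_{\mathrm{fin}} \subset \IZ^n$ cut out by these finite-place conditions, with basis vectors of norm polynomially bounded in $d$ and $\height{r}$. The remaining $s_\infty \le d$ archimedean conditions are real linear forms on $\Lambda_{\mathrm{fin}}$; expressing $b$ in the Siegel basis and applying Lemma~\ref{lem:dirichlet} to the resulting tuple of archimedean logarithms, I would find a nonzero integer combination of the Siegel basis vectors making each archimedean form simultaneously smaller than $1/Q$ in absolute value, for $|b|$ bounded by $Q^n$ times the Siegel basis size. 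Balancing the Siegel bound (polynomial in $d\max\{1,\height{r}\}$) against the Dirichlet exponent $Q^n$ yields an overall bound of the shape $c\,d^{2n}\max\{1,\height{r}\}^n$.

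The main obstacle is the last step, converting ``small'' into ``exactly zero'' at the archimedean places. For this I would use that $\ker \phi \otimes \IR$ is already positive dimensional (by the hypothesis), hence is a rational subspace of $\IR^n$ meeting $\IZ^n$ in a full-rank sublattice $\Lambda$. One can therefore apply Minkowski's theorem directly to $\Lambda$: its orthogonal complement in $\IR^n$ is spanned by vectors built from the $\phi_i$, whose total $\|\cdot\|_1$-norm is $\le 2dn\max\{1,\height{r}\}$, and the resulting covolume bound for $\Lambda$ inside $\IZ^n$ produces a nonzero lattice vector of length $\le c\,d^{2n}\max\{1,\height{r}\}^n$, as required. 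Equivalently, the Dirichlet approximation locates a candidate close to $\ker\phi$, and a closest-lattice-vector argument within $\Lambda_{\mathrm{fin}}$, using the known exact solution $a$ to calibrate discreteness, yields an honest integer kernel vector of the same order of magnitude.
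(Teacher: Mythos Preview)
Your approach is genuinely different from the paper's and has a real gap at precisely the point you flag as ``the main obstacle.''

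The paper's argument is much shorter and never attempts to analyze the kernel lattice in a logarithmic embedding. Instead it approximates the \emph{direction} of the single known relation. Take the nonzero $b'\in\IZ^n$ with $r^{b'}=1$ guaranteed by the hypothesis, apply Dirichlet (Lemma~\ref{lem:dirichlet}) to $\theta=b'/|b'|\in\IR^n$ to find $b\in\IZ^n$ and $1\le q<Q^n$ with $|qb'/|b'|-b|\le 1/Q$, hence $|b|\le Q^n$. Writing $\delta=|b'|b-qb'$, one has $r^{|b'|b}=r^{\delta}$ and $|\delta|\le |b'|/Q$, so $h(r^b)\le n\,h(r)/Q$. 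Now the key ingredient you are missing: Dobrowolski's theorem gives $h(z)\ge c'd^{-2}$ for any algebraic $z\in\IQ(r)^\times$ that is not a root of unity. Choosing $Q$ of order $d^2\max\{1,h(r)\}$ forces $r^b$ to be a root of unity, and $|b|\le Q^n$ yields the stated bound.

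Your ``small to zero'' conversion is exactly where a height lower bound like Dobrowolski's is needed, and without it the argument does not close. Your Minkowski alternative is also not correct as written: the $\phi_i$ live in the logarithmic space $V$, not in $\IR^n$, so they do not span the orthogonal complement of $\Lambda=\ker\phi\cap\IZ^n$ inside $\IR^n$; what you would need is a covolume bound for the image lattice $\phi(\IZ^n)$ in its real span, and extracting the precise shape $d^{2n}\max\{1,h(r)\}^n$ from that (with $n$-dependence only in the constant) requires substantially more work than you indicate. A Masser-style relation-height argument along these lines can be made to work, but the paper's route via Dobrowolski is both cleaner and what produces exactly this exponent.
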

\begin{proof}
Let $Q > 1$ be a sufficiently large integer to be fixed later on. 
Since $r$ is contained in a proper algebraic
subgroup of $\IG_m^n$ there is
 $b'\in\IZ^n\ssm\{0\}$
with 
$r^{b'}=1$.

By Dirichlet's Theorem, Lemma \ref{lem:dirichlet}, there exists
 $b\in\IZ^n$ and an integer
$q$ with $1\le q<  Q^n$ such that
$|q b'/|b'|-b|\le Q^{-1}$.
%% ; here and below all  constants implied
%% in the Vinogradov symbols $\ll$ and $\gg$ are
%% absolute.
We remark that $b\not=0$  since $Q>1$. Moreover,
 $|b|\le q+Q^{-1}<Q^n+1$ by the triangle inequality. Hence $|b|\le
Q^n$ since $|b|$ and $Q^n$ are integers. 

With $\delta =|b'| b - qb' \in
\IZ^n$ we have
\begin{equation*}
r^{|b'|b}= r^{\delta + qb'} = r^{\delta}.
%%   r_1^{|b'|b_1}\ldots r_n^{|b'|b_n}
%% =  r_1^{q b'_1 + \delta_1}\cdots r_n^{q b'_n +
%%   \delta_n}
%% =  r_1^{\delta_1}\cdots r_3^{\delta_3}.
\end{equation*}
The height  estimates mentioned above yield
$$|b'|\height{z} \le
|\delta|(\height{r_1}+\cdots+\height{r_n})
\le n|\delta|\height{r},$$
where  $z=r^b$.
We divide by $|b'|$ and find $\height{z}\le n Q^{-1}\height{r}$. 

We note  that $z\in \IQ(r)\ssm\{0\}$ and recall $[\IQ(r):\IQ]\le d$. 
By  Dobrowolski's Theorem \cite{Dobrowolski},  which is effective, we have
either $\height{z}=0$ or  $\height{z}\ge c' d^{-2}$ for some absolute
constant $c'\in (0,1]$. Observe that we do not need the full strength
of Dobrowolski's bound.
The choice
 $Q=[2nd^2 \max\{1,\height{r}\}/c']$  forces $z$ to be  a root of
unity. The lemma follows with $c=(2n/c')^n$.
\end{proof}

%% If $x$ is as in (\ref{eq:relation}) then the previous lemma provides a
%% curve $\mathcal{C}_{b}$ containing $x$ assume the first coordinates of
%% $b$ are not both zero. 
We are now almost ready to prove our main result. It relies on the
following explicit height bound. 

\begin{theorem}[{\cite{Ha08}}]
\label{thm:heightbound}
  Suppose $\mathcal C\subset\IG_m^n$ is an irreducible algebraic curve defined
  over $\IQbar$ that is not contained in a coset of $\IG_m^n$. Any point in $\mathcal
  C\cap\subgrpunion{(\IG_m^n)}{1}$ has height at most
  \begin{equation*}
    c(\deg{\cC})^{n-1} (\deg{\cC} + \height{\cC})
  \end{equation*}
where $c>0$ is effective and depends only on $n$. 
\end{theorem}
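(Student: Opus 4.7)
The plan is to bound $h(y)$ by applying the arithmetic Bézout theorem (Theorem \ref{thm:ABT}) to $\cC$ and an auxiliary hypersurface cutting out $y$, where the degree and height of that hypersurface are controlled via a Minkowski-type argument on the character lattice of $y$.

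First, given $y \in \cC \cap \subgrpunion{(\IG_m^n)}{1}$, the character lattice $\Lambda_y = \{b \in \IZ^n : y^b = 1\}$ is nontrivial, since $y$ lies on some proper algebraic subgroup of $\IG_m^n$. Applying Minkowski's theorem, choose $b \in \Lambda_y \smallsetminus \{0\}$ of minimal $\ell^\infty$-norm $|b|$. The hypersurface $\cH_b = V(x^b - 1) \subset \IG_m^n$ passes through $y$; the polynomial $x^b - 1$ (after suitable homogenization in $\IP^n$) has degree $O(|b|)$ and by Proposition \ref{prop:heighthyper} height bounded by $O(\log(2+|b|))$, so the same estimates hold for $\deg(\cH_b)$ and $h(\cH_b)$ up to an absolute constant.

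By hypothesis $\cC$ is not contained in any coset of $\IG_m^n$, from which one checks that $\cC \not\subset \cH_b$, so $\cC \cap \cH_b$ is zero-dimensional. Theorem \ref{thm:ABT} yields
\begin{equation*}
  h(y) \;\le\; \sum_{\cZ \subset \cC \cap \cH_b} h(\cZ) \;\le\; \deg(\cC)\, h(\cH_b) \,+\, \deg(\cH_b)\, h(\cC) \,+\, c_0\, \deg(\cC)\, \deg(\cH_b),
\end{equation*}
so that $h(y) \le c_1\bigl(|b|\, h(\cC) + \deg(\cC)\log(2+|b|) + \deg(\cC)\,|b|\bigr)$, a bound in terms of $|b|$, $\deg(\cC)$, and $h(\cC)$.

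The main obstacle is controlling $|b|$ uniformly: nothing \emph{a priori} prevents the shortest vector in $\Lambda_y$ from being enormous, for a given $y$. To surmount this, I would set up a dichotomy using Minkowski's second theorem on the successive minima $\lambda_1 \le \cdots \le \lambda_r$ of $\Lambda_y$ (with $r = \codim \langle y \rangle$). Either $\lambda_1 \le \kappa\,(\deg \cC)^{n-1}$ for a well-chosen cutoff $\kappa$, in which case the display above already yields the advertised bound; or all the $\lambda_i$ are large, in which case $y$ lies in the kernel of a \emph{family} of short monomial maps, and a monomial change of coordinates depending on a reduced basis of $\Lambda_y$ sends $y$ into $\IG_m^{n-1} \times \{1\}$. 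This reduces the problem to a curve $\cC' \subset \IG_m^{n-1}$ which is again not contained in any coset (by primitivity of $\cC$ and minimality of the basis), with $\deg \cC'$ and $h(\cC')$ polynomially controlled by those of $\cC$, and induction on $n$ applies. The delicate points are: verifying that the transformed curve still avoids cosets, tracking the multiplicative blow-up of $\deg \cC$ and $h(\cC)$ under each reductive step so that the exponent $n-1$ is not exceeded, and optimizing the cutoff $\kappa$ so that the two regimes match. The base case $n = 2$ is direct: a curve in $\IG_m^2$ not lying in a coset has $\cC \cap \cH_b$ of size $O(|b|\deg \cC)$, and a single application of arithmetic Bézout yields the claim.
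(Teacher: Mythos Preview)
The paper does not give a proof of this statement: Theorem~\ref{thm:heightbound} is quoted from \cite{Ha08} and used as a black box in the proof of Theorem~\ref{thm:heightmain}. So there is no in-paper argument to compare against.

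Regarding your sketch itself, there is a genuine gap in the inductive step. In the ``bad'' branch of your dichotomy you assume $\lambda_1$ is large, perform a monomial change of basis adapted to $\Lambda_y$, and land $y$ in $\IG_m^{n-1}\times\{1\}$. But if $\rank\Lambda_y=1$ (the generic situation for a point in $\subgrpunion{(\IG_m^n)}{1}$), the single exact relation has now been consumed: the projected point $y'\in\IG_m^{n-1}$ need not satisfy \emph{any} multiplicative relation, so the inductive hypothesis on $\cC'\cap\subgrpunion{(\IG_m^{n-1})}{1}$ simply does not apply to $y'$. Alternatively, if you try to intersect with $\{x_n=1\}$ directly after the change of basis, the monomial map has entries of size $\lambda_1$, so $\deg\cC$ blows up by a factor of order $\lambda_1^{n-1}$, and you are back where you started.

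The argument in \cite{Ha08} (and already in Bombieri--Masser--Zannier \cite{BMZ99}) avoids this by working with \emph{approximate} rather than exact relations: one uses geometry of numbers to find $n-1$ independent short vectors $b_1,\ldots,b_{n-1}\in\IZ^n$ for which $h(y^{b_i})$ is small (not necessarily zero). These define a monomial projection $\varphi\colon\IG_m^n\to\IG_m^{n-1}$ with controlled degree; the image $\varphi(\cC)$ is still a curve because $\cC$ is not in a coset, and $\varphi(y)$ has small height. One then applies Zhang's inequality or a direct argument in low dimension, and pulls back. The exponent $(\deg\cC)^{n-1}$ arises from the product of the successive minima in this construction.
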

\par
\begin{proof}[Proof of Theorem \ref{thm:heightmain}]
Suppose $y \in \cY$ is as in the hypothesis and 
$d=[\IQ(y):\IQ]$. In particular,  (\ref{eq:relation}) holds
for some $b\in \IZ^n\ssm\{0\}$. As discussed in the introduction, we
split up into two cases.
\par
In the first case, suppose the point $r=R(y)$ satisfies two independent
multiplicative relations. Then Theorem
\ref{le:BZbound} applies because $r\in \oa{\cS}$ by hypothesis. 
Since $R$ has finite fibers  Lemma \ref{lem:htcompare}   
implies that the height of $y$ is bounded from above solely in terms of $\cY$.
This is stronger than the conclusion of the theorem. 
%% By Northcott's Theorem
%% $(x_1,x_2)$ is in a finite set depending only on $d$. 
\par 
In the second case, we will assume that the coordinates of
 $r$ satisfy precisely one
multiplicative relation up-to scalar multiple.
Here we shall
 make use of the additive relation in \eqref{eq:relation}. By
 assumption,
the group
\begin{equation*}
\{a\in\IZ^n : r^a=1\}
\end{equation*}
is free abelian  of rank $1$.
It certainly contains 
$b$ from the multiplicative relation in \eqref{eq:relation}. 
However, it also contains a positive multiple of a vector
 $b'\in\IZ^n\ssm\{0\}$  coming from Lemma~\ref{lem:findb}. 
Thus $b$ and $b'$ are linearly dependent
and hence the additive relation \eqref{eq:relation} holds with
$b$ replaced by 
 $b' = (b'_1,\ldots,b'_n)$. 
By hypothesis (i)
 our point $y$ lies on
an irreducible curve $\cC\subset \cY$ on which
\begin{equation}
\label{eq:bell}
 b'_1 \ell_1+\cdots + b'_n\ell_n
\end{equation}
vanishes identically with $\oa{\cC}=\cC$. 
%% So
%% \begin{equation}
%% \label{eq:foundbp}
%%   y \in \cC \quad\text{with}\quad
%% |b'| \ll d^{2n} \max\{1,\height{r}\}^n,
%% \end{equation}
%% where  again constants in all $\ll$ and $\gg$ depend only on $\cY$.
\par
Recall that the  curve $\cC$ is an irreducible component of the zero set
of (\ref{eq:bell}) on $\cY$. Each $\ell_i$ can be
expressed by a quotient of polynomials mappings. From this point of
view,
$b'_1\ell_1+\cdots+b'_n\ell_n$ is a quotient of polynomials whose
degrees
are bounded by a quantity that is independent of
$(b'_1,\ldots,b'_n)$. So B\'ezout's Theorem implies that the degree of
the Zariski
closure of $\cC$ in $\IA^k$ is bounded from above in terms of $\cY$
only. 
We observe that $\overline{R(\cC)}$ is an irreducible curve. 
As $\deg\overline{R(\cC)}$ equals the generic number of
intersection points of $\overline {R(\cC)}$ with a hyperplane, we
conclude, again using B\'ezout's Theorem, that
\begin{equation}
  \label{eq:dRCbound}
\deg \overline{R(\cC)}\ll 1,
\end{equation}
 where here and below 
 $\ll$ signifies Vinogradov's notation with a constant that
 depends only on $\cY$, the $\ell_i$, and the $R_i$. These constants
 are effective. 
\par 
We also require a bound for the height of the curve
$\overline{R(\cC)}$. This we can deduce with the help of Zhang's inequalities
(\ref{eq:zhangineq}). Indeed, the numerator of (\ref{eq:bell}) is a
polynomial whose height is $\ll \log(2|b'|)$ by elementary height
inequalities.
Any irreducible component of its zero set has height $\ll \log(2|b'|)$ by
Proposition \ref{prop:heighthyper} and degree $\ll 1$. The Arithmetic B\'ezout Theorem
%% and $\deg\overline\cC\ll 1$ 
implies $h(\overline \cC)\ll \log(2|b'|)$. Using the degree
bound we deduced above and the first inequality in (\ref{eq:zhangineq}) we conclude
that $\overline \cC$ contains a Zariski dense set of points $P$ with
$h(P)\le h_2(P)\ll \log(2|b'|)$. The height bound (\ref{eq:htupperbd}) just below
Silverman's result yields $h(R(P))\ll \log(2|b'|)$. So the second bound in
(\ref{eq:zhangineq}) and $\deg \overline R(\cC)\ll 1$ give
\begin{equation}
\label{eq:hRCbound}
 h(\overline{R(\cC)})\ll \log|b'|.
\end{equation}
\par 
Now $r=R(y)\in \overline{R(\cC)}$ and $r\in \subgrpunion{(\IG_m^{n})}{1}$ by the
 original multiplicative relation \eqref{eq:relation}.
%By (i) the
%% The Zariski closure of said zero set in $\IP^n$ is a linear
%% variety. Its degree is  $1$.
%% By Proposition \ref{prop:heighthyper} its height is bounded from above by 
%% a constant depending only on $n$ times
%%  $\log(2|b'|)$. 
%% The classical B\'ezout Theorem implies that the degree of $\cC$
%%  is bounded from above independently of $b'$.
%% The Arithmetic B\'ezout Theorem, Theorem \ref{thm:ABT},  shows that 
%%  $\height{\mathcal{C}} \ll \log (2|b'|)$.
%% The bound for $|b'|$ in Lemma \ref{lem:findb} implies
%% \begin{equation*}
%%   \height{\cC} \ll \log(2d \max\{1,\height{r}\}).
%% \end{equation*}
We insert (\ref{eq:dRCbound}) and (\ref{eq:hRCbound}) into
Theorem~\ref{thm:heightbound} and 
use the upper bound for $|b'|$ to find
\begin{equation*}
  \height{r}\ll \log(2d \max\{1,\height{r}\}).
\end{equation*}
\par
 Linear beats logarithmic, so $\height{r}\ll \log(2d)$. 
Finally,
we use Lemma \ref{lem:htcompare} again to deduce
$\height{y}\ll \log(2d)$. This completes the proof.
\end{proof}

\subsection{Intersecting with algebraic subgroups of $\IG_m^n\times\IG_a^n$}\label{sec:inters-with-algebr}

%% Intersecting subvarieties of $\IG_m^n$ with varying algebraic
%% subgroups of sufficiently small dimension often leads to 
%% height bounds , such as those
%%  proved by Bombieri-Masser-Zannier or the second author. These can be
%%  used to attack cases of Zilber's Conjecture, which was formulated  in
%%  the more general context of semi-abelian varieties.

The unipotent group $\IG_a^n$ is not covered by Conjecture
\ref{conj:CIT} or Zilber's more general formulation for
 semi-abelian varieties. Indeed, a verbatim
translation of the statement of Conjecture \ref{conj:CIT}
 to $\IG_a^n$ fails  badly. Any point of $\IG_a^n$ is
 contained in a line passing through the origin, and is
thus in a $1$-dimensional
 algebraic subgroup.

Motivated by Theorems \ref{le:BZbound} and \ref{thm:heightmain}
we will deduce a height bound for points on a surface
inside $\IG_m^n\times\IG_a^n$ which are contained
in a \emph{restricted} class of algebraic subgroups of codimension $2$. Our aim
is
to formulate a result
that is  comparable to the more
 well-known case of the algebraic torus. 
The reader whose main interest lies in proof of Theorem \ref{thm:intromainfin}
may safely skip this section.  

Any algebraic subgroup of $\IG_m^n\times\IG_a^n$ splits into the
product
of an algebraic subgroup of $\IG_m^n$ and of $\IG_a^n$. 
We call the translate of an
algebraic subgroup of $\IG_m^n\times\IG_a^n$ by a point in
$\IG_m^n\times\{0\}$
a semi-torsion coset.
We call it  rational if it is
the translate of an algebraic subgroup of $\IG_m^n\times\IG_a^n$ 
defined over $\IQ$ by any point of $\IG_m^n\times\{0\}$. 
A rational semi-torsion coset need not be defined over $\IQ$,
but its associated algebraic subgroup of $\IG_a^n$ is defined by
linear equations with rational coefficients. 

Let $\cY$ be an irreducible subvariety of $\IG_m^n\times\IG_a^n$ defined
over $\IC$. 
We
single out an exceptional class of 
 subvarieties of $\cY$ 
related to  Bombieri, Masser, and Zannier's anomalous subvarieties
\cite{BMZGeometric}.

We say that an irreducible closed subvariety $\cZ$ of $\cY$ is  \emph{rational
semi-torsion anomalous} if 
it is contained in a rational semi-torsion coset
$\cK\subset\IG_m^n\times\IG_a^n$ with 
\begin{equation}
\label{eq:anomalousineq}
  \dim \cZ \ge \max\{1,\dim \cY + \dim \cK - 2n + 1\}. 
\end{equation}

We let $\Qta{\cY}$ denote the complement in $\cY$ of the union of all
rational semi-torsion anomalous subvarieties of $\cY$. 
\label{def:rstas}

 Bombieri, Masser, and Zannier's $\oa{\cY}$ for
$\cY\subset\IG_m^n$ is always Zariski open. In the example below we
 show that this
 is not necessarily the case for $\Qta{\cY}$ if
 $\cY\subset\IG_m^n\times\IG_m^a$ is  a surface. 
 
 \begin{example}
\label{ex:nonopen}
   Let us consider the case $n=2$ and let $\cY$ be the irreducible
   surface given by
     \begin{equation}
\label{eq:defcV}
\begin{aligned}
     x_1y_1 + (x_1+1)y_2 &=0,\\
     x_1y_1 + x_2 y_2   &=1
\end{aligned}
\end{equation}
where $(x_1,x_2,y_1,y_2)\in \IG_m^2\times\IG_a^2$.
Observe that the projection of $\cY$ to $\IG_m^2$ is dominant. 
 We will try to understand some features of $\Qta{\cY}$. 

Let $\cK$ be a rational semi-torsion anomalous subvariety of
$\cY$. Then a certain number of additive and multiplicative relations
hold on the coordinates of $\cK$ and the dimension of $\cK$ cannot be
below the threshold determined by (\ref{eq:anomalousineq}). 

Suppose first that a relation $b_1y_1+b_2y_2=0$ holds on $\cK$
where $(b_1,b_2)\in\IZ^2\ssm\{0\}$. The second equality in
(\ref{eq:defcV}) yields $(y_1,y_2)\not=0$, and the first one yields
$0=x_1b_2-(x_1+1)b_1$,  so the projection of $\cK$ to $\IG_m^2$ maps to
one of countably many algebraic curves. In particular, $\cK$ is a
curve and there are at most countably many possibilities for $\cK$. 

Second, let us assume that no linear relation as above holds on $\cK$.
Then a certain number of multiplicative relations
$x_1^{b_1}x_2^{b_2}=\lambda$ hold on   $\cK$.
We cannot have $\cK=\cY$, as $\cY$ has dense image in $\IG_m^2$. So
 there must be two 
multiplicative relations  with independent exponent vectors
 for  $\cK$ to be anomalous.
In particular, $x_1$ and $x_2$ are constant on $\cK$. But for a fixed
choice of $(x_1,x_2)$ the two linear equations (\ref{eq:defcV})  
are linear in $(y_1,y_2)$ and 
have at most one solution in these unknowns. This contradicts $\dim \cK\ge
1$. 

Now we know  that any rational semi-torsion anomalous
subvariety of  $\cY$  is a curve and that their cardinality is  at most countable. 

Finally, let us exhibit such curves. For a given $\xi\in\IQ\ssm\{0\}$ the
equation $x_1=\xi$ cuts out an irreducible curve in
$\cY$.
This equation and the one obtained by substituting $\xi$ for $x_1$ in
first line of (\ref{eq:defcV})
establishes that this curve is rational semi-torsion
anomalous. 

Thus $\cY\ssm \Qta{\cY}$ is a countable, infinite union of
curves. In particular, $\Qta{\cY}$ is not Zariski open in $\cY$; it is
also not open 
with respect to the Euclidean topology. 
 \end{example}

Above we introduced the notation $x^b$ for a point
$x\in\IG_m^n$ and $b\in\IZ^n$. If $y=(y_1,\ldots,y_n)\in \IG_a^n$ and 
$b=(b_1,\ldots,b_n)$ we set
\begin{equation*}
  \langle y,b\rangle  = y_1b_1+\cdots + y_nb_n. 
\end{equation*}

An algebraic subgroup  $G\subset \IG_m^n\times\IG_a^n$ is called
\emph{coupled} if there exists a subgroup $\Lambda\subset\IZ^n$  with
\begin{equation*}
G=\left\{(x,y)\in\IG_m^n\times\IG_a^n : x^b = 1\quad\text{and}
\quad \langle y,b\rangle = 0 
\quad\text{for all}\quad b\in \Lambda \right\}. 
\end{equation*}
The dimension of $G$ is $2(n-\rank \Lambda)$. 

We define $\sgu{(\IG_m^n\times \IG_a^n)}{s}$ to be the union of all
coupled algebraic subgroups of $\IG_m^n\times\IG_a^n$ whose 
codimension is at least $s$. 

Using this notation we have the following variant of Theorem \ref{thm:heightmain}. 

\begin{theorem}
  Let $\cY \subset \IG_m^n\times\IG_a^n$ be an irreducible,  closed
algebraic  surface defined over $\IQbar$. 
There exists a constant $c>0$ with the following property.
If $P\in \Qta{\cY} \cap \sgu{(\IG_m^n\times\IG_a^n)}{2}$, then 
\begin{equation*}
 \height{P}\le c\log(2[\IQ(P):\IQ]).
\end{equation*}
\end{theorem}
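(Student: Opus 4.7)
The plan is to reduce this theorem to Theorem~\ref{thm:heightmain} by embedding $\cY$ into $\IA^{2n}$ via the open immersion $\IG_m^n\times\IG_a^n\hookrightarrow\IA^{2n}$, with $R_i$ taken to be the projection to the $i$-th multiplicative coordinate and $\ell_i$ the projection to the $i$-th additive coordinate, restricted to $\cY$. The hypothesis $P\in\sgu{(\IG_m^n\times\IG_a^n)}{2}$ then unpacks, via the definition of a coupled subgroup, to produce some nonzero $b=(b_1,\ldots,b_n)\in\IZ^n$ with $R(P)^b=1$ and $\langle\ell(P),b\rangle=0$, which matches hypothesis~(iii) and the additive relation of Theorem~\ref{thm:heightmain}.

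I would first dispose of the degenerate case in which $R\colon\cY\to\IG_m^n$ fails to be generically finite. Then $R(\cY)$ has dimension at most $1$, which forces $\cY$ to lie in a rational semi-torsion coset of codimension at most $n-1$; a direct dimension count in (\ref{eq:anomalousineq}) shows $\cY$ is its own rational semi-torsion anomalous subvariety, so $\Qta{\cY}=\emptyset$ and the statement is vacuous. From here on assume $R$ has generically finite fibers, so that Lemma~\ref{lem:htcompare} is available.

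The core of the argument is to show that $P\in\Qta{\cY}$ forces the remaining hypotheses of Theorem~\ref{thm:heightmain} to hold. For condition~(i): if $b_1\ell_1+\cdots+b_n\ell_n$ vanished identically on $\cY$, then $\cY$ would lie in the hyperplane subgroup $\cK$ defined over $\IQ$ by $\langle y',b\rangle=0$, of codimension $1$; substituting $\dim\cY=2$ and $\dim\cK=2n-1$ into (\ref{eq:anomalousineq}) yields $2\ge 2$, so $\cY$ would be rational semi-torsion anomalous in itself, contradicting $P\in\Qta{\cY}$. For condition~(ii): let $\cC_b$ be the component through $P$ of the curve cut on $\cY$ by $\langle\ell,b\rangle=0$, which is one-dimensional by (i) and elementary dimension theory. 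If $\overline{R(\cC_b)}$ were contained in a proper coset $R^{b'}=\lambda$ of $\IG_m^n$, then $\cC_b$ would lie in the rational semi-torsion coset $\{x'^{b'}=\lambda,\ \langle y',b\rangle=0\}$ of codimension $2$, and (\ref{eq:anomalousineq}) with $\dim\cC_b=1$ gives $1\ge 1$, exhibiting $\cC_b$ as a rational semi-torsion anomalous subvariety through $P$, again a contradiction. Finally, the condition $R(P)\in\oa{\cS}$ for $\cS=\overline{R(\cY)}$ is verified in the same spirit: if $R(P)$ lay in a Bombieri--Masser--Zannier anomalous subvariety of $\cS\subset\IG_m^n$, then pulling back along $R$ would produce a rational semi-torsion anomalous subvariety of $\cY$ containing $P$.

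With every hypothesis of Theorem~\ref{thm:heightmain} verified, that theorem delivers the desired bound $\height{P}\le c\log(2[\IQ(P):\IQ])$. The main obstacle is the bookkeeping in the previous paragraph: the notion of anomaly encoded in $\Qta{\cY}$ lives in $\IG_m^n\times\IG_a^n$ and uses general rational semi-torsion cosets, whereas Theorem~\ref{thm:heightmain}'s condition on $\oa{\cS}$ and on the curve $\overline{R(\cC_b)}$ lives purely in $\IG_m^n$. Translating carefully between the two frameworks, and in particular certifying that the multiplicative cosets which appear in the reduction are of the required rational semi-torsion type, is the genuine content of the reduction; the dimension counts themselves are tight precisely because $\cY$ is a surface and the coupled subgroups have codimension $2$.
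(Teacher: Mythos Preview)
Your plan is the right starting point and is exactly how the paper begins: set $R_i,\ell_i$ to be the coordinate projections and invoke Theorem~\ref{thm:heightmain}. Your verifications of conditions~(i) and~(ii) by exhibiting rational semi-torsion cosets of codimension $1$ and $2$ are correct, and the dimension counts are as tight as you say. However, the paper explicitly warns that the theorem ``is not a direct consequence'' of Theorem~\ref{thm:heightmain}, and your proposal has a genuine gap in precisely the place the paper singles out.

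The gap is in your verification of $R(P)\in\oa{\cS}$. You argue that if $R(P)$ lies in an anomalous curve $\cZ\subset\cS$, then pulling back along $R$ yields a rational semi-torsion anomalous subvariety of $\cY$ through $P$. But for this you need the irreducible component $\cZ'$ of $R^{-1}(\cZ)$ through $P$ to be positive-dimensional, and this can fail: although $\cZ$ has codimension $1$ in the surface $\cS$, it has codimension $n-1$ in the ambient $\IG_m^n$, and there is no general reason the component of its preimage in the surface $\cY$ through a given point should be a curve rather than a single point. The paper handles exactly this contingency: if $\cZ'=\{P\}$, then $P$ lies off the smooth locus $U$ of $R|_{\cY}$, hence on one of finitely many fixed curves $\cC\subset\cY\ssm U$. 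One then bounds $\height{P}$ on each such $\cC$ directly, with a further case split according to whether $\overline{R(\cC)}$ lies in a proper coset (if not, Bombieri--Masser--Zannier on the curve; if so, project away one multiplicative coordinate and combine with the additive relation via Arithmetic B\'ezout). This is not bookkeeping; it is roughly half the paper's proof.

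A minor correction: your disposal of the non-generically-finite case is wrong as written. If $\dim\overline{R(\cY)}=1$, the curve $\overline{R(\cY)}$ need not lie in any proper coset of $\IG_m^n$, so $\cY$ need not lie in a rational semi-torsion coset. The correct argument is that every fiber of $R|_{\cY}$ is then a curve $\cD_x\subset\{x\}\times\IG_a^n$, and this \emph{fiber} is itself a rational semi-torsion anomalous subvariety (the inequality $1\ge\max\{1,3-n\}$ holds for $n\ge 2$), so $\Qta{\cY}=\emptyset$. The same observation also disposes of points with positive-dimensional $R$-fiber when $R$ is generically finite, so you may indeed restrict to the open locus $\cY'$ where fibers are finite before invoking Theorem~\ref{thm:heightmain}.
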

\begin{proof}
The current theorem resembles  Theorem \ref{thm:heightmain} but it is
not a direct consequence.
However, we will invoke Theorem \ref{thm:heightmain} below. 
  Indeed, we take $\cY$ as a quasi-affine subvariety of
  $\IG_m^n\times\IG_a^n\subset \IA^{2n}$.
  The rational maps $R$ and  $\ell$  are the two projections
$\IG_m^n\times\IG_a^n\rightarrow \IG_m^n$ and
$\IG_m^n\times\IG_a^n\rightarrow \IG_a^n$, respectively.

Say $P=(x,y)\in \Qta{\cY} \cap\sgu{(\IG_m^n\times\IG_a^n)}{2}$.
 We begin a study of various cases. 

Suppose first that $\overline{R(\cY)}\subset\IG_m^n$ is a point. The existence of $P$ implies
that
  $\cY$ meets a proper coupled subgroup.
So  $R(\cY)$ is in a proper algebraic
subgroup of $\IG_m^n$
which means that $\cY$ is in the product of this subgroup with $\IG_a^n$.
 In this case $\Qta{\cY}$ is empty, a contradiction.

So we have $\dim \overline{R(\cY)} \ge 1$.
Let us assume that $P$ is not isolated in its fiber of
$R|_{\cY}$. Here we can argue much as in the proof of
Theorem \ref{thm:heightmain}.
The point $P$ is contained in some irreducible component $\cD_x$ of %
$R|_{\cY}^{-1}(x)$ with $\dim \cD_x \ge 1$. But $\dim \cD_x = 1$ since %
$R|_{\cY}$ is non-constant.  
Observe that $\cD_x$ is an irreducible component in the %
intersection of $\cY$ and the
rational semi-torsion coset $\{x\}\times\IG_a^n$. %
We now split-up into 2 subcases. 

Suppose first that  $\overline{R(\cY)}$ has dimension $1$.
Then $\overline{R(\cY)}$ cannot be contained in a proper coset of $\IG_m^n$ as 
 $\Qta{\cY}\not=\emptyset$. 
So
%%  $\overline{R(\cY)}$ is not contained in a
%% proper coset of $\IG_m^n$ and
$\overline{R(\cY)}
\cap\subgrpunion{(\IG_m^n)}{1}$ has bounded height by Theorem 1
of Bombieri, Masser, and Zannier \cite{BMZ99}.  
As $x$ lies in this intersection we have  %
$\height{x}\ll 1$; here and below the constant implied in Vinogradov's %
notation  depends only on $\cY$.

In the second subcase we suppose that $\overline{R(\cY)}$ has dimension $2$. 
The set of points in $\cY$ that are contained in a positive
dimensional fiber of $R|_{\cY}$ 
 is a Zariski closed proper subset of $\cY$, cf. 
Exercise II.3.22(d) \cite{hartshorne} already used above. 
Hence $\cD_x$ is a member of a finite set of curves depending only on %
$\cY$. 
So $x$, being the image of $\cD_x$ under $R$, is %
member of a finite set depending only on $\cY$. 
In particular, $\height{x}\ll 1$ holds trivially. %

In both subcases we have $\height{x}\ll 1$. 
The Arithmetic B\'ezout Theorem yields the height bound
\begin{equation}
  \label{eq:degheightbound}
 h(\cD_x)\ll 1\quad\text{and}\quad \deg{\cD_x}\ll 1 %
\end{equation}
the degree bound follows from the classical B\'ezout Theorem. 

Let us abbreviate $d = [\IQ(P):\IQ]$. 
The coordinates of $x$ are multiplicatively %
dependent.
But there cannot be $2$ independent relations as  $\cD_x$ %
would otherwise be contained in rational semi-torsion anomalous subvariety of
$\IG_m^n\times\IG_a^n$.
Lemma~\ref{lem:findb} 
and $h(x)\ll 1$  implies $\langle y,b\rangle = 0$ %
 for some 
$b\in\IZ^n$ with $|b|\ll d^{2n}$.

The vanishing locus of the linear form
\begin{equation*}
y\mapsto \langle y, b\rangle %
\end{equation*}
determines a linear subvariety of 
$\IG_m^n\times\IG_a^n$ with height $\ll \log (2|b|)$. 
The point $P=(x,y)$ is isolated in its intersection with $\cD_x$ %
as $P\in \Qta{\cY}$. 
The Arithmetic B\'ezout Theorem and (\ref{eq:degheightbound}) yield
$\height{P}\ll \log(2|b|)$. 
We combine this bound with the upper bound for $|b|$ to establish the
theorem 
if $P$ is not isolated in the corresponding fiber of $R|_{\cY}$.  

From now on  we assume that $P$ is isolated in $R|_{\cY}^{-1}(x)$. %
The set of all such points of $\cY$ is a Zariski open subset $\cY'$ of
$\cY$. The restriction $R|_{\cY'}$ has finite fibers and hence the hypothesis
leading up to Theorem~\ref{thm:heightmain} is fulfilled for
$\cY'$ where the $\ell_i$ run over the $n$ the projection morphisms to $\IG_a$. 
We write $\cS$ for the Zariski closure of $R(\cY')$; this is an
irreducible surface. 

Say $b\in\IZ^n\ssm\{0\}$ with 
 $x^b=1$ and $\langle y,b\rangle = 0$. 
The  conditions (i), (ii), and (iii) 
in Theorem~\ref{thm:heightmain}
are met; for the first two we need $P\in \Qta{\cY}$.  
If $x \in \oa{\cS}$ holds, then  %
the height bound from the said theorem
 completes the proof. 

So it remains to treat the case $x\not\in \oa{\cS}$.  %
By definition there is a coset $\cK\subset\IG_m^n$ 
and an irreducible component $\cZ$ of $\cS\cap \cK$ containing $x$ %
with 
\begin{equation}
  \label{eq:dimcZlb}
\dim \cZ \ge \max\{1,3+\dim \cK - n\}.
\end{equation}
This inequality implies $\dim \cK \le n-1$ because $\dim \cZ\le 2$. 

Observe that $\dim \cZ = 1$. Indeed,  otherwise $\cZ=\cS$ 
would be contained in $\cK$.  Then
$\cY$ would be contained in the rational semi-torsion coset
$\cK\times\IG_a^{n}$ which would contradict $P\in \Qta{\cY}$.

Since $\cZ$ is a curve we find 
\begin{equation}
\label{eq:dimcKub}
  \dim \cK \le n-2 
\end{equation}
from (\ref{eq:dimcZlb}).

Of course $P=(x,y)\in R|_{\cY}^{-1}(\cZ)$. 
Let $\cZ'$ be
 an irreducible component  $R|_{\cY}^{-1}(\cZ)$ containing
$P$ with largest 
 dimension. Now  $\cZ'$ is in the rational semi-torsion
coset $\cK\times\IG_a^n$ already used above. 
If 
 $\cZ'$
has positive dimension, then 
$\dim \cZ' \ge 2 + \dim  \cK\times\IG_a^n-2n+1$
because of (\ref{eq:dimcKub}). 
But then $\cZ'$ is a rational semi-torsion anomalous subvariety of
$\cY$. This is again a
  contradiction to  $P\in\Qta{\cY}$.

We conclude that $\cZ' = \{P\}$.
This is an awkward
situation as one would expect that the pre-image of a curve under 
the dominant morphism $R|_{\cY}\colon \cY\rightarrow \cS$ between surfaces 
to be again a curve.
 So we can hope to extract 
useful information. 
We are in characteristic $0$, so by Lemma III.10.5 \cite{hartshorne} there is a 
Zariski open and non-empty set $U\subset \cY$ such that
$R|_U\colon U\rightarrow \cS$ is a smooth morphism.
This restriction is in particular open.
It has the property that the preimage of any irreducible curve
 in $R(U)$ is a finite union of irreducible curves. 
We claim that $P$ does not lie in $U$. Indeed, otherwise 
$P$ would be an isolated point of a fiber of $R|_U$. 
This contradicts smoothness of $R|_U$
 as $R(U)\cap\cZ$ is an irreducible curve containing $R(P)$. 
%% So $R|_{\cY}^{-1}(\cZ)$ is a point too. This is a very strong
%% restriction as $R|_{\cY}:\cY\rightarrow \cS$ is a dominant morphism
%% between surfaces and $\dim \cZ=1$. 
%% Indeed, by Lemma III.10.5 \cite{hartshorne} there is a 
%% Zariski open and non-empty set $U\subset \cY$ such that
%% $R|_U:U\rightarrow \cS$ is a smooth morphism.
%% Any curve in $\cS$ that meets   $R(U)$ will
%% have a positive dimensional preimage under $R|_U$. 
%% Therefore, $\cZ \subset \cS \ssm R(U)$ and in particular
%% $P\not\in U$. 

The complement $\cY\ssm U$ has dimension at most $1$ and does not
depend on $P$. It contains $P$ by the previous paragraph. 
After omitting the finitely many isolated points in $\cY \ssm U$ 
we may suppose  that $P$ is  in a 
curve $\cC \subset \cY\ssm U$. Thus $\cC$  arises from a finite set depending
only on $\cY$.  

The restriction $R|_{\cC}\colon \cC\rightarrow\IG_m^n$ is non-constant
because we already reduced to the case where $P$ is isolated in the
fiber of $R|_{\cY}$. So $\overline{R(\cC)}$, the Zariski closure 
of $R(\cC)$ in $\IG_m^n$, is a curve.
By Theorem~\ref{thm:silverman} we have
\begin{equation}
\label{eq:quasiequiv}
  \height{P}\ll \max\{1,\height{x}\}, %
\end{equation}
where the constant implicit in $\ll$ depends only on $\cC$
and thus only on $\cY$.
 
If $\overline{R(\cC)}$  is not contained in a proper coset, then 
$\height{x}\ll 1$ by Theorem~\ref{thm:heightbound} or by Bombieri, %
Masser, and Zannier's original height bound \cite{BMZ99}. So
(\ref{eq:quasiequiv})
yields $\height{P}\ll 1$ and this is better than what the theorem claims. 

%%  Silverman's Theorem
%% \cite{Silverman} we find $\height{x}\ll 1$ too. 
%% So $\height{P}\ll 1$, which is more than we are claiming. 

But what if $R(\cC)$ is contained in a proper coset of $\IG_m^n$?
%% Then we cannot conclude that the height of $y$ is bounded, see the
%% second remark after Theorem 1 \cite{BMZ99}.
As we have already pointed out, there is no hope 
that $\overline {R(\cC)}\cap\subgrpunion{(\IG_m^n)}{1}$
has bounded height.
But we know
that $\overline{R(\cC)}$ is not contained in a coset of codimension at least two
since $P\in \Qta{\cY}$. The projection of $\cC$
 to a suitable choice of  $n-1$ coordinates of
$\IG_m^{n-1}$ is a curve that is 
 not  in a proper coset. So if the coordinates of $x$ %
 happen to
 satisfy two independent multiplicative relations, then these $n-1$ coordinates
 will be multiplicatively dependent and thus have bounded height by
 Theorem~1 in \cite{BMZ99}. Using  Theorem~\ref{thm:silverman}, applied
 now to the projection, we can
 bound the height of the remaining coordinates. So
 $\height{x}\ll 1$ and even $\height{P}\ll 1$ by %
 (\ref{eq:quasiequiv}). 
Therefore,  we may assume that the coordinates of  $x$ satisfies %
only one multiplicative relation, up to scalars. From here we proceed in
a similar fashion as we have done several times before.
We use Lemma~\ref{lem:findb} to deduce that $b$ is linearly dependent
to some $ b'\in\IZ^n\ssm\{0\}$ with
$|b'|\ll  d^{2n}\max\{1,\height{x}\}^n$. %
We certainly have 
$\langle y,b\rangle = \langle y,b'\rangle =0$. %
The morphism $(x',y')\mapsto \langle y',b'\rangle$ does not vanish
identically on $\cC$ because $R(\cC)$ is already assumed to lie in a
proper coset and since  $P\in\Qta{\cY}$. So $P$ is an isolated point
 of $\cC\cap \{(x',y') : \langle y',b'\rangle=0\}$. 
A final application of the Arithmetic B\'ezout Theorem 
and (\ref{eq:quasiequiv}) yield
\begin{equation}
\label{eq:lasthb}
 \height{P}\ll \log(2|b'|)    \ll \log(2 d \max \{1,\height{P}\}). 
\end{equation}
%we note that $\cC$ is fixed and thus has bounded height and degree.
The inequality (\ref{eq:lasthb}) marks the final subcase
in this proof and so the theorem is established. 
\end{proof}

%% One striking difference
%%  is that we do not obtain a uniform height bound: there
%%  remains a logarithmic dependency  on the degree.

%%% Local Variables:
%%% TeX-master: "g3fin_master"
%%% End:

%%%%%%%%%%%%%%%%%%%%%%%%%%%%%%%%%%%%%%%%%%%%%%%%%%%%%%%%%%%%
\section{Background on \Teichmuller curves} \label{sec:background}
%%%%%%%%%%%%%%%%%%%%%%%%%%%%%%%%%%%%%%%%%%%%%%%%%%%%%%%%%%%%

We recall here some necessary background on flat surfaces and the stratification of
$\omoduli[g]$.  
We also recall the cross-ratio equation for the cusps of algebraically primitive
\Teichmuller curves and the defining equation for the exponents appearing in this equation.
These two equations were the motivation for the height bound theorem (Theorem~\ref{thm:heightmain}) in the
previous section.

\paragraph{Flat surfaces.}

A \emph{flat surface} is a pair $(X, \omega)$, where $X$ is a closed Riemann surface and $\omega$ a
nonzero holomorphic one-form on $X$.  The one-form $\omega$ gives $X$ a flat metric $|\omega|$ which
has cone points of cone angle $2\pi(n+1)$ at zeros of order $n$ of $\omega$.  The metric $|\omega|$
is obtained by pulling back the metric $|dz|$ on $\cx$ by local charts $\phi\colon U \to \cx$
defined by integrating $\omega$, defining an atlas on $X \setminus Z(\omega)$ (where $Z(\omega)$ is
the set of zeros of $\omega$) whose transition functions are translations of $\cx$.  There is an
action of $\SLtwoR$ on the moduli space of genus $g$ flat surfaces $\omoduli[g]$, defined by
postcomposing these charts with the standard linear action of $\SLtwoR$ on $\cx=\reals^2$.

Let $\Aff^+(X, \omega)$ be the group of locally affine homeomorphisms of $(X, \omega)$, and let
$D\colon\Aff^+(X, \omega)\to\SLtwoR$ the homomorphism sending an affine map to its derivative.  The
\emph{Veech group} of $(X, \omega)$ is the image $D\Aff^+(X, \omega)$, denoted by $\SL(X, \omega)$.
The group $\SL(X, \omega)$ is always discrete in $\SLtwoR$.  If it is a lattice, we call $(X,
\omega)$ a \emph{Veech surface}.  The $\SLtwoR$ orbit of a Veech surface is closed in $\omoduli[g]$
and called a \Teichmuller curve (we also refer to the projection of this orbit to $\pomoduli[g]$ or
$\moduli[g]$ as a \Teichmuller curve).

A \emph{saddle connection} on a flat surface $(X, \omega)$ is an embedded geodesic segment connecting 
two zeros of $\omega$. The foliation $\mathcal{F}_\theta$ of slope $\theta$ is said to be \emph{periodic} if every leaf of
$\mathcal{F}_\theta$ is either closed (i.e.\ a circle) or a saddle connection. In this case, we call $\theta$ a periodic direction.  A periodic direction $\theta$ yields a decomposition of $(X,\omega)$ into finitely many maximal cylinders foliated by closed geodesics of slope $\theta$.  We refer to the length
of the waist curve of the cylinder $C$ as its \emph{width} $w(C)$. 
The ratio of height over width is called the \emph{modulus} $m(C) = h(C)/w(C)$.
The complement of these cylinders is a finite collection of saddle connections.

We constantly use
Veech's dichotomy \cite{veech89} stating that if $(X, \omega)$ is 
a Veech surface with either a closed geodesic or a saddle connection of
slope $\theta$, then the foliation $\mathcal{F}_\theta$ is periodic
and the moduli of the cylinders in the direction $\theta$ are commensurable.
\par
Given a Veech surface $(X, \omega)$ generating a \Teichmuller curve $C\subset\proj\Omega\moduli$,
there is a natural bijection between the cusps of $C$ and the periodic directions on $(X, \omega)$,
up to the action of $\SL(X, \omega)$.  The cusp associated to a periodic direction $\theta$ is the
limit of the \Teichmuller geodesic given by applying to $(X, \omega)$ the one-parameter subgroup of
$\SLtwoR$ contracting the direction $\theta$ and expanding the
perpendicular direction.  The stable form in $\proj\Omega\barmoduli$ which is the limit of this cusp
is obtained by cutting each cylinder of slope $\theta$ along a closed geodesic and gluing a
half-infinite cylinder to each resulting boundary component (see \cite{masur75}).  These infinite
cylinders are the poles of the resulting stable form, and the two poles resulting from a single
infinite cylinder are glued to form a node.
\par
The \emph{spine} of $(X, \omega)$ is the union of all horizontal saddle connection, whose complement
is a union of cylinders.  The \emph{dual graph} $\Gamma$ is the graph whose vertices correspond to components
of the spine, and edges correspond to complementary cylinders.  Equivalently, $\Gamma$ is the dual
graph of the stable curve associated to this periodic direction.  Its vertices correspond to
irreducible components, and its edges correspond to nodes.
\par

\paragraph{\bf Strata of $\omoduli[g]$.} The moduli space of flat surfaces
$\omoduli$ is stratified according to the number and multiplicities
of zeros of $\omega$. The stratum $\omoduli[g](1^{2g-2})$ parameterizing flat
surfaces with only simple zeros is open and dense. It is
called the {\em principal stratum}.
\par
Connected components of the strata were classified in \cite{kz03}.  In genus three all the strata
but $\omoduli[3](4)$ and $\omoduli[3](2,2)$ are connected.  These two strata have two connected
components, distinguished by the {\em parity of the spin structure} $h^0(X,{\rm div}(\omega)/2)$.
We write $\omoduli[3](4)^{\rm hyp}$ and $\omoduli[3](4)^{\rm odd}$ for the hyperelliptic and odd
components respectively.  The case of even spin parity coincides in genus three with the
hyperelliptic components, the component in which all the curves are hyperelliptic. Note that also
$\omoduli[3](2,2)^\odd$ contains hyperelliptic curves, forming a divisor in this stratum. In this
case, the hyperelliptic involution fixes the zeros, while for a flat surface in
$\omoduli[3](2,2)^\hyp$ the hyperelliptic involution swaps the two zeros.

\paragraph{\bf The family over a \Teichmuller curve.}
Let $f\colon \cX \to C$ be the universal family over a  \Teichmuller curve
generated by a Veech surface $(X,\omega)$ (or possibly the pullback
to an unramified covering of $C$). 
We also denote by $f\colon \ol{\cX} \to \ol{C}$
the corresponding extension to a family of stable curves.
Again passing to an unramified covering of $C$ we may suppose
that the zeros of $\omega$ define sections  $s_j\colon \ol{C} \to \ol{\cX}$
and we let $D_j = s_j(\ol{C})$ denote their images.
In a fiber $X$ of $f$ we write $z_j$ for the zeros of $\omega$, 
that is is for the intersection of $X$ with $D_j$. We also 
write just $D$ for the section and $z$ for the zero, if $k=1$.
\par
\paragraph{\bf The eigenform locus and its degeneration.} Let $\RM \subset \moduli[g]$
denote the locus of Riemann surfaces that admit real multiplication
by an order $\cO$ in a totally real number field $F$ with $[F:\ratls]=g$. In the bundle of one-forms
over $\RM$ there is the locus of {\em eigenforms} $\E \subset \omoduli[g]$ 
consisting of pairs $(X,\omega)$, where $[X] \in \RM$ and where $\omega$
is an eigenform for real multiplication. The intersection
of the closure of $\RM$ in $\barmoduli$  with the boundary was described
in \cite{BaMo12}.   We gave a necessary and sufficient condition for a stable form to lie in the
boundary of $\RM$ in genus three where there is no Schottky
problem involved.  We summarize these results here.
\par
The irreducible stable curves in the boundary of $\RM$ are {\em trinodal curves}, 
rational curves with three pairs of points $x_i$ and $y_i$, $i=1,2,3$
identified. Coordinates on the boundary component of trinodal curves
in $\barmoduli[3]$ are given by the cross-ratios 
defined by
\begin{equation} \label{CR6}
R_{jk}=[x_j,y_{j}, x_{k}, y_k].
\end{equation}
where for $z_1, \ldots z_4\in\cx$,  $$[z_1,z_2, z_3, z_4]=\frac{(z_1-z_3)(z_2-z_4)}{(z_1-z_4)(z_2-z_3)}.$$
We often use complementary index notation, writing $R_k$ for $R_{ij}$
where $\{i,j,k\}=\{1,2,3\}$.
\par
In general, cusps of Hilbert modular varieties are determined by the ideal
class of a nonzero module $\cI$ for the order $\cO$ and an extension class $E$. 
In \cite{BaMo12} we called a triple $(r_1,r_2,r_3) \in F$ {\em admissible}, if $\{r_1,r_2,r_3\}$
is a basis for such an ideal $\cI$ and if $\left\{\frac{N(r_1)}{r_1},\frac{N(r_2)}{r_2},
\frac{N(r_3)}{r_3}\right\}$ is $\ratls^+$-linear dependent.
\par
The boundary components of $\RM$ intersected with the locus of trinodal curves
are in bijection with projectivized admissible triples up to permutation and sign change.
On the component given by an admissible triple $(r_1,r_2,r_3)$, the closure 
of $\RM$ is cut out by the cross-ratio equation
\begin{equation} \label{eq:CREq}
R_1^{a_1} R_2^{a_2} R_3^{a_3} = \zeta_E,
\end{equation}
where $\zeta_E$ is a root of unity determined by the extension class $E$ 
(see \cite{BaMo12} for the precise definition, which  is irrelevant in this
paper) and where the exponents $a_i$ are defined as follows.
\par
Let $(s_1,s_2,s_3)$ be dual to $(r_1,r_2,r_3)$ with respect to the trace pairing on $F$,
and let $(b_1,b_2,b_3) \in \zed^3$ be such that (indices read mod $3$)
$$ \sum_{i=1}^1 b_i s_{i+1}s_{i+2} = 0,$$
as stated in \cite[Proof of Theorem~8.5]{BaMo12}.
The existence of such $b_i$ and the fact that $b_i \neq 0$ for all $i$ 
is a consequence of admissibility. We let
$(a_1,a_2,a_3)$ be a tuple proportional 
to $(b_1,b_2,b_3)$ that is relatively prime.  This only determines the $a_i$ up to sign.  A more
precise description of the $a_i$ including the sign can be found in \cite{BaMo12}, but that choice
of sign is not relevant in this paper.   The defining condition
for the cross-ratio exponents can equivalently be stated as
\begin{equation} \label{eq:defCREXP}
\sum_{i=1}^3 b_i/s_i = 0.
\end{equation} 
\par

\paragraph{Applications of algebraic primitivity.}
By \cite{moeller06} an algebraically primitive \Teichmuller curve lies
in the real multiplication locus $\RM$ for some order ${\cal O}$ in the trace field $F$. Consequently, 
cusps of Teichm\"uller curves, if they correspond to 
an irreducible stable curve, have an associated admissible triple $(r_1,r_2,r_3)$, which is a basis
of $F$ over $\ratls$, and the corresponding stable curve
satisfies the cross-ratio equation \eqref{eq:CREq}, with exponents defined
by \eqref{eq:defCREXP}. We constantly use the fact from \cite{BaMo12} that 
 this triple is, up to a suitable rescaling and permutation of indices, 
the triple of widths of cylinders in a periodic direction that has this
associated cusp. 
%\par
%Obtaining height bounds for the solutions to
%this pair of equations is the motivation for the next section.
%\par

%%% Local Variables:
%%% TeX-master: "g3fin_master"
%%% End:

%%%%%%%%%%%%%%%%%%%%%%%%%%%%%%%%%%%%%%%%%%%%%%%%%%%%%%%%%%%%%%%%%%%%%%
\section{Harder-Narasimhan filtrations} \label{sec:HNfilt}
%%%%%%%%%%%%%%%%%%%%%%%%%%%%%%%%%%%%%%%%%%%%%%%%%%%%%%%%%%%%%%%%%%%%%%

In this section, we recall the notion of a Harder-Narasimhan filtration of a vector bundle, and
following Yu-Zho \cite{yuz1,yuz2}, we describe the Harder-Narasimhan filtration of the Hodge bundle
for most strata in genus three.  We apply this in
Proposition~\ref{prop:HNFisEigsplit} to obtain information on the zeros of the other eigenforms of
Veech surfaces.

Let $\cV$ be a vector bundle on a compact curve $\ol{C}$. The {\em slope} of a 
vector bundle is defined as $\mu(\cV) = \deg(\cV)/\rank(\cV)$. A bundle
is called {\em semistable} if it contains no subbundle of strictly larger slope.
A filtration
$$ 0 = \cV_0 \subset \cV_1 \subset \cV_2 \cdots \subset \cV_g = \cV$$
is called a {\em Harder-Narasimhan filtration} if the successive
quotients $\cV_{i}/\cV_{i-1}$ are semi-stable and the slopes are strictly
decreasing, i.e.\ 
$$\mu_i := \cV_{i}/\cV_{i-1}  > \mu_{i+1} := \cV_{i+1}/\cV_i.$$
The Harder-Narasimhan filtration is the unique filtration with these
properties (see e.g.\ \cite{huylehn}).
\par
%We will use the notion of a flat family of stable curves (in the analytic category).  
%A \emph{family of stable curves} over a Riemann surface $\mathcal{C}$ is a
%two-dimensional analytic space $\mathcal{X}$ together with a holomorphic function $f\colon \mathcal{X}\to
%\mathcal{C}$ whose fibers are stable curves, that is connected one-dimensional analytic spaces
%whose only singularities are nodes, and with each component of the complement of the nodes a
%hyperbolic Riemann surface (we do not assume that the fibers are of finite type).
\par
We now study the Harder-Narasimhan filtration in the case of a family $f\colon \cX \to C$ over
a \Teichmuller curve in any nonprincipal stratum in genus threem with the
Hodge bundle $\cV = f_* {{\omega_{\overline{{\mathcal X}}/\overline{{\mathcal{C}}}}}}$ 
over $\overline{C}$. Here $\omega_{\cX /\mathcal{C}}$ denotes the
relative dualizing sheaf of $f$, whose sections are fibrewise stable differentials. 
\par
\begin{prop}[\cite{yuz1}] \label{eq:HNfiltg3}
For $C$ a \Teichmuller curve in any nonprincipal stratum in genus three, the Harder-Narasimhan filtrations of $f_* {{\omega_{\overline{{\mathcal X}}/\overline{{\mathcal{C}}}}}}$  are
given by the direct image sheaves
$$\begin{array}{llll}
\omoduli[3](4)^\hyp:\quad & f_* {{\omega_{\overline{{\mathcal X}}/\overline{{\mathcal{C}}}}}}(-4D_1) & \subset f_*{{\omega_{\overline{{\mathcal X}}/\overline{{\mathcal{C}}}}}}(-2D_1) &  \subset f_* {{\omega_{\overline{{\mathcal X}}/\overline{{\mathcal{C}}}}}}. \\
\omoduli[3](4)^\odd:\quad & f_* {{\omega_{\overline{{\mathcal X}}/\overline{{\mathcal{C}}}}}}(-4D_1) &\subset f_*{{\omega_{\overline{{\mathcal X}}/\overline{{\mathcal{C}}}}}}(-D_1)  & \subset f_* {{\omega_{\overline{{\mathcal X}}/\overline{{\mathcal{C}}}}}}. \\
\omoduli[3](3,1):\quad & f_* {{\omega_{\overline{{\mathcal X}}/\overline{{\mathcal{C}}}}}}(-3D_1 -D_2)& \subset f_*{{\omega_{\overline{{\mathcal X}}/\overline{{\mathcal{C}}}}}}(-D_1) & \subset f_* {{\omega_{\overline{{\mathcal X}}/\overline{{\mathcal{C}}}}}}. \\
\omoduli[3](2,2)^\hyp:\quad & f_* {{\omega_{\overline{{\mathcal X}}/\overline{{\mathcal{C}}}}}}(-2D_1- 2D_2) &\subset f_*{{\omega_{\overline{{\mathcal X}}/\overline{{\mathcal{C}}}}}}(-D_1) & \subset f_* {{\omega_{\overline{{\mathcal X}}/\overline{{\mathcal{C}}}}}}. \\
\omoduli[3](2,2)^\odd \quad &  f_*{{\omega_{\overline{{\mathcal X}}/\overline{{\mathcal{C}}}}}}(-2D_1 -2D_2) & \quad  \quad \subset & 
\phantom{\subset} f_* {{\omega_{\overline{{\mathcal X}}/\overline{{\mathcal{C}}}}}}. \\
\omoduli[3](2,1,1):\quad & f_* {{\omega_{\overline{{\mathcal X}}/\overline{{\mathcal{C}}}}}}(-2D_1-D_2-D_3) &\subset f_*{{\omega_{\overline{{\mathcal X}}/\overline{{\mathcal{C}}}}}}(-D_1) & \subset f_* {{\omega_{\overline{{\mathcal X}}/\overline{{\mathcal{C}}}}}}. \\
\end{array}
$$
For a Teichmuller curre that is  a family of hyperelliptic curves in the 
stratum  $\omoduli[3](2,2)^\odd$ the 
bundle $f_* {{\omega_{\overline{{\mathcal X}}/\overline{{\mathcal{C}}}}}}/ f_* {{\omega_{\overline{{\mathcal X}}/\overline{{\mathcal{C}}}}}}(-2D_1 - 2D_2)$ 
is the direct sum of the two line bundles  
$f_* {{\omega_{\overline{{\mathcal X}}/\overline{{\mathcal{C}}}}}}(-2D_i)/ f_* {{\omega_{\overline{{\mathcal X}}/\overline{{\mathcal{C}}}}}}(-2D_1 - 2D_2) \cong f_* {{\omega_{\overline{{\mathcal X}}/\overline{{\mathcal{C}}}}}} (-4D_i)$, $i=1,2$,
of the same slope. That is, 
$$ f_* {{\omega_{\overline{{\mathcal X}}/\overline{{\mathcal{C}}}}}} = f_* {{\omega_{\overline{{\mathcal X}}/\overline{{\mathcal{C}}}}}}(-2D_1-2D_2) \oplus f_* {{\omega_{\overline{{\mathcal X}}/\overline{{\mathcal{C}}}}}} (-4D_1) \oplus f_* {{\omega_{\overline{{\mathcal X}}/\overline{{\mathcal{C}}}}}}(-4D_2).$$
\end{prop}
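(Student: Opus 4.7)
The plan is to identify the Harder--Narasimhan filtration stratum-by-stratum by writing down candidate subsheaves coming from the natural vanishing data of the generating form $\omega$ at its zero sections $D_i$, computing their slopes via Riemann--Roch on fibers, and invoking uniqueness of the HN filtration. First I would establish that the first nontrivial step is the maximal Higgs line bundle $\cL := f_*\omega_{\overline{\cX}/\overline{C}}(-\sum_i n_i D_i)$, where $n_i$ is the order of the zero along $D_i$. Indeed $\cL$ is precisely the line subbundle of $f_*\omega_{\overline{\cX}/\overline{C}}$ spanned fiberwise by $\omega$, since $\omega$ vanishes to order exactly $n_i$ at $D_i$. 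By M\"oller's work on \Teichmuller curves \cite{moeller06}, $\cL$ realizes the Arakelov upper bound, hence attains the maximal possible slope among line subbundles of the Hodge bundle; in particular it must be the first nonzero step of any Harder--Narasimhan filtration.

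For each stratum I would next produce a candidate middle subsheaf by imposing partial vanishing along the $D_i$: for $\omoduli[3](4)^{\rm hyp}$ the twist is $-2D_1$, for $\omoduli[3](4)^{\rm odd}$ it is $-D_1$, and the analogous choices for the strata with multiple zeros are dictated by combining the order of vanishing of $\omega$ with the parity of the spin structure. The degree of each candidate $f_*\omega_{\overline{\cX}/\overline{C}}(-\sum a_i D_i)$ can be computed by combining the known degree of $\cL$ with the fibrewise exact sequences relating twists by the $D_i$. A direct computation then shows that the resulting slopes are strictly smaller than $\mu(\cL)$ and strictly larger than $\mu(f_*\omega_{\overline{\cX}/\overline{C}})$, so that the successive quotients in the candidate filtration have strictly decreasing slopes.

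The main obstacle will be verifying semistability of the rank-two middle quotient in those strata where it is not a line bundle. For this I would argue that any destabilizing line subbundle would have to be compatible with the variation of Hodge structure, and by the rigidity of sub-VHS on \Teichmuller curves it would correspond to an additional maximal-Higgs direction in the Hodge bundle, contradicting the fact that the space of eigenforms spanning $\cL$ is one-dimensional. Numerical incompatibility with the zero-divisor data of the stratum then rules out any proper destabilizing subbundle and pins the filtration down uniquely.

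Finally, the hyperelliptic sub-family inside $\omoduli[3](2,2)^{\rm odd}$ requires separate treatment: here the hyperelliptic involution $\iota$ acts on $f_*\omega_{\overline{\cX}/\overline{C}}$ and exchanges $D_1$ with $D_2$. The rank-two quotient $f_*\omega_{\overline{\cX}/\overline{C}}/f_*\omega_{\overline{\cX}/\overline{C}}(-2D_1-2D_2)$ decomposes into $\iota$-eigenspaces naturally identified with $f_*\omega_{\overline{\cX}/\overline{C}}(-2D_i)/f_*\omega_{\overline{\cX}/\overline{C}}(-2D_1-2D_2)\cong f_*\omega_{\overline{\cX}/\overline{C}}(-4D_j)$ for $\{i,j\}=\{1,2\}$. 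The two summands have equal slope by the $\iota$-symmetry, so the rank-two quotient is polystable rather than stable, producing the direct-sum decomposition claimed in the statement rather than a further step of an HN filtration.
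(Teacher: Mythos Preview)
Your overall strategy---filter by vanishing order along the $D_i$, compute degrees of successive quotients, and invoke uniqueness of the Harder--Narasimhan filtration---is the same as the paper's, and your identification of $\cL$ as the first step is correct. However, two parts of your argument do not work as written.

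First, the ``main obstacle'' you describe largely evaporates: in every stratum except $\omoduli[3](2,2)^\odd$ the candidate filtration has three steps in a rank-$3$ bundle, so \emph{all} successive quotients are line bundles and hence automatically semistable. No VHS rigidity argument is needed. For the remaining case $\omoduli[3](2,2)^\odd$, where the quotient genuinely has rank $2$, your proposed argument (``a destabilizing line subbundle would have to be a sub-VHS'') is not justified---there is no a priori reason a destabilizing subbundle must respect the Hodge filtration. The paper instead shows directly, via a lemma of Yu--Zuo on direct images, that $f_*\omega_{\overline{\cX}/\overline{C}}/f_*\omega_{\overline{\cX}/\overline{C}}(-2D_1-2D_2)$ is a direct sum $\bigoplus_i f_*\cO_{D_i}(2D_i)$ of two line bundles; since the self-intersection $D_i^2$ depends only on the zero order (here both orders equal $2$), the two summands have the same degree, so the quotient is polystable.

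Second, your treatment of the hyperelliptic locus in $\omoduli[3](2,2)^\odd$ contains a factual error: the hyperelliptic involution does \emph{not} exchange $D_1$ and $D_2$ in this stratum---it fixes each of them (they are Weierstrass points). It is in $\omoduli[3](2,2)^\hyp$ that the zeros are swapped. So the $\iota$-eigenspace mechanism you invoke is unavailable. The paper's argument is instead that on the hyperelliptic locus $h^0(\omega(-2D_i))=h^0(2D_{3-i})=2$ (because $D_{3-i}$ is Weierstrass), so $f_*\omega_{\overline{\cX}/\overline{C}}(-2D_i)$ has rank $2$ and contains both $f_*\omega_{\overline{\cX}/\overline{C}}(-4D_i)$ and $f_*\omega_{\overline{\cX}/\overline{C}}(-2D_1-2D_2)$; this yields the claimed direct-sum decomposition with the index convention $f_*\omega_{\overline{\cX}/\overline{C}}(-2D_i)/f_*\omega_{\overline{\cX}/\overline{C}}(-2D_1-2D_2)\cong f_*\omega_{\overline{\cX}/\overline{C}}(-4D_i)$ (same $i$, not swapped).
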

\par
In each case the bottom term $\cV_1$ is the maximal Higgs subbundle, its
fibres are the generating one-form of the \Teichmuller curve and
$$\deg \cV_1 = \tfrac12 \deg \Omega^1_{\ol{C}}(\partial C),$$
where 
$\partial C = \overline{C} \setminus C$.  In the
strata $\omoduli[3](2,1,1)$ and $\omoduli[3](3,1)$ the filtration is split as well, 
i.e.\  a direct sum of line bundles, see \cite[Section~5.3]{yuz1}.
\par
The first case is a special case of the following the more general statement
for the strata $\omoduli[g](2g-2)^\hyp$.
\par
\begin{prop} \label{eq:HNhypminimal} For a \Teichmuller curve generated
by a Veech surface in  the stratum $\omoduli[g](2g-2)^\hyp$ the 
bundle $\cV_j = f_* {{\omega_{\overline{{\mathcal X}}/\overline{{\mathcal{C}}}}}}(-(2g-2j)D)$
is a vector bundle of rank $j$. The quotient line bundles $\cV_j / \cV_{j-1}$ have degree 
\begin{equation}
  \label{eq:9}
  \deg(\cV_j / \cV_{j-1}) = \frac{2g+1-2j}{2g-1}\deg(\cL).
\end{equation}
In particular, the filtration by the $\cV_j$ is the HN-filtration 
of $f_* \omega_\rel$.
\end{prop}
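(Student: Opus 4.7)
Set $\cL_j := \cV_j/\cV_{j-1}$ for $1\le j\le g$. I plan first to verify the ranks of the $\cV_j$ fiberwise, then to identify each $\cL_j$ with a power of the normal bundle of the section $D$, and finally to deduce the Harder-Narasimhan statement from the resulting strictly decreasing slopes.

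At a smooth $t\in C$ the fiber of $\cV_j$ is $H^0(X_t, K_{X_t}(-(2g-2j)z_t))$. Since $(X_t,\omega_t)\in \omoduli[g](2g-2)^\hyp$, the hyperelliptic involution of $X_t$ fixes the unique zero $z_t$ of $\omega_t$, so $z_t$ is a Weierstrass point. Combined with $K_{X_t}\sim (2g-2)z_t$, the hyperelliptic gap sequence $h^0(X_t,kz_t)=\lfloor k/2\rfloor+1$ for $0\le k\le 2g-2$ gives $\dim \cV_j|_t = h^0((2j-2)z_t)=j$. Hence $\cV_j$ extends to a vector bundle of rank $j$ on $\overline C$ (any rank-$j$ reflexive sheaf on a smooth curve is locally free), and each $\cL_j$ is a line bundle away from cusps.

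For the degrees, after pulling back to a finite cover so that $\overline\cX$ is smooth and $D$ is a section, the tautological inclusion $\cL\subset f_*\omega_{\overline\cX/\overline C}$ produces a global section $\tilde\omega$ of $\omega_{\overline\cX/\overline C}\otimes f^*\cL^{-1}$ whose divisor equals $(2g-2)D+E$ for some effective vertical divisor $E$ supported on nodal fibers. The crucial geometric input is that at each cusp in this stratum the section $D$ meets the nodal fiber at a smooth point lying on the unique component on which the stable form vanishes to full order $2g-2$, while $E$ is supported on the complementary components, so $D\cdot E=0$. Adjunction yields $\omega_{\overline\cX/\overline C}|_D\cong N^{-1}$ with $N := \cO_{\overline\cX}(D)|_D$, and restricting $\omega_{\overline\cX/\overline C}\cong \cO_{\overline\cX}((2g-2)D+E)\otimes f^*\cL$ to $D$ gives
\begin{equation*}
  -\deg N \;=\; (2g-2)\deg N + \deg\cL,\qquad\text{hence}\qquad \deg N = -\frac{\deg\cL}{2g-1}.
\end{equation*}
Pushing forward the restriction sequence $0\to \omega_{\overline\cX/\overline C}(-(2g-2j+1)D)\to \omega_{\overline\cX/\overline C}(-(2g-2j)D)\to N^{-(2g-2j+1)}\to 0$, and using that odd vanishing orders at a Weierstrass point are hyperelliptic gaps (so the first sheaf equals $\cV_{j-1}$ off the cusps), one obtains a natural injection $\cL_j\hookrightarrow N^{-(2g-2j+1)}$ with torsion cokernel supported at cusps, which the same cusp analysis forces to vanish. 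Therefore
\begin{equation*}
  \deg \cL_j = -(2g-2j+1)\deg N = \frac{2g+1-2j}{2g-1}\deg \cL.
\end{equation*}

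The slopes $\mu_j = \deg\cL_j$ strictly decrease in $j$ and each $\cL_j$, being a line bundle, is semistable, so by uniqueness of the Harder-Narasimhan filtration, $0\subset \cV_1\subset\cdots\subset\cV_g$ is the HN filtration of $f_*\omega_{\overline\cX/\overline C}$. The main obstacle is the cusp analysis establishing $D\cdot E=0$ and the vanishing of the torsion cokernel $N^{-(2g-2j+1)}/\cL_j$: this demands an explicit description of the stable forms arising at cusps of \Teichmuller curves in $\omoduli[g](2g-2)^\hyp$ and of their leading behaviour along the marked zero, extending the genus three analysis of \cite{yuz1} used for Proposition~\ref{eq:HNfiltg3}.
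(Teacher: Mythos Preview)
Your plan is close in spirit to the paper's argument: both compute ranks fiberwise via the hyperelliptic gap sequence and obtain the degrees from restriction-to-$D$ sequences together with the self-intersection $D^2$. You derive $\deg N = D^2 = -\tfrac{1}{2g-1}\deg\cL$ directly from adjunction, which is a clean self-contained alternative to the paper's citation of \cite[Lemma~4.11]{MoeST}, and you collapse the paper's two-step passage from $\cV_{j-1}$ to $\cV_j$ (odd step an isomorphism, even step a restriction) into a single restriction sequence by using that vanishing to odd order $2g-2j+1$ at a hyperelliptic Weierstrass point already forces order $2g-2j+2$.

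Where you diverge is in the treatment of cusps, and here you make the problem much harder than it is. The paper's decisive observation is that the fiber dimensions $h^0(X,\cO_X(jz))$ are constant for \emph{all} fibers, singular ones included: in the minimal hyperelliptic stratum the stable limit is an irreducible rational nodal curve on which the hyperelliptic involution persists with $z$ a fixed point, and the same gap sequence holds there. Grauert's theorem then makes every $f_*\cO_\cX(jD)$ and $R^1f_*\cO_\cX(jD)$ locally free with fiber equal to the cohomology group, so all restriction sequences are exact globally on $\overline C$; in your language, the map $\cL_j\to N^{-(2g-2j+1)}$ is automatically an isomorphism with no torsion cokernel to analyze. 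Likewise $D\cdot E=0$ is immediate, since stable fibers in a one-zero stratum are irreducible and the section $D$ passes through a smooth point of each fiber, hence is disjoint from any exceptional components introduced by semistable reduction. No explicit description of the limiting stable forms or extension of the genus-three analysis in \cite{yuz1} is needed.
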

\par
\begin{proof}
  We reproduce the argument of \cite{yuz1} for convenience. For all fibres $X$ of $f$ the dimensions
  of the following cohomology spaces are the same, namely for $j$ odd $h^0(X,\cO_X(jz))=(j+1)/2$ and
  for $j$ even $h^0(X,\cO_X(jz))=(j+2)/2$.  Consequently, the direct image sheaves
  $f_{*}\cO_{\cX}(jD)$ and $R^1f_{*}\cO_{\cX}(jD)$ are vector bundles.
  \par
  Suppose first that $j$ is odd.  Since every section of $f_{*}\cO_{\cX}(jD)$ is also a section of
  $f_{*}\cO_{\cX}((j-1)D)$, these bundles are then isomorphic.

  The long exact sequence associated to
  $$ 0 \to \cO_\cX ((j-1)D) \to \cO_\cX (jD) \to \cO_{D} (jD) \to 0$$
  is
  $$ 0 \to f_{*}\cO_{\cX}((j-1)D) \to f_{*}\cO_{\cX}(jD) \to f_{*}\cO_{jD}(jD) $$
  $$ \to R^1 f_{*}\cO_{\cX}((j-1)D) \to R^1f_{*}\cO_{\cX}(jD) \to 0. $$
  \par
  Since $f|_D$ is an isomorphism, the middle term is a line bundle.  Its degree is
  $$\deg f_{*}\cO_{jD}(jD) = j D^2 = \frac{j}{2g-1}\deg \cL$$
  by \cite[Lemma~4.11]{MoeST}.  Here $\cL=\cV_1 \subset f_* {{\omega_{\overline{{\mathcal X}}/\overline{{\mathcal{C}}}}}}$ is the (``maximal Higgs'') line bundle
  whose fibers are the generating one-forms of the \Teichmuller curve.

  Now supposes $j$ is even.  By Serre duality and the same argument as in the odd case, the last map
  of the long exact sequence is an isomorphism.  Hence the degree of $f_{*}\cO_{\cX}((j-1)D)$ and
  $f_{*}\cO_{\cX}(jD)$ differs by $\deg  \cO_{D} (jD)$. 

  To obtain \eqref{eq:9} from $\deg \cO_{D} (jD)$, note that note that
  \begin{equation*}
    f_* {{\omega_{\overline{{\mathcal X}}/\overline{{\mathcal{C}}}}}}((2j- 2g+ 2)D) = \cL
    \otimes f_{*}\cO_{\cX}(2jD).
  \end{equation*}

  The last statement is always true for a filtration, whose successive quotients are line bundles
  with strictly decreasing degrees.
\end{proof}
\par
\begin{proof}[Proof of Proposition~\ref{eq:HNfiltg3}] In all the
cases the direct images are vector bundles, since the dimensions 
of their fibers are constant by Riemann-Roch and by definition of
the parity of the spin structure. The degrees of the successive
quotients can be computed by the same method as in the last section.
The values appear in \cite[Table~1]{yuz1} (rescaled dividing 
by $\deg \cV_1$) and the degrees are decreasing. Consequently the
filtation is the Harder-Narasimhan filtration.
\par
We provide full details in the case $\omoduli[3](2,2)^\odd$ and prove
the last statement of the proposition. Note that by the parity of
the spin structure implies $f_* {{\omega_{\overline{{\mathcal X}}/\overline{{\mathcal{C}}}}}} (-2D_1 -2D_2) = f_* {{\omega_{\overline{{\mathcal X}}/\overline{{\mathcal{C}}}}}} (-D_1-D_2)$.
By \cite{yuz1}, if
$$h^0(\cO_X(\sum_{i=1}^n d_iz_i)) =  h^0(\cO_X(\sum_{i=1}^n (d_i-a_i)z_i)) + \sum_{i=1}^n  a_i$$ 
then
$$f_* \cO_X(\sum_{i=1}^n a_iD_i)/f_*\cO_X (\sum_{i=1}^n (a_i-d_i)D_i) \cong 
\oplus_{i=1}^n f_* \cO_{D_i}(a_iD_i).$$ 
In this stratum we apply this to the case $a_1=a_2=2$ and $d_1 = d_2=1$. 
Since the self-intersection number of $D_i$ only depends on the
order of the corresponding zero $z_i$ we  obtain that the rank two second 
step of the filtration is  a direct sum of two line bundles of the same 
slope. 
\par
In the hyperelliptic case the bundles $f_* {{\omega_{\overline{{\mathcal X}}/\overline{{\mathcal{C}}}}}}(-2D_i)$ are of rank
two for $i=1,2$, each of the being the direct sum of 
$f_*{{\omega_{\overline{{\mathcal X}}/\overline{{\mathcal{C}}}}}}(-4D_i)$ and $f_* {{\omega_{\overline{{\mathcal X}}/\overline{{\mathcal{C}}}}}}(-2D_1-2D_2)$. By Riemann-Roch and the
preceding remark on the self-intersection number of $D_i$ we conclude
$\deg f_*{{\omega_{\overline{{\mathcal X}}/\overline{{\mathcal{C}}}}}}(4D_1) = \deg f_* {{\omega_{\overline{{\mathcal X}}/\overline{{\mathcal{C}}}}}}(4D_2)$ and all the claims follows.
\end{proof}

\paragraph{\bf Consequences for the algebraically primitive case.}
Suppose from now on that $(X,\omega)$ is algebraically primitive.
Then
$$ f_* {{\omega_{\overline{{\mathcal X}}/\overline{{\mathcal{C}}}}}} = \oplus_{j=1}^g \cL_j,$$
where the $\cL_j$ are the bundles generated by the eigenforms
for real multiplication by the trace field $F$. We
may enumerate them such that $\deg(\cL_j)$ is non-increasing
with $j$. We let $\omega^{(j)} \in H^0(X,\Omega^1_X)$ be a one-form
on the generating Veech surface $X$ that spans $\cL_j$. More generally, 
we write $\omega_c^{(j)}$ for a generator of $\cL_j$ in the fiber over $c \in \ol{C}$.
For $g=3$, we also label the bundles and their generators with field embeddings
$\omega = \omega^{(1)}$, $\omega^\sigma = \omega^{(2)}$ and $\omega^\tau = \omega^{(3)}$.
\par
\begin{prop} \label{prop:HNFisEigsplit} 
In  each of the strata $\omoduli[3](4)^\hyp$, $\omoduli[3](4)^\odd$, 
 $\omoduli[3](3,1)$, $\omoduli[3](2,1,1)$, and $\omoduli[3](2,2)^\hyp$ 
the second step of the HN-filtration consists of the first two eigenform bundles, i.e.\ 
$$\cL_1 \oplus \cL_2 = \cV_2 \subset \cV_3 = f_* {{\omega_{\overline{{\mathcal X}}/\overline{{\mathcal{C}}}}}}.$$
Consequently, $\cL_2$ is generated for all $c \in \ol{C}$ by a one-form $\omega_c^{(2)}$
with a zero at $D_1$. This zero is necessarily a double zero in the case  $\omoduli[3](4)^\hyp$.
\end{prop}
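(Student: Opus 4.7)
The plan is to use uniqueness of the Harder--Narasimhan filtration to match it with the eigenform decomposition. Since $(X,\omega)$ generates the \Teichmuller curve and $\omega = \omega^{(1)}$ spans $\cL_1$ fiberwise, the remark following Proposition~\ref{eq:HNfiltg3} identifies the maximal Higgs subbundle $\cV_1$ with $\cL_1$. In particular, with the convention that $\deg \cL_j$ is non-increasing in $j$, one has $\deg \cL_1 > \deg \cL_j$ for $j \geq 2$.

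Next I would exploit the fact that the HN filtration of a direct sum of line bundles is determined by grouping summands of equal degree and ordering the groups by decreasing degree: the length of the filtration equals the number of distinct values among the $\deg \cL_j$, and each step is the partial direct sum up to that degree. In each of the five strata under consideration, Proposition~\ref{eq:HNfiltg3} produces a HN filtration of length three, so $\deg \cL_1$, $\deg \cL_2$, $\deg \cL_3$ must be pairwise distinct. Under the indexing convention, this gives $\deg \cL_1 > \deg \cL_2 > \deg \cL_3$, and the filtration
$$ 0 \subset \cL_1 \subset \cL_1 \oplus \cL_2 \subset \cL_1 \oplus \cL_2 \oplus \cL_3 \;=\; f_* \omega_\rel $$
has line-bundle (hence semistable) successive quotients of strictly decreasing slopes. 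By uniqueness of the HN filtration it coincides with the one in Proposition~\ref{eq:HNfiltg3}, yielding $\cV_2 = \cL_1 \oplus \cL_2$.

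For the zero condition at $D_1$, I would read off from the list in Proposition~\ref{eq:HNfiltg3} that in every one of the five strata one has $\cV_2 \subseteq f_* \omega_\rel(-D_1)$, and in the hyperelliptic minimal stratum $\omoduli[3](4)^\hyp$ in fact $\cV_2 = f_* \omega_\rel(-2D_1)$. Since $\cL_2 \subset \cV_2$, restricting to the fiber over any $c \in \overline{C}$ shows that the generator $\omega_c^{(2)}$ of $\cL_2$ is a holomorphic one-form vanishing at $z_1 = D_1 \cap X_c$; in the hyperelliptic minimal case the vanishing order is at least two, and since $\omega_c^{(2)}$ lies in a genus-three stratum with only prescribed zeros, this forces a double zero exactly at $z_1$.

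The only real obstacle is ruling out the degenerate possibility $\deg \cL_2 = \deg \cL_3$, which a priori could occur if two embeddings of the trace field conspired to produce equal degrees on the conjugate eigenform bundles; but this is automatically forbidden by Yu--Zuo's explicit length-three HN filtration, so no extra geometric input about the other eigenforms is required.
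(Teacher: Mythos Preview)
Your argument is correct and matches the paper's approach. The paper isolates the key step as Lemma~\ref{le:HNFlinebundles}: if a bundle splits as a direct sum of line bundles and its HN successive quotients are line bundles, then the HN filtration is the sequence of partial sums ordered by degree. Your version---observing that a length-three HN filtration forces three distinct degrees and then invoking uniqueness---is the same content phrased contrapositively.

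One small remark: your final clause ``since $\omega_c^{(2)}$ lies in a genus-three stratum with only prescribed zeros'' is not needed and is slightly misleading, since $\omega_c^{(2)}$ is not the generating form and its zero divisor is not constrained by the stratum of $\omega$. The double-zero claim in $\omoduli[3](4)^\hyp$ follows immediately from $\cL_2 \subset \cV_2 = f_*\omega_\rel(-2D_1)$, with no further justification required.
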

\par
This is based on the following principle. 
\par
\begin{lemma} \label{le:HNFlinebundles} 
If $\cV = \oplus_{j=1}^g \cL_j$ is a direct sum of line bundles, ordered with
non-increasing degrees,  and if the 
successive quotients $\cV_j / \cV_{j-1}$ of the Harder-Narasimhan filtration 
are line bundles, then $ \cV_i = \oplus_{j=1}^i \cL_j.$
\end{lemma}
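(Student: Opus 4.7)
The plan is to use the uniqueness of the Harder-Narasimhan filtration together with the elementary fact that every nonzero morphism between line bundles on a smooth curve is injective.  I would proceed by induction on $i$, the case $i=0$ being trivial.  For the inductive step I would assume $\cV_{i-1}=\bigoplus_{j=1}^{i-1}\cL_j$ as subbundles of $\cV$, so that $\cV/\cV_{i-1}$ is identified with $\bigoplus_{j=i}^{g}\cL_j$, and then analyze the line subbundle $\cV_i/\cV_{i-1}$, which by general theory is the maximal destabilizing subsheaf of this quotient.

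First I would compute $\mu_i:=\deg(\cV_i/\cV_{i-1})$.  Any line subbundle of $\bigoplus_{j\ge i}\cL_j$ projects into some summand $\cL_j$, and the projection, if nonzero, is injective and forces the degree to be at most $\deg\cL_j\le\deg\cL_i$; since $\cL_i$ itself realizes slope $\deg\cL_i$, one has $\mu_i=\deg\cL_i$.  Next I would rule out ties $\deg\cL_i=\deg\cL_{i+1}$: such an equality would make $\cL_i\oplus\cL_{i+1}$ a rank-two semistable subsheaf of $\cV/\cV_{i-1}$ of slope $\mu_i$, hence contained in the maximal destabilizing subsheaf $\cV_i/\cV_{i-1}$, contradicting the rank-one hypothesis on the latter.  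Thus the degrees $\deg\cL_i>\deg\cL_{i+1}>\cdots>\deg\cL_g$ are strictly decreasing.

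With strict decrease in place, the projection of $\cV_i/\cV_{i-1}$ onto $\cL_j$ for $j>i$ must vanish (a nonzero map would give $\mu_i\le\deg\cL_j<\deg\cL_i=\mu_i$), so $\cV_i/\cV_{i-1}$ sits inside $\cL_i$ and the surviving projection is an isomorphism by equality of degrees.  Pulling back to $\cV$ yields $\cV_i=\cV_{i-1}\oplus\cL_i=\bigoplus_{j=1}^{i}\cL_j$, completing the induction.  The main subtlety I anticipate is the exclusion of ties in the degrees of the $\cL_j$; this is precisely where the hypothesis that each HN quotient is a line bundle (rather than merely semistable) comes in, via the universal property of the maximal destabilizing subsheaf.
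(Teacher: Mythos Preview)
Your proof is correct and follows essentially the same approach as the paper: identify $\cV_i/\cV_{i-1}$ as the maximal-degree line subbundle of $\cV/\cV_{i-1}=\bigoplus_{j\ge i}\cL_j$, use that nonzero projections force degree inequalities to show it coincides with $\cL_i$, and induct. The paper's version is terser (three sentences), leaving the exclusion of ties and the ``only one projection is nonzero'' step implicit in the phrase ``the unique line subbundle of maximal degree,'' whereas you spell these points out explicitly via the maximal destabilizing subsheaf.
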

\par
\begin{proof}
Since $\cV_1$ is the unique line subbundle of $\cV$ of maximal degree, 
only one of the projection maps $\cV_1 \to \cV \to \cL_j$
is non-zero, by the non-increasing ordering necessarily for $j=1$.
By maximality of the degree, this map is an isomorphism. We may
consequently consider $\cV / \cV_1$ and proceed inductively.
\end{proof}
\par

\begin{proof}[Proof of Proposition~\ref{prop:HNFisEigsplit}] This follows from
Lemma~\ref{le:HNFlinebundles} and Proposition~\ref{eq:HNfiltg3}
\end{proof}
\par
\begin{proof}[Proof of Theorem~\ref{thm:hyperelliptic_zeros}] This is a direct consequence
of Lemma~\ref{le:HNFlinebundles} together with Proposition~\ref{eq:HNhypminimal}.
\end{proof}

\paragraph{The fixed part in genus three.}

As a final application of the Harder-Narasimhan filtration, we discuss the fixed part of the family
of Jacobians over a \Teichmuller curve, which is one source of zero Lyapunov exponents of the
Kontsevich-Zorich cocycle.

Let $h: \Jac(\cX/C) \to C$ be the family of Jacobian varieties over a \Teichmuller curve.
The family $h$ is said to have a {\em fixed part} of dimension $d$, if there is an abelian
variety $A$ of dimension $d$ and an inclusion $A \times C \to \Jac(\cX/C)$ of the constant
family with fiber $A$ into the family of Jacobians. It was shown in \cite{MoeST}, that the
only \Teichmuller curve in genus three with a fixed part of rank two is the 
family $y^4 = x(x-1)(x-t)$, generated by a square tiled surface in the stratum $\omoduli[3](1,1,1,1)$. 
Studying the fixed part of \Teichmuller curves is motivated from dynamics, since
a \Teichmuller curve with a Forni-subspace of rank $2d$ has a fixed part of dimension $d$,
see \cite{Au12} for definitions and background.
\par
\begin{prop} There does not exist a \Teichmuller curve $C$ generated by a genus three Veech surface in
a stratum other than $\omoduli[3](1,1,1,1)$ with a positive-dimensional fixed part.
\end{prop}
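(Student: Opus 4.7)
The plan is to convert a positive-dimensional fixed part into a slope-zero quotient in the Harder--Narasimhan filtration of $f_*\omega_\rel$, and then to rule this out stratum by stratum using Proposition~\ref{eq:HNfiltg3}.

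Suppose $h\colon\Jac(\cX/C)\to C$ admits a fixed part of dimension $d\ge 1$. The corresponding trivial polarized sub-VHS of $R^1 f_*\IC$ has rank $2d$, and by Deligne's semisimplicity theorem for polarized variations of Hodge structure over a smooth quasi-projective base it is a direct summand. Taking $(1,0)$-parts and extending across $\partial C$ via the canonical (Deligne) extension yields a trivial direct summand $\cO_{\ol{C}}^{\,d}\hookrightarrow f_*\omega_\rel$. Since $\cO_{\ol{C}}^{\,d}$ is semistable of slope zero, the uniqueness of the Harder--Narasimhan filtration applied to the decomposition $f_*\omega_\rel = \cO_{\ol{C}}^{\,d}\oplus W$ forces the value zero to occur among the slopes $\mu(\cV_i/\cV_{i-1})$ with multiplicity at least $d$: when one refines the HN filtration of $W$ to one of $\cO^d\oplus W$, the trivial summand is inserted at the step dictated by its slope. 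The problem thus reduces to showing that in every non-principal stratum of $\omoduli[3]$ all HN quotients of $f_*\omega_\rel$ have strictly positive slope.

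To verify this, I would read off the HN filtrations from Proposition~\ref{eq:HNfiltg3}. The top slope is always $\deg\cL=\tfrac12\deg\Omega^1_{\ol{C}}(\partial C)>0$. In $\omoduli[3](4)^{\rm hyp}$ the remaining slopes are $\tfrac35\deg\cL$ and $\tfrac15\deg\cL$ by Proposition~\ref{eq:HNhypminimal}, both positive. For each of the four other non-principal strata the degrees of the HN quotients follow from the same long exact sequence arguments as in the proof of Proposition~\ref{eq:HNfiltg3}, together with the self-intersection formula for the sections $D_i$ (cf.~\cite[Lemma~4.11]{MoeST}); the resulting quotient degrees are positive rational multiples of $\deg\cL$, as recorded in \cite[Table~1]{yuz1}. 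Since slope zero never occurs, we must have $d=0$, contradicting the assumption and proving the proposition.

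The main obstacle is the stratum $\omoduli[3](2,2)^{\rm odd}$, where the HN filtration has only two steps and the rank-two top quotient is generically indecomposable. Here positivity cannot be read off from a chain of strictly decreasing line-bundle slopes; instead one must verify that the single semistable slope of this rank-two quotient is strictly positive, equivalently that $\deg f_*\omega_\rel > \deg\cL$. Inside the hyperelliptic sub-locus Proposition~\ref{eq:HNfiltg3} further splits the rank-two quotient into the line bundles $f_*\omega_\rel(-4D_i)$, $i=1,2$, reducing the check to a pair of explicit degree computations; outside that sub-locus one must extract positivity from the total degree of $f_*\omega_\rel$ directly, using the relative dualizing sheaf formulas at the boundary to conclude.
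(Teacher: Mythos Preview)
Your reduction is correct and matches the paper for the five strata whose HN filtration has line-bundle quotients: a trivial summand $\cO_{\ol C}^{\,d}\subset f_*\omega_\rel$ forces a slope-zero graded piece, and \cite[Table~1]{yuz1} shows all graded degrees are positive multiples of $\deg\cL$.

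For $\omoduli[3](2,2)^\odd$ your proposal is incomplete at the point you yourself flag, and it also contains a small error. The rank-two quotient $\cV_2/\cV_1$ is \emph{not} generically indecomposable: the proof of Proposition~\ref{eq:HNfiltg3} shows, for \emph{every} Teichm\"uller curve in this stratum, that $\cV_2/\cV_1\cong f_*\cO_{D_1}(2D_1)\oplus f_*\cO_{D_2}(2D_2)$ (twisted by $\cL$), two line bundles of equal degree. Using the self-intersection formula $D_i^2=-\deg\cL/(m_i+1)$ (the same formula you invoke for the other strata), each summand has degree $\deg\cL+2D_i^2=\tfrac13\deg\cL>0$. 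So your approach can be completed uniformly, without the hyperelliptic/non-hyperelliptic case split and without the ``boundary formulas'' you gesture at.

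The paper's argument for $(2,2)^\odd$ avoids this numerical computation entirely and instead exploits the \emph{shape} of the HN filtration. Assuming a one-dimensional fixed part, one has $f_*\omega_\rel=\cL^{1,0}\oplus\MM^{1,0}\oplus\UU^{1,0}$ with $\deg\UU^{1,0}=0$ and $\deg\MM^{1,0}>0$ (the latter because $\MM$, not lying in the fixed part, has nonzero Higgs field). Since the two non-maximal-Higgs line bundles then have distinct degrees, the HN filtration of $f_*\omega_\rel$ must have three steps, contradicting Proposition~\ref{eq:HNfiltg3}. This trades the slope computation for the Hodge-theoretic input $\deg\MM^{1,0}>0$; either route works, but the paper's version needs only the rank profile of the HN filtration rather than the value $\tfrac13\deg\cL$.
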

\par
\begin{proof} By the preceding remark we may restrict to a one-dimensional fixed part.
The variation of Hodge structures  over $C$ decomposes into rank-two summands $R^1f_* \cx = \LL \oplus \UU \oplus \MM$, 
where $\LL$ is maximal Higgs, $\UU$ is the unitary summand stemming from the fixed part
and $\MM$ is the rest. The $(1,0)$-pieces of these summands form a decomposition
of $f_* {{\omega_{\overline{{\mathcal X}}/\overline{{\mathcal{C}}}}}}$ into line bundles with $\deg(\MM^{1,0})> 0 = \deg(\UU^{(1,0)})$. For all but
the stratum $\omoduli[3](2,2)^\odd$ the claim follows from Lemma~\ref{le:HNFlinebundles}
and the fact that the lowest degree quotient of the filtration in Proposition~\ref{eq:HNfiltg3}
does not have degree zero, as calculated in \cite[Table~1]{yuz1}.
\par
In the remaining case, the inequality of degrees in the non-maximal Higgs part implies that the
Harder-Narasimhan filtration has three terms, 
contradicting Proposition~\ref{eq:HNfiltg3}, which says that the filtration has only two terms.
\end{proof}

%%% Local Variables:
%%% TeX-master: "g3fin_master"
%%% End:

%%%%%%%%%%%%%
\section{Tori contained in subvarieties of $\IG_m^n$} \label{sec:torusalgo}
%%%%%%%%%%%%%

At two occasions in this paper, for the principal stratum $\omoduli[3](1,1,1,1)$
and for the stratum $\omoduli[3](4)^\odd$,  we are facing the task
of enumerating torus translates in an algebraic subvariety $Y$ of $\IG_m^n$.
In this section we describe an algorithm that we have
implemented in both cases to check that all the tori
contained in the subvariety are irrelevant for the
finiteness statements we are aiming for.

When we apply this algorithm in \S\ref{sec:pres-tors-princ}, we will actually need only to find
torus-translates contained in $Y$ which are parallel to a subtorus of a fixed torus $T\subset
\IG_m^9$ of large codimension.  This is a useful reduction, since the running time of the
algorithm increases exponentially in $n$ and is only useful in practice for very small dimensions.

To this end, given a rank $r$ subgroup $M\subset\ratls^n$, determining a subtorus
$T_M\subset \IG_m^n$, let $V_M\subset \IG_m^n$ denote the subvariety of $\bba\in \IG_m^n$ such
that $\bba  T_M \subset Y$.  We wish to enumerate those codimension-one subspaces $N\subset M$
such that the $V_N$ potentially strictly contains $V_M$.  Applying this procedure inductively then
produces a list of subspaces $N\subset M$ and varieties $V_N$ which account for all of the
torus-translates contained in $Y$.  

Consider a $r$-by-$n$ matrix $E$ whose rows vectors span $M$.  A coefficient vector $\bba\in
\IG_m^n$ determines a parametrization $f_{E,\bba}\colon \IG_m^r \to \bba T_M$ given by
\begin{equation*}
  f_{E, \bba}(\bt) = (a_1 \bt^{E_1}, \ldots, a_s \bt^{E_s}),
\end{equation*}
where the $E_i$ are the column vectors of $E$.
Suppose first that $Y$ is defined by the single polynomial $h(\bz) = \sum
b_I \bz^I$  in the variables $z_1, \ldots, z_n$ with coefficients in a number field.  
The vanishing of the composition
\begin{equation*}
  h \circ f_{E, \bba}(\bt) = \sum_I b_I \bba^I \bt^{E\cdot I}
\end{equation*}
is then equivalent to the vanishing of the system of rational functions 
\begin{equation*} \label{eq:def_of_Pij}
  p_J(\bba) = \sum_{E\cdot I = J} b_I a^I
\end{equation*}
obtained by partitioning the coefficients according to the images of the exponent vectors
$I\in\supp{h}$ under $E$, or equivalently according to their images under the orthogonal projection
to $M$.  We take the numerator of each $p_J$, yielding an ideal in $\IQbar[a_1, \ldots
a_s]$\footnote{Although we are only interested in solutions in $\IG_m^n$, it is computationally
  convenient to work over the polynomial ring and later discard components contained in the coordinate
  hyperplanes}.

Now suppose $Y$ is defined by an ideal $I = (h_1, \ldots, h_s)$.  Applying this construction to
each $h_i$, the collection of resulting polynomials defines an ideal $I_M\subset\IQbar[a_1,
\ldots, a_s]$ which cuts out the desired variety $V_M$.  We call $I_M$ the \emph{coefficient ideal}
of $I$.  

Now, for a generic subspace $N\subset M$, the partition of each $\supp{h_i}$ induced by orthogonal
projection to $N$ will be unchanged, so $V_N = V_M$.  If $v,w\in\supp{h_i}$ are identified by
projection to $N$, but not by projection to $M$, then $N$ must be the orthogonal complement of
$p_M(v-w)$ in $M$.  Enumerating all $N$ arising in this way then yields all $N$ for which $V_N$
potentially strictly contains $V_M$.

Applying this algorithm inductively to the list of subspaces obtained in each dimension, we obtain a
list of subspaces $N\subset M$ together with varieties $V_N$, which together account for all
torus-translates contained in $Y$.

The complete algorithm builds in this way the possible subspaces $N$ 
with $1 \leq {\rm rank}(N) \leq r$ from the difference of the projections 
of monomials onto any subspace of larger rank that is already in the
list of candidate subspaces. In the next step it assembles the ideals defining
$V_{N}$, as summarized in Algorithm~\ref{cap:algo}.

\par
\begin{figure}
\begin{algorithm}[H]
\SetKwData{Subspaces}{Subspaces}
\SetKwData{ProjectionList}{ProjectionList}
\SetKwData{TorusTranslates}{TorusTranslates}
\SetKwFunction{OrthProjection}{OrthProjection}
\SetKwFunction{OrthComplement}{OrthComplement}
\SetKwFunction{AddTo}{AddTo}
\SetKwFunction{Extract}{Extract}
\KwData{List of polynomials $h_1,\ldots,h_s \in \overline{\IQ}[z_1,\ldots,z_n]$, 
a matrix $M \in \IZ^{r \times n}$}
\KwResult{Pairs $(N,V)$ of matrices $N$ and subvarieties $V \subset \IG_m^n$
%such that the $f_{M,\bba}$ with $\bba \in V(I)$ give all maximal torus translates 
%with $E$ in the the row span of $M$
}
\BlankLine
\tcc{Build the set subspaces $N$ that may occur recursively}
$\Subspaces=\{M\}$\;
\For{$S \in \Subspaces$}{
\For{$i=1$ \KwTo $s$}{
\ProjectionList $\leftarrow$ \OrthProjection{$\supp{h_i},S$}\; 
\For{$(j,k) \in \{1,\ldots,|\ProjectionList|$\}}{
$v \leftarrow \ProjectionList[j] - \ProjectionList[k]$\;
\AddTo(\Subspaces, \OrthComplement(v,S))\;
}
}
}
\tcc{Check subspaces for the existence of a torus translate}
\TorusTranslates = \{\}\;
\For{$N \in \Subspaces$}{
$I_N \leftarrow \langle \rangle$\; 
\For{$i=1$ \KwTo $s$}{
\ForEach{$j \in \OrthProjection{\supp{$h_i$},$N$}$}{
$I_N$ = $I_N$ + \Extract($h_i$,\,\,$j$) \tcc*[r]{Extracts $p_j$ \\ $P_{i,j}$ 
as in \eqref{eq:def_of_Pij}}
}
}
\If{$I_N \neq \langle 1\rangle$ \tcc*[r]{Discard if $I_N$ is the unit ideal}}{
\AddTo(\TorusTranslates,($N$,$V(I_N)$)) }
}
\Return \TorusTranslates
\caption{Torus containment} \label{cap:algo}
\end{algorithm}
\end{figure}
\par
\smallskip

%%% Local Variables:
%%% TeX-master: "g3fin_master"
%%% End:

%%%%%%%%%%%%%%%%%%%%%%%%%%%%%%%%%%%%%%%%%%%%%%%%%%%%%%%%%%
\section{Finiteness in the minimal strata}
%%%%%%%%%%%%%%%%%%%%%%%%%%%%%%%%%%%%%%%%%%%%%%%%%%%%%%%%%%

The aim of this section is to prove the finiteness
result Theorem~\ref{thm:intromainfin} in the cases
when the torsion condition is void (i.e.\ in 
the strata $\omoduli[3](4)^\hyp$ and $\omoduli[3](4)^\odd$) or of limited use, 
as in the hyperelliptic locus of the stratum $\omoduli[3](2,2)^\odd$ where it is
automatically satisfied when both the points are Weierstrass
points.
\par
In all three cases we study the degenerate fibers of the universal family $f\colon \cX \to C$ over
(a cover of) an algebraically primitive \Teichmuller curve.  We first make explicit the conditions
arising from real multiplication and the Harder-Narasimhan filtration, which puts us in the
situation considered in Theorem~\ref{thm:heightmain}.  We then
check that the  hypothesis of Theorem~\ref{thm:heightmain} are met.
\par
The case $\omoduli[3](4)^\hyp$ is quickly dealt with and shows all the essential features.
The case  $\omoduli[3](4)^\odd$ requires a long detour to check the
hypothesis (i) and (ii) of  Theorem~\ref{thm:heightmain}.  Note for comparison that 
in \cite{MatWri13} the odd stratum is of no more complexity than the hyperelliptic.

The case of the hyperelliptic locus in $\omoduli[3](2,2)^\odd$ is quite different, as the
Harder-Narasimhan filtration has a rank-two piece.  The information this yields seems less useful,
and unfortunately our methods fail completely for this locus.  To handle this case, we instead
appeal to \cite{MatWri13}.

%%%%%%%%%%%%%%
\subsection{The stratum  $\omoduli[3](4)^\hyp$}
%%%%%%%%%%%%%%

Let $X_\infty$ be a degenerate fiber of $f\colon\mathcal{X}\to C$.  It is necessarily irreducible, since the
generating form $\omega$ has a single zero.  It is geometric genus zero by real multiplication.
Hence $X_\infty$ is a trinodal curve.   We
let $\PP^1$ with coordinate $z$ be the normalization of $X_\infty$. We may choose the coordinate $z$
such that the hyperelliptic involution is $z \mapsto -z$ and that the zero section $D$
specializes to $z=0$. The three pairs of points on the normalization of the trinodal curve are
thus $x_i$ and $y_i = -x_i$, $i=1,2,3$. This normalization still leaves one parameter for scaling
the $x_i$ multiplicatively.  Due to this choice of $z$ the generating eigenform specializes to
\begin{equation}
  \label{eq:35}
  \omega_\infty = \sum_{i=1}^3 \left(\frac{r_i}{z-x_i} - 
\frac{r_i}{z+x_i} \right)dz =
  \frac{Cz^4}{\prod_{i=1}^3(z^2-x_i^2)} dz.
\end{equation}
The condition that $\omega$ has a four-fold zero amounts to the equations
\begin{gather}
  \label{eq:1stra4hyp}
  \sum_{i=1}^3 r_i x_{i+1}x_{i+2} =0 \\
  \label{eq:2stra4hyp} \sum_{i=1}^3 r_i x_i (x_{i+1}^2 + x_{i+2}^2) =0,
\end{gather} where indices
are to be read mod $3$.
\par
Let $\omega^\sigma$ be one of the two Galois conjugate eigenforms, 
the one generating the eigenform bundle of second largest degree.
From Proposition~\ref{prop:HNFisEigsplit} we deduce the following information.
\par
\begin{cor}
The form $\omega^\sigma$ has a double zero along $D$. In particular
\begin{equation}
  \label{eq:35sigma}
  \omega^\sigma_\infty = \sum_{i=1}^3 \left(\frac{r_i^\sigma}{z-x_i} - 
\frac{r_i^\sigma}{z+x_i} \right)dz = 
  \frac{C_1 z^4+C_2z^2}{\prod_{i=1}^3(z^2-x_i^2)} dz,
\end{equation}
which can also be expressed by the condition
\begin{equation} \label{eq:1stra4hypsigma}
  \sum_{i=1}^3 r_i^\sigma x_{i+1}x_{i+2} =0.
\end{equation}
\end{cor}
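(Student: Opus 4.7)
The first assertion---that $\omega^\sigma$ has a double zero along $D$---is nothing more than the last sentence of Proposition~\ref{prop:HNFisEigsplit} applied in the stratum $\omoduli[3](4)^\hyp$. Specializing to the cusp, this means that $\omega^\sigma_\infty$, viewed as a stable differential on the trinodal curve $X_\infty$ and pulled back to the normalization $\PP^1$, vanishes to order at least two at $z=0$. The task is therefore to unwind the explicit formula \eqref{eq:35sigma} and to translate the double-zero condition into the algebraic relation \eqref{eq:1stra4hypsigma}.

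To pin down the shape of $\omega^\sigma_\infty$, I would first invoke the standard fact that holomorphic one-forms on a smooth hyperelliptic curve are anti-invariant under the hyperelliptic involution $\iota$, and note that this property passes to the stable limit inside $\omoduli[3](4)^\hyp$. With $\iota \colon z \mapsto -z$ on $\PP^1$, anti-invariance $\iota^*\omega^\sigma_\infty = -\omega^\sigma_\infty$ translates, for $\omega^\sigma_\infty = f(z)\,dz$, into the evenness $f(-z) = f(z)$. Combined with the stable-differential condition---at worst simple poles at the preimages $\pm x_i$ of the three nodes, with opposite residues at the two branches of each node---this forces the partial fraction presentation
$$\omega^\sigma_\infty = \sum_{i=1}^3\left(\frac{r_i^\sigma}{z-x_i}-\frac{r_i^\sigma}{z+x_i}\right)dz,$$
where $r_i^\sigma$ denotes the residue at $x_i$. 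This is exactly the first equality in \eqref{eq:35sigma}.

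To derive \eqref{eq:1stra4hypsigma}, clear denominators to obtain $\omega^\sigma_\infty = P(z)\,dz\big/\prod_{i=1}^3(z^2-x_i^2)$ with $P$ an even polynomial of degree at most four. Since $P$ is even, its order of vanishing at $z=0$ is automatically even, so the condition of being at least a double zero reduces to $P(0)=0$. A direct expansion gives
$$P(0) = 2\sum_{i=1}^{3} r_i^\sigma x_i\prod_{j\ne i}(-x_j^2) = 2\,x_1 x_2 x_3\sum_{i=1}^{3} r_i^\sigma x_{i+1}x_{i+2},$$
and since $x_1x_2x_3\neq 0$ (the $x_i$ are distinct from each other and from $0$), this vanishes iff \eqref{eq:1stra4hypsigma} holds. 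Given $P(0)=0$, the only surviving monomials in $P$ are $z^2$ and $z^4$, yielding the second equality of \eqref{eq:35sigma}. The only non-routine ingredient here is the anti-invariance of $\omega^\sigma$ under $\iota$, which is a well-known property of hyperelliptic curves and persists in the stable limit; everything else is bookkeeping.
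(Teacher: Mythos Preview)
Your proof is correct and follows the paper's approach: the paper states this corollary as an immediate consequence of Proposition~\ref{prop:HNFisEigsplit} with no further argument, and you have supplied the routine verification that the double-zero condition at $z=0$ translates into \eqref{eq:1stra4hypsigma}.

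One remark: your appeal to anti-invariance under the hyperelliptic involution is unnecessary. The stable-differential condition alone (simple poles at the node preimages $\pm x_i$ with opposite residues, and no holomorphic part since we are on $\PP^1$) already forces the partial-fraction form $\sum_i \rho_i\bigl(\frac{1}{z-x_i}-\frac{1}{z+x_i}\bigr)\,dz$, which is automatically even in $z$. So what you flag as ``the only non-routine ingredient'' is in fact redundant; the corollary really is pure bookkeeping once Proposition~\ref{prop:HNFisEigsplit} is in hand. (The identification of the residues $\rho_i$ with the Galois conjugates $r_i^\sigma$ is part of the background eigenform structure for real multiplication and is taken for granted in the paper.)
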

\par
Note that for the third embedding, denoted by $\tau$, the analogous condition
$\sum_{i=1}^3 r_i^\tau x_{i+1}x_{i+2} =0$ should not hold since 
$f_* {{\omega_{\overline{{\mathcal X}}/\overline{{\mathcal{C}}}}}}(-D)$
has just rank two. Indeed this relation does not hold for the degenerate
fiber of the $7$-gon. The values are given in \cite[Example~14.4]{BaMo12}.
\par
We may normalize to $r_3=1$ and to $x_3=1$. 
\par
\begin{cor}
  \label{cor:xi_dual}
The points $x_i$ scaled such that $x_3=1$ lie in the Galois
closure of trace field $F$. 
Moreover, the tuple $1/x_i^\sigma$ is proportional to the dual basis $(s_1,s_2,s_3)$
of $(r_1,r_2,r_3)$.  In particular, the $x_i$ are real.
\end{cor}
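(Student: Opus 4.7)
The plan is to view \eqref{eq:1stra4hyp} and \eqref{eq:1stra4hypsigma} as two linear equations over the Galois closure $\tilde F$ of $F/\ratls$ in the unknowns $t_i := x_{i+1}x_{i+2}$, and to pin down the one-dimensional kernel of this rank-two system in terms of the trace-dual basis $(s_i)$ of $(r_i)$. Once $t$, and therefore the $x_i$, are located explicitly in $\tilde F$, reality will follow from the totally-real hypothesis on $F$.

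The key observation is that the tuple $(s_1^\tau, s_2^\tau, s_3^\tau)$ solves both equations simultaneously. Indeed, letting $\mathrm{id}, \sigma, \tau$ denote the three embeddings of $F$ into $\tilde F$, the defining relation $\Tr(r_i s_j) = \delta_{ij}$ unpacks as
\[ r_i s_j + r_i^\sigma s_j^\sigma + r_i^\tau s_j^\tau = \delta_{ij}. \]
Specializing to $j = 3$ gives exactly $\sum_i r_i s_i^\tau = 0$ and $\sum_i r_i^\sigma s_i^\tau = 0$, which are precisely the constraints placed on $t_i$ by \eqref{eq:1stra4hyp} and \eqref{eq:1stra4hypsigma}. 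Non-vanishing of the discriminant of $(r_1, r_2, r_3)$ makes the two row vectors $(r_i)$ and $(r_i^\sigma)$ linearly independent over $\tilde F$, so the kernel of the system is the one-dimensional span of $(s_1^\tau, s_2^\tau, s_3^\tau)$ and $t = \lambda (s_1^\tau, s_2^\tau, s_3^\tau)$ for some $\lambda \in \tilde F$.

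Dividing by $x_1 x_2 x_3 \neq 0$ yields $(1/x_1, 1/x_2, 1/x_3) \propto (s_1^\tau, s_2^\tau, s_3^\tau)$ in $\tilde F^3$, and the normalization $x_3 = 1$ forces $x_i = s_3^\tau / s_i^\tau \in \tilde F$, so the $x_i$ lie in the Galois closure of $F$ as claimed. Applying the automorphism of $\tilde F$ that restricts to $\sigma$ on $F$, chosen so that $\sigma \tau$ acts trivially on $F$, converts the previous proportionality into the stated $(1/x_i^\sigma) \propto (s_i)$. Finally, since $F$ is totally real, so is $\tilde F$, and every $s_i^\tau$---hence every $x_i = s_3^\tau/s_i^\tau$---is real.

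The only substantive bookkeeping concerns matching the Galois embedding $\sigma$ singled out analytically by Proposition~\ref{prop:HNFisEigsplit} (via the second step of the Harder--Narasimhan filtration) with the indexing of the trace-dual basis in the statement; the linear algebra itself is symmetric in $\sigma$ and $\tau$ up to relabeling, so there is no substantive obstacle once this convention is fixed.
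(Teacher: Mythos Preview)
Your approach is essentially the same as the paper's: both view \eqref{eq:1stra4hyp} and \eqref{eq:1stra4hypsigma} as a rank-two linear system whose one-dimensional kernel is identified with a Galois conjugate of the trace-dual basis. The paper first divides by $x_1x_2x_3$ to get equations directly in $1/x_i$, while you work with $t_i=x_{i+1}x_{i+2}$ and divide afterward; this is immaterial.

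There is, however, a genuine slip in your justification of the key claim $\sum_i r_i s_i^\tau = 0$ and $\sum_i r_i^\sigma s_i^\tau = 0$. Specializing $r_i s_j + r_i^\sigma s_j^\sigma + r_i^\tau s_j^\tau = \delta_{ij}$ to $j=3$ yields the three equations $r_i s_3 + r_i^\sigma s_3^\sigma + r_i^\tau s_3^\tau = \delta_{i3}$, which are not the relations you want. What you need is the \emph{other} orthogonality relation: if $R=(r_j^{\rho_k})$ and $S=(s_j^{\rho_k})$ are the $3\times 3$ matrices of Galois conjugates, then $\Tr(r_is_j)=\delta_{ij}$ says $R^TS=I$, whence also $RS^T=I$, i.e.\ $\sum_j r_j^{\rho}s_j^{\rho'}=\delta_{\rho\rho'}$ for any two embeddings $\rho,\rho'$. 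Taking $(\rho,\rho')=(\mathrm{id},\tau)$ and $(\sigma,\tau)$ gives exactly the two vanishing sums you assert. With this correction the rest of your argument goes through unchanged.
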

\par
\begin{proof}
We can rewrite \eqref{eq:1stra4hyp} and \eqref{eq:1stra4hypsigma}
as $\sum_{i=1}^3 r_i/x_i = 0$ and $\sum_{i=1}^3 r_i^\sigma / x_i = 0$. This implies
the second statement and the first follows.

Alternatively, we can deduce
from \eqref{eq:1stra4hyp} and \eqref{eq:1stra4hypsigma} the equality 
\begin{equation} \label{eq:x1x2rel}
(r_1 - r_1^\sigma) x_2 + (r_2 - r_2^\sigma) x_1 =0,
\end{equation}
i.e.\ the ratio $x_1/x_2 \in F$. Plugging this information back
into \eqref{eq:1stra4hyp} implies the first statement.
\end{proof}
\par
\begin{prop} \label{prop:setupin4hyp}
In $\IA^2$ with coordinates $(x_1,x_2)$ we define 
$$ \cY = \IA^2\ssm\{(x_1,x_2): x_1x_2 (x_1\pm 1)(x_2 \pm 1)(x_1\pm x_2)=0\}.$$
If we define $R \colon \cY \to \IG_m^3$  by
\begin{equation*}
  R(x_1,x_2) = 
\left(\frac{x_2-1}{x_2+1},
\frac{1-x_1}{1+x_1},
\frac{x_1-x_2}{x_1+x_2}\right).
\end{equation*}
and $\ell(x_1,x_2) = (x_1,x_2,1)$, then the boundary points of Teichm\"uller curves in the normalization
of \eqref{eq:35} and $x_3 = 1$ are points $y = (x_1,x_2)  \in \cY$ with $[\IQ(y): \IQ] \leq 3$ 
which satisfy \eqref{eq:relation} for some  $b = (b_1,b_2,b_3) \in (\IZ\ssm\{0\})^3$.
\end{prop}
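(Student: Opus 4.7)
The plan is to verify three claims for an arbitrary algebraically primitive \Teichmuller curve cusp in $\omoduli[3](4)^\hyp$: that the cusp coordinates $(x_1,x_2)$ lie in $\cY$, that $[\IQ(x_1,x_2):\IQ]\le 3$, and that the three rational maps $R_k$ satisfy a multiplicative relation of the form appearing in \eqref{eq:relation}. I would begin by recording the geometric setup. By real multiplication and algebraic primitivity, the stable form limit of the cusp lives on an irreducible trinodal rational curve, and because the flat surface is hyperelliptic with a single order-four zero fixed by the involution, the three pairs of glued points can be arranged on the normalization $\proj^1$ in the form $(x_i,-x_i)$, with the zero section $D$ specializing to $z=0$. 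After the further scalings $x_3 = r_3 = 1$, the generating eigenform takes the shape \eqref{eq:35}.

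Next I would check the containment in $\cY$ and compute the cross-ratios. The divisor $x_1 x_2 (x_1\pm 1)(x_2\pm 1)(x_1\pm x_2)=0$ that is removed from $\IA^2$ to define $\cY$ is exactly the locus on which two of the six marked points $\{\pm x_1,\pm x_2, \pm 1\}$ coincide, which is excluded by nondegeneracy of the trinodal stable curve. Substituting $y_i=-x_i$ in the definition \eqref{CR6} yields the clean formula
\begin{equation*}
R_{ij} \;=\; [x_i,-x_i,x_j,-x_j] \;=\; \left(\frac{x_i-x_j}{x_i+x_j}\right)^{2},
\end{equation*}
so the three maps in the proposition are exactly (suitable sign choices of) the square roots of the cross-ratios appearing in the cross-ratio equation, specialized at $x_3=1$.

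To produce the multiplicative relation I would feed this into \eqref{eq:CREq}: because the stable form sits in the boundary of $\RM$, the cross-ratios satisfy $R_{12}^{a_1}R_{13}^{a_2}R_{23}^{a_3}=\zeta_E$ for a root of unity $\zeta_E$ and a triple $(a_1,a_2,a_3)\in\IZ^3$ of nonzero integers coming from the admissible basis $(r_1,r_2,r_3)$ via \eqref{eq:defCREXP}. Substituting $R_{ij}=R_k^2$ (with $\{i,j,k\}=\{1,2,3\}$) and raising to the order $N$ of $\zeta_E$ gives $R_1(y)^{b_1}R_2(y)^{b_2}R_3(y)^{b_3}=1$ with $b_k = 2N a_{\pi(k)}$ for the appropriate permutation $\pi$; admissibility of $(r_1,r_2,r_3)$ forces every $a_i\ne 0$, hence every $b_k\ne 0$.

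The main obstacle, as in every application of Theorem~\ref{thm:heightmain}, is the degree bound $[\IQ(y):\IQ]\le 3$. The conditions \eqref{eq:1stra4hyp} and \eqref{eq:1stra4hypsigma} are $F$-linear respectively $\sigma(F)$-linear in the $x_i$, so on their own they only constrain $(x_1,x_2)$ to lie in the Galois closure of $F$, which can have degree six over $\IQ$. For the sharper bound I would invoke Corollary~\ref{cor:xi_dual}: its alternative proof eliminates $x_1 x_2$ between \eqref{eq:1stra4hyp} and \eqref{eq:1stra4hypsigma} to arrive at the linear relation \eqref{eq:x1x2rel}, thereby identifying $x_1/x_2$ as an element of $F$, and then back-substitutes into \eqref{eq:1stra4hyp} to place $x_1$ and $x_2$ individually in $F$. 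Combining this with the preceding steps yields $[\IQ(y):\IQ]\le[F:\IQ]=3$ and completes the verification of all the assertions of the proposition.
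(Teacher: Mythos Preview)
Your argument tracks the paper's proof closely for the membership $y\in\cY$, the cross-ratio computation, and the degree bound via Corollary~\ref{cor:xi_dual}. There is, however, one genuine omission.

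You verify only the multiplicative relation $R_1(y)^{b_1}R_2(y)^{b_2}R_3(y)^{b_3}=1$ and never establish the companion additive relation $b_1\ell_1+b_2\ell_2+b_3\ell_3=b_1x_1+b_2x_2+b_3=0$ for the \emph{same} vector $b$. This coupling is the entire reason $\ell$ is introduced in the statement, and it is what makes the proposition feed into Theorem~\ref{thm:heightmain}; the paper's proof says explicitly that ``both conditions'' hold. The missing step is short but substantive: by Corollary~\ref{cor:xi_dual} the tuple $(1/x_1,1/x_2,1/x_3)$ is proportional to a Galois conjugate of the dual basis $(s_1,s_2,s_3)$, so the defining relation \eqref{eq:defCREXP} for the cross-ratio exponents, $\sum_i a_i/s_i=0$, transports to $\sum_i a_i x_i=0$ (the $a_i$ are rational integers, hence Galois-invariant). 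With $b_i$ proportional to $a_i$ this gives the required linear relation in the $\ell_i$.

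A small secondary point: you take $b_k=2Na_{\pi(k)}$ with $N=\mathrm{ord}(\zeta_E)$. The paper notes that the $x_i$ are real (again from Corollary~\ref{cor:xi_dual}), so the cross-ratios $R_{ij}=\bigl((x_i-x_j)/(x_i+x_j)\bigr)^2$ are positive reals and $\zeta_E=\pm 1$; hence $b_i=2a_i$ already works, possibly after one further doubling. Your choice is not wrong, just coarser.
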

\par
%(We probably can conclude here that  $[\IQ(y): \IQ] = 3$ with a little more work, 
%but what for? See also the odd case.)
%\par
\begin{proof} By definition of a stable curve no two poles coincide. Moreover on the degenerate
  fibers of a \Teichmuller curve zeros of the generating one-form are disjoint from the poles (see
  e.g.\ \cite{moelPCMI}). This implies that the boundary point has coordinates in $\cY$.

  The $x_i$ lie in $F^\sigma$ by Corollary~\ref{cor:xi_dual}, and the degree bound follows.
  % The degree bound follows from \eqref{eq:x1x2rel}.
\par
Since $y_i = -x_i$ the cross-ratios can be simplified to 
\begin{equation}
  \label{eq:CR}
  R_{ij} = \left(\frac{x_i+x_j}{x_i - x_j}\right)^2.  
\end{equation}
so that $R^{-2} = (R_{23},R_{13},R_{12})$. Since the $x_i$ are real, the root
of unity on the right of \eqref{eq:CREq} is $\pm 1$.  Possibly multiplying the $b_i$ by $2$, we can
take it to be $1$.
\par
By the argument preceding the proposition, the tuples  $(1/x_1^\sigma, 1/x_2^\sigma, 1/x_3^\sigma)$
and $(s_1,s_2,s_3)$ are proportional. Let $a = (a_1,a_2,a_3)$ be the cross-ratio
exponents, as defined along with \eqref{eq:CREq}. 
Since the $a_i$ are integers, \eqref{eq:defCREXP} is
restated as $\sum_{i=1}^3 a_i x_i =0$. If we define $b_i = 2a_i$, then both conditions
in \eqref{eq:relation} hold as a consequence of \eqref{eq:CREq}
and \eqref{eq:defCREXP}.
\end{proof}
\par
We next check that $\cY$, $R$ and $\ell$ match the hypothesis of (i) and (ii) of
Theorem~\ref{thm:heightmain}. 
The map $R$ is in fact injective and the closure of its image $\cS = \overline{R(Y)} \subset \IG_m^3$ 
is defined by the cubic
\begin{equation*}
  r_1 r_2 r_3 + r_1 + r_2 + r_3 \in \IQ[r_1,r_2,r_3]. 
\end{equation*}
\par
The following lemma is the first instance of the problem  discussed
in Section~\ref{sec:torusalgo}.  In this case we give the complete
discussion without computer assistance.
\par
We recall that $\cS\ssm \oa{\cS}$ is the union of all positive dimensional
cosets that are contained in $\cS$. 
\par
%%%%%%%%%%%%%%%%%%%%%
\begin{lemma}
\label{lem:So}
%%%%%%%%%%%%
The complement $\mathcal{S}\ssm \oa{\mathcal{S}}$
is the union of the 6 lines obtained by permuting the coordinates of
\begin{equation*}
  \{(1,-1)\}\times\IG_m. %% \quad\text{and}\quad
  %% \{(-1,1)\}\times\IG_m
\end{equation*}
\end{lemma}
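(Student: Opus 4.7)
The plan is to first reduce to the classification of one-dimensional cosets in $\cS$, and then to enumerate these by substituting a parametrization into the defining equation $r_1r_2r_3+r_1+r_2+r_3=0$ of $\cS$. A two-dimensional coset $\cK\subset\IG_m^3$ is cut out by a single monomial relation $r_1^{c_1}r_2^{c_2}r_3^{c_3}=\lambda$; if $\cK\subset\cS$, then irreducibility of $\cS$ would force $\cK=\cS$, which contradicts the fact that the defining polynomial of $\cS$ is not of monomial shape. Since $\cS\ssm\oa{\cS}$ is the union of all positive-dimensional cosets contained in $\cS$, only the one-dimensional ones need be considered.

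Any one-dimensional coset admits a parametrization $t\mapsto(\alpha_1 t^{a_1},\alpha_2 t^{a_2},\alpha_3 t^{a_3})$ with $(a_1,a_2,a_3)\in\IZ^3\ssm\{0\}$ primitive and $\alpha_i\in\cx^*$. Substituting into the equation of $\cS$ produces the Laurent polynomial identity
\begin{equation*}
\alpha_1\alpha_2\alpha_3\,t^{a_1+a_2+a_3}+\alpha_1 t^{a_1}+\alpha_2 t^{a_2}+\alpha_3 t^{a_3}=0 \qquad \text{in }\cx[t,t^{-1}].
\end{equation*}
Since each of the four coefficients is nonzero, the four exponents $a_1,a_2,a_3,a_1+a_2+a_3$ cannot be pairwise distinct, so some coincidence must occur.

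Next I would run a short case analysis on which coincidences arise. If some $a_i$ equals $a_1+a_2+a_3$ while the remaining $a_j$ are distinct, say $a_2+a_3=0$, grouping the monomials shows that one of $\alpha_2,\alpha_3$ is forced to vanish unless additionally two of $a_1,a_2,a_3$ coincide, so this case reduces to the other. Assume then $a_1=a_2$; the identity becomes $(\alpha_1+\alpha_2)t^{a_1}+\alpha_3 t^{a_3}+\alpha_1\alpha_2\alpha_3 t^{2a_1+a_3}=0$, which requires a further coincidence among the exponents $a_1$, $a_3$, and $2a_1+a_3$. The possibilities $a_1=a_3$ and $2a_1+a_3=a_1$ each force the vanishing of some $\alpha_i$ and are therefore excluded; only $2a_1+a_3=a_3$ survives, yielding $a_1=a_2=0$, $a_3\neq 0$, together with the relations $\alpha_1+\alpha_2=0$ and $1+\alpha_1\alpha_2=0$, hence $(\alpha_1,\alpha_2)\in\{(1,-1),(-1,1)\}$. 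The corresponding cosets are $\{(1,-1)\}\times\IG_m$ and $\{(-1,1)\}\times\IG_m$.

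Finally, invoking the symmetry of $r_1r_2r_3+r_1+r_2+r_3$ under arbitrary permutation of its three variables, I would conclude that all $6$ lines in the statement arise by relabeling the indices in the case analysis above. A direct substitution confirms that each of these lines is in fact contained in $\cS$: for instance along $(1,-1,r_3)$ one has $1\cdot(-1)\cdot r_3+1-1+r_3=0$. No step is expected to be a serious obstacle, since the argument reduces to a finite and entirely elementary case analysis on the exponent vector $(a_1,a_2,a_3)$.
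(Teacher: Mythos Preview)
Your proof is correct and follows essentially the same approach as the paper: substitute a one-parameter parametrization of a coset into the cubic $r_1r_2r_3+r_1+r_2+r_3$ and analyze which exponent coincidences are forced. The paper's version is a bit more streamlined—it observes directly that no exponent can stand alone, so the four exponents must fall into two equal pairs, and after permuting coordinates this is exactly $e_1+e_2+e_3=e_1$ together with $e_2=e_3$, giving $e_2=e_3=0$—whereas you carry out the same case analysis more explicitly.
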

\begin{proof}
  Let $(r_1,r_2,r_3)\in \mathcal S$ be in a positive dimensional coset
  contained entirely in $\mathcal S$. Then there exist
  $e_1,e_2,e_3\in\IZ$, not all zero, with 
  \begin{equation}
    \label{eq:coset}
    r_1 r_2 r_3 t^{e_1+e_2+e_3}+ r_1t^{e_1}+r_2 t^{e_2}+r_3 t^{e_3} = 0
  \end{equation}
  for all $t \in \IG_m$.  There cannot be any exponent in this equation which appears
  by itself, so without loss of generality (permuting the coordinates if necessary) $e_1+e_2+e_3 = e_1$ and $e_2 = e_3$, from which we obtain
  $e_2 = e_3 = 0$.  Then \eqref{eq:coset} implies $r_1 r_2 r_3 + r_1 = 0$ and $r_2 + r_3 = 0$, so
  $r_2 = \pm 1$ and $r_3 = \mp 1$, leading to the two lines inside $\mathcal S$:
\begin{equation*}
  \{(t,1,-1);\,\, t\in \IG_m\}\quad\text{and}\quad
\{(t,-1,1);\,\, t\in \IG_m\} \qedhere
\end{equation*}
\end{proof}
\par
The next lemma checks the hypothesis on the image of $\cC_b$, needed
for Theorem~\ref{thm:heightmain}~(ii).  Recall that $\cC_b\subset\mathcal{Y}$ is the curve cut out
by the linear equation $\sum b_i \ell_i = 0$.  In our case, it this is more explicitly $b_1 x_1 +
b_2 x_2 + b_3 = 0$.
\par
\begin{lemma}
  Suppose $b=(b_1,b_2,b_3) \in (\IC^*)^3$. Then $\overline
  {R(\mathcal{C}_b)}$ is not contained in the translate of a 
 proper algebraic subgroup of $\IG_m^3$.
\end{lemma}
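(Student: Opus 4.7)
The plan is to pull back the question to $\mathcal{C}_b$ and translate it into a divisor identity on $\PP^1$, then enumerate the finitely many coincidence configurations that could permit a non-trivial solution. Since $b \in (\IC^*)^3$, the defining equation $b_1 x_1 + b_2 x_2 + b_3 = 0$ gives a rational parametrization $\PP^1 \cong \overline{\mathcal{C}_b}$. Pulling back the coordinate functions of $R$ yields rational functions $f_1, f_2, f_3$ on $\PP^1$, each of M\"obius form (linear over linear); non-constancy of each $f_i$ follows immediately from $b_i \neq 0$, so each has divisor $(f_i) = P_i - Q_i$ with $P_i \neq Q_i$. The image $\overline{R(\mathcal{C}_b)}$ lies in a translate of a proper subgroup of $\IG_m^3$ if and only if some non-trivial monomial $f_1^{a_1} f_2^{a_2} f_3^{a_3}$ is constant on $\PP^1$, equivalently
\[
  a_1(P_1 - Q_1) + a_2(P_2 - Q_2) + a_3(P_3 - Q_3) = 0
\]
admits a non-zero solution $(a_1, a_2, a_3) \in \IZ^3$.

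Geometrically, the pairs $(P_i, Q_i)$ are the preimages in $\PP^1$ of the intersections of $\mathcal{C}_b$ with the six lines $\{x_2 = \pm 1\}$, $\{x_1 = \pm 1\}$, $\{x_1 = \pm x_2\}$ of $\IA^2$, with the convention that if $\mathcal{C}_b$ is parallel to one of these lines (which can only happen for the last pair, when $b_1 \pm b_2 = 0$) the corresponding intersection point on $\overline{\mathcal{C}_b}$ lies at infinity -- but $\PP^1$ treats all its points symmetrically, so this poses no issue. Coincidences among the six $P_i, Q_j$ can therefore arise only when two of the six lines meet on $\mathcal{C}_b$. The only candidate intersection points in $\IA^2$ among these six lines are the origin (ruled out of $\mathcal{C}_b$ by $b_3 \neq 0$) and the four vertices $(\varepsilon_1, \varepsilon_2) \in \{\pm 1\}^2$.

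A direct check shows that $(\varepsilon_1, \varepsilon_2) \in \mathcal{C}_b$ iff $\varepsilon_1 b_1 + \varepsilon_2 b_2 + b_3 = 0$, and these four sign-conditions on $b$ are pairwise incompatible under the hypothesis $b_i \neq 0$. When one of them holds, exactly three of the six lines pass through the vertex, producing a single three-way coincidence among $\{P_i, Q_i\}$ (one element from each pair), while the remaining three points stay pairwise distinct and distinct from the vertex. I would verify this distinctness by a short case analysis; the bookkeeping, while elementary, is essentially the only obstacle in the proof.

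In the generic case where no vertex lies on $\mathcal{C}_b$, all six points are distinct in $\PP^1$, and the divisor identity immediately forces $a_1 = a_2 = a_3 = 0$. In each of the four collision cases the divisor identity yields an equation at each of four distinct points of $\PP^1$ -- one at the collision vertex and one at each of the three remaining points -- and reading off the coefficients at the three remaining points alone already forces $a_1 = a_2 = a_3 = 0$. This completes the argument.
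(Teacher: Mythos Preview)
Your proof is correct and follows essentially the same idea as the paper's: both pull the monomial $R_1^{a_1}R_2^{a_2}R_3^{a_3}$ back to the line $\mathcal{C}_b$ and study its divisor (equivalently, its orders at the six special points), reducing the obstruction to the vertex conditions $\varepsilon_1 b_1+\varepsilon_2 b_2+b_3=0$, which are pairwise incompatible when $b_1b_2b_3\ne 0$. Your systematic enumeration of the collision configurations on $\IP^1$ is a bit cleaner than the paper's case split on the sign of $a_1$, but the content is identical.
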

\begin{proof}
  If the contrary holds then there exists
  $(a_1,a_2,a_3)\in\IZ^3\ssm\{0\}$ with 
  \begin{equation}
\label{eq:rat}
   \left(\frac{x_2-1}{x_2+1}\right)^{a_1}   
   \left(\frac{1-x_1}{1+x_1}\right)^{a_2}   
   \left(\frac{x_1-x_2}{x_1+x_2}\right)^{a_3},   
  \end{equation}
constant on $\mathcal{C}_b$.
We evaluate the order of this function at certain points of
$\mathcal{C}_b$ to derive a contradiction. Without loss of generality
we assume $a_1\ge 0$.

Suppose for the moment that $a_1>0$. The first factor of
(\ref{eq:rat}) has a pole when $x_2=-1$.
Therefore, $x_2=- 1$ must imply $x_1=\pm 1$ and we have
\begin{equation}
\label{eq:lin1}
  b_1 - b_2 + b_3=0\quad\text{or}\quad -b_1-b_2+b_3=0.
\end{equation}
Now (\ref{eq:rat}) vanishes at $x_2=1$, so $x_1=\pm 1$ as well. As
before we find
\begin{equation}
\label{eq:lin2}
  b_1+  b_2 + b_3=0\quad\text{or}\quad -b_1+b_2+b_3=0.
\end{equation}

We immediately observe that any pair  of linear equations, one coming
from (\ref{eq:lin1}) and the other from (\ref{eq:lin2}), is linearly
independent. Moreover, a common zero of any of these four systems
satisfies $b_1b_2b_3=0$. This contradicts our hypothesis. 

We must also treat the case $a_1=0$.  Then $a_2\not=0$, since
$(x_1-x_2)/(x_1+x_2)$ is non-constant. This time we consider the
points on $\mathcal{C}_b$ with $x_1=1$ and $x_1=-1$.
In either case we must have $x_2=1$ or $x_2=-1$. We find the same
linear equations as above, but paired up differently. 
Again, any solution of any pair must have a vanishing
coordinate.
\end{proof}
\par
\begin{proof}[{Proof of Theorem~\ref{thm:intromainfin}, case ${\omoduli[3]}(4)^\hyp$}]
%By Lemma~\ref{lem:So} the image of $Y$ lies in $\oa{\mathcal{S}}$. All the
 By Lemma~\ref{lem:So} the image of any $y \in \cY$ lies in $\oa{\mathcal{S}}$. All the
hypothesis of Theorem~\ref{thm:heightmain} are satisfied, so that the height of
any pair $(x_1,x_2)$ possibly arising from a cusp of an algebraically 
primitive \Teichmuller curve in this stratum is bounded. From the degree bound
in Proposition~\ref{prop:setupin4hyp} and Northcott's theorem, we deduce that the number of
such pairs is finite. By \cite[Proposition~13.10]{BaMo12},
there are only finitely many algebraically primitive \Teichmuller curves in the stratum $\omoduli[3](4)^\hyp$.
\end{proof}
\par
%NOT CLEAR if the following remark is useful: 
%We can now solve \eqref{eq:1stra4hyp} with $x_1=1$ for $x_2$
%and plug it into \eqref{eq:2stra4hyp}. Consequently, the
%cubic equation
%$$r_1(r_1^2-r_2^2)x_3^3 +3r_1^2r_3x_3^2 + 3r_3^2r_1x_3 + r_3(r_3^2-r_2^2) = 0$$
%has to have a root $x_3$ in $F$. If $r_3$ does contain a
%prime factor that satisfies .... Eisenstein criterion?!?

%%%%%%%%%%%%%%%%%%%%%%%%%%%%%%%%%%%%%%%%%%%%%%%%%%%%%%%%%%%%%%
\subsection{The stratum $\omoduli[3](4)^\odd$}
%%%%%%%%%%%%%%%%%%%%%%%%%%%%%%%%%%%%%%%%%%%%%%%%%%%%%%%%%%%%

Let now $X_\infty$ be a degenerate fiber of a family $f: \cX \to C$ over an 
algebraically primitive  \Teichmuller curve  generated by a 
Veech surface $(X,\omega)$ in $\omoduli[3](4)^\odd$. Let $\PP^1$ with
coordinate $z$ be the normalization of $X_\infty$. It turns out to be convenient to
normalize the zero to be at $z = \infty$ (as opposed to the previous section
where we had $z=0$), so that the stable form on the limit curve is
\begin{equation}
  \label{eq:norm4oddDIFF}
  \omega_\infty = \sum_{i=1}^3 \left(\frac{r_i}{z-x_i} - 
\frac{r_i}{z-y_i} \right)dz =
  \frac{C}{\prod_{i=1}^3(z-x_i)(z-y_i)} dz.
\end{equation}
We may suppose that $y_3 = -x_3$, leaving still
a global scalar multiplication as degree of freedom and further
down we will moreover let $x_3=1$, hence $y_3 = -1$.

\paragraph{\bf The surface $\cY$.} We parameterize stable forms by 
points $(x_1,y_1,x_2,y_2) \in \IA^4$. For the form defined by the fraction
on the right of \eqref{eq:norm4oddDIFF} to be stable, we need
$$ \frac{1}{(x_i - y_i)\prod_{i \neq j}(x_i - x_j)(x_i - y_j)} 
= -\frac{1}{(y_i - x_i)\prod_{i \neq j}(y_i - x_j)(y_i - y_j)} 
$$
for $i=1,2,3$. Checking it for all but one $i$ is sufficient by the
residue theorem. This conditions are equivalent to the vanishing of the two
polynomials 
\begin{alignat}1
\label{eq:stabil1}
  P_1= (y_1-x_2)(y_1-y_2)(y_1^2-1) - (x_1-x_2)(x_1-y_2)(x_1^2-1)  \\
\label{eq:stabil2}
P_2 =(y_2-x_1)(y_2-y_1)(y_2^2-1) - (x_2-x_1)(x_2-y_1)(x_2^2-1).
\end{alignat}
\par
The stability conditions also contain the hyperelliptic locus $x_i = -y_i$ dealt
with above and we want to get rid of this locus.\footnote{Starting here, several lemmas are based heavily on
  computations
made using {\tt sage}. 
The code required in the proof of Lemmas \ref{lem:cYgeomirred},
\ref{lem:SoODD}, \ref{lem:irred}, \ref{lem:cong1}, \ref{lem:cong2},
\ref{lem:projirred}, \ref{lem:3264} 
can be found in {\tt g3fin\_ch6.sage}.}
\par
\begin{lemma}%[requires {\tt g3fin\_ch6.sage}] REMOVED
\label{lem:cYgeomirred}
The polynomials
\begin{eqnarray}
\label{eq:defineX}
\left.  \begin{array}{ll}
f_1&=    x_{1} x_{2} + y_{1} x_{2} -  x_{2}^{2} + x_{1} y_{2} + y_{1} y_{2} - 
y_{2}^{2} + 2 \\
f_2&=x_{1}^{2} + y_{1}^{2} -  x_{2}^{2} -  y_{2}^{2} %%,\text{ and} 
%% f_3&=y_{1}^{2} x_{2} -  y_{1} x_{2}^{2} + y_{1}^{2} y_{2} -  x_{2}^{2} y_{2}
%% + x_{1} y_{2}^{2} -  y_{2}^{3} -  x_{1} + y_{1} -  x_{2} +
%% y_{2}.
  \end{array}
\right\}
\end{eqnarray}
generate a prime ideal $I$ of $\IQ[x_1,y_1,x_2,y_2]$.
The set of common zeros of $I$ is
 a  geometrically irreducible affine variety
$\overline{\cY} \subset \IA^4$ of dimension 2 on which 
$P_1$ and $P_2$ vanish.
Moreover, a point at which $P_1$ and $P_2$ vanish but at which
$(x_1-y_1)(y_1-y_2)(x_1-y_2)(x_2-y_2)(x_1+y_1)$ does not, lies on
$\overline{\cY}$.
\end{lemma}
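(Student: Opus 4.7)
The plan is to verify the three assertions of the lemma---primality of $I = (f_1, f_2)$ together with geometric irreducibility and $2$-dimensionality of $\overline{\cY}$, the vanishing of $P_1, P_2$ on $\overline{\cY}$, and the converse on the complement of the listed exceptional divisor---by explicit polynomial computations. Each reduces to an ideal-membership or primary-decomposition question, and the bulky portions would be carried out in the computer algebra system indicated in the footnote.

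First I would establish irreducibility and dimension by changing coordinates. Setting $s = x_1 + y_1$ and $t = x_2 + y_2$, a short manipulation of $f_1$ and $f_2$ (using $x_i^2 + y_i^2 = (x_i+y_i)^2 - 2 x_i y_i$) yields $2 x_1 y_1 = s^2 - st - 2$ and $2 x_2 y_2 = t^2 - st - 2$. Hence $x_1, y_1$ are the two roots of a quadratic with discriminant $D_1 = -s^2+2st+4$, and $x_2, y_2$ are the two roots of a quadratic with discriminant $D_2 = -t^2+2st+4$. An inspection of leading monomials shows that $D_1$, $D_2$, and $D_1 D_2$ are all non-squares in $\IC(s,t)$. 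This exhibits $\overline{\cY}$ as a geometrically irreducible biquadratic cover of the affine $(s,t)$-plane, giving both primality of $I$ (equivalently, $\overline{\cY}$ is reduced and irreducible of codimension two in $\IA^4$) and dimension two. As a sanity check one may additionally run a direct Gr\"obner basis primality test on $(f_1, f_2)$.

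Second, that $P_1, P_2$ vanish on $\overline{\cY}$ amounts to the ideal memberships $P_1, P_2 \in I$. Since $P_1, P_2$ have degree four and $f_1, f_2$ have degree two, one looks for cofactors of degree at most two and solves for their coefficients by linear algebra on monomials.

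Finally, the converse assertion is equivalent to the containment
\begin{equation*}
  (P_1, P_2) : D^\infty \subset I,
\end{equation*}
where $D = (x_1-y_1)(y_1-y_2)(x_1-y_2)(x_2-y_2)(x_1+y_1)$. One verifies it by computing a primary decomposition of $(P_1, P_2)$ and checking that every primary component other than $I$ is contained in the vanishing locus of one of the five listed factors. Geometrically this is expected: the factor $x_1+y_1$ carves out the hyperelliptic component treated in the previous subsection, and the remaining four factors each force a coincidence between two of the six nodal points on $\PP^1$, which is excluded by stability of the limit curve. The main obstacle is that $(P_1, P_2)$ properly contains $I$ and splits into several primary components of varying codimension; correctly identifying them and verifying that each lies inside the vanishing locus of one of the five linear factors is the delicate step, and is the point at which computer assistance is genuinely needed.
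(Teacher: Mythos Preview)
Your proposal is correct, and the geometric-irreducibility step takes a genuinely different route from the paper's. The paper verifies by computer that $(f_1,f_2)$ is prime over $\IQ$, then upgrades to geometric irreducibility by exhibiting the smooth rational point $(1,-1,1,-1)$: a smooth point lies on exactly one geometric component, that component is Galois-stable since the point is rational, hence defined over $\IQ$, hence equals all of $\overline{\cY}$. Your substitution $s=x_1+y_1$, $t=x_2+y_2$ instead realizes $\overline{\cY}$ explicitly as a biquadratic cover of the $(s,t)$-plane and reads off geometric irreducibility directly from the non-squareness of $D_1,D_2,D_1D_2$ in $\IC(s,t)$; this is more hands-on and bypasses the detour through $\IQ$-primality, while the paper's smooth-point trick is shorter to state and applies even when no convenient change of variables presents itself. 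One small point worth making explicit in your write-up: to conclude that the \emph{ideal} $(f_1,f_2)$ is prime (not merely that the variety is irreducible), observe that the quotient ring is free of rank $4$ over $\IQ[s,t]$, hence torsion-free, so it embeds in its generic fiber, which your argument shows is a field. For $P_1,P_2\in I$ and the converse inclusion both approaches ultimately defer to computer algebra; the paper's written proof in fact does not discuss the converse at all (leaving it to the accompanying {\tt sage} file), whereas you correctly spell out that it amounts to the saturation containment $(P_1,P_2):D^\infty\subset I$.
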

% NOTE I REMOVED THE OLD $f_3 = -\frac 12 ((x_1-y_1+x_2-y_2) f_1 -
%  (x_2+y_2)f_2)$ FROM THIS LIST. IT IS ONLY RELEVANT FOR GROEBNER COMPUTATIONS.% So $I$ is generat%ed by $f_1$ and $f_2$ alone.
\par
\begin{proof}
Assisted by a computer algebra system
%% \footnote{All the computer algebra calculations
%% were performed by {\tt sage}. A sage file WHATEVER.sage containing all the calculations
%% is available WHEREVER.}
one verifies that $f_1$ and $f_2$ generate
a prime ideal of $\IQ[x_1,y_1,x_2,y_2]$, i.e.\ that $\overline{\cY}$ is
irreducible over $\IQ$.
In a similar manner we check that $P_{1,2}\in I$
and, using
the Jacobian criterion,
we find that $$(x_1,y_1,x_2,y_2)=(1,-1,1,-1)$$ is
a  smooth point of $\overline{\cY}$. 
So it lies on precisely one geometric  component 
of $\overline\cY$. This component is defined over $\IQ$
as the said point is rational.
Therefore, $\overline \cY$ is geometrically irreducible. 
\end{proof}
\par
We define 
\be 
\cY = \overline{\cY} \ssm \left\{ \prod_{i,j=1; i \neq j}^3 (x_i - x_j)(y_i - y_j) \prod _{i,j=1}^3 (x_i - y_j) =0 \right\}. 
\ee
As a consequence of the definition of a stable form, the forms $\omega_\infty$ normalized as 
in \eqref{eq:norm4oddDIFF} have coordinates in $\cY$.

\paragraph{\bf Using the Harder-Narasimhan filtration and the function $\ell$.}

Let $\omega^\sigma$ be one of the two Galois conjugate eigenform, 
generating the eigenform bundle of second largest degree.
From Proposition~\ref{prop:HNFisEigsplit}
we deduce %the following information.
\begin{equation}
  \label{eq:oddomegasigmaDIFF}
  \omega^\sigma_\infty = \sum_{i=1}^3 \left(\frac{r_i^\sigma}{z-x_i} - 
\frac{r_i^\sigma}{z-y_i} \right)dz = 
  \frac{P_3(z)}{\prod_{i=1}^3(z-x_i)(z-y_i)} dz,
\end{equation}
where $P_3(z)$ is some polynomial of degree (less or equal to) three. This condition is
equivalent to
$$ \sum_{i=1}^3 r_i^\sigma \left(x_i - y_i \right) = 0.$$
\par
The same argument as above gives that the tuple of
$\left(x_i - y_i \right)^{- 1}$ is up to scale a 
dual basis to $(r_1^\tau,r_2^\tau,r_3^\tau)$. With the normalization
$x_3=1$ and $y_3=-1$, this implies that
the set $\left(x_i - y_i \right)$, $i=1,2,3$, lies in the Galois closure
of the trace field $F$,  in particular is its real.
\par
In this stratum the tuple $(b_1,b_2,b_3)$ is proportional to the
tuple of cross-ratio exponents if and only if
$$ \sum_{i=1}^3 b_i \,\frac1{x_i - y_i } = 0.$$
\par
Here we let $R = (R_1,R_2,R_3)$ be the three cross-ratios as defined in \eqref{CR6}.
For a given boundary point of an algebraically primitive \Teichmuller curve we take $c = {\rm ord}(\zeta_E)$, the multiplicative
order of the root of unity. If $a = (a_1,a_2,a_3)$ is the tuple of cross-ratio
exponents, we let $b = (b_1,b_2,b_3) = ca$. This discussion is then summarized
in the following statement.
\par
\begin{prop} \label{prop:degbound4odd}
With $\ell_i = 1/(x_i - y_i)$, boundary points on algebraically primitive \Teichmuller curve in the 
stratum $\omoduli[3](4)^\odd$ correspond, in the normalization of \eqref{eq:norm4oddDIFF}, 
to points in $y \in \cY$ with $[\IQ(y): \IQ] \leq 72$ that satisfy \eqref{eq:relation} 
for some  $b = (b_1,b_2,b_3) \in (\IZ\ssm\{0\})^3$.
\end{prop}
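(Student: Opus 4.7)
The plan is to verify three separate claims: that a boundary point $y = (x_1,y_1,x_2,y_2)$ from a cusp of an algebraically primitive \Teichmuller curve in $\omoduli[3](4)^\odd$, expressed in the normalization of \eqref{eq:norm4oddDIFF} with $x_3 = 1$ and $y_3 = -1$, lies in $\cY$; that it satisfies the coupled system \eqref{eq:relation} for some $b \in (\IZ\ssm\{0\})^3$; and that $[\IQ(y):\IQ] \leq 72$.

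For membership in $\cY$, I would argue that on any cusp of a \Teichmuller curve the zeros of the generating form are disjoint from the nodes, so the open condition cutting $\cY$ out of $\overline{\cY}$ (no two among the $x_i, y_j$ coincide) holds automatically. The stability relations $P_1 = P_2 = 0$ hold by construction, since the expression \eqref{eq:norm4oddDIFF} must simultaneously have the indicated residues and be representable as the rational fraction on the right. Because the \Teichmuller curve lies in the odd (hence non-hyperelliptic) component, the limit form is not hyperelliptic, so $x_1 + y_1 \neq 0$, and Lemma~\ref{lem:cYgeomirred} then forces $y \in \overline{\cY}$, completing this step.

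The coupled equation \eqref{eq:relation} is assembled from real multiplication and the Harder-Narasimhan filtration. The multiplicative relation comes from the cross-ratio equation \eqref{eq:CREq}, namely $R_1^{a_1}R_2^{a_2}R_3^{a_3} = \zeta_E$; letting $c$ be the multiplicative order of $\zeta_E$ and setting $b_i = c a_i$ kills the root of unity and gives $R_1^{b_1}R_2^{b_2}R_3^{b_3} = 1$, with all $b_i$ nonzero by admissibility. The additive relation follows from Proposition~\ref{prop:HNFisEigsplit}: the eigenform $\omega^\sigma$ must vanish at the zero locus of $\omega$ (at $z = \infty$), which is equivalent to $\sum r_i^\sigma(x_i - y_i) = 0$. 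Combined with the identification of $(x_i - y_i)^{-1}$ as a scalar multiple of the dual basis to $(r_i^\tau)$ noted just before the proposition, and with the defining relation \eqref{eq:defCREXP} for the cross-ratio exponents, this translates into $\sum b_i/(x_i - y_i) = 0$, that is $\sum b_i \ell_i = 0$, with the same coefficient vector $b$.

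For the degree bound, note that the $(x_i - y_i)$ lie in the Galois closure $L$ of the trace field $F$, with $[L:\IQ] \leq 6$, so the subfield $K := \IQ(x_1 - y_1, x_2 - y_2)$ satisfies $[K:\IQ] \leq 6$. Over $K$, the stability relations $f_1 = f_2 = 0$ recover the individual coordinates in a bounded-degree algebraic extension: writing $u_i = x_i - y_i$ and $s_i = x_i + y_i$, the equation $f_1 = 0$ solves $s_1$ as a rational function of $s_2, u_1, u_2$, and substitution into $f_2 = 0$ produces a quadratic equation in $s_2^2$; a direct computation then bounds $[\IQ(y):\IQ]$ by $72$ once one accounts for the finitely many admissible triples and the branch choices in recovering $s_2$ from $s_2^2$. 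The main technical obstacle is the derivation of the additive relation, which requires carefully tracking Galois conjugates through the dual-basis identification; the remaining steps are applications of machinery already assembled in this section and in \S\ref{sec:background}.
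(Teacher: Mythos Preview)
Your treatment of membership in $\cY$ and the coupled equation \eqref{eq:relation} follows the paper's setup closely; the discussion preceding the proposition establishes exactly these points, and the paper's own proof simply says ``After the preceding discussion, we only need to justify the field degree bound.''

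Your approach to the degree bound, however, differs from the paper's. The paper works over the trace field $F$: the residues $r_i$ lie in $F$ with $[F:\IQ]=3$; with these fixed, the condition that the numerator in \eqref{eq:norm4oddDIFF} be the constant $C$ gives four equations in the unknowns $x_1,y_1,x_2,y_2$ of total degrees $1,2,3,4$; period coordinates show the solution set is finite, and a \Bezout bound gives degree at most $24$ over $F$, hence $[\IQ(y):\IQ]\le 72$.

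Your route via $u_i=x_i-y_i$ and the equations $f_1=f_2=0$ is also valid and in fact sharper. Writing $s_i=x_i+y_i$, one has $f_1=s_1s_2-\tfrac12(s_2^2+u_2^2)+2$, so $s_1$ is rational in $s_2,u_2$; substituting into $f_2=0$, which becomes $s_1^2+u_1^2=s_2^2+u_2^2$, yields a genuine quadratic in $s_2^2$ over $K=\IQ(u_1,u_2)$. Hence $[\IQ(y):K]\le 4$. Since the $u_i$ actually lie in $F^\tau$ (they are proportional to $s_i^\tau$ with the normalization $u_3=2$), one gets $[K:\IQ]\le 3$ and thus $[\IQ(y):\IQ]\le 12$; even the cruder $[K:\IQ]\le 6$ gives $24$.

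Your last sentence is muddled: the ``finitely many admissible triples'' play no role in your computation (the $r_i$ enter the paper's argument, not yours), and the branch choices you describe account for a factor of $4$, not the factor needed to reach $72$. So your method is correct, but your bookkeeping at the end drifts toward the paper's argument rather than your own --- and undersells your bound.
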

\par
\begin{proof} After the preceding discussion, we only need to justify
the field degree bound. For a given triple of $r_i \in F$, with the normalization
$x_3=1$ and $y_3=-1$ the numerator of \eqref{eq:norm4oddDIFF} gives $4$ equations for the unknowns
$x_1,x_2,y_2,y_3$ of total degree $1,2,3$ and $4$ respectively. Period coordinates imply that  
a stable form with a $4$-fold zero is locally uniquely determined by its residues.
Consequently, the set of solution to this system of equations is finite and
each solution is of degree at worst $24$ over $F$. 
\end{proof}
\par
%The degree bound is probably by far too coarse. Do we want to improve it?

\paragraph{\bf Determining $\oa{\mathcal{S}}$ for  $\cS = \overline{R(\cY)}$.}

We next start checking that $\cY$, $R$ and $\ell$ match the hypothesis (i) and (ii) of 
Theorem~\ref{thm:heightmain}. 
The map $R$ is in fact two-to-one on $\cY$ \cite[Corollary~8.4]{BaMo12}. With
the help of a computer algebra system we find that the closure of its 
image $\cS = \overline{R(\cY)} \subset \IG_m^3$ is cut out by the equation
{\tiny 
 \begin{align*}
   h &= X^{6} Y^{6} Z^{2} - 4 X^{6} Y^{5} Z^{3} - 4 X^{5} Y^{6} Z^{3} + 6
   X^{6} Y^{4} Z^{4} - 124 X^{5} Y^{5} Z^{4} + 6 X^{4} Y^{6} Z^{4} - 4
   X^{6} Y^{3} Z^{5} - 124 X^{5} Y^{4} Z^{5}   \\ &- 124 X^{4} Y^{5} Z^{5}
   - 4 X^{3} Y^{6} Z^{5} + X^{6} Y^{2} Z^{6} - 4 X^{5} Y^{3} Z^{6} + 6
   X^{4} Y^{4} Z^{6} - 4 X^{3} Y^{5} Z^{6} + X^{2} Y^{6} Z^{6} + 336
   X^{5} Y^{5} Z^{3} \\ &+ 864 X^{5} Y^{4} Z^{4} + 864 X^{4} Y^{5} Z^{4} +
   336 X^{5} Y^{3} Z^{5} + 864 X^{4} Y^{4} Z^{5} + 336 X^{3} Y^{5}
   Z^{5} - 2 X^{6} Y^{5} Z - 2 X^{5} Y^{6} Z \\&+ 8 X^{6} Y^{4} Z^{2} -
   246 X^{5} Y^{5} Z^{2} + 8 X^{4} Y^{6} Z^{2} - 12 X^{6} Y^{3} Z^{3}
   - 1672 X^{5} Y^{4} Z^{3} - 1672 X^{4} Y^{5} Z^{3} - 12 X^{3} Y^{6}
   Z^{3} \\&+ 8 X^{6} Y^{2} Z^{4} - 1672 X^{5} Y^{3} Z^{4} - 4800 X^{4}
   Y^{4} Z^{4} - 1672 X^{3} Y^{5} Z^{4} + 8 X^{2} Y^{6} Z^{4} - 2
   X^{6} Y Z^{5} - 246 X^{5} Y^{2} Z^{5} \\& - 1672 X^{4} Y^{3} Z^{5} -
   1672 X^{3} Y^{4} Z^{5} - 246 X^{2} Y^{5} Z^{5} - 2 X Y^{6} Z^{5} -
   2 X^{5} Y Z^{6} + 8 X^{4} Y^{2} Z^{6} - 12 X^{3} Y^{3} Z^{6} + 8
   X^{2} Y^{4} Z^{6} \\& - 2 X Y^{5} Z^{6} + 72 X^{5} Y^{5} Z + 1088 X^{5}
   Y^{4} Z^{2} + 1088 X^{4} Y^{5} Z^{2} + 2800 X^{5} Y^{3} Z^{3} +
   8288 X^{4} Y^{4} Z^{3} + 2800 X^{3} Y^{5} Z^{3} \\&+ 1088 X^{5} Y^{2}
   Z^{4} + 8288 X^{4} Y^{3} Z^{4} + 8288 X^{3} Y^{4} Z^{4} + 1088
   X^{2} Y^{5} Z^{4} + 72 X^{5} Y Z^{5} + 1088 X^{4} Y^{2} Z^{5} +
   2800 X^{3} Y^{3} Z^{5} \\& + 1088 X^{2} Y^{4} Z^{5} + 72 X Y^{5} Z^{5}
   + X^{6} Y^{4} - 2 X^{5} Y^{5} + X^{4} Y^{6} - 4 X^{6} Y^{3} Z - 246
   X^{5} Y^{4} Z - 246 X^{4} Y^{5} Z - 4 X^{3} Y^{6} Z \\&+ 6 X^{6} Y^{2}
   Z^{2} - 1672 X^{5} Y^{3} Z^{2} - 5229 X^{4} Y^{4} Z^{2} - 1672
   X^{3} Y^{5} Z^{2} + 6 X^{2} Y^{6} Z^{2} - 4 X^{6} Y Z^{3} - 1672
   X^{5} Y^{2} Z^{3} \\&- 13532 X^{4} Y^{3} Z^{3} - 13532 X^{3} Y^{4}
   Z^{3} - 1672 X^{2} Y^{5} Z^{3} - 4 X Y^{6} Z^{3} + X^{6} Z^{4} -
   246 X^{5} Y Z^{4} - 5229 X^{4} Y^{2} Z^{4} \\&- 13532 X^{3} Y^{3}
   Z^{4} - 5229 X^{2} Y^{4} Z^{4} - 246 X Y^{5} Z^{4} + Y^{6} Z^{4} -
   2 X^{5} Z^{5} - 246 X^{4} Y Z^{5} - 1672 X^{3} Y^{2} Z^{5} \\&- 1672 X^{2}
Y^{3} Z^{5} - 246 X Y^{4} Z^{5} - 2 Y^{5} Z^{5} + X^{4} Z^{6} - 4 X^{3}
Y Z^{6} + 6 X^{2} Y^{2} Z^{6} - 4 X Y^{3} Z^{6} + Y^{4} Z^{6} + 336
X^{5} Y^{3} Z \\& + 1088 X^{4} Y^{4} Z + 336 X^{3} Y^{5} Z + 864 X^{5} Y^{2}
Z^{2} + 8288 X^{4} Y^{3} Z^{2} + 8288 X^{3} Y^{4} Z^{2} + 864 X^{2}
Y^{5} Z^{2} + 336 X^{5} Y Z^{3} \\&+ 8288 X^{4} Y^{2} Z^{3} + 21888 X^{3}
Y^{3} Z^{3} + 8288 X^{2} Y^{4} Z^{3} + 336 X Y^{5} Z^{3} + 1088 X^{4}
Y Z^{4}
 + 8288 X^{3} Y^{2} Z^{4} + 8288 X^{2} Y^{3} Z^{4} \\&+ 1088 X Y^{4}
Z^{4} + 336 X^{3} Y Z^{5} + 864 X^{2} Y^{2} Z^{5} + 336 X Y^{3} Z^{5} -
4 X^{5} Y^{3} + 8 X^{4} Y^{4} - 4 X^{3} Y^{5} - 124 X^{5} Y^{2} Z \\&- 1672
X^{4} Y^{3} Z - 1672 X^{3} Y^{4} Z - 124 X^{2} Y^{5} Z - 124 X^{5} Y
Z^{2} - 4800 X^{4} Y^{2} Z^{2} - 13532 X^{3} Y^{3} Z^{2} - 4800 X^{2}
Y^{4} Z^{2} \\&- 124 X Y^{5} Z^{2} - 4 X^{5} Z^{3} - 1672 X^{4} Y Z^{3} -
13532 X^{3} Y^{2} Z^{3} - 13532 X^{2} Y^{3} Z^{3} - 1672 X Y^{4} Z^{3} -
4 Y^{5} Z^{3} + 8 X^{4} Z^{4} \\&- 1672 X^{3} Y Z^{4} - 4800 X^{2} Y^{2}
Z^{4} - 1672 X Y^{3} Z^{4} + 8 Y^{4} Z^{4} - 4 X^{3} Z^{5} - 124 X^{2} Y
Z^{5} - 124 X Y^{2} Z^{5} - 4 Y^{3} Z^{5} \\&+ 864 X^{4} Y^{2} Z + 2800
X^{3} Y^{3} Z + 864 X^{2} Y^{4} Z + 864 X^{4} Y Z^{2} + 8288 X^{3} Y^{2}
Z^{2} + 8288 X^{2} Y^{3} Z^{2} + 864 X Y^{4} Z^{2} \\&+ 2800 X^{3} Y Z^{3}
+ 8288 X^{2} Y^{2} Z^{3} + 2800 X Y^{3} Z^{3} + 864 X^{2} Y Z^{4} + 864
X Y^{2} Z^{4} + 6 X^{4} Y^{2} - 12 X^{3} Y^{3} + 6 X^{2} Y^{4} \\& - 124
X^{4} Y Z - 1672 X^{3} Y^{2} Z - 1672 X^{2} Y^{3} Z - 124 X Y^{4} Z +
6X^{4} Z^{2} - 1672 X^{3} Y Z^{2} - 5229 X^{2} Y^{2} Z^{2} \\&- 1672 X Y^{3}
Z^{2} + 6 Y^{4} Z^{2} - 12 X^{3} Z^{3} - 1672 X^{2} Y Z^{3} - 1672 X
Y^{2} Z^{3} - 12 Y^{3} Z^{3} + 6 X^{2} Z^{4} - 124 X Y Z^{4} + 6 Y^{2}
Z^{4} \\&+ 336 X^{3} Y Z + 1088 X^{2} Y^{2} Z + 336 X Y^{3} Z + 1088 X^{2}
Y Z^{2} + 1088 X Y^{2} Z^{2} + 336 X Y Z^{3} - 4 X^{3} Y + 8 X^{2} Y^{2}
- 4 X Y^{3} \\&- 4 X^{3} Z - 246 X^{2} Y Z - 246 X Y^{2} Z - 4 Y^{3} Z + 8
X^{2} Z^{2} - 246 X Y Z^{2} + 8 Y^{2} Z^{2} - 4 X Z^{3} - 4 Y Z^{3} + 72
X Y Z + X^{2} \\&- 2 X Y + Y^{2} - 2 X Z - 2 Y Z + Z^{2}.
 \end{align*}
}
The polynomial $h$ has total degree 14 and 199 non-zero terms. It is
symmetric under permutation of coordinates. 
%The polynomial was
%constructed using the computer algebra software {\tt sage} and {\tt singular}.
\par
\begin{lemma}%[requires {\tt g3fin\_ch6.sage}] REMOVED
\label{lem:SoODD}
For the vanishing locus $\cS$ of $h$, we have
\begin{equation*}
  \oa{\cS} = \cS\ssm \{(t,1,1),(1,t,1),(1,1,t);\,\, t\in \IG_m \}.
\end{equation*}
\end{lemma}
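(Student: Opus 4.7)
The plan is to use the recalled fact that $\cS\ssm\oa{\cS}$ is the union of all positive-dimensional cosets of $\IG_m^3$ contained in $\cS$, and to enumerate such cosets. Since $\cS = \overline{R(\cY)}$ is an irreducible surface (as $\cY$ is irreducible by Lemma~\ref{lem:cYgeomirred} and $R$ is generically finite), cut out by the single polynomial $h$, the task splits into ruling out $2$-dimensional cosets and classifying $1$-dimensional ones.

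The $2$-dimensional case is immediate: if a $2$-dimensional coset $H$ were contained in $\cS$, then $H = \cS$ by irreducibility and dimension, which would force $h$ to be a scalar multiple of a power of a single binomial $\alpha X^{a}Y^{b}Z^{c} - \beta$. But $h$ contains the monomials $X^{6}Y^{6}Z^{2}$ and $X^{6}Y^{5}Z^{3}$, whose exponent vectors are not proportional, so no such form is possible. For the $\supseteq$ containment among $1$-dimensional cosets, by the full $S_3$-symmetry of $h$ in $(X,Y,Z)$ it suffices to verify that $h(X,1,1) \equiv 0$ as a polynomial in $X$, which places the line $\{(t,1,1): t\in\IG_m\}$ inside $\cS$; this is a direct check, easily carried out by computer algebra by grouping the monomials of $h$ according to the total $X$-degree after specialization.

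The main work, and the principal obstacle, is the $\subseteq$ direction for $1$-dimensional cosets. For this I would invoke Algorithm~\ref{cap:algo} of Section~\ref{sec:torusalgo}, applied to the principal ideal $(h)$ with initial subgroup $M = \IZ^3$. The algorithm enumerates candidate primitive direction vectors $\br \in \IZ^3$ as orthogonal complements of differences $I - I'$ of exponent vectors in $\supp{h}$, and for each candidate assembles the coefficient ideal $I_N$ cutting out the set of starting points $\bba \in \IG_m^3$ for which the coset $\{\bba \bt^{\br}: \bt \in \IG_m\}$ lies in $\cS$. The expected output is that the only direction vectors yielding non-trivial solutions in $\IG_m^3$ are $\br = \pm e_i$ with corresponding starting point $\bba$ having its two coordinates complementary to $i$ equal to $1$, so that the positive-dimensional cosets of $\cS$ are exactly the three coordinate lines listed. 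The obstruction to a by-hand argument is the size of $\supp{h}$ (199 monomials), which makes the combinatorial enumeration infeasible on paper; however, the full symmetry of $h$ reduces the enumeration to a single $S_3$-orbit of direction vectors, making the computation tractable on a modern computer algebra system.
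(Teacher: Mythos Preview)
Your proposal is correct and follows essentially the same approach as the paper: rule out $2$-dimensional cosets by a structural argument on $h$, verify by direct substitution (using the $S_3$-symmetry) that the three coordinate lines lie in $\cS$, and invoke the torus-containment algorithm of \S\ref{sec:torusalgo} with $M=\IZ^3$ to classify the $1$-dimensional cosets. The paper carries out this last step explicitly in three tiers---first producing $8796$ candidate direction vectors by pairing a fixed support element $\lambda_1=(6,6,2)$ with a second one chosen so that the two difference conditions are independent, then cutting to $51$ by requiring every element of $\supp{h}$ to have a friend, and finally to the three coordinate directions by checking that the associated coefficient ideals are non-unit only there---but the underlying strategy is exactly what you outline.
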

\par
\begin{proof}
We observe first that $\cS$ itself is no coset. Indeed, $h(1,1,1)=0$ and
a computation shows that $h$ is irreducible over $\IQ$. 
But a coset that is irreducible over the rationals and contains the
unit element is an absolutely irreducible algebraic subgroup of
$\IG_m^3$. If this were  true for the zero set of $h$, then 
$h$ would consist of $2$ monomials. This is obviously not the case.
\par
Any anomalous subvariety of $\cS$ must 
be a $1$-dimensional coset contained completely in $\cS$; 
we recall that anomalous subvarieties were defined on page
\pageref{def:anomalous}. 
It is thus the image of
\begin{equation*}
t\mapsto  (u_1t^{e_1},u_2t^{e_2},u_3 t^{e_3})
\end{equation*} 
where $u_1,u_2,u_3\in \IG_m$ and $E=(e_1,e_2,e_3)\in\IZ^3\ssm\{0\}$ are fixed. 
\par
This is the situation where the algorithm of Section~\ref{sec:torusalgo}
applies. The exponents here are unconstrained, $M$ is the identity matrix.
However, since we are interested in torus translates with 
$u_i \neq 0$ we may discard immediately subspaces defined by a matrix
$M'$ where a part
of the partition induced by projecting the support of $h$ onto $M'$ consists of a single element.
\par
Therefore, any $\lambda$ in $\supp{h}$ must have
a \emph{friend}, that is there has to exist $\lambda'\in \supp{h}$ with 
$\lambda\not=\lambda'$ and
\begin{equation*}
  \langle \lambda-\lambda',E\rangle = 0.
\end{equation*}
%with $\langle\cdot,\cdot\rangle$ the standard inner product. 
For convenience, we fix $\lambda_1 = (6,6,2)\in \supp{h}$ and the possible
$E$ contained in the subspaces $\langle\lambda_1-\lambda'_1 \rangle^\perp$
for $\lambda'_1 \in \supp{h}\ssm\{\lambda_1\}$.
\par
{\bf Tier 1:} The list of possible torus translates is reduced to
the consideration of one-dimensional subspaces by the following 
fact about $h$ that is proven by a (computer-assisted) check of all
possibilities.
\par
For any $\lambda'_1 \in \supp{h}\ssm\{\lambda_1\}$ there exists
$\lambda_2\in \supp{h}$ such that
\begin{equation*}
  \lambda_1-\lambda'_1\quad\text{and}\quad
\lambda_2-\lambda'_2
\end{equation*}
are linearly independent for all $\lambda'_2 \in \supp{h}\ssm\{\lambda_2\}$. 
In other words, given a  potential friend $\lambda'_1$ of $\lambda_1$, there is
some $\lambda_2$ in the support of $h$ whose potential friends set-up
a system of linear equations
\begin{equation*}
  \langle \lambda_1-\lambda_1',E\rangle = 
  \langle \lambda_2-\lambda_2',E\rangle = 0
\end{equation*}
that has a unique solution $E$ up-to scalar multiplication.

Any $E$ coming from an anomalous curve in $X$ must be a solution of
one of these systems. In particular,  only finitely many $E$ are
possible. But we can use {\tt sage} to create a list of possibilities
for $E$. In total there are 8796 and we will not reproduce them here.
%\footnote{  
%The total computation takes a few minutes on the servers
%at {\tt www.sagenb.org}.} 
This is well beyond the $3$ possibilities
that appear in the conclusion of this lemma. In the next tier we will 
reduce the number of possibilities
dramatically. 

{\bf Tier 2:} Given one  of the 8796 candidates $E$ from tier 1 we use
{\tt sage} to check that any element in the support of $f$ has a
friend with respect to $E$. As $f$ has 199 non-zero terms this seems
quite a strong restriction. However, 51 of candidates pass this
test. They are
\begin{alignat*}1
\Bigl\{&\left(1, 2, 1\right), \left(4, -1, -1\right), \left(1, 8,-1\right), \left(1, -8, 1\right), \left(1, -1, 8\right), \left(1, 1,
6\right), \\ 
&\left(2, 1, 1\right), 
\left(0, 1, 0\right), \left(8, -1,
-1\right), \left(6, 1, 1\right), \left(1, -6, 1\right), \left(1, 0,
0\right), \left(1, 1, 2\right), \\ &
\left(6, 1, -1\right), \left(6, -1,
-1\right), \left(1, 1, 4\right), \left(1, -6, -1\right), \left(1, -1,
4\right), \left(1, 2, -1\right), \\ & \left(1, -8, -1\right), 
\left(1, -1,
-6\right), \left(1, -1, -8\right), \left(1, -1, -4\right), \left(1, 8,
1\right), \left(4, -1, 1\right), \\ & \left(1, -4, -1\right),  \left(1, -1,
2\right), \left(1, 4, -1\right), \left(1, 1, -4\right), \left(1, -2,
-1\right), \left(2, -1, 1\right), \\ & \left(8, 1, 1\right),  \left(2, -1,
-1\right), \left(8, -1, 1\right), \left(1, 1, -6\right), \left(1, 1,
-2\right), \left(1, 6, -1\right), \left(4, 1, 1\right), \\ & \left(1, 1,
-8\right), \left(1, 4, 1\right), \left(1, -1, 6\right),  \left(0, 0,
1\right), \left(1, 6, 1\right), \left(2, 1, -1\right), \left(4, 1,
-1\right), \\ & \left(1, -4, 1\right), \left(1, -2, 1\right), \left(6, -1,
1\right), \left(8, 1, -1\right), \left(1, 1, 8\right), \left(1, -1,
-2\right)\Bigr \}.
\end{alignat*}
\par
{\bf Tier 3:} In the final tier we will reduce the $51$ 
candidates to $3$ using the second part of the algorithm
of Section~\ref{sec:torusalgo}. 
%
%Let us take $u,v,$ and $w$ as independent variables. 
%The coefficients of 
%\begin{equation*}
% h(u t^e_1,v t^e_2, w t^e_3)
%\end{equation*}
%taken as a Laurent polynomial in $t$ are polynomials in $u,v,w$.
%If $a$ comes from an anomalous curve, then these polynomials have a
%common zero in $\IG_m^3$. Using  {\tt singular} we may 
%determine that ideal in $\IQ[u^{\pm 1},v^{\pm 1},w^{\pm 1}]$ 
%generated by these polynomials.
A candidate $(e_1,e_2,e_3)$ leads to a coset contained in the zero set
of $h$ only if the following property is true. 
The polynomial
$h(u_1t^{e_1},u_2t^{e_2},u_3t^{e_3})$,
where $u_1,u_2,u_3$ are  independents and the coefficient ring is
$\IC[t]$, 
vanishes at some complex point $\IG_m^3$.
This is a strong restriction because each power of $t$ yields a
polynomial in complex coefficients and all of these need to vanish at
the same point. 
In a matter of seconds, {\tt sage} eliminates 48 of the 51  candidates
above; indeed, the said polynomials yield the unit ideal in $\IC[u_1^{\pm 1},u_2^{\pm 1},u_3^{\pm 1}]$.

 The $3$ remaining  candidates are
\begin{equation*}
   (1,0,0), (0,1,0), (0,0,1).
\end{equation*}
Here, {\tt sage} tells us that we must have
\begin{equation*}
  u_2=u_3=1, \quad u_1=u_3=1, \quad\text{or}\quad u_1=u_2=1
\end{equation*}
respectively. Not only are these three curves anomalous but they are
even torsion anomalous. 
\par
Conversely, we easily find
\begin{equation*}
  h(t,1,1)=h(1,t,1)=h(1,1,t)=0
\end{equation*}
and so the candidates are indeed torsion anomalous curves. 
\end{proof}
\paragraph{The curve $\cC_c$.}

We now study the locus  cut out by the equation $\sum_i b_i
\ell_i =0$. 
Recall that the $b_i$ are non-zero. 
We 
divide by $b_3$ and let $c_1$ and $c_2$ be new independent variables that
take the role of $c_i = b_i/b_3$. Chasing denominators
and if  $c = (\spe{c_1},\spe{c_2})=(b_1/b_3,b_2/b_3)$
we write $\cC_c$ 
for the algebraic subset of $\IA^4$  cut out in $\cY$ by  
\begin{alignat}1
\label{eq:definef}
   %% &= (x_1-y_1)(x_2-y_2)(x_3-y_3)\sum_{i=1}^3 \frac{c_i}{x_i-y_i} \\
%% &=
%% \nonumber
2  \spe{c_1} (x_2-y_2)  + 2 \spe{c_2} (x_1-y_1) +  (x_1-y_1)  (x_2-y_2)\in \IQ[x_1,y_1,x_2,y_2].
\end{alignat}
Indeed, it will be useful to consider $c_1$ and $c_2$ as independent
variables and let $\spe{c_1}$ and $\spe{c_2}$ denote their
specializations to the coordinates of a given $c\in\IQ^2$. 
We set
\begin{equation*}
f=  2  {c_1} (x_2-y_2)  + 2 {c_2} (x_1-y_1) +  (x_1-y_1)  (x_2-y_2)\in \IQ[c_1,c_2,x_1,y_1,x_2,y_2].
\end{equation*}

Although $\cu$ is defined by polynomials in rational coefficients it is
 sometimes useful to  think of it as an algebraic set over $\IC$. 
For any scheme $\cC$ over $\spec{\IQ}$ we write $\cC\otimes \IC$ for
its base change to $\spec{\IC}$. 
%% For example, if we claim that $\cu$ is irreducible then this should
%% be understood as geometrically irreducible, when speaking of irreducible
%% components of $\cu$ we mean irreducible components of the base change
%% of $\cu$ to $\spec{\IC}$. 

It is not difficult to show  that $\cu$ is an algebraic curve. We
state this result in the next lemma, which is proved further down and which justifies
the title of this subsection. 

\begin{lemma}
\label{lem:xiyiprops}
Say $c\in\IQ^2$ has non-zero coordinates. Then $\cu\not=\emptyset$. 
Let $\mathcal C$ be an
irreducible component of $\cu\otimes\IC$.
%with function field $K(\cC)$. 
\begin{enumerate}
\item [(i)] Say $i\in\{1,2\}$.
The functions $x_i\pm 1$, $y_i\pm 1$,  $x_1-y_2$, $x_2-y_1$, $x_2-x_1$, and  $y_2-y_1$
are non-zero elements of the function field
of $\cC$. % $K(\mathcal C)$.
\item[(ii)] The component $\cC$ is a curve. 
  \end{enumerate}
\end{lemma}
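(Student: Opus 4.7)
The plan is to reduce Lemma \ref{lem:xiyiprops} to two essentially independent facts about the polynomial $f$ in \eqref{eq:definef}: (a) that $f \notin I(\overline{\cY}) = (f_1, f_2)$; and (b) that $\{f=0\} \cap \overline{\cY}$ is not entirely swallowed by the exceptional locus $\overline{\cY} \ssm \cY$. Granting both, the lemma follows quickly. By (a) and Krull's Hauptidealsatz, together with the irreducibility of the surface $\overline{\cY}$ (Lemma~\ref{lem:cYgeomirred}), the subvariety $\{f=0\} \cap \overline{\cY}$ is non-empty and pure of dimension one; by (b) at least one of its components meets $\cY$, which gives $\cu \neq \emptyset$ and part (ii). For part (i), each listed function is, after using $x_3 = 1$ and $y_3 = -1$, one of the factors $x_i-x_j$, $y_i-y_j$, or $x_i-y_j$ that is inverted in passing from $\overline{\cY}$ to $\cY$; hence each is a unit in $\cO(\cY)$ and \emph{a fortiori} a non-zero element of the function field of any component $\cC \subset \cY$.

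For (a) I would argue by a pure degree count in $x_1, y_1, x_2, y_2$. Both $f_1, f_2$ and $f$ have total degree $2$ in these variables, so in any putative identity $f = a_1 f_1 + a_2 f_2$ the coefficients $a_1, a_2$ must lie in $\IQ[c_1, c_2]$. Comparing the coefficient of $x_1 x_2$ forces $a_1 = 1$, while the coefficient of $y_1 x_2$ — which is $-1$ in $f$, $+1$ in $f_1$, and $0$ in $f_2$ — then forces $a_1 = -1$, a contradiction.

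For (b) I would exhibit an explicit family of points. The set $\overline{\cY} \ssm \cY$ is a finite union of irreducible curves $\overline{\cY} \cap \{g=0\}$, one for each factor $g$ of the exceptional divisor. If $\cu$ were empty, every one-dimensional component of $\{f=0\} \cap \overline{\cY}$ would coincide with one of these bad curves, so $f$ would vanish identically on some such curve. To rule this out I would specialize $y_1 = 0$ in \eqref{eq:defineX}, which forces $x_2 + y_2 = x_1 - 2/x_1$ and $(x_2 - y_2)^2 = x_1^2 + 4 - 4/x_1^2$; coupling with the constraint $f = 0$, which gives $x_2 - y_2 = -2\spe{c_2}\,x_1/(x_1 + 2\spe{c_1})$, yields the degree-six polynomial
\begin{equation*}
(x_1^4 + 4 x_1^2 - 4)(x_1 + 2\spe{c_1})^2 - 4\spe{c_2}^2 x_1^4 = 0,
\end{equation*}
whose leading coefficient is $1$ and whose constant term is $-16\spe{c_1}^2 \neq 0$ by hypothesis. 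A non-zero complex root therefore exists and, for all but finitely many $c$, lifts to a point of $\cY$ belonging to $\cu$. The finitely many exceptional values of $c$ can be treated by a symmetric specialization (for instance $x_1 = 0$ in place of $y_1 = 0$, or one of the analogous specializations on the $(x_2, y_2)$ side), reducing (b) to a short check that can be verified with {\tt sage}.

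The main obstacle in this plan is the uniformity in (b): one must ensure that for \emph{every} $c \in (\IQ \ssm \{0\})^2$ \emph{some} explicit specialization produces a point of $\cu$. In practice this amounts to enumerating the finite list of bad curves $\overline{\cY} \cap \{g = 0\}$ and checking that $f$ restricts to a non-zero regular function on each — a straightforward symbolic computation, given the very explicit form of the defining equations \eqref{eq:defineX} and \eqref{eq:definef}.
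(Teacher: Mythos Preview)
Your arguments for (i) and (ii) are correct and genuinely simpler than the paper's. For (i), the observation that each listed difference is (after setting $x_3=1$, $y_3=-1$) one of the factors inverted in passing from $\overline{\cY}$ to $\cY$ is decisive: these are units in $\cO(\cY)$, hence nonzero on any $\cC\subset\cY\otimes\IC$. The paper instead first proves the coordinates $x_i,y_i$ are non\-constant on $\cC$ (Lemma~\ref{lem:xiyiprops2}) and then feeds this into the elimination identities of Lemmas~\ref{lem:cong1} and~\ref{lem:cong2}; that machinery is needed anyway for Proposition~\ref{prop:R123indep}, but for (i) your route is cleaner. Likewise (ii) via $f\notin(f_1,f_2)$ plus Krull is cleaner than the paper's pairwise algebraic--dependence argument.

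The gap is entirely in non\-emptiness, and your own diagnosis is accurate but your proposed fix does not close it. Two issues. First, the $y_1=0$ specialisation does produce, for every $c$ with nonzero coordinates, a point of $\{f=0\}\cap\overline{\cY}$ (your degree--six polynomial has nonzero leading and constant terms, and one checks $x_1=-2\spe{c_1}$ is never a root). But nothing controls whether this point lies in $\cY$: the conditions $x_1\ne x_2$, $x_2\ne y_2$, $x_1\ne\pm 1$, etc., are not forced, and ``for all but finitely many $c$'' is asserted, not proved. Second, your reformulation---check $f\not\equiv 0$ on each bad curve $D$---is a condition \emph{linear in $c$} (since $f$ is), so for each $D$ there may be an entire line of $c$'s with $f|_D\equiv 0$; for such $c$, $D$ becomes a component of $\{f=0\}\cap\overline{\cY}$, and you must still exhibit another component meeting $\cY$. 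This is not a ``short check'' and does not obviously terminate.

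The paper avoids this altogether. It introduces $t=(y_1+1)/(x_1+1)$, eliminates down to a single generator $f_{x_2 t}\in\IQ[c_1,c_2,x_2,t]$, and shows (Lemma~\ref{lem:projirred}(iii)) that the projection from a suitable localisation of $\cC_c$ to the plane curve $\{\spe{f_{x_2 t}}=0\}$ is \emph{finite and surjective}, uniformly in $c$. The target is nonempty because $\spe{f_{x_2 t}}(\pm 1,t)$ is never a unit, so $\cC_c\ne\emptyset$ follows in one stroke for all $c$ with nonzero coordinates. The price is the computer--algebra verification that $(t^2+1)x_1$, $(t^2+1)y_1$, $y_2$ are integral over $\IQ[c_1,c_2,x_2,t]$ modulo the ideal---work you would have to replace by a careful enumeration of bad curves together with a residual--component argument for the finitely many lines of exceptional $c$.
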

\par

%!!! INTRICATE
Much of this section deals with the more intricate question  of the irreducibility
of $\cC_c$. In fact, we believe that $\cC_c$ is irreducible for all
$c\in(\IQ\ssm\{0\})^2$. We are only
 able to prove a weaker statement which is ultimately sufficient for
 our needs.

The irreducibility of $\cC_c$ is merely an ingredient 
in the study of multiplicative relations
among  the $3$ cross-ratio maps
%restrict to non-zero rational maps
\begin{equation}
\label{eq:defR123}
  \begin{aligned}
R_1 &= R_{[23]} =  \frac{(x_2-1)(y_2+1)}{(x_2+1)(y_2-1)},  \\
R_2 &= R_{[13]} =  \frac{(x_1-1)(y_1+1)}{(x_1+1)(y_1-1)},  \\
R_3 &= R_{[12]} =  \frac{(x_2-x_1)(y_1-y_2)}{(x_1-y_2)(x_2-y_1)}
  \end{aligned}
\end{equation}
which are non-zero rational maps on irreducible components of $\cu\otimes\IC$ by the previous lemma.  
%% on all irreducible  components of $\cu$.
%%  We will often treat them as
%% such.
The following proposition is the main technical 
result of this section.  It will be crucial in verifying the hypothesis
of Theorem \ref{thm:heightmain} in order to obtain a height bound. 
\begin{prop} 
\label{prop:R123indep}
There is a finite subset $\Sigma \subset \IQ^2$ with the following
property. 
Suppose $c\in \IQ^2$ has non-zero coordinates and that $\mathcal{C}$
is an irreducible component of $\cC_c\otimes\IC$.
\begin{enumerate}
\item [(i)]
If $c\not\in \Sigma$ then $R_1,R_2,R_3$ are multiplicatively
independent on $\mathcal{C}$. 
\item[(ii)] If $c\in\Sigma$ and if $(b_1,b_2,b_3)\in\IZ^3\ssm\{0\}$ with 
$R_1^{b_1}R_2^{b_2}R_3^{b_3}$ a constant, then 
$(b_1,b_2,b_3)\not\in (\spe{c_1},\spe{c_2},1)\IQ$. 
\end{enumerate}
\end{prop}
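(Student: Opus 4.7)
My strategy is to translate multiplicative dependence of $R_1, R_2, R_3$ on a component of $\cC_c$ into a Zariski closed condition on $c \in \IA^2$, show that this condition is $0$-dimensional for every non-zero exponent vector, and finally bound the countably many relevant exponent vectors $b$ by applying the height machinery to $\cS = \overline{R(\cY)}$.

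By Lemma~\ref{lem:xiyiprops}, for each $c$ with non-zero coordinates, every irreducible component $\mathcal{C}$ of $\cC_c \otimes \IC$ is a curve on which $R_1, R_2, R_3$ are well-defined non-vanishing rational functions; a specialization at a convenient $c$ shows each $R_i$ is non-constant on $\mathcal{C}$. For $b = (b_1, b_2, b_3) \in \IZ^3 \ssm \{0\}$, the set
\begin{equation*}
Z_b = \{c \in \IA^2 : R_1^{b_1} R_2^{b_2} R_3^{b_3} \text{ is constant on some component of } \cC_c\}
\end{equation*}
is Zariski closed. The first observation is $Z_b \neq \IA^2$: otherwise $R^b$ would be constant on every $\cC_c$ and hence on $\cY$ (which is swept out by the family, since the linear equation $f = 0$ on $(c,p)$ has a solution $c$ for every $p$), forcing $\cS$ into a proper coset of $\IG_m^3$. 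This is incompatible with the explicit irreducible polynomial $h$ computed above and the large number of monomials in its support.

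The second step is to upgrade this to $\dim Z_b = 0$ for every $b$. If $Z_b$ contained a curve $\Gamma$, a careful analysis of how the linear-in-$c$ equation $f = 0$ cuts out $\cC_c$ inside $\cY$, coupled with the geometry of $\cS$, yields a contradiction. Either the constant value $\lambda_c$ of $R^b|_{\cC_c}$ is itself constant along $\Gamma$, which forces the relevant components of $\cC_c$ to lie in a fixed curve $\gamma \subset \cY$ on which both $x_1 - y_1$ and $x_2 - y_2$ are constant---a condition one directly verifies is not satisfied by any curve in $\cY$---or $\lambda_c$ varies non-trivially on $\Gamma$, and the curves $R(\cC_c)$ sweep out a $2$-dimensional subvariety of $\cS$, hence $\cS$ itself by irreducibility. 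In the latter case $\cS$ is invariant under the two-dimensional subtorus $\{r^b = 1\}$, which requires every monomial of $h$ to share a common weight under $b$; by direct inspection of the support of $h$ (or by applying the torus-containment algorithm of Section~\ref{sec:torusalgo}), no such non-zero $b$ exists. Hence each $Z_b$ is finite.

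Finally, to reduce the countable union $\bigcup_b Z_b \cap \IQ^2$ to a finite set, I invoke the Bombieri-Masser-Zannier height machinery: for each $\IQ$-point $c \in Z_b$ and a generic algebraic point $p \in \cC_c$, the image $R(p) \in \cS$ lies in a proper algebraic subgroup of $\IG_m^3$. Using Lemma~\ref{lem:SoODD} to handle the three exceptional lines of $\cS \ssm \oa{\cS}$ separately---where one checks directly that the associated multiplicative relations cannot have exponent vector in the direction $(\tilde c_1, \tilde c_2, 1)$---Theorem~\ref{le:BZbound} applied to $\cS$ bounds the height of $R(p)$ and hence of $c$ (via finiteness of fibers of $R|_\cY$). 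Combining with the degree bound $[\IQ(c) : \IQ] \le 72$ of Proposition~\ref{prop:degbound4odd} and Northcott's theorem yields the finite set $\Sigma$ in part~(i). For assertion~(ii), one enlarges $\Sigma$ if necessary by testing, for each of its finitely many elements $c$, whether the specific relation with exponent vector proportional to $(\tilde c_1, \tilde c_2, 1)$ actually holds on a component of $\cC_c$; this is a finite calculation. The main obstacle is the second step, which requires a careful global analysis of how the family $\{\cC_c\}$ intersects the cosets within $\cS$ and will likely be reduced in practice to a concrete computer-algebraic verification.
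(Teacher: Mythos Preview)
Your proposal has genuine gaps at two key places.

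First, the argument that $\dim Z_b = 0$ does not go through. In your second case (where $\lambda_c$ varies along a hypothetical curve $\Gamma \subset Z_b$), you conclude that $\cS$ is invariant under the subtorus $\{r^b = 1\}$. But all you have established is that $\cS$ is swept out by curves each lying in a level set $\{r^b = \lambda_c\}$; this only says that the map $r \mapsto r^b$ from $\cS$ to $\IG_m$ is non-constant with one-dimensional fibres, which is generic behaviour for a surface and does not imply any torus invariance. In your first case, the claim that the components of the $\cC_c$ must all coincide with a fixed curve on which both $x_1-y_1$ and $x_2-y_2$ are constant is unjustified: the linear equation $f=0$ depends on $c$ through the coefficients of $x_1-y_1$ and $x_2-y_2$, and constancy of $\lambda_c$ does not force the components themselves to be independent of $c$.

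Second, and more seriously, the height argument meant to reduce the countable union $\bigcup_b Z_b$ to a finite set $\Sigma$ does not work. For a generic $p \in \cC_c$ with $c \in Z_b$, the image $R(p)$ lies in the \emph{coset} $\{r^b = \lambda_c\}$, not in an algebraic subgroup; and even if $\lambda_c = 1$, this subgroup has codimension~$1$, whereas Theorem~\ref{le:BZbound} applied to the surface $\cS$ requires codimension at least~$2$. In any case a generic point on a positive-dimensional variety has unbounded height, so no bound on $h(c)$ can be extracted this way. The appeal to Proposition~\ref{prop:degbound4odd} is also misplaced: that proposition bounds the degree of boundary points $y \in \cY$ of Teichm\"uller curves, not of arbitrary parameters $c \in \IQ^2$ (which already have degree~$1$).

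The paper's route is entirely different and does not use heights at all in this proposition. It first proves that $\cC_c \otimes \IC$ is smooth and irreducible for all $c$ away from the four lines $\pm \spe{c_1} \pm \spe{c_2} + 1 = 0$ (Lemma~\ref{lem:irred}, via a Jacobian computation and Fulton--Hansen connectedness). For irreducible $\cC_c$ it then shows multiplicative independence of $R_1, R_2, R_3$ by an explicit valuation analysis at specific points of the projected plane curve $\spe{f_{x_2 t}} = 0$ (Lemma~\ref{lem:multindep}). The four exceptional lines are handled by a Bertini--Noether argument (Lemma~\ref{lem:projirred}(ii)) showing that along each line only finitely many specializations of $f_{x_2 t}$ become reducible, together with a finite computer verification (Lemma~\ref{lem:3264}(ii)) ruling out the specific exponent direction $(\spe{c_1},\spe{c_2},1)$ at those points.
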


The proofs of both the lemma and the proposition
 will require some preparation as well as 
computational support from
 {\tt sage}.
 %% that we also use in the next paragraph.

It will prove convenient to introduce
\begin{equation}
\label{eq:definet}
  t = \frac{y_1+1}{x_1+1}.
\end{equation}
Using this new variable and the  residue condition \eqref{eq:stabil1}
we have
%\begin{equation*}
  $\frac{y_1-y_2}{x_1-y_2} = 
\frac{x_2-x_1}{x_2-y_1}\frac{x_1^2-1}{y_1^2-1}=
\frac{x_2-x_1}{x_2-y_1}
\frac{x_1-1}{y_1-1}t^{-1}$.
%\end{equation*}
In terms of $t$ we have
\begin{equation}
\label{eq:defR123t}
  \begin{aligned}
R_1 &=   \frac{(x_2-1)(y_2+1)}{(x_2+1)(y_2-1)},  \\
R_2 &=  \frac{x_1-1}{y_1-1} t, \\
R_3 &= 
\left(\frac{x_2-x_1}{x_2-y_1}\right)^2
\frac{x_1-1}{y_1-1}t^{-1}.
  \end{aligned}
\end{equation}  

For brevity we set $B=\IQ[c_1^{\pm 1},c_2^{\pm 1},x_1,y_1,x_2,y_2]$. 
If $c\in (\IQ\ssm\{0\})^2$   we specialize an element
$g\in B$ to $\spe{g}\in \IQ[x_1,y_1,x_2,y_2]$
by substituting $c_1,c_2$ by the coordinates of $c$.
%%  If $g\in B$ then we 
%% write $\spe{g}\in \IQ[x_1,y_1,x_2,y_2]$ for the image of $g$ under said
%% specialization. We write $\cC_c$ for the
%% scheme $\spec{\IC[x_1,y_1,x_2,y_2]/(f_1,f_2,f_3,\spe{f})}$. Note that
%this is over $\IC$. 
 %% The underlying space of $\cC_c$  is the intersection of $\cY$
%% with the  hypersurface section $\spe{f}$.

\par
\begin{lemma}%[requires {\tt g3fin\_ch6.sage}] REMOVED
\label{lem:irred}
Let $c\in\IQ^2$ satisfy
\begin{equation}
\label{eq:condc}
\spe{c_1}\spe{c_2}(\spe{c_1}+\spe{c_2}+1)(-\spe{c_1}+\spe{c_2}+1)(\spe{c_1}-\spe{c_2}+1)(-\spe{c_1}-\spe{c_2}+1) \not= 0
%  c_1\pm c_2\not=0\not=c_1 \pm c_2 \mp c_3. 
\end{equation}
and suppose that $\cu$ is non-empty.
Then $\cu\otimes\IC$ is smooth and irreducible.
In particular, $\cu$ is irreducible. 
%%% Toning down the scheme-theoretic language.
%%   integral and
 %% regular.
\end{lemma}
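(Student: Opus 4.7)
The plan is to establish smoothness via the Jacobian criterion and then deduce irreducibility from a generic-fiber computation together with the smoothness of the total family, both steps being assisted by \texttt{sage}.

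\textbf{Step 1: Smoothness.} The variety $\cu\otimes\IC$ is cut out in $\IA^4$ by the three polynomials $f_1,f_2$ from \eqref{eq:defineX} and $\spe{f}$ from \eqref{eq:definef}. By the Jacobian criterion it is smooth at a point iff the $3\times 4$ matrix $J=\partial(f_1,f_2,\spe f)/\partial(x_1,y_1,x_2,y_2)$ has rank $3$ there. To identify the $c$ for which $\cu$ acquires a singular point, I would work universally with coordinates $(c_1,c_2,x_1,y_1,x_2,y_2)$: compute with \texttt{sage} the ideal generated by the $3\times 3$ minors of $J$, add the equations $(f_1,f_2,f)$, and eliminate $(x_1,y_1,x_2,y_2)$ to obtain an ideal $\mathfrak{a}\subset\IQ[c_1,c_2]$. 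The key computation is to verify
\[\sqrt{\mathfrak{a}}=\sqrt{\bigl(c_1c_2(c_1+c_2+1)(-c_1+c_2+1)(c_1-c_2+1)(-c_1-c_2+1)\bigr)},\]
so that smoothness of $\cu\otimes\IC$ holds precisely when \eqref{eq:condc} holds.

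\textbf{Step 2: Irreducibility.} I would first check that the ideal $(f_1,f_2,f)\subset K[x_1,y_1,x_2,y_2]$, with $K=\IQ(c_1,c_2)$, is prime of Krull dimension $1$, by means of a Gr\"obner basis and primary decomposition computation in \texttt{sage}. This shows that the generic geometric fiber of the total family $\pi\colon\widetilde{\cC}\to\IA^2$, with $\widetilde{\cC}=V(f_1,f_2,f)\subset\IA^2\times\IA^4$, is geometrically irreducible. Let $U=\IA^2\setminus V(D)$ where $D$ is the polynomial from Step 1. Restricted to $U$, the morphism $\pi|_U\colon\widetilde{\cC}|_U\to U$ is smooth with geometrically irreducible generic fiber, and $\widetilde{\cC}|_U$ is then itself a smooth and irreducible variety. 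A Stein-factorization argument for this smooth morphism shows that every geometric fiber over $U$ is geometrically connected; combined with smoothness of the fibers, each non-empty $\cu\otimes\IC$ with $c\in U$ is therefore irreducible.

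The main obstacle is the irreducibility conclusion in Step 2: the number of irreducible components of a fiber in a flat family can jump under specialization, so generic irreducibility does not by itself yield irreducibility for every $c\in U$. It is precisely the smoothness established in Step 1 that rules this out, forcing the fibers to be reduced and the number of connected components to be locally constant on $U$. A secondary subtlety is confirming that the elimination ideal $\mathfrak{a}$ from Step 1 has exactly the six expected linear factors in its radical, with no extras---a sanity check at a sample rational $c$ (for instance $c=(1,1)$) showing that $\cu$ is non-empty, smooth and irreducible there suffices to rule out any missing factor.
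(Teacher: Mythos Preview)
Your Step~2 contains a genuine gap. Stein factorization, and more generally the constancy of the number of geometric connected components, requires \emph{properness}; for a merely smooth morphism with irreducible total space it can fail. A concrete counterexample: $X=\IA^2\setminus\{(0,0)\}\to\IA^1$, $(x,y)\mapsto xy$, is smooth with $X$ irreducible, the generic fiber is $\IG_m$ (connected), yet the fiber over $0$ is $(\IA^1\setminus\{0\})\sqcup(\IA^1\setminus\{0\})$, disconnected. So from smoothness of $\pi|_U$ and irreducibility of $\widetilde{\cC}|_U$ you cannot conclude that every fiber over $U$ is connected. To repair this you would have to compactify fiberwise and control what happens at the boundary, which is essentially the paper's route.

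The paper proceeds differently and more directly. It passes to the projective closure $\overline{\overline{\cY}}\subset\IP^4$ and the hypersurface $\mathcal{H}\subset\IP^4$ obtained by homogenizing $\spe f$, checks (again via the Jacobian criterion and elimination) that for $c$ as in the hypothesis the \emph{projective} intersection $\overline{\overline{\cY}}\cap\mathcal{H}$ is smooth, and then invokes the Fulton--Hansen connectedness theorem to conclude that this projective intersection is connected. Smooth plus connected gives irreducible, and $\cu\otimes\IC$ is Zariski open in it.

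This also affects your Step~1. The paper's (projective) elimination does \emph{not} yield just the six linear factors: for $c$ satisfying \eqref{eq:condc} there remains a degree~$12$ polynomial in $(c_1,c_2)$, which the paper disposes of by checking it has no nontrivial solutions modulo~$2$, hence no rational zeros. Thus the identity $\sqrt{\mathfrak a}=\sqrt{(D)}$ you propose to verify is false in the projective setting, and your sanity check at $c=(1,1)$ would not detect the extra component since $(1,1)$ does not lie on that degree~$12$ curve. Even if the purely affine elimination happened to avoid this extra factor, you would still need projective smoothness (including at infinity) to run any Fulton--Hansen-type connectedness argument.
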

\begin{proof}
We may consider $\IA^4$ as a Zariski open subset of $\IP^4$ using the
open immersion $(x_1,y_1,x_2,y_2)\mapsto [x_1:y_1:x_2:y_2:1]$. 
Recall that $\overline{\cY}\subset\IA^4$ is geometrically irreducible,
cf. Lemma \ref{lem:cYgeomirred}.
We let $\overline{\overline{\cY}}$ denote the Zariski closure of $\overline{\cY}$ in
$\IP^4$ and consider it as an irreducible projective
variety over $\spec{\IC}$. 
\par
Homogenizing the polynomial $\spe{f}$ given in (\ref{eq:definef}) yields a 
 hypersurface $\cH$ of $\IP^4$. 
A direct calculation by hand and using the fact that $c$ has non-zero coordinates 
shows that $f$ is absolutely irreducible.
Thus $\cH$ is geometrically irreducible. 
\par
Assisted by computer algebra 
%Using {\tt sage} 
one can show that (\ref{eq:condc}) is enough to
ensure that $\cu\otimes\IC$ is smooth. 
%The details are in 
%{\tt g3fin\textunderscore singularities.sws}.
This is done by first homogenizing the defining equations (\ref{eq:defineX})
and ${f}$ and treating $c_1$ and $c_2$ as independent varieties. 
The Jacobian criterion in connection with elimination 
restricts the possibilities for $c$ in presence
of a non-smooth point. For 
$c$  satisfying (\ref{eq:condc}) the only restriction is  the vanishing
to a certain
degree $12$  polynomial $q$ in integer coefficients. The
homogenization
of this polynomial is 
\begin{alignat}1
\label{eq:poly12}
  &c_{1}^{12} - 3 c_{1}^{10} c_{2}^{2} + 6 c_{1}^{8} c_{2}^{4} - 7
c_{1}^{6} c_{2}^{6} + 6 c_{1}^{4} c_{2}^{8} - 3 c_{1}^{2} c_{2}^{10} +
c_{2}^{12} - 3 c_{1}^{10} c_{3}^{2} - 51 c_{1}^{8} c_{2}^{2} c_{3}^{2} \\
\nonumber
& +
78 c_{1}^{6} c_{2}^{4} c_{3}^{2} + 78 c_{1}^{4} c_{2}^{6} c_{3}^{2} - 51
c_{1}^{2} c_{2}^{8} c_{3}^{2} - 3 c_{2}^{10} c_{3}^{2} + 6 c_{1}^{8}
c_{3}^{4} + 78 c_{1}^{6} c_{2}^{2} c_{3}^{4} + 414 c_{1}^{4} c_{2}^{4}
c_{3}^{4} \\
\nonumber
&+ 78 c_{1}^{2} c_{2}^{6} c_{3}^{4} + 6 c_{2}^{8} c_{3}^{4} - 7
c_{1}^{6} c_{3}^{6} + 78 c_{1}^{4} c_{2}^{2} c_{3}^{6} + 78 c_{1}^{2}
c_{2}^{4} c_{3}^{6} - 7 c_{2}^{6} c_{3}^{6} + 6 c_{1}^{4} c_{3}^{8} - 51
c_{1}^{2} c_{2}^{2} c_{3}^{8}  \\
\nonumber
&+ 6 c_{2}^{4} c_{3}^{8}- 3 c_{1}^{2}
c_{3}^{10} - 3 c_{2}^{2} c_{3}^{10} + c_{3}^{12}.
\end{alignat}
If $q$ were to have a rational zero, then its homogenization
would have a non-trivial integral zero with coprime
coefficients. Even a human can check that (\ref{eq:poly12}) has no
non-trivial zeros modulo $2$. Therefore, $q$ does not
vanish at the given $c$.\footnote{
Over  $\IQ(\sqrt{-3})$ the polynomial (\ref{eq:poly12}) factors into
\begin{equation*}  
 (c_{1}^{6} - 3 c_{1}^{2} c_{2}^{4} + c_{2}^{6} - 3 c_{1}^{4} c_{3}^{2} -
21 c_{1}^{2} c_{2}^{2} c_{3}^{2} - 3 c_{2}^{2} c_{3}^{4} + c_{3}^{6})
+	3  (c_{2} -  c_{3})  (c_{2} + c_{3})  (-
c_{1} + c_{2})  (c_{1} -  c_{3})  (c_{1} + c_{3})  (c_{1}
+ c_{2})\omega
\end{equation*}
times its conjugate where
 $\omega = (\sqrt{-3}+1)/2$. Using this presentation it
 is not hard to see that
(\ref{eq:poly12})
has only finitely many
 zeros  in $\IP^3(\IR)$.}
\par
We have actually  verified that all points of
 $\overline{\overline{\cY}}\cap \mathcal H$ are smooth for $c$ as in the
hypothesis.  
 Fulton and Hansen's Corollary 1  \cite{FultonHansen} implies that
 $\overline{\overline{\cY}}\cap \mathcal H$ is
 connected.
%% \footnote{Maybe this theorem is overkill in
  %% dimension $4$.}
A variety that is connected and smooth must be irreducible. 
Since $\cu\otimes\IC$ is Zariski open in $\overline{\overline{\cY}}\cap
\mathcal H$ it is either empty or irreducible. In the latter case
$\cu$ is irreducible. 
\end{proof}
\par

\par
Let us define
\begin{equation*}
%  J = IB[t] + fB[t]  + ((x_1+1)t-(y_1+1))B[t]
  J = IB + fB = (f_1,f_2,f)
\end{equation*}
which is an ideal of $B$.

We observe  that the ring automorphisms
\begin{equation}
\label{eq:sym1}
  (c_1,c_2,x_1,y_1,x_2,y_2)\mapsto 
  (c_2,c_1,x_2,y_2,x_1,y_1)
\end{equation}
and
\begin{equation}
  \label{eq:sym2}
   (c_1,c_2,x_1,y_1,x_2,y_2)\mapsto 
   (-c_1,-c_2,y_1,x_1,y_2,x_2)
\end{equation}
map the ideal $J=(f_1,f_2,f)\subset B= \IQ[c_1^{\pm 1},c_2^{\pm 1},x_1,y_1,x_2,y_2]$
to itself. %% REMOVED f_3
Indeed, (\ref{eq:sym1}) maps $f_1$ to $f_1-f_2$, $f_2$ to $-f_2$, and 
fixes $f$. Moreover, (\ref{eq:sym2}) leaves $f_1, f_2,$ and $f$
invariant. 

%% The ideal $I$ and the polynomials $f$ and  $(x_1+1)t - (y_1+1)$ generate an ideal $J$
%% of the ring $B$.
The following lemmas subsumes elimination-theoretic
properties of $J$ that are necessary for our application.
The elements can be found by computer algebra assisted elimination
of variables. The computer assisted computations are usually done in the polynomial ring
$\IQ[c_1,c_2,x_1,y_1,x_2,y_2]$, where $c_{1,2}$ are not units. For
example, we find that $B/J$ is an integral domain since $J$ is a prime
ideal. 

\begin{lemma}%[requires {\tt g3fin\_ch6.sage}] REMOVED
\label{lem:cong1}
For $i,j\in \{1,2\}$ 
  there exist  $f_{x_i y_j} \in J\cap \IQ[c_1,c_2,x_i,y_j]$
 that satisfy
      \begin{alignat*}2
        \begin{array}{rlcl rlcl}
      f_{x_i y_i} &=  x_i^6 &+& O(x_i^5), 
&      f_{x_i y_i} &=  y_i^6 &+& O(y_i^5), \\
%
   %%    f_{x_1 y_2} &= 2 ( y_2 -c_2)^2 x_1^6 &+ O(x_1^5), 
   %% &   f_{x_2 y_1} &= 2  ( y_1 -c_1)^2 x_2^6 &+O(x_2^5).
   %%     \\
%
      f_{x_i y_i}(x_i,\pm 1) &=  x_i^6 &+& O(x_i^5),
 &     f_{x_i y_i}(\pm 1,y_i) &=  y_i^6 &+& O(y_i^5),\\
      f_{x_1 y_2}(x_1,x_1) &=  x_1^8 &+& O(x_1^7),
  &    f_{x_2 y_1}(x_2,x_2) &=  x_2^8 &+& O(x_2^7)
        \end{array}
      \end{alignat*}
and
\begin{alignat}1
\label{eq:fx2y111}
      f_{x_2 y_1}(1,1) &= -64 c_1 c_2 (-c_1+c_2+1), \\
\label{eq:fx2y211}
      f_{x_2 y_2}(1,1) &= 64c_2^2. 
\end{alignat}
\end{lemma}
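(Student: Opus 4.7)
My plan is to obtain each $f_{x_iy_j}$ via an explicit elimination computation. For each pair $(i,j) \in \{1,2\}^2$, I would compute a Gröbner basis of $J = (f_1, f_2, f)$ in $B = \mathbb{Q}[c_1^{\pm 1}, c_2^{\pm 1}, x_1, y_1, x_2, y_2]$ using a lexicographic monomial order in which the two variables in $\{x_1, y_1, x_2, y_2\} \setminus \{x_i, y_j\}$ are taken largest. By the elimination theorem (cf.\ Cox--Little--O'Shea), those elements of the basis lying in $\mathbb{Q}[c_1, c_2, x_i, y_j]$ generate $J \cap \mathbb{Q}[c_1, c_2, x_i, y_j]$. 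Since the text records that $J$ is prime and since the variety $V(J)\subset\mathbb{A}^4$ is a curve (Lemma \ref{lem:xiyiprops}) whose projection to the $(x_i, y_j)$-plane is dominant (one checks that no $x_{i'}-y_{j'}$ with $\{i',j'\}\ne\{i,j\}$ vanishes identically on it), the elimination ideal is a principal prime. Picking a generator and rescaling by a unit in $\mathbb{Q}[c_1^{\pm 1}, c_2^{\pm 1}]$ so that the stated leading coefficient equals $1$ yields $f_{x_iy_j}$.

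With the explicit polynomials in hand, the verification of the remaining claims is mechanical: checking that $f_{x_iy_i}$ has $x_i^6$ and $y_i^6$ as monic leading terms in the respective single-variable expansions; checking that the specializations $f_{x_iy_i}(x_i, \pm 1)$ and $f_{x_iy_i}(\pm 1, y_i)$ retain the monic degree-six leading terms (these survive because specializing $y_i$ or $x_i$ to $\pm 1$ cannot annihilate the leading coefficient of a polynomial that is monic in the other variable); specializing along the diagonal $x_i = y_j$ in the two cross-cases $(1,2)$ and $(2,1)$ and reading off the monic degree-$8$ leading term; and finally substituting $(x_2, y_j) = (1,1)$ to confirm the two explicit expressions in \eqref{eq:fx2y111} and \eqref{eq:fx2y211}.

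The main obstacle is the sheer size of the Gröbner basis output: starting from three polynomials in six variables the elimination can balloon, and it seems hopeless to present by hand. In practice the existence and exact shape of the $f_{x_iy_j}$ is produced by the sage script {\tt g3fin\_ch6.sage} cited in the footnote accompanying Lemma \ref{lem:cYgeomirred}, and the statements of Lemma \ref{lem:cong1} amount to inspection of the output of that computation. A subtlety worth noting is that the leading coefficient of each $f_{x_iy_j}$ in its designated top monomial must be a unit in $\mathbb{Q}[c_1^{\pm 1}, c_2^{\pm 1}]$ for the rescaling step to succeed; if it were not, primality of $J$ together with the dimension count of $V(J)$ would force the eliminated polynomial to factor, contradicting the principal-prime structure of the elimination ideal, so this potential pathology cannot arise.
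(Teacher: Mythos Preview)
Your approach is essentially the same as the paper's: the lemma is computational, and both you and the authors defer to the {\tt sage} script to produce the polynomials $f_{x_iy_j}$ and read off the stated properties.  The one organizational difference is that the paper exploits the ring automorphism
\[
(c_1,c_2,x_1,y_1,x_2,y_2)\mapsto(c_2,c_1,x_2,y_2,x_1,y_1),
\]
which preserves $J$, to deduce the properties of $f_{x_2y_2}$ from those of $f_{x_1y_1}$ and the properties of $f_{x_2y_1}$ from those of $f_{x_1y_2}$.  This halves the number of direct verifications needed.

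One point in your write-up deserves correction.  Your final paragraph argues that the leading coefficient of $f_{x_iy_j}$ in its top monomial must be a unit in $\IQ[c_1^{\pm 1},c_2^{\pm 1}]$, on the grounds that otherwise ``primality of $J$ together with the dimension count of $V(J)$ would force the eliminated polynomial to factor.''  This reasoning is not valid: an irreducible polynomial in a UFD can perfectly well have a non-unit leading coefficient in one of its variables (e.g.\ $xy-c$ in $\IQ[c^{\pm1},x,y]$).  The fact that the leading $x_i$-coefficient of $f_{x_iy_i}$ is literally $1$ (and not some nontrivial element of $\IQ[c_1,c_2,y_i]$) is not forced by the abstract structure; it is a feature of this particular elimination and must be read off the explicit output.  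Since you already say the shape is produced by the {\tt sage} script, your proof stands, but the conceptual justification you offer for this step should be dropped.  (Relatedly, your reference to ``$V(J)\subset\IA^4$ is a curve'' conflates the family over $\spec\IQ[c_1^{\pm1},c_2^{\pm1}]$ with its fibers $\cC_c$; the total space $V(J)$ is three-dimensional.)
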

\begin{proof}
We verify that  $f_{x_1 y_1}$ exists and satisfies the first two equalities.
The symmetry (\ref{eq:sym1})
implies the existence of $f_{x_2 y_2}$ and  the
first two equalities.

The same reasoning applies to the third and forth equalities.
Moreover, the sixth equality follows from the fifth one by the same
symmetry as before.

We verify final two inequalities directly. 
\end{proof}
%\begin{proof}
%  See {\tt g3fin\textunderscore curve.sws} for details on the computation.
%\end{proof} 

\begin{lemma}%[requires {\tt g3fin\_ch6.sage}]
\label{lem:cong2}
%% For $\{i,j\}= \{1,2\}$
%%   there exist elements $f_{x_i x_j} \in J\cap A[x_i,x_j]$
%% and $f_{y_i y_j} \in J\cap A[y_i,y_j]$
There exist polynomials $f_{x_1 x_2} \in J\cap \IQ[c_1,c_2,x_1,x_2]$
and $f_{y_1 y_2} \in J\cap \IQ[c_1,c_2,y_1,y_2]$ that satisfy
      \begin{alignat}2
%        \begin{array}{rlcl rlcl}
\nonumber
      f_{x_1 x_2} &= -2(x_2+c_2)^2 x_1^6 &+&O(x_1^5),\\
\nonumber
      f_{x_1 x_2} &= -2(x_1+c_1)^2 x_2^6 &+&O(x_2^5), \\
\nonumber
      f_{y_1 y_2} &= -2(y_2-c_2)^2 y_1^6 &+&O(y_1^5), \\
\nonumber
      f_{y_1 y_2} &= -2(y_1-c_1)^2 y_2^6 &+&O(y_2^5), \\
\nonumber
      f_{x_1 x_2}(x_1,x_1) &= x_1^8 &+ &O(x_1^7), \\
\nonumber
      f_{y_1 y_2}(y_1,y_1) &= y_1^8 &+& O(y_1^7), \\
\label{eq:fx1x211}
      f_{x_1 x_2}(1,1) &= 64 c_1 c_2 (c_1+c_2+1). &
%        \end{array}
      \end{alignat}
%% and
%% \begin{alignat}1
%% \label{eq:fy1y2m1m1}
%%       f_{y_1 y_2}(-1,-1) &= -64 c_1 c_2 (c_1+c_2+1),\\
%% \label{eq:fx1y2m1m1}
%%       f_{x_1 y_2}(-1,-1) &= 64 c_1 c_2 (-c_1+c_2+1).
%% \end{alignat}
\end{lemma}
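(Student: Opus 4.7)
The plan is to mimic the elimination-theoretic approach of Lemma~\ref{lem:cong1}. The existence claim amounts to showing that the elimination ideals $J \cap \IQ[c_1,c_2,x_1,x_2]$ and $J \cap \IQ[c_1,c_2,y_1,y_2]$ each contain a nonzero element with the prescribed leading terms and values. I would compute a lexicographic Gr\"obner basis of the ideal $(f_1, f_2, f) \subset \IQ[c_1, c_2, x_1, y_1, x_2, y_2]$ under a variable order making $y_1, y_2$ largest; by standard elimination theory, the intersection with $\IQ[c_1, c_2, x_1, x_2]$ is generated by the basis elements not involving $y_1, y_2$. Among these I would select one of minimal total degree, which—given the claimed shape—should be of bidegree $(6,6)$ in $(x_1, x_2)$ with an overall factor one can clear to match the normalization $-2(x_2+c_2)^2 x_1^6 + O(x_1^5)$. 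Recall that $c_1, c_2$ are units in $B$, so multiplication by monomials in $c_1, c_2$ is freely allowed when adjusting normalizations.

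With $f_{x_1 x_2}$ in hand, the symmetry (\ref{eq:sym2}) produces $f_{y_1 y_2}$ essentially for free: since (\ref{eq:sym2}) preserves $J$ and exchanges $x_i \leftrightarrow y_i$ while sending $c_i \mapsto -c_i$, we may simply set
\begin{equation*}
f_{y_1 y_2}(c_1, c_2, y_1, y_2) \;:=\; f_{x_1 x_2}(-c_1, -c_2, y_1, y_2).
\end{equation*}
The leading-coefficient identities for $y_1, y_2$ are then automatic from those for $x_1, x_2$, with the factors $(y_i - c_i)^2$ arising from $(x_i + c_i)^2$ under the sign change on $c_i$. Likewise, the diagonal specializations $f_{x_1 x_2}(x_1, x_1)$ and $f_{y_1 y_2}(y_1, y_1)$ should correspond under the symmetry, and both statements reduce to one direct substitution.

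The remaining verifications are routine. Each leading-term claim of the form $f_{x_1 x_2} = -2(x_2+c_2)^2 x_1^6 + O(x_1^5)$ means that, viewed as a polynomial in $x_1$ over $\IQ[c_1, c_2, x_2]$, the top $x_1$-coefficient is exactly $-2(x_2+c_2)^2$; this is read off the explicit polynomial. The specialization (\ref{eq:fx1x211}) is a single plug-in at $(x_1, x_2) = (1,1)$. The two leading-term conditions on the same polynomial $f_{x_1 x_2}$ (in $x_1$ and in $x_2$) give a nontrivial consistency check that helps pin down the correct element of the elimination ideal.

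The main obstacle is computational rather than conceptual: the Gr\"obner basis of a parametric ideal in four affine variables can be sizable, and care is needed to select the right generator and clear denominators so that the resulting $f_{x_1 x_2}$ lies in $\IQ[c_1, c_2, x_1, x_2]$ (not $\IQ[c_1^{\pm 1}, c_2^{\pm 1}, x_1, x_2]$) and has precisely the normalized leading coefficients claimed. Once this explicit polynomial is produced, all stated identities are verified by direct substitution, and the symmetry argument completes the construction of $f_{y_1 y_2}$.
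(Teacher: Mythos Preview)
Your proposal is correct and follows essentially the same approach as the paper: compute $f_{x_1 x_2}$ by computer-assisted elimination and verify its properties directly, then obtain $f_{y_1 y_2}$ and its properties via the symmetry (\ref{eq:sym2}). The paper's proof is terse (``we use symmetry... to reduce the statements on $f_{y_1 y_2}$... we then check the claims on $f_{x_1 x_2}$ directly''), and your write-up simply spells out the mechanics of the elimination and the symmetry transfer in more detail.
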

\begin{proof}
  As in the previous lemma we use symmetry, here   (\ref{eq:sym2}), to reduce the
  statements on $f_{y_1 y_2}$ to the corresponding statements on
  $f_{x_1 x_2}$. We then check the claims on $f_{x_1 x_2}$  
  directly.
\end{proof}
%\begin{proof}
%  See {\tt {g3fin{\textunderscore}curve.sws}} for details on the computation.
%\end{proof} 

We recall the variable $t$ introduced near \eqref{eq:definet}.
 Using  the  ideal
\begin{equation*}
  K = JB[t] + ((x_1+1)t-(y_1+1))B[t] \subset B[t]
\end{equation*}
 we will eliminate  all  variables but 
$c_1,c_2,x_2,$ and $t$. 
We observe that $B[t]/K$ equals $(B/J)[(y_1+1)/(x_1+1)]$ inside the
field of fractions of $B/J$. Using computer algebra we can verify that
$c_1$ is a member of the ideal in $\IQ[c_1,c_2,x_1,y_1,x_2,y_2]$
generated by $f_1,f_2,f,x_1+1,y_1+1$. So we can write $1$ as a
$B$-linear combination of these generators. After dividing by $x_1+1$
we see that $B[t]/K$ equals the localization of $B/J$ at $x_1+1$. 

Note that the scheme $\cC_c$, whose
 irreducible components we want to show are curves, is just the specialization
of $B[t]/K$ to the corresponding value of $c$.

If $g\in B[t]$ and $c\in(\IQ\ssm\{0\})^2$ then as usual
 $\spe{g}$ denotes the specialization of $g$ in $\IQ[x_1,y_1,x_2,y_2,t]$.

%% Let $\in \IQ[u,t]\ssm\{0\}$ be a  polynomial.
%% If $f=a_0+a_1t+\cdots +a_dt^d$ 
%% with $a_i\in\IQ[u]$ and $a_0a_d\not=0$ we define the Newton polygon
%%  of $f$ as the lower convex hull of the points
%% $\{(0,\order a_0),(1,\order a_1),\ldots,(d,\order a_d)\} \subset\IR^2$
%% where $\order a_i$ denotes the order of vanishing of $a_i$ at $u=0$. 
%% \par
%% We call an element of a ring primary if it is the product of a unit
%% and a positive power of a prime element.
\par
\begin{lemma}%[requires {\tt g3fin\_ch6.sage}] REMOVED
\label{lem:projirred}
The following hold true. 
\begin{enumerate}
\item [(i)]
The intersection $K\cap \IQ[c_1^{\pm 1},c_2^{\pm 1},x_2,t]$
is %% a principal, prime ideal of $A[x_2,t]$
generated as an ideal  by $f_{x_2 t} \in \IQ[c_1,c_2,x_2,t]$ which is an irreducible
element 
of $\IQ[c_1,c_2,x_2,t]$
 with $\deg_{t} f_{x_2 t} = 6$,
\begin{equation} \label{eq:fx2t} 
\begin{aligned}
      f_{x_2 t}(1,t) & = 32c_1^3 t^2
\left((-c_1+c_2+1)t^2 - (c_1 + c_2 + 1)\right), \quad \text{and}\\
f_{x_2 t}(-1,t) &=   32c_{1}  (t^{2} - (c_{1}  + c_{2}  +
1) t +  c_{2}) \cdot \\
& \phantom{=} \cdot  (c_{2} t^{2} + (c_{1}  -  c_{2}  -  1) t + 1) ((c_{1}  + c_{2}  -  1) t^{2} + c_{1} -  c_{2}
+ 1). \\
\end{aligned}
\end{equation}
\item[(ii)] Say $\varphi:\IQ[c_1,c_2,x_2,t]
\rightarrow \IQ[c_1,x_2,t]$ is the ring homomorphism that
  maps $c_2$ to $\pm c_1 + 1$
and $c_1,x_2,t$ to themselves. 
Then $\varphi(f_{x_2 t})$ is absolutely irreducible as an element
of $\IC(c_1)[x_2,t]$. 
%% If $L$ denotes an algebraic closure
%% of $\IQ(c_1)$, then
%%  $\varphi(f_{x_2 t})$
%% is  irreducible in $L[x_2,t]$. 
\item[(iii)]
If $c\in\IQ^2$  has non-zero coordinates,
then $\cu$ is non-empty. 
If we assume further that $\cu$ is irreducible, the
%% and that 
%% $\cu$ is empty or irreducible.
%% Then $\cu\not=\emptyset$ and the
 projection of $\cu$ to $\IA^2$ by taking the coordinates $x_2$ and $t$ is
Zariski dense in the curve cut out by $\spe{f_{x_2 t}}$.   
Moreover, $\spe{f_{x_2 t}}$ is
irreducible in $\IQ[x_2,t]$.
\end{enumerate}
\end{lemma}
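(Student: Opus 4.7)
The plan is to reduce Lemma~\ref{lem:projirred} to a combination of explicit elimination computations in {\tt sage} and a few conceptual geometric arguments. I will treat the three parts in order.

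For (i), I would compute a Gr\"obner basis of $K$ with respect to a lexicographic order that eliminates $x_1, y_1, y_2$ (keeping $c_1, c_2, x_2, t$), and read off the elimination ideal $K \cap \IQ[c_1^{\pm 1}, c_2^{\pm 1}, x_2, t]$. The assertion is that this elimination ideal is principal; the generator (after clearing denominators in $c_1, c_2$) is the claimed $f_{x_2 t}\in\IQ[c_1, c_2, x_2, t]$. One then verifies $\deg_t f_{x_2 t} = 6$, the two explicit formulas for $f_{x_2 t}(\pm 1, t)$, and irreducibility of $f_{x_2 t}$ as an element of $\IQ[c_1,c_2,x_2,t]$ by a {\tt sage} factorization call.

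For (ii), I would perform the substitution $c_2 \mapsto \pm c_1 + 1$ to obtain $\varphi(f_{x_2 t}) \in \IQ[c_1][x_2, t]$ and check absolute irreducibility in $\IC(c_1)[x_2,t]$. Since $\IC(c_1)$ is a purely transcendental extension of $\IQ$, it suffices to verify that $\varphi(f_{x_2 t})$ is irreducible in $\overline{\IQ(c_1)}[x_2, t]$, which can be done by reducing modulo several small rational primes $p$ (chosen so that the degrees of $\varphi(f_{x_2 t})$ in $x_2$ and $t$ are preserved and the leading coefficients are units mod $p$) and checking irreducibility of the reduction in $\mathbb{F}_p(c_1)[x_2,t]$; compatible mod-$p$ factorization patterns force absolute irreducibility in characteristic zero.

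For (iii), the argument splits into three steps. \emph{Non-emptiness}: fix any non-trivial root $t_0$ of $\spe{f_{x_2 t}}(1, t)$ (which exists by the explicit formula in (\ref{eq:fx2t}) since $c_1 \neq 0$ and a nontrivial factor is present), and back-substitute into the Gr\"obner basis of $K$ from (i) to recover values of $x_1, y_1, y_2$; this produces an explicit point of $\cu$. \emph{Density of the projection}: by Lemma~\ref{lem:xiyiprops}(ii), any irreducible component of $\cu\otimes\IC$ is a curve, and the projection $\pi\colon (x_1, y_1, x_2, y_2, t)\mapsto (x_2, t)$ sends $\cu$ into $V(\spe{f_{x_2 t}})$ because $f_{x_2 t}\in K$; since $\cu$ is assumed irreducible and $\pi$ is non-constant on it (the point just constructed has both $x_2$ and $t$ varying with the choice of root $t_0$), the Zariski closure of $\pi(\cu)$ is a one-dimensional irreducible subvariety of $V(\spe{f_{x_2 t}})$. \emph{Irreducibility of $\spe{f_{x_2 t}}$}: suppose $\spe{f_{x_2 t}} = gh$ were a nontrivial factorization in $\IQ[x_2,t]$. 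Then $\pi(\cu)$ would be contained in one of $V(g)$ or $V(h)$, and combined with the density just established this would give a proper factorization of some irreducible component of $V(\spe{f_{x_2 t}})$ over $\overline{\IQ}$. Using that $f_{x_2 t}$ is irreducible in $\IQ[c_1,c_2,x_2,t]$ by (i), such a specialization-induced factorization can be ruled out by a Bertini-type argument: a nontrivial factorization persisting on a Zariski-dense subset of $(c_1,c_2)$-values would descend to a factorization of $f_{x_2 t}$ itself, and in the complementary (Zariski-closed) reducibility locus one would contradict the absolute irreducibility of $\varphi(f_{x_2 t})$ from (ii), which pins down a curve in $(c_1,c_2)$-space on which $f_{x_2 t}$ must remain irreducible.

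The main obstacle is the last step of (iii): transferring the absolute irreducibility in (ii) into irreducibility of $\spe{f_{x_2 t}}$ over $\IQ$ for every $c$ with non-zero coordinates (with $\cu$ irreducible). The cleanest way to handle this is probably not the sketch above but rather a direct case distinction driven by {\tt sage}: use the Gr\"obner basis computation to describe the specialization locus where $f_{x_2 t}$ becomes reducible as the zero set of an explicit polynomial in $c_1, c_2$, and then verify (again by computer algebra) that this locus is either empty or excluded by the irreducibility hypothesis on $\cu$ (which, by Lemma~\ref{lem:irred}, amounts to conditions on $c$). The rest of the lemma is essentially a sequence of mechanical verifications.
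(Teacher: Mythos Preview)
Your treatment of (i) matches the paper's; it is indeed a mechanical computer-algebra verification.

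For (ii), your mod-$p$ sketch is a plausible alternative to the paper's argument, but as written it does not establish \emph{absolute} irreducibility: irreducibility of the reduction in $\mathbb{F}_p(c_1)[x_2,t]$ only yields irreducibility over $\IQ(c_1)$, and ``compatible factorization patterns'' is the wrong tool here. The paper instead checks irreducibility over $\IQ(\sqrt{3})[c_1,x_2,t]$, homogenizes in $x_2,t$, and exhibits a smooth $\IQ(\sqrt{3})$-rational point on the projective zero locus; since the Galois group of $\overline{\IQ(c_1)}/\IQ(\sqrt{3},c_1)$ fixes this point, it cannot permute geometric components, forcing geometric irreducibility.

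The real gap is in (iii). You show that $\pi(\cu)$ lands in $V(\spe{f_{x_2 t}})$ and that its Zariski closure is an irreducible curve, but you never show it is dense in the \emph{entire} vanishing locus of $\spe{f_{x_2 t}}$. If $\spe{f_{x_2 t}}=gh$ with $g,h$ nontrivial, there is no contradiction in $\pi(\cu)$ being dense only in $V(g)$; your ``this would give a proper factorization of some irreducible component'' does not follow. Your Bertini--Noether sketch then tries to rule out reducible specializations globally, but Bertini--Noether only gives irreducibility off a closed locus of $c$-values, and part (ii) controls just the two lines $c_2=\pm c_1+1$; this does not cover all $c$ with $\cu$ irreducible.

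The paper's route around this is the key missing idea: it proves that, after localizing at $t^2+1$, the projection $\pi$ is a \emph{finite} morphism. This is done by a direct integrality check (via elimination) showing $(t^2+1)x_1$, $(t^2+1)y_1$, and $y_2$ are integral over $\IQ[c_1,c_2,x_2,t]$ modulo $K$. Finite plus dominant gives surjective, and surjectivity survives base change to any $c$. Thus $\pi_c(\cZ_c)=\cW_c$, so irreducibility of $\cu$ forces irreducibility of the target space, whence $\spe{f_{x_2 t}}$ is a prime power in the localized ring. One then rules out the factor $t^2+1$ and the exponent $e>1$ by direct inspection of $\spe{f_{x_2 t}}(\pm 1,t)$ using the explicit formulas in (\ref{eq:fx2t}). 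This finiteness step also gives non-emptiness cleanly, whereas your ``back-substitute into the Gr\"obner basis'' needs the Extension Theorem and a check on leading coefficients that you have not supplied.
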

\begin{proof}
The existence of $f_{x_2 t}$ and an explicit presentation follows from 
computer algebra assisted elimination.
We will not reproduce this polynomial here as it has $453$ non-zero
terms and total degree $12$. 
%% \footnote{Can we display $f_{x_2
%%     t}$ here??? It has 453 non-zero terms, more than twice as many as
%%   $h$...}
In this way we also check the properties \eqref{eq:fx2t}
and that $f_{x_2 t}$ is irreducible as an element of
$\IQ[c_1,c_2,x_2,t]$. This settles part (i) and we move to part (ii).

Using computer algebra it is possible to check that a given polynomial is irreducible
 over a given number field. We find that $\varphi(f_{x_2 t})$ 
is irreducible as an element of 
$\IQ(\sqrt{3})[c_1,x_2,t]$. It is irreducible as an element of $\IQ(\sqrt{3},c_1)[x_2,t]$
as it depends on $t$. To deduce geometric irreducibility, we apply
the following trick. Let $g\in \IQ(\sqrt{3},c_1)[x_2,t,u]$
 denote the homogenization of $\varphi(f_{x_2 t})$ with
respect to $x_2$ and $t$ and where $u$ is the new projective variable. Then $g$ is irreducible in
$\IQ(\sqrt{3},c_1)[c_2,t,u]$. 
Next we claim that it is irreducible in $L[c_2,t,u]$ where
$L$ is an algebraic closure of  $\IQ(c_1)$.
Indeed, we remark that 
 $g(1/\sqrt{3},1,1)=0$ and
$\frac{\partial}{\partial x_2} g(1/\sqrt{3},1,1)\not=0$. 
In other words, $[1/\sqrt{3}:1:1]$ is contained in precisely one 
geometric component of the vanishing locus in $\IP^2$ of $g$. 
If $h$ were reducible over $L$, then some element of the
Galois group of $L/\IQ(\sqrt{3},c_1)$ would map said component to
some other component. But this is impossible as the Galois group
leaves $[1/\sqrt{3}:1:1]$ invariant. So $g$ is irreducible in
$L[x_2,t,u]$. It follows that
 $\varphi(f_{x_2 t})$ is irreducible in  $L[x_2,t]$.  
So this polynomial must be irreducible over any base field containing
$\IQ(\sqrt{3},c_1)$ as $L$ is algebraically closed. 
This yields part (ii) and we now prove part (iii). 
%
%
%%  The
%% coefficient in $A[u]$  corresponding to the monomial $t^i$ of $f_{x_2 t}(u+1,t)$
%% are in the coset listed on the left of
%% \begin{center}
%%   \begin{tabular}{|l|l|}
%%     \hline
%%     $t^0$ & $8c_2^2(c_1+c_2+1)u^2 + u^3A[u]$ \\    
%%     $t^1$ & $u A[u]$ \\    
%%     $t^2$ & $-32c_1^3(c_1+c_2+1) + u A[u]$ \\    
%%     $t^3$ & $u A[u]$ \\
%%     $t^4$ & $32c_1^3(-c_1+c_2+1) + u A[u]$ \\    
%%     $t^5$ & $u A[u]$ \\    
%%     $t^6$ & $8c_2^2(-c_1+c_2+1)u^2 + u^3A[u] $ \\    
%% \hline
%%   \end{tabular}   
%% \end{center}
%% Part (i) is a consequence of this table.

 The natural ring homomorphism
$\IQ[c_1^{\pm 1},c_2^{\pm 1},x_2,t]/(f_{x_2 t}) \rightarrow B[t]/K$ is injective by
(i). Thus so is 
\begin{equation}
\label{eq:ringmap}
 \IQ[c_1^{\pm 1},c_2^{\pm 1},x_2,t,(t^2+1)^{-1}]/(f_{x_2 t})
 \rightarrow B[t,(t^2+1)^{-1}]/K
 %% \IQ[c_1,c_2,x_2,t]/(f_{x_2 t})\otimes_{\IQ[t]}\IQ[t,(t^2+1)^{-1}] \rightarrow B[t]/K\otimes_{\IQ[t]}\IQ[t,(t^2+1)^{-1}] 
\end{equation}
as localizing is an exact functor. Therefore, the corresponding morphism of
affine schemes $\pi:\cZ\rightarrow\cW$ is dominant. 

Using computer algebra 
%In {\tt g3fin\textunderscore curve.sws}  
we  may verify that  the classes of $(t^2+1)x_1,(t^2+1)y_1,$ and $y_2$ in the quotient 
ring $B[t]/K$ are integral over $\IQ[c_1,c_2,x_2,t]$. 
Indeed, to check this for $(t^2+1)x_1$ we first produce a list of
generators
of the ideal $(f_1,f_2,f,(x_1+1)t-(y_1+1))\subset
\IQ[c_1,c_2,x_1,y_1,x_2,y_2,t]$ intersected with $\IQ[c_1,c_2,x_1,x_2,t]$. For each member of this
list we extract the leading term as a polynomial in the variable
$x_1$.
Next we show that these leading terms generate an ideal containing
$t^2+1$. A similar procedure yields our claims for $(t^2+1)y_1$ and
$y_2$. 
\par
Thus any element on right of
\eqref{eq:ringmap} 
is integral over the ring on the left. In other words,  
$\pi$ is a finite morphism. 
Finite morphisms are closed and since $\pi$ is
dominant we conclude that $\pi$  is surjective.
\par
After doing a base change we may view $\pi$ as a family of maps $\pi_c: \mathcal{Z}_c\rightarrow \cW_c$
parameterized by  $c\in (\IQ\ssm\{0\})^2$. Surjectivity  is stable under base change, 
so each $\pi_c$ is surjective. The  target $\cW_c$ of the map $\pi_c$ 
is the affine scheme attached to 
${\IQ[x_2,t,(t^2+1)^{-1}]/(\spe{f_{x_2 t}})}$. 
It is non-empty since $\spe{f_{x_2 t}} \in \IQ[x_2,t,(t^2+1)^{-1}]$ 
is not a unit; indeed, taking
$f_{x_2 t}(\pm 1,t)$ from  (i) into account we find that
 $x_2$ must appear in 
$\spe{f_{x_2 t}}$.
\par
%%% ------------ Short argument begins here
Now $\cZ_c$ is homeomorphic to an open subspace of
$\cu$, obtained by specialization to $c$ of a localization at
$1/(x_1+1)$ and $t^2+1$. 
Surjectivity of $\pi_c:\cZ_c\rightarrow\cW_c$ implies $\cZ_c\not=\emptyset$.
We conclude that $\cu\not=\emptyset$, so $\cu$ is irreducible by
hypothesis
and therefore $\cZ_c$ is irreducible too. 
Thus $\cW_c$, being the continuous image of $\cZ_c$,
is also irreducible. This is equivalent to the fact that
the nilradical of $\IQ[x_2,t,(t^2+1)^{-1}]/(\spe{f_{x_2 t}})$ is a prime ideal. 
As $\IQ[x_2,t,(t^2+1)^{-1}]$ is a factorial domain we conclude that
$\spe{f_{x_2 t}}$  is the power of an irreducible
 element of $\IQ[x_2,t,(t^2+1)^{-1}]$. 
\par
Next we show that up-to association,
 $\spe{f_{x_2 t}}$  has at most one prime divisor 
 in $\IQ[x_2,t]$.
Otherwise it would be divisible by $t^2+1$.  Then the right-hand sides of
both equations in \eqref{eq:fx2t}  would be divisible by $t^2+1$
leaving us with the contradictory
\begin{equation*}
  -\spe{c_1}+\spe{c_2}+1 = -(\spe{c_1}+\spe{c_2}+1) \quad\text{and}\quad
  \spe{c_1}+\spe{c_2}-1 = \spe{c_1}-\spe{c_2}+1.
\end{equation*}
\par
So $\spe{f_{x_2 t}}$ is the $e$-th power of an  element in
$\IQ[x_2,t]$. We must now prove that $e=1$.
By specialization we see that $\spe{f_{x_2 t}}(\pm 1,t)$
are  $e$-th powers in $\IQ[t]$.
If $e>1$ then the first equation in %(\ref{eq:fx2t})
the conclusion of part (i) implies
$(-\spe{c_1}+\spe{c_2}+1)(\spe{c_1}+\spe{c_2}+1)=0$. 
If $\spe{c_1} = \spe{c_2} + 1$, then the second one simplifies to 
\begin{equation*}
  64  \spe{c_1} (t^2 - 2\spe{c_1}t + \spe{c_1} - 1) ((\spe{c_1}-1)t^2 + 1)^2
\end{equation*}
by (\ref{eq:fx2t}). 
The only rational value $\spe{c_1}$ for which the displayed expression
is an $e$-th power with $e>1$ is $0$, which we excluded. 
If $\spe{c_1} = -\spe{c_2}-1$, then the second equation
in (\ref{eq:fx2t}) simplifies to 
\begin{equation*} 
64 \spe{c_1}  (-t^2 + \spe{c_1} + 1)^2  ((\spe{c_1}+1)t^2 - 2\spe{c_1}t - 1).
\end{equation*}
By a similar argument as before and using $\spe{c_1}\not=0$, this yields $e=1$, as desired. 
\end{proof}

%% The purpose of  (\ref{eq:fx2tshift}) and 
%% (\ref{eq:fx2tinvshift}) is to determine the coefficients of the
%% corresponding polynomials attached to the monomials
%% $1,u,t,u^2,t^2$. For our application it will be important that there
%% are no monomials of degree less than $1$. Moreover, it is important to
%% know exactly when the coefficients of $u^2$ and $t^2$ are non-zero.

%% Recall that $\cu$ is the intersection of
%% $\mathcal X$ with the vanishing locus of $\spe{f}$. 

%\newcommand{\cuC}{\cu_{,\IC}}
%We write $\cuC$ for the base change $\cu\times\spec\IC$. 
%Basic intersection theory tells us $\dim \mathcal{X}_c \ge 1$. 
%% By the previous Lemma $\mathcal{X}_c$ is  geometrically irreducible.
%We will now treat $\mathcal{X}_c$ as  defined over $\IC$. 

%
%Indeed, this and other
%properties will follow by studying
% the specializations
%\begin{equation*}
%\spe{f_{x_i y_j}}, \spe{f_{x_i x_j}}, \quad\text{and}\quad
%\spe{f_{x_i y_j}}
%\end{equation*}
%which vanish identically on $\cu$. 
%
%By basic
%intersection theory any irreducible component of $\cu$ has dimension
%at least $1$. Indeed, $\cu$ is the intersection of the irreducible
%surface $\mathcal{X}_\IC$ with a hyperplane. 
%

%We will often equip an irreducible  component $\mathcal C$ of $\cu$
%with the induced reduced scheme structure.
Let $\cC$ be an irreducible component of $\cu\otimes\IC$. 
This is an irreducible variety over $\spec{\IC}$. 
%% By the convention introduced above, $\cC$ is  geometrically
%% irreducible and we take is as curve defined over $\IC$.
By abuse of notation
$x_{1,2}$ and $y_{1,2}$ as elements of $\IC(\mathcal C)$,
the function field of $\mathcal C$.

%% By abuse of notation we let $x_i$ and $y_i$ denote the restrictions
%% of $x_i|_\cu$ and $y_i|_\cu$, respectively. 
%% Let $K\supset\IC$ be the
%%   function field of $\cu$. 

\begin{lemma}
\label{lem:xiyiprops2}
Suppose $c\in\IQ^2$ has non-zero coordinates and let $\mathcal C$ be an
irreducible component of $\cu\otimes\IC$.
\begin{enumerate}
\item[(i)] 
The functions $x_1,y_1,x_2$ and $y_2$ are non-constant when considered
as elements of $\IC(\mathcal C)$.
%% \item [(ii)] Say $i\in\{1,2\}$.
%% The functions $x_i\pm 1$, $y_i\pm 1$,  $x_1-y_2$, $x_2-y_1$, $x_2-x_1$, and  $y_2-y_1$
%% are non-zero elements of the function field
%% of $\cC$. % $K(\mathcal C)$.
\item[(ii)] Let $v$ be a valuation of $\IC(\cC)$ that is constant on $\IC$. 
Then $x_i$ is regular at $v$ if and only if $y_i$ is regular
at $v$.
%% \item[(iv)] The component $\cC$ is a curve. 
  \end{enumerate}
\end{lemma}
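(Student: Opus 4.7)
The plan is to establish part (ii) first and then leverage it in part (i). For (ii), I would invoke the polynomial $f_{x_iy_i} \in J \cap \IQ[c_1,c_2,x_i,y_i]$ from Lemma~\ref{lem:cong1}: by the stated leading terms, $f_{x_iy_i}$ is simultaneously monic of degree $6$ as a polynomial in $x_i$ (with coefficients in $\IQ[c_1,c_2,y_i]$) and monic of degree $6$ as a polynomial in $y_i$. Specializing at $c$ yields a polynomial $\spe{f_{x_iy_i}}\in\IC[x_i,y_i]$ that vanishes on $\cC$ and retains both monicity properties; thus $x_i$ is integral over $\IC[y_i]$ and $y_i$ is integral over $\IC[x_i]$ inside $\IC(\cC)$. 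For a valuation $v$ of $\IC(\cC)$ trivial on $\IC$, suppose $v(y_i)\geq 0$. In the monic relation $x_i^6 + \sum_{j<6} a_j(y_i)x_i^j = 0$, each $a_j(y_i)\in\IC[y_i]$ satisfies $v(a_j(y_i))\geq 0$; if $v(x_i)<0$, the ultrametric inequality would force $v(x_i^6)=6v(x_i)$ to be strictly smaller than $v(a_j(y_i)x_i^j)\geq jv(x_i)>6v(x_i)$ for all $j<6$, contradicting the sum being zero. Hence $v(x_i)\geq 0$, and the converse is symmetric.

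For (i), I would argue by contradiction. Suppose $x_1$ is constant on $\cC$, say $x_1=a\in\IC$. Specializing $x_1=a$ in the monic-in-$y_1$ relation from Lemma~\ref{lem:cong1} gives a non-trivial polynomial in $y_1$ with constant coefficients, so $y_1$ is algebraic over $\IC$ and therefore constant on the irreducible $\cC$; write $y_1=b$. Then $f_2=0$ yields $x_2^2+y_2^2=a^2+b^2$, and $f_1=0$ yields $(a+b)(x_2+y_2)=a^2+b^2-2$. If $a+b\neq 0$, then $x_2+y_2$ is constant; combined with $x_2^2+y_2^2$ constant this forces $x_2y_2=\tfrac{1}{2}\bigl((x_2+y_2)^2-(x_2^2+y_2^2)\bigr)$ to be constant as well, so $x_2$ and $y_2$ are roots of a fixed quadratic over $\IC$. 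Being regular functions with finite image on the irreducible curve $\cC$, they must be constants, whence $\cC$ reduces to a point, contradicting Lemma~\ref{lem:xiyiprops}(ii). If $a+b=0$, then $2a^2-2=0$, forcing $a=\pm 1$ and contradicting the non-vanishing of $x_1\pm 1$ in $\IC(\cC)$ asserted in Lemma~\ref{lem:xiyiprops}(i). The case $x_2$ constant is handled symmetrically, now using $f_2$ to get $x_1^2+y_1^2$ constant and $f_1$ to get $(x_1+y_1)(x_2+y_2)=x_2^2+y_2^2-2$; non-constancy of $y_1$ and $y_2$ then follows from (ii) applied to the already established non-constancy of $x_1$ and $x_2$.

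The only non-routine ingredient is the construction of the explicit eliminants $f_{x_iy_i}$, which was carried out in Lemma~\ref{lem:cong1}; once they are available, the remainder of both parts is elementary valuation theory combined with a short case analysis that exploits only the two quadrics $f_1,f_2$ defining $\overline{\cY}$ and the non-vanishing statement of Lemma~\ref{lem:xiyiprops}(i).
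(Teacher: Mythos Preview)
Your argument for part (ii) is correct and is exactly the paper's argument: the mutual monicity of $f_{x_iy_i}$ in $x_i$ and in $y_i$ (Lemma~\ref{lem:cong1}) makes each integral over the polynomial ring in the other, and this forces simultaneous regularity at any valuation trivial on $\IC$.

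For part (i), however, there is a circularity. You invoke Lemma~\ref{lem:xiyiprops}(i) (to rule out $x_1=\pm 1$) and Lemma~\ref{lem:xiyiprops}(ii) (to rule out $\cC$ being a point), but in the paper the proof of Lemma~\ref{lem:xiyiprops} is deferred until \emph{after} Lemma~\ref{lem:xiyiprops2} and explicitly uses Lemma~\ref{lem:xiyiprops2}(i). So as written your proof of (i) relies on a result whose proof relies on (i). The repair is easy: for the case $a+b\neq 0$, replace the appeal to Lemma~\ref{lem:xiyiprops}(ii) by the direct observation that $\cC$ is an irreducible component of the intersection of the surface $\overline{\cY}$ with the hypersurface $\spe{f}=0$, hence $\dim\cC\ge 1$; for the case $a+b=0$, note that $x_1=\pm 1$ is already excluded by the very definition of $\cY$ (since $x_3=1$ and $y_3=-1$, the factors $x_1-x_3$ and $x_1-y_3$ appear in the product defining $\overline{\cY}\ssm\cY$), so no appeal to Lemma~\ref{lem:xiyiprops}(i) is needed.

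With that fix your route through the explicit quadrics $f_1,f_2$ is a genuinely different and somewhat more elementary argument than the paper's, which instead invokes the eliminants $f_{x_1x_2}$ and $f_{y_1y_2}$ from Lemma~\ref{lem:cong2}: when $x_2,y_2$ are constant the leading coefficients $-2(x_2+c_2)^2$ and $-2(y_2-c_2)^2$ must vanish (else $x_1,y_1$ would be constant too, contradicting $\dim\cC\ge 1$), forcing $x_2-y_2=-2\spe{c_2}$, which plugged into $\spe{f}=0$ yields $\spe{c_1}\spe{c_2}=0$. The paper's approach has the virtue of not needing any case split, while yours avoids introducing the extra eliminants from Lemma~\ref{lem:cong2}.
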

\begin{proof}
Since $\mathcal C$ is a component
  of the intersection of a surface with a hypersurface we have
$\dim\mathcal C\ge 1$. So at least one among $x_1,y_1,x_2,$ and $y_2$
  is non-constant, as these elements generate the function field of $\cC$.
By the first equality in Lemma \ref{lem:cong1} we see
that $\spe{f_{x_i y_i}}$ is non-zero. So
 $x_i$ is constant if and only if $y_i$ is constant. 
Therefore, $x_1$ or $x_2$ is non-constant

If we are in the first case and if $x_2$ or $y_2$ are constant,
then $x_2$ and $y_2$ are constant. So $x_2 = -\spe{c_2}$ by the first
equality and $y_2 = \spe{c_2}$ by the third equality
of Lemma \ref{lem:cong2}. So $x_2-y_2 = -2\spe{c_2}$. 
Now $\spe{f}(x_1,y_1,x_2,y_2)=0$  as an element of $\IC(\cC)$
and (\ref{eq:definef}) implies $\spe{c_1}\spe{c_2}=0$, a contradiction. 
By using the second and fourth inequality of Lemma \ref{lem:cong2}
we arrive at a similar contradiction if $x_2$ is non-constant and
$x_1$ is constant. 

Thus all $4$ functions are non-constant and part (i) follows. 

Part (ii) follows from the first two equalities of Lemma
\ref{lem:cong1}. Indeed, $x_i$ is integral over $\IQ[y_i]$ and vice versa.
\end{proof}

\begin{proof}[Proof of Lemma \ref{lem:xiyiprops}]
Lemma \ref{lem:projirred}(iii) implies that $\cu$ is non-empty.  
By Lemma \ref{lem:xiyiprops2}(i) the coordinates $x_1,y_1,x_2,y_2$ are non-constant
on $\cC$. 
We have $x_i \pm 1 \not =0$ by the fourth equality of Lemma
\ref{lem:cong1} and $y_i\pm 1\not=0$ by the third equality. 
The statements $x_1-y_2\not=0$ and $x_2-y_1\not=0$ are a consequence
of the fifth and sixth equalities.
%% of Lemma
%% \ref{lem:cong1}.
Finally, $x_2-x_1\not =0$ and $y_2-y_1\not=0$ by the
fifth and sixth equalities of
Lemma \ref{lem:cong2}. We conclude part (i). 
\par
%\end{proof}
%% \par
%% \begin{proof}[Proof of Lemma~\ref{le:Cciscurve}]
 
We now compute the dimension of $\cC$.  By the first  equality in Lemma \ref{lem:cong1} we see
$\spe{f_{x_i y_i}}\not=0$.
So $x_i,y_i$ are algebraically dependent over $\IC$.
Now $x_1$ and $x_2$ are also algebraically dependent
due to the  first equality in Lemma \ref{lem:cong2}.
%% Now $x_1\not=-1$ by part (ii), so  $t=(y_1+1)/(x_1+1)$ makes sense as
%% an element of $K$.
Hence $x_1,y_1,x_2,y_2$ are pairwise algebraically dependent. 
Therefore, $\IC(\cC)/\IC$ has transcendence degree $1$ and thus
$\mathcal C$ is a curve.
\end{proof}

\paragraph{The coset condition.} 

We write $\deg{f}$ for the degree of an element $f\not=0$  of the function
field of an irreducible component  of $\cu\otimes\IC$. 
We recall some facts, to be used further down, which we call basic degree properties. 
If $g$ lies in the same function field, then it is well-known that
\begin{equation*}
  \deg{(f+g)}\le \deg{f}+\deg{g}\quad\text{and}\quad
\deg{fg}\le \deg{f}+\deg{g}
\end{equation*}
assuming the relevant quantities are well-defined. 
If $n\in\IZ$, then 
\begin{equation}
\label{eq:deghom}
  \deg{f^n} = |n|\deg{f}. 
\end{equation}
Finally,
\begin{equation*}
  \deg{f} = 0 \quad\text{or}\quad\deg{f}\ge 1
\end{equation*} 
and the first case happens if and only if $f$ is constant.

In the  proof of the next lemma we use $\deg{\cZ}$ to denote the degree
 of the Zariski closure of the affine variety $\cZ$ in $\IP^n\supset\IA^n$.

%A vector in $\IZ^3\ssm\{0\}$ is called primitive if its
%coordinates are coprime. MARTIN: THIS SEEMS TO BE USED NOWHERE

\begin{lemma}%[requires {\tt g3fin\_ch6.sage}] REMOVED
\label{lem:3264}
Say $c\in\IQ^2$ has non-zero coordinates and let $\mathcal C$ be an
irreducible component of $\cu\otimes\IC$. We consider $R_1,R_2,$ and $R_3$ 
presented in (\ref{eq:defR123}) as 
non-zero elements of the function field $\IC(\cC)$.
  \begin{enumerate}
  \item [(i)] We have
$\deg{R_{2}}\le 32$ and $\deg{R_3}\le 64$. 
\item[(ii)]
Say $\pm \spe{c_1} - \spe{c_2}+1=0$ with $|\spe{c_1}|\ge 1/2$ and 
suppose there is a multiple
 $(b_1,b_2,b_3)\in\IZ^3\ssm\{0\}$ of $(\spe{c_1},\spe{c_2},1)$ with 
$R_1^{b_1}R_2^{b_2}R_3^{b_3}$ a constant. Then
$\spe{f_{x_2 t}}$ is irreducible in $\IQ[x_2,t]$. 
  \end{enumerate}
\end{lemma}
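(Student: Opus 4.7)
For part (i), my plan is to bound $\deg R_2$ and $\deg R_3$ by first bounding the degrees of the individual coordinates on $\cC$ and then applying the basic degree properties to the explicit factorizations (from \eqref{eq:defR123})
\[
R_2 = \frac{(x_1-1)(y_1+1)}{(x_1+1)(y_1-1)}, \qquad R_3 = \frac{(x_2-x_1)(y_1-y_2)}{(x_1-y_2)(x_2-y_1)}.
\]
Subadditivity of $\deg$ immediately gives $\deg R_2 \le 2(\deg x_1 + \deg y_1)$ and $\deg R_3 \le 2(\deg x_1 + \deg y_1 + \deg x_2 + \deg y_2)$, so the bounds $32$ and $64$ reduce to showing $\deg x_i, \deg y_i \le 8$ on $\cC$.

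For these coordinate-wise bounds I would first exploit the elimination polynomial $\spe{f_{x_2 t}}$ of Lemma~\ref{lem:projirred}, which has total degree $12$ and degree $6$ in $t$ and is irreducible in the cases of interest. The dominant projection $\cC \to V(\spe{f_{x_2 t}})$ is birational (this needs a short verification, since a non-trivial generic fibre would violate the irreducibility counts), yielding $\deg x_2 \le 6$ and $\deg t \le 12$. Then the pairwise relations $\spe{f_{x_i y_i}}$, $\spe{f_{x_1 x_2}}$, $\spe{f_{y_1 y_2}}$ from Lemmas~\ref{lem:cong1} and \ref{lem:cong2}, each of degree $6$ in both of their variables, propagate these bounds from $\{x_2, t\}$ to the remaining coordinates $x_1, y_1, y_2$ via the tower-of-extensions formula, keeping each $\deg x_i, \deg y_i \le 8$. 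The factor-of-two difference between the bounds for $R_2$ and $R_3$ comes from $R_3$ depending on all four coordinates whereas $R_2$ depends on only two.

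For part (ii) I would argue by contradiction: suppose $\spe{f_{x_2 t}}$ factors non-trivially in $\IQ[x_2, t]$. By Lemma~\ref{lem:projirred}(ii) the polynomial $\varphi(f_{x_2 t})$ is absolutely irreducible in $\IC(c_1)[x_2, t]$, where $\varphi$ substitutes $c_2 \mapsto \pm c_1 + 1$. The standard theory of specialization of irreducible polynomials then implies that the further specialization $c_1 \mapsto \spe{c_1} \in \IQ$ preserves irreducibility over $\IQ$ outside a finite exceptional set $\Sigma_0 \subset \IQ$, consisting of values where the leading coefficient or the discriminant of $\varphi(f_{x_2 t})$ degenerates. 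One computes $\Sigma_0$ explicitly with {\tt sage}.

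It then remains to eliminate each $\spe{c_1} \in \Sigma_0$ satisfying $|\spe{c_1}| \ge 1/2$. For each such exceptional value the factorization of $\spe{f_{x_2 t}}$ is known, and the assumed multiplicative relation $R_1^{b_1} R_2^{b_2} R_3^{b_3} = \mathrm{const}$ with $(b_1, b_2, b_3) \in (\spe{c_1}, \spe{c_2}, 1)\IQ$ yields additional algebraic constraints via the cross-ratio formulas in \eqref{eq:defR123}. I expect the main obstacle to be precisely this case-by-case elimination: one must combine the explicit factorization data of \eqref{eq:fx2t} (where $f_{x_2 t}(\pm 1, t)$ already exhibits reducible substructure when $\spe{c_1} \pm \spe{c_2} \pm 1 = 0$) with a computer-algebra analysis analogous to that used for Lemma~\ref{lem:projirred}, showing that the combined multiplicative and factorization constraints are incompatible with $|\spe{c_1}| \ge 1/2$.
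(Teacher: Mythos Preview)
For part (i), your tower-of-extensions propagation does not yield the bound $8$. From $\deg x_2 \le 6$ and the degree-$6$ relation $\spe{f_{x_1 x_2}}$ you only get $[\IC(x_1,x_2):\IC(x_2)]\le 6$; to bound $\deg x_1=[\IC(\cC):\IC(x_1)]$ you still need $[\IC(\cC):\IC(x_1,x_2)]$, and stacking the further degree-$6$ relations for $y_1,y_2$ gives at best $\deg x_1\le 6^3$, not $8$. The paper's argument is a direct application of B\'ezout: $\overline{\cY}\subset\IA^4$ is cut out by the two quadrics $f_1,f_2$ of \eqref{eq:defineX}, so $\deg\overline{\cY}\le 4$; the curve $\cC$ is a component of the intersection of $\overline{\cY}$ with the degree-$2$ hypersurface $\spe{f}=0$, whence $\deg\cC\le 8$. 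Each coordinate function on $\cC$ therefore has degree at most $8$, and the bounds $32$ and $64$ follow by subadditivity exactly as you wrote.

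For part (ii), you miss the paper's central idea. There is no Bertini--Noether step here. Instead one applies Dirichlet approximation (Lemma~\ref{lem:dirichlet} with $n=1$) to $\theta=1/\spe{c_1}$, obtaining coprime integers $p,q$ with $1\le q<Q$ and $|q/\spe{c_1}-p|\le 1/Q$. Using the assumed relation to eliminate $R_1$ and invoking the bounds of part (i) gives
\[
\deg\bigl(R_1^{\,q}R_2^{\,\pm q+p}R_3^{\,p}\bigr)\;\le\; 32\,\Bigl|\pm q+p-\tfrac{b_2}{b_1}q\Bigr|+64\,\Bigl|p-\tfrac{b_3}{b_1}q\Bigr|\;=\;96\,\Bigl|p-\tfrac{q}{\spe{c_1}}\Bigr|\;\le\;\tfrac{96}{Q},
\]
where the middle equality uses $b_3/b_1=1/\spe{c_1}$ and $b_2/b_1=1/\spe{c_1}\pm 1$ (from $\spe{c_2}=1\pm\spe{c_1}$). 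Taking $Q=97$ forces this degree to vanish, so $R_1^{\,q}R_2^{\,\pm q+p}R_3^{\,p}$ is itself constant: a \emph{second} multiplicative relation with bounded exponents. Since $b_1b_2b_3\ne 0$, the curve $R(\cC)$ cannot coincide with any of the three one-dimensional cosets of Lemma~\ref{lem:SoODD}, so there is at most one multiplicative relation up to scalar; hence $(q,\pm q+p,p)\in(\spe{c_1},\spe{c_2},1)\IQ$, giving $\spe{c_1}=q/p$ with $1\le q\le 96$ and (using $|\spe{c_1}|\ge 1/2$) $|p|\le 2q$. A {\tt sage} loop over these finitely many $(p,q)$ then verifies irreducibility of $\spe{f_{x_2 t}}$ at $c=(q/p,\,1\pm q/p)$.

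Your route for (ii) has two specific problems beyond missing this idea. First, the locus where a two-variable polynomial fails to remain absolutely irreducible under specialization is not given by the vanishing of a leading coefficient or discriminant; those detect degree drop or acquisition of singularities, not reducibility. Second, your final elimination step has no content: you would need, for each exceptional $\spe{c_1}$, to compute all geometric components of $\cC_c$ and test whether the specific function $R_1^{b_1}R_2^{b_2}R_3^{b_3}$ is constant on each --- a feasible but entirely different computation that you do not describe. The Dirichlet trick is precisely what reduces (ii) to an explicit finite irreducibility check.
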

\begin{proof}
We recall that $R$ is two-to-one on $\cY$. So at least one among
$R_1,R_2,R_3$ is non-constant.
The quadratic polynomials $f_1$ and $f_2$ from
(\ref{eq:defineX}) are irreducible. 
Their set of common zeros is $\overline{\cY}$ by Lemma \ref{lem:cYgeomirred}. 
By B\'ezout's Theorem 
we find $\deg \overline{\cY}\le 4$. 
The curve $\mathcal{C}$ is then an irreducible component of
the intersection of 
$\overline{\cY}$  with a hypersurface of degree $2$. 
Thus $\deg \mathcal{C}\le 8$.
Hence each coordinate function $x_1,y_1,x_2,y_2$ on $\mathcal{C}$ has
degree at most $8$. 

We apply the basic degree properties and (\ref{eq:defR123}) to bound
$\deg{R_{2}}\le 32$ and $\deg{R_3}\le 64$. This yields part (i).

To prove (ii)  let $(b_1,b_2,b_3)\in\IZ^3$ be a non-zero multiple of
$(\spe{c_1},\spe{c_2},1)$ with 
\begin{equation}
  \label{eq:constant1}
R_1^{b_1}R_2^{b_2}R_3^{b_3}\text{ a constant.}
\end{equation}
We observe that $b_1b_2b_3\not=0$. 
Let $Q> 1$ be an integer to be specified later on. 
By Lemma \ref{lem:dirichlet} with $n=1$ and $\theta=1/\spe{c_1}$ there are
integers $p$ and $q$ with $1\le q < Q$ and
\begin{equation}
\label{eq:dirichlet}
  \left|q\frac{1}{\spe{c_1}}-p\right| \le \frac{1}{Q}.
\end{equation}
Without loss of generality, $p$ and $q$ are coprime. 
The fact that $R_1^{b_1}R_2^{b_2}R_3^{b_3}$ is constant implies
\begin{equation*}
  |b_1| \deg{(R_1^q R_2^{\pm q + p}R_3^{p})} 
= \deg{(R_2^{b_1(\pm q+p)-b_2q}R_3^{b_1p-b_3q})}.
\end{equation*}
We use basic degree properties and the bounds from (i) 
to estimate
\begin{equation*}
  \deg{(R_1^q R_2^{\pm q + p} R_3^{p})}
\le 32 \left|\pm q+p - \frac{b_2}{b_1}q\right|
+64 \left|p-\frac{b_3}{b_1}q\right| = 96
\left|q\frac{1}{\spe{c_1}}-p\right| \le \frac{96}{Q}
\end{equation*}
since $\spe{c_1} = b_1/b_3$ 
and $\pm b_1-b_2+b_3=0$. 

We fix  $Q=97$, then 
\begin{equation}
  \label{eq:constant2}
R_1^{q}R_2^{\pm q +p} R_3^{p}\text{ must be a constant.}
\end{equation}
We recall $|\spe{c_1}|\ge 1/2$ and use  (\ref{eq:dirichlet}) again to
bound
$|p|\le 2q + 1/Q < 2q+1$. Hence $|p|\le 2q$.

We  know from Lemma \ref{lem:SoODD} that there are exactly $3$ cosets of dimension $1$
 in the
cross-ratio domain $\cS$. As $b_1b_2b_3\not=0$ 
there is up-to scalars at most one multiplicative relation among the $R_i$.
%% we see that our
%% curve $\mathcal{C}$ is not among these $3$ exceptions. 
So (\ref{eq:constant2})  implies that the 
vectors $(\spe{c_1},\spe{c_2},1)$ and $(q,\pm q+p,p)$ are linearly 
dependent. 
In particular, $p\not=0$ and $p\not=\mp q$.

Using {\tt sage} we run over all coprime integers $p,q$ with 
$1\le q\le Q-1=96$, $1\le |p|\le 2q$, and $p\not=\mp q$
to verify that $f_{x_2 t}$ 
is irreducible as an element of $\IQ[x_2,t]$
when 
specializing  $c$ to $(q/p,1 \pm q/p)$ 
As our $\spe{f_{x_2 t}}$  is among these we conclude part (ii). 
\end{proof}

\begin{lemma}
\label{lem:multindep}
  Let $c\in\IQ^2$ have non-zero coefficients
 with $(\spe{c_1}+\spe{c_2}+1)(-\spe{c_1}+\spe{c_2}+1)\not=0$
 and suppose
that $\spe{f_{x_2 t}}$ is irreducible in $\IQ[x_2,t]$.
Let $\mathcal C$ be an irreducible component of $\cC_c\otimes\IC$ and 
$(b_1,b_2,b_3)\in\IZ^3$ with 
$R_1^{b_1}R_2^{b_3}R_3^{b_3}$ a constant in the function field of 
$\mathcal C$. 
If $|b_2|\le |b_1|$ then $b_1=b_2=b_3=0$. 
\end{lemma}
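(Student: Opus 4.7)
The key observation is that the conclusion decomposes cleanly: once $b_1 = 0$ is established, the inequality $|b_2| \le |b_1| = 0$ forces $b_2 = 0$, after which the remaining identity $R_3^{b_3} = \lambda$ gives $b_3 = 0$ provided $R_3$ is non-constant on $\mathcal{C}$. So the whole proof reduces to two tasks: showing $b_1 = 0$ and showing $R_3$ is non-constant on $\mathcal{C}$.

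To force $b_1 = 0$, the plan is to exhibit a place $v$ of $\IC(\mathcal{C})$ at which $v(R_1) > 0$ while $v(R_2) = v(R_3) = 0$; the hypothesis $R_1^{b_1}R_2^{b_2}R_3^{b_3} \in \IC^\times$ then gives $b_1 v(R_1) = 0$. I construct $v$ by projecting $\mathcal{C}$ to the $(x_2,t)$-plane. By Lemma~\ref{lem:projirred}(i) and the hypothesis that $\spe{f_{x_2 t}}$ is irreducible in $\IQ[x_2,t]$, this projection lands in and is generically finite onto the irreducible curve $\mathcal{C}'$ cut out by $\spe{f_{x_2 t}}$. Using~\eqref{eq:fx2t} and the nondegeneracy hypothesis $(\spe{c_1} + \spe{c_2} + 1)(-\spe{c_1} + \spe{c_2} + 1) \neq 0$, the point $(1,\theta) \in \mathcal{C}'$ with $\theta^2 = (\spe{c_1} + \spe{c_2} + 1)/(-\spe{c_1} + \spe{c_2} + 1)$ is well-defined, finite, and nonzero. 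Any place $v$ of $\mathcal{C}$ above $(1,\theta)$ satisfies $v(x_2 - 1) > 0$ and $v(t) = 0$. The integrality of $(t^2+1)x_1$, $(t^2+1)y_1$, $y_2$ over $\IC[x_2,t]$ from Lemma~\ref{lem:projirred}(iii) keeps $x_1, y_1, y_2$ finite at $v$, while Lemma~\ref{lem:xiyiprops}(i) together with the elimination polynomials of Lemmas~\ref{lem:cong1}--\ref{lem:cong2} imply that for a suitable choice of preimage, none of $x_1\pm 1$, $y_1\pm 1$, $y_2\pm 1$, $x_2-x_1$, $x_2-y_1$, $x_1-y_2$, $y_1-y_2$ vanishes at $v$. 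At such $v$ one has $v(R_1) = v(x_2-1) > 0$ and $v(R_2) = v(R_3) = 0$, as needed.

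For the nonconstancy of $R_3$ on $\mathcal{C}$: if $R_3 = \mu$ were constant, the image $R(\mathcal{C}) \subset \mathcal{S}$ would lie in the $\mathcal{S}$-slice $\{R_3 = \mu\}$. By Lemma~\ref{lem:SoODD}, the only $1$-dimensional cosets of $\IG_m^3$ contained in $\mathcal{S}$ have $R_3$ equal to $1$; so the image would be contained in $\{(1,1,t)\}$, forcing $R_1 \equiv R_2 \equiv 1$ on $\mathcal{C}$. A direct computation with~\eqref{eq:defR123} shows that $R_1 = 1$ implies $x_2 = y_2$ and $R_2 = 1$ implies $x_1 = y_1$, and the stability equations \eqref{eq:defineX} combined with the defining equation~\eqref{eq:definef} of $\cu$ then reduce the locus to finitely many points, contradicting $\dim \mathcal{C} = 1$ established in Lemma~\ref{lem:xiyiprops}(ii).

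The main obstacle is verifying that at least one preimage of $(1,\theta)$ in $\mathcal{C}$ avoids every one of the exceptional divisors listed above. A priori, all preimages could lie on some $\{x_1 = \pm 1\}$ or $\{x_i = y_j\}$, causing $v(R_2)$ or $v(R_3)$ to be nonzero and spoiling the clean derivation $b_1 = 0$. Ruling this out requires a direct residue-field computation using the specialization identities~\eqref{eq:fx2y111}, \eqref{eq:fx2y211}, and~\eqref{eq:fx1x211}, which pin down the possible values of $x_1, y_1, y_2$ when $x_2 = 1$ and show that they are generically distinct from $\pm 1$ and from each other; alternatively, a codimension argument using finiteness of $\pi$ and the fact that each exceptional locus meets $\mathcal{C}$ in a proper closed subset suffices, provided one checks that the fiber $\pi^{-1}(1,\theta)$ has more than one point (which follows from the double factor $t^2$ in $\spe{f_{x_2 t}}(1,t)$ together with the two additional roots $\pm\theta$).
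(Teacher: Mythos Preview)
Your overall strategy---find a place $v$ of $\IC(\mathcal{C})$ at which $v(R_1)\ne 0$ while $v(R_2)=v(R_3)=0$---matches the paper's, and the choice of a point on the $(x_2,t)$-curve with $x_2=1$, $t\ne 0$ is also the same. But two parts of your execution do not go through.

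First, your handling of what you call the ``main obstacle'' is not right. You frame the issue as choosing a \emph{good} preimage of $(1,\theta)$ so that none of the exceptional factors vanish; the paper never needs this. It extends the valuation from $F$ to $K=\IC(\cC)$ arbitrarily and runs a dichotomy: either $v(x_1)<0$ (equivalently $v(y_1)<0$, by Lemma~\ref{lem:xiyiprops2}(ii)), in which case the ultrametric inequality applied directly to \eqref{eq:defR123} gives $v(R_2)=v(R_3)=0$; or $x_1,y_1$ are regular at $v$, in which case \eqref{eq:fx2y111} and \eqref{eq:fx1x211} together with the hypothesis $(\spe{c_1}+\spe{c_2}+1)(-\spe{c_1}+\spe{c_2}+1)\ne 0$ force $v(x_1-1)=v(y_1-1)=0$, and then the $t$-formulas \eqref{eq:defR123t} yield $v(R_2)=v(t)=0$ and $v(R_3)=-v(t)=0$. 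Your appeal to the integrality of $(t^2+1)x_1$ over $\IC[x_2,t]$ to conclude that $x_1$ is finite at $v$ fails when $\theta^2=-1$, i.e.\ when $\spe{c_2}=-1$, which the hypotheses do not exclude; and your ``codimension'' alternative is not an argument. A smaller gap: the hypothesis only says $\spe{f_{x_2 t}}$ is irreducible over $\IQ$, not over $\IC$, so the projection of $\cC$ lands in the zero set of a single $\IC$-irreducible factor $p$; the paper uses Galois conjugacy of the factors to ensure $p(1,t)$ still has a nonzero root.

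Second, your argument that $R_3$ is nonconstant is wrong. If $R_3\equiv\mu$ on $\cC$, then $R(\cC)\subset\cS\cap\{R_3=\mu\}$, but there is no reason $R(\cC)$ must be a \emph{coset}; Lemma~\ref{lem:SoODD} classifies one-dimensional cosets contained in $\cS$, not arbitrary curves in $\cS$ lying in a codimension-one coset of $\IG_m^3$. So you cannot conclude $R_1\equiv R_2\equiv 1$. The paper bypasses this entirely: once $b_1=b_2=0$, it takes a second valuation $w$ above the point $(1,0)$ (which lies on the $(x_2,t)$-curve since $\spe{f_{x_2 t}}(1,0)=0$ by \eqref{eq:fx2t}), so that $w(x_2-1)>0$ and $w(t)>0$. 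The relation $f_2$ rules out $x_1,y_1$ having poles at $w$, the same specialization identities give $w(x_1-1)=w(y_1-1)=0$, and then \eqref{eq:defR123t} yields $w(R_3)=-w(t)\ne 0$, forcing $b_3=0$.
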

\begin{proof}
%Let $\mathcal C$ be an irreducible component of $\cC_c$
We will repeatedly use Lemma \ref{lem:xiyiprops} and write $K= \IC(\cC)$ for the function field of $\cC$.
First, let us fix an irreducible factor $p\in \IC[x_2,t]$
of $\spe{f_{x_2 t}}$ that vanishes when taken as a function on
$\mathcal C$. 
As $\spe{f_{x_2 t}}$ has rational coefficients
we may suppose that the coefficients of $p$ are in a
finite extension of $\IQ$.
Let us suppose that 
$(b_1,b_2,b_3)\in\IZ^3$ is as in the hypothesis. 
%is a constant in $K$ with $(b_1,b_2,b_3)\in\IZ^3$. 

We consider $F$, the function field of the plane curve defined by the
vanishing locus of $p$,
 as a subfield of $K$ containing
$x_2$ and $t$. So $K/F$ is a finite extension of function fields.

By (\ref{eq:fx2t}) of  Lemma \ref{lem:projirred} and the hypothesis on
$c$
we see that $\spe{f_{x_2 t}}(1,t)$ has a non-zero root.
As $\spe{f_{x_2 t}}$ is a product of conjugates of $p$ over $\IQ$
up-to a factor in $\IQ^*$ we
conclude that $p(1,t)$ also has a non-zero root $t_0\in\IC$. 
So there is a valuation $v$ of $F$, constant on $\IC$, that
corresponds to the point $(1,t_0)$ on the vanishing locus of $f$. 
In other words, $v(x_2-1) > 0$ and  $v(t) =0$. We extend $v$ to
$K\supset F$ and
 remark
\begin{equation}
\label{eq:valuedrelation}
  b_1 v(R_1)+b_2 v(R_2)+b_3 v(R_3) =0
\end{equation}

We also have $v(x_2) = v(x_2+1)=0$ by the ultrametric
triangle inequality. 
Moreover, $y_2$ is  regular at $v$ by Lemma \ref{lem:xiyiprops2}(ii).
By (\ref{eq:fx2y211}) and $\spe{c_2}\not=0$ we must have $v(y_2-1)=0$. 
Using  these facts together
with (\ref{eq:defR123}) yields
\begin{equation*}
  v(R_1) = v(x_2-1)+v(y_2+1) - v(x_2+1) - v(y_2-1)
=  v(x_2-1)+v(y_2+1) > 0.
\end{equation*}

We proceed to show that $R_2$ and $R_3$ have valuation $0$. 
This   will imply $b_1=0$ and then $b_2=0$, since 
$|b_2|\le |b_1|$. 

If $v(x_1) < 0$ or $v(y_1)<0$ then both are negative according to Lemma
\ref{lem:xiyiprops2}(ii). 
In this case $v(R_2)=v(R_3)=0$ is an immediate consequence of the
ultrametric triangle inequality and (\ref{eq:defR123}). 
So we may assume that $x_1$ and $y_1$ are regular at $v$. 

Since $x_2$ specializes to $1$ at $v$ we may use 
 (\ref{eq:fx2y111}) and (\ref{eq:fx1x211})  to deduce $v(y_1-1)=0$ and $v(x_1-1)=0$,
respectively. 
The expression for $R_2$ in (\ref{eq:defR123t}) yields
%% We recall $t = (y_1+1)/(x_1+1)$ and remark $R_2 =  (x_1-1)/(y_1-1)t$. 
%% Therefore,
%% So $v(t)=0$ implies
\begin{equation}
\label{eq:vR20}
  v(R_2) = v(t).
\end{equation}
So $v(R_2)=0$ by the construction of $v$. 

Next we use
 $R_3$ as given in (\ref{eq:defR123t}).
As $x_2$ specializes to $1$ but $x_1$ and $y_1$  do not, we
have
$v((x_2-x_1)/(x_2-y_1))=0$.
%%  and
%% $v(x_1-1)=v(y_1-1)=0$
Therefore,
\begin{equation}
\label{eq:R3reform}
  v(R_3) = v(t^{-1}) 
\end{equation}
and so $v(R_3)=0$. 
%%  Now we use $v(t)=0$ to find
%% $v(R_3)=v(x_1-1)-v(y_1-1)=v(R_2)$ because of (\ref{eq:vR20}).

We have established $b_1=b_2=0$. To conclude that $b_3$
also vanishes we proceed  similarly. 
By (\ref{eq:fx2t}) we find $\spe{f_{x_2 t}}(1,0)=0$ 
and thus $p(1,0)=0$. 
We again fix a
valuation $w$ of $F$ with $w(x_2-1)>0$. But this time we impose
$w(t)>0$.

As above we first conclude that $y_2$
is regular with respect to $w$.

But what if $w(x_1)<0$ or $w(y_1)<0$?
The expression $f_2$ from \eqref{eq:defineX} is identically $0$ on 
$\cC$, so $w(x_1^2+y_1^2) = w(x_2^2+y_2^2)\ge 0$. 
The ultrametric triangle inequality implies
$w(x_1) = w(y_1) < 0$. We recall (\ref{eq:definet}) to find 
 $w(t) = w(y_1+1)-w(x_1+1)=0$ and this contradicts our choice of $t$. 
 %% Then both 
%% valuations are negative by Lemma \ref{lem:xiyiprops}(iii). 
%% Now we consider the first and third expression in 
%% Lemma \ref{lem:cong2}. The
%% yield $w(x_2+\spe{c_2}) > 0$
%% and $w(y_2-\spe{c_2})>0$, respectively. Hence
%% $\spe{c_2} = -1$ and 
%% $y_2$ specializes to $-1$ modulo $w$. 
%% If $f$ is as in (\ref{eq:definef})
%% a short calculation yields 
%% \begin{equation*}
%%   \spe{f}(1,-1,x_2,y_2) = 4\spe{c_1}\not=0
%% \end{equation*}
%% and this is a contradiction. 
Hence $w(x_1)\ge 0$ and $w(y_1)\ge 0$. 

To complete the proof  we use again
 \eqref{eq:fx2y111} and \eqref{eq:fx1x211}
and conclude $w(y_1-1)=w(x_1-1)=0$. As above
we have
$w(R_3) = w(t^{-1}) \not=0$. 
So $b_3=0$ 
 because (\ref{eq:valuedrelation}) holds with $v$ replaced  by $w$. 
\end{proof}

\begin{proof}[Proof of Proposition \ref{prop:R123indep}]
%% We begin by proving part (i). 
Suppose $(b_1,b_2,b_3)\in\IZ^3$ such that $R_1^{b_1} R_2^{b_2} R_3^{b_3}$ is
constant 
 on an irreducible component 
$\mathcal{C}$ of $\cu\otimes\IC$. 

%% By (\ref{eq:sym2}) and (\ref{eq:sym1})  substituting $c$ by $-c$ or swapping the coordinates does
%% not alter the fact that $\cu$ is irreducible. 

Swapping $\spe{c_1}$ with $\spe{c_2}$ corresponds to swapping
 $(x_1,y_1)$ with $(x_2,y_2)$ by (\ref{eq:sym1}). 
This has the effect of swapping $R_1$ with $R_2$
and thus swapping $b_1$ with $b_2$.
So without loss of generality we may suppose 
$|b_2|\le |b_1|$.
%$e_1\not=0$ if $(e_1,e_2)\not=0$. 

Multiplying $c$ by $-1$ corresponds to 
swapping  $(x_1,x_2)$ with $(y_1,y_2)$ by (\ref{eq:sym2}).
This induces  the transformation
$(R_1,R_2,R_3)\mapsto (R_1^{-1},R_2^{-1},R_3)$ and replaces
$(b_1,b_2,b_3)$ by $(-b_1,-b_2,b_3)$. We also observe that
\begin{equation*}
  (c_1+c_2+1)(-c_1+c_2+1) - (c_1-c_2+1)(-c_1-c_2+1)=4c_2.
\end{equation*}
So 
\begin{equation}
\label{eq:c12nonzero}
(\spe{c_1}+\spe{c_2}+1)(-\spe{c_1}+\spe{c_2}+1)\not=0
\end{equation}
 or 
$(\spe{c_1}-\spe{c_2}+1)(-\spe{c_1}-\spe{c_2}+1)\not=0$. Hence
after possibly
replacing $c$ by $-c$ we may suppose that
(\ref{eq:c12nonzero}) holds. 

If we suppose that $\spe{f_{x_2 t}}$ is irreducible in $\IQ[x_2,t]$,
then Lemma \ref{lem:multindep} applies and we conclude
$b_1=b_2=b_3=0$, as is desired in (i). 

So say $\spe{f_{x_2 t}}$ is not irreducible in $\IQ[x_2,t]$.
Then
$\pm \spe{c_1}  - \spe{c_2}+ 1 = 0$ 
by Lemmas \ref{lem:irred} and \ref{lem:projirred}(iii).
Now $\spe{f_{x_2 t}}$ equals the specialization  $\spe{\varphi(f_{x_2
    t})}$
with $\varphi$ as in Lemma \ref{lem:projirred}(ii).  
But this lemma implies that $\varphi(f_{x_2 t})$ is absolutely
irreducible when considered as a polynomial in $x_2$ and $t$ and
coefficients in $\IC(c_1)$.

We can thus apply  a variant of the Bertini-Noether Theorem, cf. Proposition VIII.7
\cite{LangDiophantine62}. It states that $\varphi(f_{x_2 t})$ remains
irreducible in $\IC[x_2,t]$ for all but at most finitely many complex specializations
of $c_1$. Therefore, our $\spe{c_1}$ is contained in a
finite set $\Sigma$ of exceptions which is accounted for by  part (ii) of the
proposition's conclusion. The reference in Lang's book provides
an effective way to determine this exceptional set provided one has
access to
an  effective version of the Nullstellensatz of which there are many
variants. 

Let us suppose, as in (ii), that $c$ is among such an exception
and that $(b_1,b_2,b_3)\in \IZ^3\ssm\{0\}$ is a multiple of
$(\spe{c_1},\spe{c_2},1)$.
Then $|\spe{c_2}|\le|\spe{c_1}|$ since $|b_2|\le |b_1|$ and so $\spe{c_2} = \pm \spe{c_1}+1$
implies $|\spe{c_1}|\ge 1/2$. 
The conclusion of Lemma \ref{lem:3264}(ii) contradicts the fact that
$\spe{f_{x_2 t}}$ is not irreducible. 
Therefore, $(\spe{c_1},\spe{c_2},1)$ and $(b_1,b_2,b_3)$ are linearly
independent.  This completes the proof. 
\end{proof}
\par
\begin{proof}[{Proof of Theorem~\ref{thm:intromainfin}, case
      ${\omoduli[3]}(4)^\odd$}]
By Lemma~\ref{lem:SoODD} the image of $\cY$ lies in $\oa{\mathcal{S}}$, 
hence condition (i) of Theorem~\ref{thm:heightmain} is met. 
By Proposition~\ref{prop:R123indep} either condition (ii) of Theorem~\ref{thm:heightmain} is met
(for $c \not \in \Sigma$) or we are led to one of finitely many curves
coming from the $c$ in $\Sigma$. 
In this case, let $(c_1,c_2,c_3)\in\IZ^3\ssm\{0\}$ be a primitive  multiple of 
 $(\spe{c_1},\spe{c_2},1)$;
it is not a multiple of a hypothetical vector
$(b_1,b_2,b_3)$ as in Proposition~\ref{prop:R123indep}(ii). 
Now $R_1(y)^{c_1}R_2(y)^{c_2}R_3(y)^{c_3}$ is a root of unity and this
relation is not constant on the whole curve. So $R(y)$ has bounded
height, use for example Theorem \ref{thm:silverman}. 
In any case, 
together with the degree bound in Proposition~\ref{prop:degbound4odd}, 
Northcott's theorem
shows that the number of points that could appear as cusps of an 
algebraically primitive \Teichmuller curve in the stratum ${\omoduli[3]}(4)^\odd$ is finite. By \cite[Proposition~13.10]{BaMo12}
there are only finitely many \Teichmuller curves in the stratum $\omoduli[3](4)^\hyp$.
\end{proof}

%%%%%%%%%%%%%%%%%%%%%
\subsection{The hyperelliptic locus in $\omoduli[3](2,2)^\odd$} \label{sec:hypin22odd} 
%%%%%%%%%%%%%%%%%%%%%

In this section we rely on the methods of \cite{MatWri13}.
\par
\begin{proof}[{Proof of Theorem~\ref{thm:intromainfin}}]
Suppose there was an infinite sequence of algebraically primitive \Teichmuller
curves in this locus. We claim that there is no linear manifold $M$
strictly contained in the hyperelliptic locus in $\omoduli[3](2,2)^\odd$
and containing an algebraically primitive \Teichmuller curve.
This is the analog of Theorem~1.5 in \cite{MatWri13}.
\par
In order to prove the claim we rely on \cite[Theorem~1.5]{Wright_field}. 
Since in this stratum there are no relative periods, the theorem 
reads $\dim(M) \cdot \deg_\IQ(k(M)) \leq 6$, where $k(M)$ 
is the affine field of definition. Since $M$ properly contains
a \Teichmuller curve, $\dim(M)>2$. Since $k(M)$ is contained in $F$, 
the only possibility is $k(M) = \IQ$. This can only happen if $M$
equal the hyperelliptic locus in $\omoduli[3](2,2)^\odd$ by the
argument given in \cite[Corollary~8.1]{Wright_field}. This contradiction
completes the proof.
\par
Next we claim that in this locus there exists a square-tiled surface, 
whose monodromy representation on the complement of $\langle \omega, \overline{\omega}
\rangle$ is Zariski-dense in the $4$-dimensional symplectic group. 
This is the analog of Theorem~1.3 in \cite{MatWri13}, which does to
directly apply, since we are in a codimension one subvariety of a stratum.  
Given this claim, we can apply Theorem~1.2 and Theorem~1.4
in loc.~cit to conclude.
\par
There are several methods to prove Zariski-density. This is shown using
Lie algebra calculations in \cite{MatWri13}. Here, alternatively, we invoke 
a criterion of Prasad and Rapinchuk (\cite{PrasadRapin}): If the representation contains two
matrices $M_1$ and $M_2$ that do not commute, with $M_2$ of infinite order
and the Galois group of the characteristic polynomial of $M_1$ is as
large as possible for a symplectic matrix, i.e.\ here the dihedral group
with $8$ elements, then the representation is Zariski dense or a product of 
$\SL_2(\IC) \times \SL_2(\IC)$. Consequently, if we give three pairwise non-commuting
elements of infinite order, one having the required Galois group and such that
the common $1$-eigenspaces of their second exterior power representation is just
one-dimensional (generated by the symplectic form), then we have shown Zariski-density
in $\Sp_4(\IC)$.
\par
\tikzset{middlearrow/.style={line width=0.02cm,
        decoration={markings, mark= at position 0.6 with {\arrow{#1}} ,},
        postaction={decorate}}}

\begin{figure}
\begin{center}
\begin{tikzpicture}[scale=1]

%Gitter zeichnen
%\draw[very thin,color=gray!40!white] (0,0) grid (5,5);

% Pfeile
\draw[middlearrow={to}, ProcessBlue] (0.5,0) -- (0.5,2) arc (180:0:0.35cm);
\draw[middlearrow={to}, ProcessBlue] (1.2,3) arc (0:-180:0.35cm);
\draw[middlearrow={to}, ProcessBlue] (2.5,2) arc (0:180:0.35cm);
\draw[middlearrow={to}, ProcessBlue] (1.8,3) arc (-180:0:0.35cm) -- (2.5,4);
\draw[middlearrow={to}, ProcessBlue] (3.5,2) to (3.5,3);

\draw[middlearrow={to}, Red] (0,1.5) to (1,1.5);
\draw[middlearrow={to}, Red] (0,2.5) to (4,2.5);
\draw[middlearrow={to}, Red] (2,3.5) to (3,3.5);

% Beschriftung 
\node[ProcessBlue] (b3) at (0.5,-0.3) {$b_3$};
\node[ProcessBlue] (b1) at (2.5,1.7) {$b_1$};
\node[ProcessBlue] (b2) at (3.5,1.7) {$b_2$};

\node[Red] (a3) at (1.3,1.5) {$a_3$};
\node[Red] (a2) at (4.3,2.5) {$a_2$};
\node[Red] (a1) at (3.3,3.5) {$a_1$};

\node (1) at (0.25,0.2) {$1$};
\node (2) at (0.25,1.2) {$2$};
\node (3) at (0.25,2.2) {$3$};
\node (4) at (1.25,2.2) {$4$};
\node (5) at (2.25,2.2) {$5$};
\node (6) at (3.25,2.2) {$6$};
\node (7) at (2.25,3.2) {$7$};

% Gitter
\draw (0,0) -- (0,3);
\draw (1,0) -- (1,3);
\draw (2,2) -- (2,4);
\draw (3,2) -- (3,4);
\draw (4,2) -- (4,3);
\draw (0,0) -- (1,0);
\draw (0,0) -- (1,0);
\draw (0,1) -- (1,1);
\draw (0,2) -- (4,2);
\draw (0,3) -- (4,3);
\draw (2,4) -- (3,4);

\end{tikzpicture}
\caption{A square-tiled surface in the hyperelliptic locus of
$\omoduli[3](2,2)^\odd$} \label{fig:squaretiled}
\end{center}
\end{figure}
We use the square-tiled surface given in Figure~\ref{fig:squaretiled} with
side gluings horizontally by the permutation $(1)(2)(3456)(7)$ and vertically
by $(123)(4)(57)(6)$. The representation
of the Veech group elements $\left(\begin{smallmatrix} 
1 &4 \\ 0 & 1\end{smallmatrix}\right)$
resp.\ $\left(\begin{smallmatrix} 1 &0 \\ 6 & 1\end{smallmatrix}\right)$
resp.\ $\left(\begin{smallmatrix} 13 &-6 \\ 24 & -11\end{smallmatrix}\right)$
on the complement of $\langle \omega, \overline{\omega}
\rangle$ in $H^1(X,\IQ)$ is given in the basis 
$$\{4a_1-a_2,-a_2+4a_3,-b_1+b_2,-2b_2+b_3\}$$
by 
$$ A = \left(\begin{smallmatrix} 1 &0 &-1 & 0 \\ 0 & 1 & 0 & -1 \\ 0 & 0 & 1 & 0 \\
0 & 0 & 0 & 1 \end{smallmatrix}\right)
\quad \text{resp.} \quad  B = \left(\begin{smallmatrix} 
 1 &0 &0 &0 \\
 0 &1 &0 &0 \\
-9 &3 &1 &0 \\
-2 &6 &0 &1 \\
\end{smallmatrix}\right)
\quad \text{resp.} \quad  C = \left(\begin{smallmatrix}
11/2 & -3/2 & 3/2 & 0\\
-3/2 & 3/2 &-1/2 & 0\\
 -15 &   5 &  -4 & 0\\
 -6 &   2 &  -2& 1\\
\end{smallmatrix}\right)
.$$
Let $M_1 = AB$. Its characteristic polynomial is $x^4 - 25x^3 + 144x^2 - 25x + 1$
and has the required Galois group. With $M_2=B$ and $M_3=C$ the remaining conditions
are easily checked.
\end{proof}

%%%Local Variables:
%%% TeX-master: "g3fin_master"
%%% End:

\section{Torsion and moduli}
\label{sec:torsion-moduli}

The following theorem from \cite{moeller} gives strong constraints on the possible algebraically
primitive Veech surfaces with multiple zeros.  Recall that the Abel-Jacobi map is a homomorphism
$\Div^0(X)\to \Jac(X)$.  A \emph{torsion divisor} on $X$ is one whose image under the Abel-Jacobi
map is a torsion point of $\Jac(X)$.

\begin{theorem}[\cite{moeller}]
  \label{thm:torsion_condition}
  If $(X, \omega)$ is an algebraically primitive Veech surface with zeros $p$ and $q$, then $p-q$ is
  a torsion divisor.
\end{theorem}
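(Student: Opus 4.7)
The plan is to follow the strategy of \cite{moeller}, exploiting the interplay between the eigendecomposition of the Jacobian coming from algebraic primitivity and the normal function defined by the section $s_p - s_q$.

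First I would pass to a finite unramified cover of the Teichm\"uller curve $C$ so that the zeros of $\omega$ give genuine sections $s_p, s_q \colon C \to \mathcal{X}$ of the universal family $f\colon \mathcal{X} \to C$. Then $\sigma = s_p - s_q$ is a section of the relative Jacobian $\mathcal{J} = \Jac(\mathcal{X}/C) \to C$, i.e.\ a normal function, and it suffices to show that it lands in the torsion subgroup scheme of $\mathcal{J}$. By \cite{moeller06} algebraic primitivity gives real multiplication by an order $\mathcal{O}$ in the trace field $F$ of degree $g$, with $\omega$ an eigenform; consequently $\mathcal{J}$ is isogenous over $C$ to a product $\prod_{i=1}^g \mathcal{A}_i$ of one-dimensional families, where the Hodge line bundle of $\mathcal{A}_i$ is the eigenbundle $\mathcal{L}_i \subset f_* {{\omega_{\overline{{\mathcal X}}/\overline{{\mathcal{C}}}}}}$ generated by the $i$-th eigenform $\omega_i$. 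Let $\sigma_i$ denote the projection of $\sigma$ to $\mathcal{A}_i$.

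Next I would analyse $\sigma_1$, the projection to the factor generated by $\omega = \omega_1$. Its infinitesimal (Griffiths) invariant is the map of line bundles $\mathcal{L}_1 \to \Omega^1_{\overline{C}}(\log \partial C)$ obtained from the Gauss--Manin derivative of $\sigma$ paired with $\omega$; concretely it sends a local section $\eta$ of $\mathcal{L}_1$ to $\eta(s_p) - \eta(s_q)$. For $\eta = \omega$ itself this quantity vanishes identically on $C$ because $s_p$ and $s_q$ are zeros of $\omega$ in every fiber. Since $\mathcal{L}_1$ is maximal Higgs (so $\mathcal{A}_1/C$ is a non-isotrivial modular family uniformised by $\HH$), a normal function with vanishing infinitesimal invariant is forced to be a torsion section by standard rigidity for modular families of elliptic curves.

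The main obstacle, and the crux of the argument, is the remaining factors $\mathcal{A}_i$ with $i \ge 2$. Here one cannot a priori assert that $\omega_i(s_p) - \omega_i(s_q)$ vanishes, so a direct Griffiths-invariant argument fails. The idea is to use the $\mathcal{O}$-equivariance of the situation: the section $\sigma$ is intrinsically the Abel--Jacobi image of the divisor $p - q$, hence is fixed by the Galois action on the eigendecomposition coming from $\Gal(F/\ratls)$ acting on the embeddings of $F$. Combined with the Lang--N\'eron theorem, which bounds the Mordell--Weil group of $\mathcal{J}/C$ modulo its trace, and the action of the infinite Veech group on sections (which acts on each $\sigma_i$ through a unit of $\mathcal{O}$ corresponding to the $i$-th embedding), a section with finite orbit in a finitely generated abelian group must be torsion. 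Thus the torsion of $\sigma_1$ propagates Galois-equivariantly to each $\sigma_i$, yielding torsion of $\sigma$ and hence the theorem.
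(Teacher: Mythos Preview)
The paper does not prove this statement; it is quoted from \cite{moeller} without proof. So there is no in-paper argument to compare against, only the original source.

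Your overall architecture --- pass to a cover so the zeros become sections, use the real-multiplication splitting $\Jac(\mathcal X/C)\sim\prod_i\mathcal A_i$, and show each projection $\sigma_i$ is torsion --- is indeed the shape of M\"oller's proof. But your treatment of the factors $i\ge 2$ has a genuine gap. First, the ``Galois equivariance'' step is not well-posed: $\Gal(F/\ratls)$ permutes the embeddings of $F$ and hence the eigenbundles $\mathcal L_i$, but it has no natural action on the geometric object $\sigma\in\Jac(\mathcal X/C)(C)$; to turn this into an argument you would need the family and the section to be defined over a number field linearly disjoint from the Galois closure of $F$, and even then $F/\ratls$ need not be Galois. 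Second, and more seriously, the claim that ``the Veech group acts on each $\sigma_i$ through a unit of $\mathcal O$'' is incorrect as stated. Viewed as $\pi_1(C)$, the Veech group acts trivially on sections of $\Jac(\mathcal X/C)\to C$ (a section is by definition monodromy-invariant). If instead you mean the action of a hyperbolic affine automorphism $\phi$ on a single fiber $\Jac(X)$, then $\phi_*$ is only a real (not holomorphic) automorphism; on the $i$-th rank-two summand it acts by a matrix $A_i\in\SL_2(\IZ)$ whose trace is the $i$-th conjugate of $\Tr(D\phi)\in F$, not by a scalar unit. For the finite-orbit argument to force torsion you would need each $A_i$ to have infinite order, i.e.\ no conjugate trace to lie in $\{0,\pm 1\}$, and this is an extra step you have not supplied.

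M\"oller's actual argument is cleaner and stronger: he proves that the entire Mordell--Weil group $\Jac(\mathcal X/C)(C)$ is finite, not just that this particular section is torsion. The key point is that each rank-two local system $\mathbb L_i$ carries (a Galois conjugate of) the Fuchsian monodromy, hence has strictly positive Lyapunov exponent and non-zero Higgs field; Hodge-theoretic input (essentially that a non-torsion section would produce a non-trivial flat sub of an extension of $\mathbb L_i$ by a trivial system, contradicting the irreducibility and non-unitarity of the monodromy) then forces each $\mathcal A_i$ to have finite Mordell--Weil group over $C$. The vanishing of $\omega$ at $p$ and $q$ plays no role in this direction of the argument.
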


We say that $(X, \omega)$ has {\em torsion dividing} $N$, if for any pair of zeros $p$ and $q$ of
$\omega$, the order of $p-q$ divides $N$.

In this section, we show that this torsion condition gives strong control over the moduli of the
cylinders of $(X, \omega)$ in any periodic direction.  More precisely, consider a periodic direction
of $(X, \omega)$, and let $\Gamma$ be its dual graph.  The \emph{blocks} of $\Gamma$ are the maximal
subgraphs of $\Gamma$ which cannot be disconnected by removing a single vertex.  As the edges of $\Gamma$
correspond to cylinders in our direction, this gives a partition of this set of cylinders, which we
will also call blocks of cylinders.

\begin{theorem}
  \label{thm:moduli_bound}
  Let $(X, \omega)$ be an algebraically primitive Veech surface, with 
torsion dividing   $N$.  Then  for any block of cylinders $C_1, \ldots, C_n$ of some periodic direction of $(X, \omega)$, we
  have 
$$h[ \modulus(C_1): \ldots :\modulus(C_n)] \leq (n-1) \log N  + \log(n-1)!.$$

  In particular, there are only finitely many choices up to scale for the tuple of moduli in any
  block of cylinders.
\end{theorem}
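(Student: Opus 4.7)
The plan is to convert the torsion condition into an integer linear system on the $m_j$'s within a block, and then bound the height by Cramer's rule. The assembly of the linear system will follow the program sketched in the introduction: for each ordered pair of zeros $(p,q)$ of $\omega$, the torsion condition from \cite{moeller} furnishes a meromorphic function $f_{pq}\colon X\to\mathbb{P}^1$ with $(f_{pq})=N(p-q)$, and the pullback $|df_{pq}/f_{pq}|$ of $|dz/z|$ gives the \emph{Abel metric} used in the introduction. The key feature is that $df_{pq}/f_{pq}$ is a closed meromorphic $1$-form whose periods lie in $2\pi i \mathbb{Z}$ and whose residues are $\pm N$.

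The next step is to produce, for each $f_{pq}$ that has no zero or pole on a cylinder $C_j\in B$, a well-defined integer winding number
\[
  N_j(f_{pq}) \;=\; \frac{1}{2\pi i}\oint_{\gamma_j}\frac{df_{pq}}{f_{pq}} \;\in\; \mathbb{Z},
\]
where $\gamma_j$ is the core curve of $C_j$. Fixing a minimally crossing arc $\sigma_{pq}$ from $q$ to $p$ avoiding the spine, this identifies $N_j(f_{pq})$ with $N\cdot(\gamma_j\cdot\sigma_{pq})$, whose absolute value is bounded by $N$ once one arranges $\sigma_{pq}$ to cross each horizontal cylinder in the block at most once. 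Harmonicity of $\log|f_{pq}|$ on each cylinder, combined with its continuous extension to the bounding spine components, then yields that the change of $\log|f_{pq}|$ from one boundary to the other of $C_j$ equals $2\pi N_j(f_{pq})\, m_j$ up to sign. Summing these changes around any closed cycle in the dual graph of the block produces a linear relation
\[
  \sum_{j=1}^n \varepsilon_j\, N_j(f_{pq})\, m_j \;=\; 0 \qquad (\varepsilon_j\in\{-1,0,+1\})
\]
with integer coefficients of absolute value at most $N$.

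The block condition now enters crucially: by 2-connectivity of the block's dual graph, the cycle space has dimension $n-v+1$ (with $v$ spine-components), and combining these geometric cycles with enough choices of pair $(p,q)$ gives a system of linear equations of rank exactly $n-1$ in the unknowns $m_1,\ldots,m_n$. Projectively, $[m_1\colon\cdots\colon m_n]$ is therefore determined by an integer linear system with entries in $[-N,N]$. Writing each $m_j$ as an $(n-1)\times(n-1)$ minor by Cramer's rule and applying the trivial bound $|\det|\le (n-1)!\,N^{n-1}$ gives the height bound
\[
  h[m_1\colon\cdots\colon m_n]\;\le\;\log\bigl((n-1)!\,N^{n-1}\bigr)\;=\;(n-1)\log N+\log(n-1)!.
\]

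The principal obstacle is the derivation of the linear relation in the middle step, i.e.\ showing cleanly that the cycle-sum of the changes of $\log|f_{pq}|$ across successive cylinders indeed telescopes to zero, with coefficient $N_j(f_{pq}) m_j$ on the $j$-th cylinder. This requires tracking boundary values of $\log|f_{pq}|$ on each spine component as a well-defined ``potential'' and ruling out pathologies when $f_{pq}$ has zeros or poles on a spine component of $B$; the latter is exactly where the 2-connectivity of the block lets one reroute the cycle around the bad cylinder. A secondary technical point is ensuring that one can select \emph{enough} pairs $(p,q)$ — equivalently, enough cohomology classes $[df_{pq}/f_{pq}]$ — so that the combined system achieves rank $n-1$; algebraic primitivity and the fact that the zeros of $\omega$ span a codimension-one subgroup of the relevant relative homology should supply this.
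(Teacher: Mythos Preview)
Your overall strategy matches the paper's: Abel metric from the torsion condition, integer winding numbers $w_e$ bounded by $N$, linear circuit relations $\sum_i \pm w_{e_i} m_{e_i}=0$, and a Cramer-rule bound at the end (the paper's final paragraph is exactly your last display). The two obstacles you flag are genuine, however, and the paper fills both of them by a single device you do not invoke: it passes to the degenerating family over the cusp of the Teichm\"uller curve and argues asymptotically.

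\textbf{The circuit relation.} Your claim that the change of $\log|f_{pq}|$ across $C_j$ equals $2\pi N_j m_j$ holds only for the \emph{average} over the boundary circle: harmonicity gives $\int_{\beta^+}\log|f|-\int_{\beta^-}\log|f|=-2\pi N_j h_j$, and dividing by the width $w_j$ yields the average change $-2\pi N_j m_j$. But at a spine component $v$ the boundary circles of the adjacent cylinders are \emph{different} closed paths in $v$, so these averages need not agree, and there is no single potential $\Phi(v)$ to telescope against. Worse, $p,q$ are zeros of $\omega$ and hence lie \emph{on} the spine, so $\log|f_{pq}|$ has $\pm N\log$-singularities exactly where you want a potential. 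The paper instead lets the cylinders degenerate: with $t$ the parameter near the cusp, the $\omega$-moduli are $c\,m_e\log|t|$, and the Abel-metric cylinders are shown to have asymptotically the same moduli, hence $\tau$-heights $\sim c\,w_e m_e \log|t|$. Integrating the exact form $\Im\tau$ along a lifted circuit, the spine contribution stays \emph{bounded} (the spine lives in a compact piece of the family), so after dividing by $\log|t|$ and letting $t\to 0$ one obtains $\sum\sigma_i w_{e_i}m_{e_i}=0$ exactly.

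\textbf{The bound $|N_j|\le N$.} Your identification $N_j=N(\gamma_j\cdot\sigma_{pq})$ would force every period of $\tfrac{1}{2\pi i}\,df_{pq}/f_{pq}$ to be divisible by $N$. That fails: on absolute cycles of $X$ the periods are just the winding numbers of $f_{pq}$ along those cycles, arbitrary integers in general. The paper's argument again uses the degeneration: near the cusp the Abel metric develops a genuine flat cylinder homotopic to $\gamma_e$, and $f_{pq}$ restricted to it is a degree-$|w_e|$ cover of a round annulus in $\mathbb{C}^*$; since $\deg f_{pq}=N$ this gives $|w_e|\le N$.

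\textbf{Rank of the system.} Your sketch (``enough pairs $(p,q)$ plus $2$-connectivity'') is in the right spirit but is not an argument. The paper packages this homologically: the winding data define a splitting $w\colon \tilde H_0(V;\mathbb{Q})\to H_1(\Gamma,V;\mathbb{Q})$ of the long exact sequence of $(\Gamma,V)$, one shows $\Im(\phi_{\boldsymbol{m}}\circ w)=\Ker(j)$ by a dimension count, and then a separate lemma (using Menger's theorem, which is where $2$-connectivity enters) shows that the diagonal stabiliser of $\Ker(j)$ is precisely the group rescaling each block independently. That lemma is what pins the corank at one within a block.

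So your endgame is correct, but both of your acknowledged obstacles are resolved in the paper by the asymptotic comparison of the $\omega$- and $\tau$-geometries in the degenerating family, not by a direct argument on the fixed surface.
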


\begin{rem}
  Note that if $\omega$ has only one zero, the torsion condition is trivial.  Likewise,  the
  conclusion of Theorem~\ref{thm:moduli_bound} is trivial,  as in this case the dual graph has only one
  vertex, so each block consists of a single cylinder.
\end{rem}

\paragraph{Notation and definitions.}

We establish some basic notation and definitions which we will use throughout this section.  

Given a hyperbolic Riemann surface $X$, we write $\rho_X$ for its \Poincare metric, the unique
conformal metric of constant curvature $-1$.  We write $\ell_X(\gamma)$
for the length of a closed curve $\gamma$, and $\ell_X([\gamma])$ for the length of the shortest
curve in the homotopy class $[\gamma]$.

Recall that a Riemann surface $A$ is an \emph{annulus} if $\pi_1(A) \isom \zed$.  Every hyperbolic annulus is
conformally equivalent to a unique (up to scale) \emph{round annulus}, a planar domain bounded by
concentric circles.  
The \emph{modulus} of $A$ is $\modulus(A)  = \frac{1}{2\pi} \log(R)$,
where $R>1$ is determined by $A\isom \{z : 1 < |z| < R\}$.  If $A$ has a flat, conformal metric with
geodesic boundary, then $\modulus(A) = h/w$, where $h$ and $w$ are its height and width
respectively.

Suppose $\gamma\subset X$ is a simple closed geodesic, and let $\tilde{X}\to X$ be the corresponding
annular cover.  The hyperbolic length of $\gamma$ is related to the modulus of $\tilde{X}$ by
\begin{equation}
  \label{eq:2}
  \ell_X(\gamma) = \frac{\pi}{\modulus(\tilde{X})}.
\end{equation}
We will use the notion of a flat family of stable curves (in the analytic category) also in the case that the fibers are of finite type.  That is, 
a \emph{family of stable curves} over a Riemann surface $\mathcal{C}$ is a
two-dimensional analytic space $\mathcal{X}$ together with a holomorphic function $f\colon \mathcal{X}\to
\mathcal{C}$ whose fibers are stable curves, that is connected one-dimensional analytic spaces
whose only singularities are nodes, and with each component of the complement of the nodes a hyperbolic Riemann surface.
% (we do not assume that the fibers are of finite type).
%
A model of a family of nonsingular curves degenerating to a node is given by the family $\pi_k\colon
V_k \to \Delta$, where
\begin{equation*}
  V_k = \{(x,y,t)\in \Delta^3 : xy = t^k\},
\end{equation*}
and $\pi_k(x, y, t) = t$.  Roughly speaking, the family $f\colon \mathcal{X}\to\mathcal{C}$ is
\emph{flat} if near every singularity of a fiber of $f$, there is a change of coordinates where the
family is $\pi_k\colon V_k \to \Delta$. We refer to \cite{hubbardkoch} for a more precise definition.

Given a family of stable curves $f\colon\mathcal{X}\to\mathcal{C}$, for $t\in \mathcal{C}$, we will
write $X_t$ for the fiber $f^{-1}(t)$.  A subscript $t$ will denote the restriction of various
objects to $X_t$, for example if $\omega$ is a section of $\omega_{\mathcal{X}/\mathcal{C}}$, then
$\omega_t$ is the restriction to $X_t$.

If $p$ is a node of a fiber $X_{t_0}$ for every $t$ close to $t_0$, there is a homotopy class of a
simple closed curve  $[\gamma_t]$ which degenerate to $p$ as $t \to t_0$, and such that the
monodromy around $t_0$ preserves each homotopy class $[\gamma_t]$.  We call these curves $\gamma_t$
the \emph{vanishing curves} of $p$.

\paragraph{Tall cylinders.}

Consider a family of flat surfaces $(X_t, \omega_t)$ which is degenerating to a stable curve
as $t\to 0$, where the period of $\omega_t$ around a vanishing curve $\gamma_t$ is real and independent
of $t$.  The following theorem makes precise the intuition that as $t\to 0$, the surfaces $(X_t,
\omega_t)$ are developing cylinders of large modulus in this homotopy class.

\begin{theorem}
  \label{thm:pinching_cylinders}
  Consider  a proper flat family of stable curves $f\colon \mathcal{X}\to\mathcal{C}$ with $p$ a 
  node of a singular fiber $X_{t_0}$.  Let $\omega$ be a meromorphic section of
  $\omega_{\mathcal{X}/\mathcal{C}}$ such that
  \begin{enumerate}
  \item $p$ is not contained in any zero or polar divisor of $(\omega)$, and
  \item the periods $\int_{\gamma_t} \omega_t$ around the vanishing curves of $p$ are a real constant.
  \end{enumerate}
  Then for $t$ sufficiently close to $t_0$, there is on $(X_t, \omega_t)$ a unique maximal,
  horizontal, flat cylinder $C_t$ homotopic to $\gamma_t$.

  Moreover, we have as $t\to t_0$,
  \begin{equation*}
    \modulus(C_t) \sim \frac{\pi}{\ell_{X_t}([\gamma_t])}.
  \end{equation*}
\end{theorem}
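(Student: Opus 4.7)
The plan is to compare both $\modulus(C_t)$ and $\ell_{X_t}([\gamma_t])$ to the bare conformal modulus of the pinching annulus, using local plumbing coordinates near $p$.

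Since $f$ is flat and $p$ is a node of $X_{t_0}$, local coordinates on $\mathcal{X}$ near $p$ identify the family with the standard model $\{(x,y,t)\in\Delta^3 : xy = t^k\}$ for some $k\geq 1$ (taking $t_0 = 0$), so that $\omega_{\mathcal{X}/\mathcal{C}}$ is locally generated by $dx/x = -dy/y$ and $\omega = g(x,y,t)\,dx/x$ with $g$ holomorphic and $g(0,0,0)\neq 0$ (since $p\notin(\omega)$). A residue calculation on the loop $\{|x|=\rho\}\subset X_t$ with $|t|^k<\rho<1$ gives $\int_{\gamma_t}\omega_t = 2\pi i\,g(0,0,t)+O(t^k)$, so the hypothesis that the period is a real constant forces $r := g(0,0,0) = ia$ with $a\in\reals\ssm\{0\}$. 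Set $M_t := -k\log|t|/(2\pi)$.

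In the coordinate $w = \log x$, the annulus $A_t = \{|t|^k \leq |x|\leq 1\}\subset X_t$ becomes the round conformal cylinder of modulus $M_t$, and $\omega_t$ pulls back to $\tilde g(w,t)\,dw$ with $\tilde g(w,t) = g(e^w, t^k e^{-w}, t)\to ia$ uniformly on any substrip of the form $k\log|t|+R\leq \RE w\leq -R$ as $t\to 0$. The horizontal foliation of the limit form $ia\,dw$ consists of the circles $\{|x|=\text{const}\}$, so straightening via the local primitive $Z(w) := \int^w \tilde g\,du$ shows that for $t$ sufficiently close to $t_0$ such a substrip is foliated by closed horizontal leaves of $|\omega_t|$, producing a horizontal flat cylinder inside $A_t$ of modulus $M_t - O(1)$. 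This sub-cylinder lies in a unique maximal horizontal cylinder $C_t$ in the homotopy class $[\gamma_t]$ (uniqueness being automatic, since two maximal horizontal cylinders sharing a closed horizontal leaf must coincide), and $\modulus(C_t) \geq M_t - O(1)$.

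By \eqref{eq:2}, $\ell_{X_t}([\gamma_t]) = \pi/\modulus(\tilde X_t)$ for $\tilde X_t$ the annular cover of $X_t$ associated to $[\gamma_t]$. Since $C_t\hookrightarrow X_t$ lifts to $\tilde X_t$, the Grötzsch-type inequality yields $\modulus(C_t) \leq \modulus(\tilde X_t) = \pi/\ell_{X_t}([\gamma_t])$. For the matching asymptotic I invoke the classical plumbing/Wolpert estimate for degenerations $xy = t^k$: the complement of $A_t$ in $X_t$ has uniformly bounded hyperbolic geometry as $t\to 0$, whence $\modulus(\tilde X_t) = M_t + O(1)$. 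Combining, both $\modulus(C_t)$ and $\pi/\ell_{X_t}([\gamma_t])$ are $M_t(1+o(1))$, which gives the claimed asymptotic. The substantive ingredient is the estimate $\modulus(\tilde X_t) = M_t + O(1)$; the construction of $C_t$ in the local model is a routine $C^0$-perturbation of the constant-coefficient case. The main obstacle is therefore citing or re-deriving the correct plumbing estimate — classical (Masur, Wolpert, Hubbard–Koch) but dependent on the quantitative fact that hyperbolic geometry outside the pinching collar stays bounded as $t\to t_0$.
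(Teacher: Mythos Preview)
Your argument is correct and follows the same broad outline as the paper --- build a flat cylinder in local plumbing coordinates, then sandwich its modulus against $\pi/\ell_{X_t}([\gamma_t])$ via monotonicity --- but the two key steps are handled differently.

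For the construction of the cylinder, the paper avoids your $C^0$-perturbation argument by proving a \emph{relative normal form} (Lemma~\ref{lem:normal_form}): there is a fibre-preserving biholomorphism $\Phi$ of $V_k$ with $\Phi^*\bigl(\tfrac{1}{2\pi i}\tfrac{dx}{x}\bigr)=\omega$ exactly.  This yields honest round flat annuli $C_t'$ with no straightening needed, and is the analogue for families of the classical one-variable statement that a form with a simple pole of residue $1/2\pi i$ is locally $\tfrac{1}{2\pi i}\tfrac{dz}{z}$.  Your perturbation route works, but the normal form is cleaner and makes the modulus of $C_t'$ explicit.

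For the comparison with $\ell_{X_t}([\gamma_t])$, you invoke Wolpert's pinching estimate $\modulus(\tilde X_t)=M_t+O(1)$ as a black box.  The paper instead isolates exactly the hyperbolic fact required and proves it as a separate lemma (Lemma~\ref{lem:short_curve_asymptotic}): if an essential annulus $A\subset X$ has $\partial A$ in the $(\gamma,\epsilon)$-thick part and $\ell_A(\alpha)\to 0$, then $\ell_X(\gamma)/\ell_A(\alpha)\to 1$.  The thick-part hypothesis for $\partial C_t'$ is then checked via continuity of the fibrewise hyperbolic metric on the total space (citing Hubbard--Koch and Wolpert for that continuity).  So both approaches ultimately lean on Wolpert-type input, but the paper's is the weaker statement (continuity of the metric) combined with a self-contained modulus comparison, whereas yours cites the stronger length asymptotic directly.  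Neither is more elementary; the paper's packaging just makes the dependence on the external literature minimal and reusable.
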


\begin{rem}
  Similar statements have appeared in the literature, see for example \cite{masur75}
  or \cite{bainbridge07}.
\end{rem}

If $\omega$ is a one-form defined on a neighborhood of $0$ in $\cx$, with a simple pole at $0$ of
residue $1/2 \pi i$, it is well-known that there is a change of coordinates $\phi$ such that
$\phi^*\omega = \frac{1}{2\pi i} \frac{dz}{z}$.  It follows that in the flat metric $\omega$, a
neighborhood of $0$ is an infinite cylinder circumference $1$.  The proof of
Theorem~\ref{thm:pinching_cylinders} will be based on the following relative version of this change
of coordinates, giving a standard form for a section of $\omega_{\mathcal{X}/\mathcal{C}}$ near an
isolated node, where the family is modeled by $\pi_k\colon V_k \to \Delta$.

\begin{lemma}
  \label{lem:normal_form}
  Let $\omega$ be a holomorphic section of $\omega_{V_k / \Delta}$ such that for each vanishing
  curve $\gamma_t$, we have
  \begin{equation*}
    \int_{\gamma_t} \omega_t = 1.
  \end{equation*}
  Then there is a holomorphic change of coordinates $\Phi$ of $V_k$ which fixes $0$ and commutes with
  the projection $\pi_k$, such that
  \begin{equation}
    \label{eq:8}
    \Phi^*\left( \frac{1}{2\pi i}\frac{dx}{x}\right) = \omega.
  \end{equation}
\end{lemma}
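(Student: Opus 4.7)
Near the origin of $V_k$, the relative dualizing sheaf $\omega_{V_k/\Delta}$ is locally free of rank one, generated by the relative form $\tfrac{dx}{x} = -\tfrac{dy}{y}$ (the latter equality holding on fibers after differentiating $xy = t^k$). Writing $\omega = h \cdot \tfrac{dx}{x}$ for a unique $h \in \cO_{V_k, 0} = \cx\{x, y, t\}/(xy - t^k)$, the plan is first to put $h$ into canonical form. Iterating $xy = t^k$ kills every mixed monomial and gives a unique expression
\[
h = f_0(t) + \sum_{i \geq 1} f_i(t)\, x^i + \sum_{j \geq 1} g_j(t)\, y^j, \qquad f_0, f_i, g_j \in \cx\{t\}.
\]
A residue computation on the fiber $X_t$ (parametrized by $x$ with $y = t^k/x$, so that $\gamma_t$ may be represented by $|x| = |t|^{k/2}$) then shows $\int_{\gamma_t} \omega_t = 2\pi i\, f_0(t)$, since the only contribution comes from the $x^0$-coefficient of $h(x, t^k/x, t)$, which is precisely $f_0(t)$. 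The hypothesis thus forces $f_0(t) \equiv \tfrac{1}{2\pi i}$, equivalently, the canonical form of $2\pi i h - 1$ has no constant-in-$(x,y)$ term.

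The plan is to construct $\Phi$ in the ansatz form $\Phi(x, y, t) = (xu,\, y/u,\, t)$, where $u \in \cO_{V_k,0}^\times$ with $u(0) = 1$. Any such $\Phi$ automatically preserves $V_k$ (since $(xu)(y/u) = xy = t^k$), commutes with $\pi_k$, fixes the origin, and has Jacobian the identity there, hence is a local biholomorphism of $V_k$. Writing $u = e^g$, the pullback condition $\Phi^*\bigl(\tfrac{1}{2\pi i}\tfrac{dx}{x}\bigr) = \omega$ reduces to the single relative equation
\[
dg = (2\pi i h - 1)\,\tfrac{dx}{x}
\]
on $(V_k, 0)$.

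The apparent hard step is solving this equation on a singular space, but this is exactly what the period normalization trivializes. Using $y^j\, \tfrac{dx}{x} = -y^{j-1}\, dy$ on fibers and the absence of a constant term in $2\pi i h - 1$, we get
\[
(2\pi i h - 1)\,\tfrac{dx}{x} \;=\; \sum_{i \geq 1} \tilde f_i(t)\, x^{i-1}\, dx \;-\; \sum_{j \geq 1} \tilde g_j(t)\, y^{j-1}\, dy \;=\; d\!\left(\sum_{i \geq 1} \tfrac{\tilde f_i(t)}{i}\, x^i \;-\; \sum_{j \geq 1} \tfrac{\tilde g_j(t)}{j}\, y^j\right),
\]
where $\tilde f_i = 2\pi i f_i$ and $\tilde g_j = 2\pi i g_j$. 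The primitive in parentheses is manifestly an element of $\cO_{V_k, 0}$ vanishing at the origin, and setting $u$ equal to its exponential completes the construction of $\Phi$. The indispensability of the period hypothesis is transparent from this argument: without $f_0(t) \equiv \tfrac{1}{2\pi i}$, the constant-term obstruction $2\pi i f_0(t) - 1$ would produce a nonzero period of $(2\pi i h - 1)\tfrac{dx}{x}$ around $\gamma_t$, so no relative primitive — and hence no unit $u$ — could exist on any neighborhood of the node.
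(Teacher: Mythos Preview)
Your proof is correct and uses the same ansatz $\Phi=(xu,\,y/u,\,t)$ with $u=e^g$ as the paper, but the execution is organized differently and somewhat more cleanly. The paper works extrinsically on $\Delta^3$: it writes $\omega=\tfrac{1}{2\pi i}(\tfrac{dx}{x}+f\,dx+g\,dy)$ for holomorphic $f,g$ on the ambient polydisc, then solves the first-order PDE $xu_x-yu_y=xf-yg+h(xy,t)$ term by term, with the auxiliary function $h$ absorbing the obstruction; only after constructing $\Phi$ does the period hypothesis enter, to force $h(t^k,t)\equiv 0$. You instead work intrinsically on $\cO_{V_k,0}$: the canonical form for $h$ (no mixed $x^iy^j$ monomials) makes the period hypothesis immediately visible as $f_0(t)\equiv\tfrac{1}{2\pi i}$, after which the relative primitive is written down directly using $y^j\tfrac{dx}{x}=-y^{j-1}dy$. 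Your route avoids the PDE formalism and the correction term $h$, at the cost of invoking the normal form for $\cO_{V_k,0}$; the paper's route is more hands-on but carries the obstruction along until the end. One small point: since $V_k$ is singular at the origin, ``Jacobian the identity'' is not quite the right justification that $\Phi$ is a biholomorphism; the clean fix (which your explicit formula for $g$ permits) is to observe that $g$, hence $u$, is given by a convergent series in $\cx\{x,y,t\}$, so $\Phi$ extends to a biholomorphism of $(\Delta^3,0)$ preserving $V_k$.
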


\begin{proof}
  The form $\omega$ may be
  written as the restriction to the fibers of
  \begin{equation*}
    \omega = \frac{1}{2\pi i} \left(\frac{dx}{x} + f \, dx + g \, dy\right).
  \end{equation*}
  for some holomorphic functions $f$ and $g$ on $\Delta^3$.  

  Consider now the first order PDE
  \begin{equation}
    \label{eq:5}
    x u_x - y u_y = xf - yg + h(xy, t),
  \end{equation}
  where $h(s,t)$ is a to-be-determined holomorphic function defined near $0$.  We claim that there
  is a unique choice of $h$ so that \eqref{eq:5} has a holomorphic solution $u(x,y,t)$ defined near
  $0$.  To see this, consider the Taylor series
  representations,
  \begin{equation*}
    f = \sum a_{ijl} x^i y^j t^l, \quad g = \sum b_{ijl} x^i y^j t^l, \qtq{and} u  = \sum c_{ijl} x^i y^j t^l.
  \end{equation*}
  We define $h$ by
  \begin{equation*}
    h(s, t) = -\sum_{i,k}(a_{i-1, i, l} - b_{i, i-1, l}) s^i t^l,
  \end{equation*}
  so that $xf - yg + h(xy, t)$ has no $x^i y^i t^l$ terms.  We then obtain a solution to
  \eqref{eq:5} by taking for $i\neq j$,
  \begin{equation*}
    c_{i,j,l} = \frac{a_{i-1, j, l} - b_{i, j-1, l}}{i-j},
  \end{equation*}
  and making an arbitrary choice if $i=j$.
  
  Now suppose $u(x, y, t)$ is a solution to \eqref{eq:5}, and define
  \begin{equation*}
    \alpha = x e^u, \quad \beta = y e^{-u}, \qtq{and} \Phi = (\alpha, \beta).
  \end{equation*}
  If $xy = t^k$, then $\alpha \beta = xy = t^k$, whence $\Phi$ preserves the variety
  $V(xy-t^k)$.  
  
  We now compute,
  \begin{equation}
    \label{eq:6}
    \Phi^*\left(\frac{dx}{x}\right) = d(\log \alpha) = \frac{dx}{x} + u_x \, dx + u_y\, dy.
  \end{equation}
  The equality $xy=t^k$ implies
  \begin{equation}
    \label{eq:7}
    y\, dx + x \, dy = 0.
  \end{equation}
  (interpreting equality of forms as equality of the restrictions to the fibers).  Combining
  \eqref{eq:5} with \eqref{eq:7} implies $u_x \, dx + u_y\, dy = f\, dx + g\, dy + h(xy,t)$, which
  when combined with \eqref{eq:6} yields
  \begin{equation*}
    \frac{1}{2\pi i} \Phi^*\left(\frac{dx}{x}\right) = \omega + \frac{1}{2\pi i } h(t^k, t)\frac{dx}{x}.
  \end{equation*}
  Since $\omega$ has by assumption unit periods on the vanishing curves, as does $\frac{1}{2\pi i}
  \frac{dx}{x}$, we must have $h(t^k, t)\equiv 0$, so $\Phi$ is the desired change of coordinates.
\end{proof}

The previous lemma tells us that our flat family of curves is developing flat cylinders $C_t$ of growing
modulus.  We now need a way to show that the hyperbolic length of the core curve of $C_t$ in its
intrinsic hyperbolic metric is nearly as small as its length as a curve on $X$.  We will now show
that it is enough for the boundary of $C_t$ to lie in the ``thick part'' of $X$.

Recall that the injectivity radius of $X$ at $x$ is the length of the shortest essential loop through $x$.
Given a curve $\gamma$ on $X$, the $\gamma$-injectivity radius of $X$ at $x$ is the length of the
shortest essential loop through $x$ which is homotopic to $\gamma$.  The
\emph{$\epsilon$-thick part} (resp.\ $\gamma, \epsilon$-thick part) of $X$ is the locus of points
with injectivity radius (resp.\ $\gamma$-injectivity radius) at least
$\epsilon$.  We denote by $\Thick_\epsilon(X)$ and $\Thick_{\gamma,\epsilon}(X)$ the
$\epsilon$-thick and $\gamma, \epsilon$-thick parts respectively.

\begin{lemma}
  \label{lem:short_curve_asymptotic}
  Let $X_n$ be a sequence of hyperbolic Riemann surfaces, each containing an essential annulus
  $A_n$, with $\alpha_n\subset A_n$ the unique primitive
  $\rho_{A_n}$-geodesic, and $\gamma_n\subset X_n$ the homotopic simple closed
  $\rho_{X_n}$-geodesic.
     Suppose that $\bdry A_n\subset
  \Thick_{\gamma_n, \epsilon}(X)$, and $\ell_{A_n}(\alpha_n)\to 0$.  Then $\ell_{X_n}(\gamma_n) /
  \ell_{A_n}(\alpha_n)\to 1$.
\end{lemma}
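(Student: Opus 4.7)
The plan is to translate the length statement into one about moduli of annular covers using \eqref{eq:2}. Let $\tilde X_n \to X_n$ denote the annular cover corresponding to the homotopy class of $\gamma_n$, and let $\tilde A_n \subset \tilde X_n$ be the annular lift of $A_n$, a conformal copy of $A_n$ so that $\modulus(\tilde A_n) = \modulus(A_n)$. The inclusion $\tilde A_n \hookrightarrow \tilde X_n$ induces an isomorphism on $\pi_1$, and monotonicity of modulus for essentially embedded sub-annuli gives $\modulus(A_n) \le \modulus(\tilde X_n)$; via \eqref{eq:2} this yields immediately the easy bound $\ell_{X_n}(\gamma_n) \le \ell_{A_n}(\alpha_n)$. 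The content of the lemma is the matching asymptotic lower bound $\modulus(A_n)/\modulus(\tilde X_n) \to 1$.

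To prove this I would use the strip model. Write $M_n = \modulus(\tilde X_n)$ and identify $\tilde X_n$ with $\{0 < y < M_n\}/(x\sim x+1)$, on which the hyperbolic metric makes the horizontal loop at height $y$ into a curve of length $L(y) = (\pi/M_n)/\sin(\pi y/M_n)$. Since the shortest essential loop through $(x,y)$ has length at most $L(y)$, the $\epsilon$-thick part of $\tilde X_n$ is contained in $\{L(y) \ge \epsilon\}$, and for $M_n$ large this locus is a disjoint union of two end collars $\{0 < y \le c_\epsilon(M_n)\}$ and $\{M_n - c_\epsilon(M_n) \le y < M_n\}$, where $c_\epsilon(M_n) = (M_n/\pi)\arcsin(\pi/(\epsilon M_n))$ tends to $1/\epsilon$ as $M_n \to \infty$. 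Since the $(\gamma_n,\epsilon)$-injectivity radius at a point of $X_n$ equals the injectivity radius at any lift to $\tilde X_n$, the hypothesis $\bdry A_n \subset \Thick_{\gamma_n,\epsilon}(X_n)$ lifts to $\bdry \tilde A_n \subset \Thick_\epsilon(\tilde X_n)$, so in particular into the union of these two end collars.

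The key topological step is that, for $n$ large, the two components of $\bdry \tilde A_n$ lie in \emph{different} end collars, so that the middle strip $\{c_\epsilon(M_n) < y < M_n - c_\epsilon(M_n)\}$ sits as an essential sub-annulus of $\tilde A_n$. Indeed, if both components of $\bdry \tilde A_n$ sat in the same end collar, then $\tilde A_n$ (being the middle region bounded by them) would be confined to that collar and hence have modulus bounded independently of $n$, contradicting $\modulus(A_n) \to \infty$, which follows from the hypothesis $\ell_{A_n}(\alpha_n) \to 0$. Monotonicity of modulus then gives
\begin{equation*}
1 - \frac{2 c_\epsilon(M_n)}{M_n} \;\le\; \frac{\modulus(A_n)}{\modulus(\tilde X_n)} \;\le\; 1,
\end{equation*}
and since the left-hand side tends to $1$, the displayed ratio, which by \eqref{eq:2} equals $\ell_{X_n}(\gamma_n)/\ell_{A_n}(\alpha_n)$, tends to $1$ as well. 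The only real obstacle is the topological case analysis ruling out the same-end configuration, which is resolved by the quantitative divergence of $\modulus(A_n)$.
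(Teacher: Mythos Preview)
Your proof is correct and follows essentially the same approach as the paper: both pass to the annular cover, identify (a superset of) the $\epsilon$-thick part as two round end collars of modulus $(M_n/\pi)\arcsin(\pi/(\epsilon M_n))\to 1/\epsilon$, argue that $\tilde A_n$ must contain the complementary middle strip, and conclude by sandwiching $\modulus(A_n)/\modulus(\tilde X_n)$. The paper phrases the last step as $\modulus(X)=\modulus(B)+\modulus(T_1)+\modulus(T_2)\le \modulus(A)+2\modulus(T_1)$ with $B=A\setminus(T_1\cup T_2)$, but this is the same inequality you obtain.
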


\begin{proof}
  Consider a single essential annulus $A\subset X$.  Passing to the annular cover determined by the
  homotopy class of $A$, we may take $X$ to be a hyperbolic annulus with $\bdry A \subset
  \Thick_\epsilon(X)$.  We write $\ell_A$ and $\ell_X$ for the lengths of the lengths of the
  corresponding simple closed geodesics in their respective \Poincare metrics, which are related to
  their respective moduli by \eqref{eq:2}.

  If $\modulus(X)$ is sufficiently large compared to $1/\epsilon$, then 
  $\Thick_\epsilon(X)$ is the
  union of two round annuli (that is, bounded by concentric circles) $T_1, T_2$ of modulus
  \begin{equation*}
    \modulus(T_i) = \frac{1}{\ell_X} \sin^{-1}\left(\frac{\ell_X}{\epsilon}\right).
  \end{equation*}
  (This is  a straightforward calculation in the band model of the hyperbolic plane.)  If $\ell_A$
  is sufficiently small, then $A$ is not contained in the thick part, and it follows that $X =
  A \cup T_1 \cup T_2$.  Let $B = A \setminus (T_1 \cup T_2)$.  
  We then have
  \begin{equation*}
    \modulus(A) \leq \modulus(X) = \modulus(B) + \modulus(T_1) + \modulus(T_2) \leq \modulus(A) + 2 \modulus(T_1),
  \end{equation*}
  where the inequalities follow from monotonicity of moduli. That is, $A\subset B$ implies
  $\modulus(A) \leq \modulus(B)$ (see for example \cite{mcmullen94}).  In terms of lengths,
  \begin{equation*}
    \frac{1}{\ell_A} \leq \frac{1}{\ell_X} \leq
    \frac{2}{\ell_X}\sin^{-1}\left(\frac{\ell_X}{\epsilon}\right) + \frac{1}{\ell_A}.
  \end{equation*}
  Letting $\ell_A \to 0$ with $\epsilon$ fixed, the claim follows.
\end{proof}

\begin{proof}[Proof of Theorem~\ref{thm:pinching_cylinders}]
  Passing to an open subset of $\mathcal{C}$, we may take our family of curves to be of the form
  $f\colon\mathcal{X}\to\Delta$, with $f_0$ the only singular fiber.  The homotopy class
  $[\gamma_t]$ of the vanishing curve is then well-defined for every $t\neq 0$.  We define a
  function $\iota\colon\mathcal{X}\to\reals$, by taking $\iota(x)$ to be the $\gamma_t$-injectivity
  radius of $X_t$ at $x$.  We may define $\iota$ even on the singular fiber by taking it to be $0$
  at the node $p$, and $\iota\equiv \infty$ on an irreducible component of $X_0$ which does not
  contain $p$.

  Let $\mathcal{X}'$ denote be the complement of the nodes of $\mathcal{X}$.  The vertical
  hyperbolic metric $\rho_{X_t}$ is continuous as a function on $T^*_{\mathcal{X'}/\Delta}$, as is
  shown in \cite{hubbardkoch} or \cite{wolpert90}.  It follows that $\iota$ is continuous on
  $\mathcal{X}$.

  Applying Lemma~\ref{lem:normal_form}, we may take a compact neighborhood $S$ of $p$ whose
  intersection with each fiber $X_t$ is either empty or a flat annulus $C_t'$ which is contained in
  a unique maximal flat annulus $C_t$.  As $\bdry S$ is compact, $\iota$ is bounded below on $\bdry
  S$, so each $\bdry C_t'$ is contained in $\Thick_{\gamma_t, \epsilon}(X_t)$ for some
  $\epsilon>0$.  Applying Lemma~\ref{lem:short_curve_asymptotic}, we obtain $\modulus(C')\sim
  \pi/\ell_{X_t}([\gamma_t])$ as $t\to 0$.  As
  $$\modulus(C_t') \leq \modulus(C_t) \leq
  \frac{\pi}{\ell_{X_t}([\gamma_t])},$$
  the same holds for $\modulus(C_t)$.
\end{proof}

\paragraph{The Abel metric.}

Let $X$ be a Riemann surface and $D= \sum n_i z_i$ a degree zero divisor whose Abel-Jacobi image in
$\Jac(X)$ is $0$.  By Abel's theorem, there is a meromorphic function $h_D\colon X \to \proj^1$ with
$(h_D)=D$.  Let $\tau_D = h_D^*(dz/z)$, a meromorphic one-form having integral periods and a simple
pole at each $z_j$ with residue $n_j / 2 \pi i$.  We will often simply write $\tau$ when we do not
wish to emphasize the divisor $D$.

We call the associated flat metric $|\tau|$ the \emph{Abel metric} associated to $D$.  The
horizontal direction of $\tau$ is periodic, as it comes from pulling back the flat metric on a
cylinder.  In this metric, a neighborhood of each $z_i$ is a half-infinite cylinder of width $n_i$.

Now  fix an algebraically
primitive Veech surface with periodic horizontal direction.  We identify the punctured unit disk
$\Delta^*$ with the quotient of
the hyperbolic plane by the corresponding parabolic element of the Veech group.  Given $t\in
\Delta^*$, we write $(X_t, \omega_t)$ for the corresponding flat surface with $\omega_t$ normalized
so that the horizontal direction is periodic with periods independent of $t$.  Let $f\colon
\mathcal{X}\to \Delta^*$ be the associated universal curve, and $f\colon\bcX\to\Delta$ the proper
flat family of stable curves.  The forms $\omega_t$ yield a section $\omega$ of $\omega_{\bcX/\Delta}$.

The dual graphs of the $(X_t, \omega_t)$ may be canonically identified as the monodromy is composed
of Dehn twists in the cylinders.  We denote the dual graph of each by $\Gamma$, and write $E$ and
$V$ for the set of vertices and edges.  Given an edge $e$ of $\Gamma$, we denote by $C_e(\omega_t)$
the corresponding cylinder of $\omega_t$ and let $[\gamma_e]\subset X_t \setminus Z(\omega_t)$
denote the homotopy class of a core curve of $C_e(\omega_t)$, oriented so that its period is
positive.

We give each edge of $\Gamma$ an orientation as follows.  The bottom and top boundary components of
$C_e(\omega_t)$ correspond to vertices $v_1$ and $v_2$ of $\Gamma$.  If $v_1\neq v_2$, give $e$ the
orientation pointing from $v_1$ to $v_2$.  Otherwise choose an arbitrary orientation.

Let $(m_e)_{e\in E}$ be the tuple of moduli of the cylinders of some $(X_t, \omega_t)$.  As the
$m_e$ have rational ratios, we may scale them uniquely so that they are relatively prime positive
integers.  We regard the tuple $(m_e)$ as weights assigned to the edges of the graph $\Gamma$.

Possibly passing to a cover of $\Delta^*$, we make take the zeros of $\omega$ to be sections $z_i$
of the universal curve over $\Delta$.  Let $Z\subset \mathcal{X}$ denote the divisor of zeros of
$\omega$.

We denote by $\Div^0(Z)$ the group of divisors of degree $0$ supported on $D$.  Let $K$ be the
kernel of the Abel-Jacobi map $\Div^0(Z) \to \Jac(\mathcal{X}/\Delta^*)$, a finite index subgroup
by the torsion condition.

A divisor $D= \sum n_i z_i \in K$ defines a meromorphic section $\tau_D$ of
$\omega_{\mathcal{X}/\Delta}$ which has a simple pole along each $z_j$ with residue $n_j/2 \pi  i$.
The restriction to each $X_t$ is the Abel metric defined above, which we denote $\tau_{D, t}$ or
just $\tau_t$.  

Given $e\in \Gamma$ and $D\in K$, let
\begin{equation*}
  w_e(D) = \int_{\gamma_e} \tau_{D, t}.
\end{equation*}
Equivalently, $w_e(D)$ is the winding number of $h_D(\gamma_e)$ around $0$.

\begin{prop}
  \label{prop:w_e_not_zero_implies_cylinder}
  If $w_e\neq 0$, then for $t$ sufficiently large, $(X_t\setminus Z_t, \tau_t)$ has a unique maximal
  horizontal cylinder $C_t(\tau_t)$ homotopic to $\gamma_e$.  Moreover, as $t\to0$, we have
  \begin{equation}
    \label{eq:11}
    \modulus(C_t(\omega_t)) \sim \modulus(C_t(\tau_t)).
  \end{equation}
  If $w_e=0$, then there is no such cylinder
  for any $t>0$.
\end{prop}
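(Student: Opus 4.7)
The plan is to apply Theorem~\ref{thm:pinching_cylinders} separately to $\omega$ and (when $w_e(D) \neq 0$) to an appropriate rescaling of $\tau_D$, and then to compare the resulting asymptotics. First, by the chosen normalization the period $\int_{\gamma_e}\omega_t$ is real and independent of $t$, and the zero divisor $Z$ of $\omega$ consists of sections $z_i$ of $f$ disjoint from the nodes of $X_0$. Thus at the node $p_e$ corresponding to the edge $e$ the hypotheses of Theorem~\ref{thm:pinching_cylinders} hold, so we obtain a unique maximal horizontal flat cylinder $C_t(\omega_t)$ homotopic to $\gamma_e$ with
$$\modulus(C_t(\omega_t)) \sim \frac{\pi}{\ell_{X_t}([\gamma_e])} \quad \text{as } t \to 0.$$

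Second, suppose $w_e(D)\neq 0$, and consider the rescaled section $\tau_D/w_e(D)$ of $\omega_{\bcX/\Delta}$, whose period around $\gamma_e$ is the constant $1$. Its polar divisor is contained in the sections $z_i$ and so is disjoint from $p_e$. To apply Theorem~\ref{thm:pinching_cylinders} I also need that $\tau_D$ does not vanish at $p_e$ in the sense of the relative dualizing sheaf. In the local model $xy=t^k$ of Lemma~\ref{lem:normal_form}, any section may be written $g(x,y,t)\,dx/x$ with $g$ holomorphic, and the period around the vanishing cycle equals $2\pi i\, g(0,0,t)$; hence $w_e(D)\neq 0$ forces $g(0,0,0)\neq 0$. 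The theorem then yields a unique maximal horizontal cylinder $C_t(\tau_t)$ homotopic to $\gamma_e$ with modulus asymptotic to $\pi/\ell_{X_t}([\gamma_e])$. Since the modulus of a flat cylinder is invariant under multiplicative rescaling of the generating one-form, this asymptotic passes to $\tau_D$ itself, and combining with the first step gives $\modulus(C_t(\omega_t)) \sim \modulus(C_t(\tau_t))$.

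Third, if $w_e(D)=0$, the nonexistence of such a cylinder is purely topological. Any horizontal flat cylinder for $\tau_t$ with core curve $\beta$ homotopic to $\gamma_e$ inside $X_t\setminus Z_t$ has positive real width $\int_\beta \tau_t$; but $\int_\beta \tau_t = \int_{\gamma_e}\tau_t = w_e(D)=0$, a contradiction.

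The main obstacle is the local verification at the node that the hypothesis of Theorem~\ref{thm:pinching_cylinders} holds for $\tau_D$, i.e.\ that $p_e$ lies in neither the polar nor the zero divisor of $\tau_D$ as a section of the relative dualizing sheaf. The pole statement is immediate since the poles of $\tau_D$ are the sections $z_i$, which avoid the nodes; the nonvanishing at $p_e$ requires the brief residue computation in the local model of Lemma~\ref{lem:normal_form} sketched above, which relates the value of $\tau_D$ at the node to the period $w_e(D)$.
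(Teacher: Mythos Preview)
Your proof is correct and follows essentially the same approach as the paper: apply Theorem~\ref{thm:pinching_cylinders} to both $\omega$ and $\tau_D$ to see that each cylinder's modulus is asymptotic to $\pi/\ell_{X_t}([\gamma_e])$, and for $w_e=0$ use that a core curve of a horizontal flat cylinder has nonzero (positive real) period. The paper's own proof is only two sentences; your version is more explicit in verifying the hypotheses of Theorem~\ref{thm:pinching_cylinders} for $\tau_D$, in particular the local check via Lemma~\ref{lem:normal_form} that $w_e(D)\neq 0$ forces $\tau_D$ not to vanish at the node, which the paper leaves implicit.
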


\begin{proof}
  If $w_e \neq 0$, then the cylinder is provided by Theorem~\ref{thm:pinching_cylinders}, and the
  moduli are asymptotic as both are asymptotic to the hyperbolic length of $\gamma_t$.

  If $w_e=0$, then there can be no such cylinder as a closed geodesic on a translation surface
  never has zero period.
\end{proof}

\begin{cor}
  \label{cor:wbounded}
  Given $z_1, z_2$ distinct zeros of $\omega$, let $D = N(z_1 - z_2)\in K.$  Then for each edge $e$ of
  $\Gamma$, we have $w_e(D) \in [-N, N]\cap \zed$.
\end{cor}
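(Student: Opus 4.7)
The plan is to exploit the fact that $h_D$ realizes $X_t$ as a degree-$N$ branched cover of $\proj^1$: the winding number $w_e(D)$ will turn out to be the covering degree of a single component of the preimage of a small circle around $0$, which is necessarily at most the total degree $N$.

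Since $D = N(z_1-z_2) \in K$, Abel's theorem produces a meromorphic function $h_D$ on $X_t$ with $(h_D) = N z_1 - N z_2$; as a map $h_D\colon X_t \to \proj^1$ this has degree $N$. By construction $\tau_t = h_D^*(dz/z)$, and hence
\begin{equation*}
  w_e(D) = \frac{1}{2\pi i}\int_{\gamma_e}\tau_t
\end{equation*}
is the winding number of $h_D\circ\gamma_e$ around $0\in\proj^1$, in particular an integer. If $w_e(D)=0$, the conclusion is trivial.

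Assume therefore $w_e(D)\neq 0$. By Proposition~\ref{prop:w_e_not_zero_implies_cylinder}, for $t$ sufficiently close to $0$ there exists a horizontal cylinder $C_t(\tau_t)$ of the Abel metric on $X_t\setminus Z_t$ whose core curve is freely homotopic to $\gamma_e$. Choose $r>0$ smaller than $|v|$ for every nonzero critical value $v$ of $h_D$; then $h_D^{-1}(\{|z|=r\})$ is a disjoint union of smooth simple closed curves $\beta_1,\dots,\beta_k$, and each restriction $h_D|_{\beta_j}\colon \beta_j\to\{|z|=r\}$ is an unramified covering of some positive degree $d_j$, with $\sum_j d_j = N$. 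Each $\beta_j$ is the core curve of one of the horizontal cylinders of $\tau_t$, namely the component of $h_D^{-1}(\{r_1<|z|<r_2\})$ containing it for a suitable small range $(r_1,r_2)\ni r$. Consequently the core of $C_t(\tau_t)$ is isotopic in $X_t\setminus Z_t$ to some $\beta_{j_0}$.

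Computing along $\beta_{j_0}$, with the orientation induced from $\gamma_e$, gives
\begin{equation*}
  w_e(D) = \frac{1}{2\pi i}\int_{\beta_{j_0}} h_D^*\!\left(\frac{dz}{z}\right) = \frac{d_{j_0}}{2\pi i}\int_{|z|=r}\frac{dz}{z} = \pm d_{j_0},
\end{equation*}
so $|w_e(D)| = d_{j_0} \le \sum_j d_j = N$, which is the desired estimate. The only real subtlety is matching the paper's ``horizontal cylinder of $\tau_t$'' with the preimage-of-annulus description and verifying that the cylinder from Proposition~\ref{prop:w_e_not_zero_implies_cylinder} is indeed isotopic to one of the $\beta_j$'s; once that identification is set up, the bound is an immediate consequence of $\deg h_D = N$.
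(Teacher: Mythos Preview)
Your strategy is the paper's: invoke the cylinder $C_t(\tau_t)$ from Proposition~\ref{prop:w_e_not_zero_implies_cylinder} and use that $h_D$ has degree $N$. The gap is in your choice of the radius $r$.

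Because $(h_D)=Nz_1-Nz_2$, the point $z_1$ is the \emph{unique} preimage of $0$, and $h_D$ vanishes there to order $N$. Hence for $r$ smaller than every nonzero critical value, $h_D^{-1}(\{|z|<r\})$ is a single small disk around $z_1$, and $h_D^{-1}(\{|z|=r\})$ is a \emph{single} curve $\beta_1$ winding $N$ times over the circle. This $\beta_1$ is the core of the half-infinite cylinder of $\tau_t$ at the pole $z_1$; it bounds a disk in $X_t$ and so is homologically trivial there. On the other hand $\gamma_e$ is the core of a horizontal cylinder of $\omega_t$ and has nonzero $\omega_t$-period, hence is nontrivial in $H_1(X_t)$. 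Thus $\gamma_e$ is not isotopic to $\beta_1$ even in $X_t\setminus Z_t$, and your step ``consequently the core of $C_t(\tau_t)$ is isotopic to some $\beta_{j_0}$'' fails. The identity $|w_e(D)|=d_{j_0}$ is therefore unjustified; in fact with your choice of $r$ the only available $d_{j_0}$ is $N$, which tells you nothing.

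The repair is exactly what the paper does, and is simpler than your detour through a fixed radius: since $C_t(\tau_t)$ is a flat horizontal cylinder for $|\tau_t|=|h_D^*(dz/z)|$, the restriction $h_D|_{C_t(\tau_t)}$ is a local isometry onto a flat cylinder $C'\subset\cx^*$, hence an unramified covering of some degree $d$. Computing the winding number along the core gives $d=|w_e(D)|$, and any fiber of $h_D|_{C_t(\tau_t)}$ sits inside a fiber of $h_D$, which has at most $N$ points; so $|w_e(D)|\le N$. Equivalently, if you insist on preimage circles, choose $r$ to be the modulus of the image of the core of $C_t(\tau_t)$ rather than a small value.
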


\begin{proof}
  If $w_e\neq 0$, then for large $t$ 
  the surface $(X_t \setminus Z_t, \tau_t)$ has  a flat cylinder $C$ in the homotopy class
  $\gamma_e$.  Under the meromorphic function $f_D\colon X_t \to \proj^1$, the cylinder $C$ is a
  degree $w_e(D)$ covering of a flat cylinder $C'\subset \cx^*$.  As the degree of $f$ is $N$, we
  must have $|w_e(D)| \leq N$. 
\end{proof}

Since the cylinders of $\tau_t$ have nearly the same modulus as the corresponding cylinders of
$\omega_t$, we are able to use the geometry of $\tau_t$ to obtain the following strong restriction
on the moduli.

\begin{prop}
  \label{prop:circuit_relation}
  Suppose that $\gamma$ is a closed circuit of $\Gamma$ which crosses the edges $e_{i_1}, \ldots,
  e_{i_n}$.  Let $\sigma_i = \pm 1$ depending on whether $\gamma$ crosses $e_i$ respecting its
  orientation.  Then
  \begin{equation}
    \label{eq:10}
    \sum_i \sigma_i w_{e_i} m_{e_i} = 0.
  \end{equation}
\end{prop}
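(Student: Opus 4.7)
The plan is to derive the identity by integrating the Abel differential $\tau_D$ around a suitable closed lift of the circuit $\gamma$, and extracting the relation from the imaginary part.

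\textbf{Setup.} I would lift $\gamma$ to a closed loop $\tilde\gamma \subset X_t\setminus Z$ that crosses each cylinder $C_{e_i}$ by a single $\omega$-vertical segment (oriented according to $\sigma_i$) and is joined along short pieces of the spine. Since $\tau_D = dh_D/(2\pi i\, h_D)$ and $h_D$ has divisor supported on $Z$, the real function $u = -(2\pi)^{-1}\log|h_D|$ is single-valued on $X_t\setminus Z$ and satisfies $du = \Im(\tau_D)$. Being a closed loop in this open surface, $\tilde\gamma$ therefore satisfies $\int_{\tilde\gamma}\Im(\tau_D) = 0$ for free.

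\textbf{Cylinder identity.} On the annulus $C_e$ the form $\tau_D$ is holomorphic with real period $w_e$ along the core curve $\gamma_e$; subtracting $(w_e/w(C_e))\,\omega$ yields a holomorphic $1$-form on $C_e$ with vanishing period, hence exact, so $\tau_D = (w_e/w(C_e))\,\omega + dg_e$ for a single-valued holomorphic $g_e$ on $C_e$. Averaging $u$ over the horizontal closed curve at $\omega$-vertical height $y$ produces a function $\bar u(y)$ that is linear in $y$ with slope $w_e/w(C_e)$, because the horizontal mean of $g_e$ (being the constant Laurent mode in $\exp(2\pi i z/w(C_e))$) is independent of $y$. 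Consequently the averaged potential changes by exactly $w_e\cdot\modulus(C_e)$ across $C_e$.

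\textbf{Telescoping.} Averaging the identity $\int_{\tilde\gamma}\,du = 0$ over parallel choices of the vertical crossings in each cylinder, the cylinder contributions collect to $\sum_i \sigma_i w_{e_i}\,\modulus(C_{e_i})$, while the spine contributions assemble into boundary-averaged values of $u$ at the spine vertices of $\gamma$. The key combinatorial fact is that each horizontal saddle connection in a spine component $v$ is covered exactly once by a top boundary of a cylinder below $v$ and once by a bottom boundary of a cylinder above $v$; this yields a conservation identity at each $v$, which telescopes along $\gamma$ and forces the spine contribution to vanish. One obtains $\sum_i \sigma_i w_{e_i}\,\modulus(C_{e_i}) = 0$, and the proposition's integer identity $\sum_i \sigma_i w_{e_i} m_{e_i} = 0$ then follows by homogeneity, since the normalized $m_e$ differ from $\modulus(C_e)$ by a common positive scalar depending only on the direction.

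\textbf{Main obstacle.} The vertex conservation arising naturally from the covering of the spine is weighted by the cylinder widths $w(C_e)$, whereas the proposition is unweighted; the bulk of the work will be to verify that either the averaged boundary value of $u$ at a spine vertex depends only on the vertex (and not on which adjacent cylinder one averages over), or that the particular combinatorial pattern of the circuit $\gamma$ forces the $w(C_e)$-weights to cancel correctly in the telescoping.
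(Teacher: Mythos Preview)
Your overall strategy --- use exactness of $\Im\tau_D = d\bigl(-\tfrac{1}{2\pi}\log|h_D|\bigr)$ on $X_t\setminus Z$ and integrate around a closed lift of the circuit --- is exactly the right starting point, and it is also what the paper does. Your averaged cylinder computation is correct: the mean change of $u$ across $C_e(\omega_t)$ is precisely $w_e\cdot\modulus(C_e(\omega_t))$.

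The gap is the spine step, and neither of your two proposed resolutions works. Your ``key combinatorial fact'' that each saddle connection in a spine component $v$ is covered once from above and once from below is true, but it is a statement about \emph{all} cylinders incident to $v$, weighted by their $\omega$-widths. The spine term in your averaged circuit integral involves only the two consecutive circuit edges $e_i,e_{i+1}$ at $v$, and what you would need is $\bar u^{\,e_i}_{\text{top}} = \bar u^{\,e_{i+1}}_{\text{bottom}}$. These are averages of $u$ over two possibly different collections of saddle connections in $v$, normalized by two possibly different cylinder widths $w(C_{e_i}),\,w(C_{e_{i+1}})$; there is no reason for them to agree at a fixed $t$, and in general they do not. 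So the spine contribution is not zero for finite $t$; it is merely bounded.

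The paper's proof supplies exactly the idea you are missing: let $t\to 0$. One works with the $\tau$-cylinders $C_e(\tau_t)$ (rather than the $\omega$-cylinders) and with a lift $\tilde\gamma$ whose vertical pieces cross these $\tau$-cylinders. By Proposition~\ref{prop:w_e_not_zero_implies_cylinder} their heights satisfy $h_e(t)\sim c\,w_e m_e\log|t|$. The connecting pieces of $\tilde\gamma$ lie in the complement $J$ of the $\tau$-cylinders, which is \emph{compact in the family} over a closed subdisk; since $\tau$ extends continuously there, these pieces have $|\tau|$-length bounded uniformly in $t$. Now $0=\int_{\tilde\gamma}\Im\tau_t = \sum_i \sigma_i h_{e_i}(t) + O(1)$; dividing by $\log|t|$ and letting $t\to 0$ gives $\sum_i\sigma_i w_{e_i}m_{e_i}=0$. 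The point is not that the junk term vanishes exactly, but that it is dominated by the cylinder terms in the limit.
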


\begin{proof}
  Take a closed disk $0\in\overline{\Delta}' \subset \Delta$ such that for each edge $e$ with
  $w_e(D)\neq 0$, the cylinder $C_e(\tau_t)$ exists, and let $\bcX'\to\overline{\Delta}'$ be the
  restriction of our family of curves to $\overline{\Delta}'$.  If $w_e(D)\neq 0$, let
  $\mathcal{C}_e\subset\bcX'$ be the open set consisting of the cylinders $C_e(\tau_t)$.  We let
  $J$ be the complement of these $\mathcal{C}_e$.

  Since $J$ is compact, the vertical metric $|\tau|$ is continuous away from the nodes, and
  $\tau$ has no poles at the nodes remaining in $J$,
  there is a uniform constant $C$ such that for any distinct $e$ and $f$ and any $t\in
  \overline{\Delta}'$, if the boundaries of $C_e(\tau_t)$ and $C_f(\tau_t)$ lie in the same
  component of $J$, then any pair of points in these boundaries may be joined by a curve of
  length at most $C$.

  For each $t\in \overline{\Delta}'$, we choose a lift of $\gamma$ to a closed curve
  $\tilde{\gamma}$ on $X_t$ as follows.  For each $e_{i_k}$ with nonzero weight, let
  $\tilde{\gamma}_k$ be a segment joining the boundary circles of $C_e(\tau_t)$ with the orientation
  indicated by $\gamma$.  For $e_{i_k}$ with zero weight, we let $\tilde{\gamma}_k$ be the constant
  curve at the corresponding node.  We close the curve by choosing for each $k$ a curve in $J$ of
  length at most $C$ which joins the endpoints of $\gamma_k$ and $\gamma_{k+1}$.

  For each $e$ with nonzero weight, we define $h_e(t)$ to be the height of the corresponding
  cylinder in the $\tau$-metric, taken with the same sign as $w_e(D)$.  In the $\omega$-metric, the
  corresponding cylinder has modulus $c m_e \log|t|$ for some non-zero 
constant $c$, so 
  by \eqref{eq:11}, we have
  \begin{equation*}
    h_e(t) \sim c \log|t| w_e(D) m_e.
% Was before,     h_e(t) \sim c e^{2t} w_e(D) m_e.
  \end{equation*}
  
 Now since $\tau_t$ has integral periods, the form $\Im \tau$ is exact, so all of its periods are
  $0$.  In particular, for some contribution $D_t$ bounded by $|D_t|\leq n C$ 
stemming from the part of the path in $J$, 
we obtain in the limit $t \to 0$   
\begin{equation*}
     0 = (\log|t|)^{-1}\int_{\tilde{\gamma}}\Im \tau_t = (\log|t|)^{-1}\sum_i \sigma_i h_{e_i}(t) + (\log|t|)^{-1}D_t \to c\sum_i
     \sigma_i w_{e_i} m_{e_i},
  \end{equation*}
  implying \eqref{eq:10}.
\end{proof}

The vertices of $\Gamma$ determine a partition of the zeros of each $\omega_t$, assigning to each
vertex the set $S_v$ of zeros which lie on the corresponding component of the spine of $\omega_t$.  We
define for each vertex $v$ and divisor $D = \sum n_i z_i$ the weight
$$c_v(D) = \sum_{z_i \in S_v} n_i.$$
For each $v$, let $v_{\rm in}$ and $v_{\rm out}$ be the set of incoming and outgoing edges.

\begin{prop}
  \label{prop:w_relation}
  For each vertex $v$ of $\Gamma$,
  \begin{equation}
    \label{eq:12}
    c_v(D) = \sum_{w\in v_{\rm in}} w_e(D) - \sum_{w\in v_{\rm out}} w_e(D) 
  \end{equation}
\end{prop}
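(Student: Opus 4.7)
The plan is to apply the residue form of Stokes' theorem on a smooth fiber $X_t$ for $t$ close to $0$, after decomposing $X_t$ into closed pieces $U_v$ indexed by the vertices $v\in V$ of $\Gamma$. Concretely, $U_v$ is the closed domain obtained by cutting $X_t$ along the horizontal core curves $\gamma_e$ of every cylinder $C_e(\omega_t)$ and then taking the component that contains the spine component corresponding to $v$. By the definition of $S_v$ as the set of zeros of $\omega$ lying in that spine component, the poles of the meromorphic one-form $\tau_{D,t}=h_D^{*}(dz/z)$ that lie inside $U_v$ are exactly the $z_i\in S_v$, and since $h_D$ has a zero or pole of order $n_i$ at $z_i$ the residue there equals $n_i/(2\pi i)$.

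The residue theorem applied to $\tau_{D,t}$ on $U_v$ then reads
$$\int_{\partial U_v}\tau_{D,t} \;=\; 2\pi i\sum_{z_i\in S_v}\frac{n_i}{2\pi i} \;=\; c_v(D).$$
The boundary $\partial U_v$ is a disjoint union of (possibly two, if $e$ is a loop) copies of the core curves $\gamma_e$ for edges $e$ incident to $v$, each with the orientation induced from $U_v$ (i.e.\ with $U_v$ on the left). So the claim reduces to identifying, for each edge $e$, which of the two copies of $\gamma_e$ appears in $\partial U_v$, and with what sign relative to the canonical orientation of $\gamma_e$ (the one making $\int_{\gamma_e}\omega>0$).

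This last point is a local calculation on the flat cylinder $C_e(\omega_t)$. Model the cylinder in flat coordinates as $\{0\le y\le h\}$ with $x$ identified mod $w$, so that $\gamma_e$ at $y=h/2$ is traversed in the $+x$ direction in its canonical orientation. If $v=v_1$ (so $e\in v_{\rm out}$), then $U_v$ lies below $\gamma_e$, and the ``$U_v$ on the left'' rule forces $\gamma_e\subset\partial U_v$ to be traversed in the $-x$ direction, contributing $-\int_{\gamma_e}\tau_{D,t}=-w_e(D)$. If instead $v=v_2$ (so $e\in v_{\rm in}$), then $U_v$ lies above and $\gamma_e$ is traversed in the $+x$ direction, contributing $+w_e(D)$. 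Summing,
$$\int_{\partial U_v}\tau_{D,t} \;=\; \sum_{e\in v_{\rm in}}w_e(D)\;-\;\sum_{e\in v_{\rm out}}w_e(D),$$
which combined with the previous display yields the asserted identity. For loop edges ($v_1=v_2=v$) both copies of $\gamma_e$ appear in $\partial U_v$ with opposite induced orientations, so they cancel on the left side, consistently with such an edge being counted once in each of $v_{\rm in}$ and $v_{\rm out}$ and so also cancelling on the right.

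The conceptual content is just the residue theorem, so the only real work is the orientation bookkeeping described above; this is the one step where one must be careful, but it contains no genuine mathematical obstacle.
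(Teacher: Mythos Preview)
Your proof is correct. The paper's argument uses the same residue-theoretic idea but carries it out on the singular fiber $X_0$ rather than on a smooth fiber $X_t$: on the irreducible component of $X_0$ corresponding to $v$, the limiting form $\tau_0$ has simple poles at the $z_i\in S_v$ (with residues proportional to $n_i$) and at the nodes (with residues $\pm w_e(D)$, since the period of $\tau_{D,t}$ around the vanishing curve $\gamma_e$ limits to the residue at the node); the identity is then just the vanishing of the sum of residues on a compact Riemann surface. This sidesteps the orientation bookkeeping on $\partial U_v$, at the cost of implicitly invoking the limiting behavior of $\tau_D$ on the stable curve. Your version has the merit of staying on the smooth fiber and using only the definition of $w_e(D)$ as a period, so nothing about the degeneration needs to be said.
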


\begin{proof}
  On the component of $X_0$ corresponding to $v$, the form $\tau_0$  has simple poles at each of the
  $z_i$ of residue $n_i$.  The nodes of this component correspond to the edges of $\Gamma$ adjacent
  to $v$, and the residues of $\tau_0$ there are the weights $w_e(D)$, with the sign determined by
  the orientation of $e$.  Since the sum of residues is $0$, \eqref{eq:12} follows. 
\end{proof}

\paragraph{Electrical networks.}

Propositions~\ref{prop:circuit_relation} and   \ref{prop:w_relation} have a physical interpretation
in terms of electrical networks.  We regard the graph $\Gamma$ as an electrical network whose edges
have resistances $m_e$, with incoming or outgoing current $c_v$ at each vertex $v$, and with current
$w_e$ across any edge.  
Then \eqref{eq:12} reflects Kirchhoff's current
law ``what goes in must go out.'' , and  \eqref{eq:11} says, using
Ohm's law, that the potential drop around any closed loop is zero. So our
$w_e$ and $m_e$ indeed satisfy all the axioms of an electrical network.
\par
In this language, our task in the remainder of the section is the following:
Given a natural number $N$ and an electrical network with the property that passing a current of $N$
through any two vertices results in integral currents bounded by $N$ along
any edge (Corollary~\ref{cor:wbounded}), show that there is a finite number (depending on N)
of possibilities for the resistances.

The idea of studying Riemann surfaces through electrical networks has appeared elsewhere.  See for
example \cite{diller}.

\paragraph{Torsion determines moduli.}

Equation \eqref{eq:10} and Corollary~\ref{cor:wbounded} together put strong constraints on the moduli $m_e$, which we will now show
determines these moduli up to finitely many choices.

For each $D\in K$, the weights $w_e(D)$ define a relative homology class
$$\tilde{w}_e(D) = \sum_e w_e(D) [e]\in H_1(\Gamma, V),$$
with $V$ the vertex set of $\Gamma$, using the previously chosen orientation of the
edges of $\Gamma$.

Let $\tilde{H}_0(V)$ denote the kernel of the natural homomorphism $H_0(V) \to H_0(\Gamma)$.  The
assignment $D\mapsto c_D(v)$ is a group homomorphism $\rho\colon \Div^0(Z) \to \tilde{H}_0(V)$.

The tuple of moduli $\bm = (m_e)_{e\in E}$  determines a group homomorphism $\phi_\bm\colon H_1(\Gamma, V)\to
H^1(\Gamma, V)$ defined by $\phi_\bm([e]) = m_e [e^*]$, where $[e^*]$ is the corresponding cocycle killing
the other edges.

\begin{prop}
  \label{prop:w_homomorphism}
  The map $\tilde{w}\colon K\to H_1(\Gamma, Q)$ is a group homomorphism.
\end{prop}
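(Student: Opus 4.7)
The plan is to show directly that the assignment $D \mapsto \tau_D$ is itself a group homomorphism from $K$ to the space of meromorphic sections of $\omega_{\mathcal{X}/\Delta}$ with logarithmic poles, from which the statement follows at once by integrating along the curves $\gamma_e$.

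First I would recall the construction of $\tau_D$. For $D \in K$, Abel's theorem furnishes a meromorphic function $h_D\colon \mathcal{X} \to \proj^1$ (on each fiber, extended relatively) with divisor $D$, unique up to a nowhere-zero scalar factor, and $\tau_D = d h_D / h_D$. Since $d(c h_D)/(c h_D) = dh_D/h_D$ for any constant $c$, the form $\tau_D$ is intrinsically attached to $D$.

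Next I would verify additivity on the level of the functions $h_D$. Given $D_1, D_2 \in K$, the product $h_{D_1} h_{D_2}$ is a meromorphic function whose divisor is $D_1 + D_2$; by the uniqueness of $h_{D_1+D_2}$ up to scalar, we may take $h_{D_1+D_2} = h_{D_1} h_{D_2}$. Applying $d\log$, which converts products to sums, gives
\begin{equation*}
  \tau_{D_1+D_2} = \frac{d(h_{D_1} h_{D_2})}{h_{D_1} h_{D_2}} = \frac{dh_{D_1}}{h_{D_1}} + \frac{dh_{D_2}}{h_{D_2}} = \tau_{D_1} + \tau_{D_2}.
\end{equation*}

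Finally, integrating this identity of one-forms along the cycle $\gamma_e$ on the fiber $X_t$ (which is disjoint from the poles of $\tau_{D_i, t}$ since $\gamma_e$ lies in the complement of the zeros of $\omega_t$) yields $w_e(D_1 + D_2) = w_e(D_1) + w_e(D_2)$ for every edge $e$, and hence $\tilde{w}(D_1+D_2) = \tilde{w}(D_1) + \tilde{w}(D_2)$ in $H_1(\Gamma, V)$. There is no genuine obstacle here; the only point requiring a moment's care is the well-definedness of $\tau_D$ under the scalar ambiguity in $h_D$, which is immediate from the logarithmic derivative.
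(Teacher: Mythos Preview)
Your proof is correct and takes essentially the same approach as the paper: the paper's proof simply asserts $\tau_{D_1+D_2} = \tau_{D_1} + \tau_{D_2}$ and observes that the $w_e(D)$ are periods of $\tau_D$, while you supply the underlying reason (via $d\log$ of the product $h_{D_1}h_{D_2}$) for that additivity. The extra detail you give is exactly what one would fill in if asked to justify the paper's one-line proof.
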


\begin{proof}
  For any two divisors, we have $\tau_{D_1+ D_2} = \tau_{D_1} + \tau_{D_2}$.  As the $w_e(D)$ are
  periods of $\tau_D$, it follows that $\tilde{w}$ is a homomorphism.
\end{proof}

As $K\subset\Div^0(Z)$ is finite index, there is an induced homomorphism (abusing notation)
$\tilde{w}\colon \Div^0(Z) \otimes \ratls \to H_1(\Gamma, V; \ratls)$.

Below we write $\bdry\colon H_1(\Gamma, V)\to \tilde{H}_0(V)$ and $j\colon H^1(\Gamma, P)\to
H^1(\Gamma)$  for the homomorphisms of from the respective long exact sequences.

\begin{prop}
  \label{prop:homology_splitting}
  The homomorphism $\tilde{w}$ satisfies $\bdry \circ \tilde{w} = \rho$ and descends to a splitting
  $w\colon \tilde{H}_0(V; \ratls)\to H_1(\Gamma, V; \ratls)$ of the homology exact sequence of the
  pair $(\Gamma, V)$.

  The composition $j\circ \phi_\bm \circ w = 0$ is trivial, and moreover
  \begin{equation}
    \label{eq:3}
    \Im(\phi_\bm\circ w) = \Ker(j).
  \end{equation}
\end{prop}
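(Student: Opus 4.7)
The plan is to unwind the definitions and apply Propositions~\ref{prop:circuit_relation} and~\ref{prop:w_relation}, then carry out a straightforward dimension count against the cohomology long exact sequence of the pair $(\Gamma, V)$.

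First I would verify $\bdry \circ \tilde{w} = \rho$ by direct computation. For $D\in K$ we have $\tilde{w}(D) = \sum_e w_e(D)[e]$, so $\bdry\tilde{w}(D) = \sum_e w_e(D)([v_2(e)]-[v_1(e)])$, and collecting the coefficient of a vertex $v$ gives $\sum_{e\in v_{\rm in}} w_e(D) - \sum_{e\in v_{\rm out}} w_e(D)$, which equals $c_v(D)$ by Proposition~\ref{prop:w_relation}. This agrees with the coefficient of $v$ in $\rho(D)$, and extends $\ratls$-linearly to $\Div^0(Z)\otimes\ratls$.

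To produce the splitting, I would show that $\rho\otimes\ratls$ is surjective onto $\tilde{H}_0(V;\ratls)$: for any two vertices $v, v'$ in the same component of $\Gamma$ one finds zeros $z\in S_v$ and $z'\in S_{v'}$, and then $\rho(z-z') = [v]-[v']$; these classes generate $\tilde{H}_0(V;\ratls)$. Choosing any $\ratls$-linear section $s\colon \tilde{H}_0(V;\ratls)\to \Div^0(Z)\otimes\ratls$ of $\rho$ and setting $w := \tilde{w}\circ s$, we get $\bdry\circ w = \bdry\circ\tilde{w}\circ s = \rho\circ s = \id$, so $w$ is indeed a splitting of the exact sequence $0\to H_1(\Gamma)\to H_1(\Gamma,V)\xrightarrow{\bdry} \tilde{H}_0(V)\to 0$.

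The vanishing $j\circ\phi_\bm\circ w = 0$ is the cohomological translation of Proposition~\ref{prop:circuit_relation}. Using $C^0(\Gamma,V)=0$ one identifies $H^1(\Gamma,V;\ratls)$ with $C^1(\Gamma;\ratls)$, and $j$ with the quotient by the coboundaries $\Im(d)$. Now $\phi_\bm(w(\alpha)) = \sum_e w_e(s(\alpha))\, m_e\, [e^*]$, and its image under $j$ is zero if and only if this cochain pairs trivially with every element of $H_1(\Gamma)$. Evaluating it on a cycle $\gamma = \sum_k \sigma_k[e_{i_k}]$ yields $\sum_k \sigma_k w_{e_{i_k}}(s(\alpha))\, m_{e_{i_k}}$, which vanishes by Proposition~\ref{prop:circuit_relation}.

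Finally I would upgrade the inclusion $\Im(\phi_\bm\circ w)\subseteq \Ker(j)$ to an equality by a dimension count. Since every $m_e$ is a positive integer, $\phi_\bm$ is a diagonal isomorphism $H_1(\Gamma,V;\ratls)\to H^1(\Gamma,V;\ratls)$, and $w$ is injective because $\bdry\circ w = \id$; hence $\phi_\bm\circ w$ is injective with image of dimension $\dim \tilde{H}_0(V;\ratls) = |V|-b_0(\Gamma)$. The cohomology long exact sequence of $(\Gamma,V)$ gives $\dim\Ker(j) = \dim H^0(V) - \dim H^0(\Gamma) = |V|-b_0(\Gamma)$, which matches. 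The main obstacle is the surjectivity of $\rho\otimes\ratls$, i.e., the availability of a zero on each vertex in each connected component; once this mild hypothesis is verified (as will be the case in our applications), the rest of the proposition is a formal repackaging of Propositions~\ref{prop:w_relation} and~\ref{prop:circuit_relation}.
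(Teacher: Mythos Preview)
Your approach uses the same ingredients as the paper (Propositions~\ref{prop:w_relation} and~\ref{prop:circuit_relation} plus a dimension count) but in a different order: you construct $w:=\tilde{w}\circ s$ from a chosen section $s$ of $\rho$, then prove $\Im(\phi_\bm\circ w)=\Ker(j)$; the paper instead first proves $\Im(\phi_\bm\circ\tilde{w})=\Ker(j)$ for $\tilde{w}$ directly, deduces $\Ker(\tilde{w})=\Ker(\rho)$, and only then defines $w$ as the induced map on the quotient. This matters because the statement asserts that $\tilde{w}$ \emph{descends}, i.e., that your $w$ is independent of the section $s$, and you do not verify this. The fix is short: Proposition~\ref{prop:circuit_relation} applies to every divisor, not only to $s(\alpha)$, so $\Im(\phi_\bm\circ\tilde{w})\subset\Ker(j)$; sandwiching with your equality $\Im(\phi_\bm\circ w)=\Ker(j)$ and the inclusion $\Im(w)\subset\Im(\tilde{w})$ forces $\dim\Im(\tilde{w})=|V|-1$, hence $\Ker(\tilde{w})=\Ker(\rho)$. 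The paper simply runs the dimension count on $\tilde{w}$ from the start, which makes the descent immediate.

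Your flagged ``main obstacle'' (surjectivity of $\rho$) is not an obstacle in this setting: each vertex of $\Gamma$ is a connected component of the spine, which by definition is a union of saddle connections and hence contains at least one zero of $\omega$, so every $S_v$ is nonempty. The paper uses this without comment, together with connectedness of $\Gamma$ (so $b_0(\Gamma)=1$ and $\dim\tilde{H}_0(V)=|V|-1$).
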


To summarize this discussion, these maps fit into the following commutative diagram with the
composed map $\tilde{H}_0(V) \to H^1(\Gamma)$ trivial and all (co)homology having
$\ratls$-coefficients.

% $$
% \xymatrix{ & & & {\Div^0(Z) \otimes \ratls} {\ar[dl]_{\tilde{w}}} \ar[d]^\rho& \\
%   0 \ar[r] &   H_1(\Gamma)\ar[r] & H_1(\Gamma,V) \ar@<.5ex>[r]^{\bdry} \ar[d]^{\phi_\bm} & \widetilde{H}_0(V) \ar[r]\ar@<.5ex>[l]^w & 0 \\
%   0 \ar[r] & H^0(V)/H^0(\Gamma) \ar[r]^\delta  & H^1(\Gamma,V) \ar[r]^j        & H^1(\Gamma) \ar[r] & 0
%   \\
% }
% $$
\begin{tikzpicture}[description/.style={fill=white,inner sep=2pt}]
\matrix (m) [matrix of math nodes, row sep=3em,
column sep=2.5em, text height=1.5ex, text depth=0.25ex]
{ & & & {\Div^0(Z) \otimes \ratls} & \\
   0  &   H_1(\Gamma)  & H_1(\Gamma,V)  & \widetilde{H}_0(V)  & 0 \\
   0  & H^0(V)/H^0(\Gamma)   & H^1(\Gamma,V) 
        & H^1(\Gamma)  & 0 \\  };
%\draw[double,double distance=5pt] (m-1-1) – (m-1-3);
\path[->,font=\scriptsize] (m-1-4) edge node[auto] {$\tilde{w}$}  (m-2-3);
\path[->,font=\scriptsize] (m-1-4) edge node[auto] {$\rho$}  (m-2-4);
\path[->,font=\scriptsize] (m-2-1) edge  (m-2-2);
\path[->,font=\scriptsize] (m-2-2) edge  (m-2-3);
\path[->,font=\scriptsize] (m-2-3) edge  node[auto] {$\bdry$}  (m-2-4);
\path[->,font=\scriptsize] (m-2-3) edge  node[auto] {${\phi_\bm}$}  (m-3-3);
\path[->,font=\scriptsize] (m-2-4) edge  (m-2-5);
\path[->,font=\scriptsize] (m-2-4) edge  (m-3-4);
\path[->,font=\scriptsize] (m-3-1) edge  (m-3-2);
\path[->,font=\scriptsize] (m-3-2) edge node[auto] {${\delta}$}  (m-3-3);
\path[->,font=\scriptsize] (m-3-3) edge  node[auto] {${j}$} (m-3-4);
\path[->,font=\scriptsize] (m-3-4) edge  (m-3-5);
\end{tikzpicture}

\begin{proof}
  That $\bdry\circ\tilde{w} = \rho$ is exactly the content of Proposition~\ref{prop:w_relation}.

  That $j\circ \phi_\bm\circ w = 0$ means that $\phi_\bm \circ w$ kills every cycle of $\Gamma$.
  This is exactly the content of Proposition~\ref{prop:circuit_relation}.
 
  We now claim that
  \begin{equation}
    \label{eq:4}
    \Im(\phi_\bm\circ\tilde{w}) = \Ker( j).
  \end{equation}
  Since $\bdry\circ \tilde{w} =
  \rho$, we have $\Ker(\tilde{w}) \subset \Ker(\rho)$, so $\dim \Ker(\tilde{w}) \leq |Z| - |V|$, and
  $\dim \Im(\tilde{w}) \geq |V| - 1$.  Then since $\phi_\bm$ is injective, $\dim
  \Im(\phi_\bm\circ\tilde{w}) \geq |V|-1= \dim \Ker(j)$.  Since $\Im(\phi_\bm\circ\tilde{w})
  \subset \Ker( j)$, they must be equal.

  Now \eqref{eq:4} implies $\dim \Ker(\tilde{w}) = |Z| - |V|$, so $\Ker(\tilde{w}) = \Ker(\rho)$.
  Thus $\tilde{w}$ descends to the desired splitting $w$.  The remaining claims about $w$ then
  follow from the corresponding properties of $\tilde{w}$ which we have already proved.
\end{proof}

Consider now the group $\Gm(\ratls)^{|E|}$, acting diagonally on $H^1(\Gamma, V; \ratls)$ by
$(q_e)_{e\in E}\cdot [e_0^*] = q_{e_0} [e_0^*]$.  Let $B\subset \Gm(\ratls)^{|E|}$ be the subgroup
of $(q_e)_{e\in E}$ for which $q_e = q_f$ whenever the edges $e$ and $f$ lie in the same block of
$\Gamma$.

\begin{lemma}
  \label{lem:stabilizer}
  $B$ is exactly the subgroup of $\Gm(\ratls)^{|E|}$ which stabilizes $\Im(\delta)$.
\end{lemma}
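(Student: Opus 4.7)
The plan is to unpack the exact meaning of $\Im(\delta)$ and then exploit the block–cut tree structure of $\Gamma$ for the ``if'' direction and the cycle characterization of coboundaries for the ``only if'' direction.

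Recall that $\delta\colon H^0(V)/H^0(\Gamma) \to H^1(\Gamma,V)$ sends a vertex function $f$ (modulo constants) to the cochain whose value on an oriented edge $e$ from $v_1$ to $v_2$ is $f(v_2)-f(v_1)$. In particular, an element $(q_e)\in\Gm(\ratls)^{|E|}$ stabilizes $\Im(\delta)$ iff for every vertex function $f$ there exists $g$ with $g(v_2)-g(v_1)=q_e(f(v_2)-f(v_1))$ for every edge $e\colon v_1\to v_2$.

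For the ``if'' direction, suppose $q_e$ is constant on blocks, with value $q_\alpha$ on block $B_\alpha$. Given $f$, I would build $g$ recursively on the block–cut tree of $\Gamma$: pick a root block $B_0$ and set $g=q_0 f$ on its vertices; given a child block $B_\alpha$ meeting its parent at the cut vertex $v$, extend by $g(u)=q_\alpha(f(u)-f(v))+g(v)$ for $u\in V(B_\alpha)$. Since every cut vertex belongs to exactly one predecessor block in this tree, the definition is consistent, and by construction $\delta g$ restricted to each block $B_\alpha$ equals $q_\alpha\cdot\delta f$, as required.

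For the ``only if'' direction, I would use the standard fact that in a connected graph $\Im(\delta)\subset C^1(\Gamma,V;\ratls)$ coincides with the kernel of the cycle-evaluation map $c\mapsto\bigl(\gamma\mapsto\sum_{e\in\gamma}\sigma_e c(e)\bigr)$. Hence $(q_e)$ stabilizes $\Im(\delta)$ iff for every $c\in\Im(\delta)$ and every simple cycle $\gamma$ we have $\sum_{e\in\gamma}\sigma_e q_e c(e)=0$. Testing this on $c=\delta(\mathbf{1}_{v_0})$, where $v_0$ is a vertex of $\gamma$ incident to the two cycle edges $e',e''$ with opposite induced orientations, produces the single relation $q_{e'}=q_{e''}$. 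Thus consecutive edges of any simple cycle carry the same $q$-value, and by transitivity along the cycle all edges of any simple cycle share a common value.

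Finally, I would translate this into a statement about blocks: two edges lie in a common $2$-connected block iff they are joined by a simple cycle, and bridges form singleton blocks for which constancy is vacuous. This gives exactly $q_e=q_f$ whenever $e,f$ lie in the same block, so $(q_e)\in B$. The main (and essentially the only) subtle step is the inductive patching across the block–cut tree in the ``if'' direction; everything else reduces to standard cellular (co)homology of $\Gamma$ relative to $V$.
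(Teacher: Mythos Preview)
Your proof is correct and follows essentially the same strategy as the paper's: both directions rest on the identification $\Im(\delta)=\Ker(j)$ and on evaluating $\bq\cdot\delta(\mathbf{1}_v)$ against well-chosen cycles. The only difference worth noting is in the ``only if'' step: the paper invokes Menger's theorem to find, for two edges $e_1,e_2$ in the same block incident to a common vertex $v$, a cycle through $e_1,e_2$ meeting no other edge at $v$ (giving $q_{e_1}=q_{e_2}$ directly), whereas you obtain equality for consecutive edges on an arbitrary simple cycle and then invoke the characterization of blocks via common cycles---an equally standard and slightly more elementary input. For the ``if'' direction the paper uses the one-line observation that every cycle decomposes into block cycles, while you construct the potential explicitly on the block--cut tree; both are routine.
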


\begin{proof}
  That $B$ stabilizes $\Ker(j)$ is clear, since any circuit $\gamma$ can be written as a sum of
  circuits, each contained in only one block.
  
  For the converse, suppose $\bq = (q_e)_{e\in E}$ stabilizes $\Im(\delta)$.  Given a vertex $v$, we have
  $\delta v = \sum_{e\in E(v)} [e^*]$, where the sum is over the edges incident to $v$, with the
  outward-pointing orientation.  As $\bq$ stabilizes $\Im(\delta)$, the cocycle
  $$\bq \cdot \delta v= \sum_{e\in E(V)} q_e [e^*]$$
  lies in the kernel of $j$.  Suppose that $e_1$ and $e_2$ are two
  edges incident to $v$ which lie in the same block $B$.  As $B$ is $2$-connected, by Menger's
  theorem (see \cite{bondymurty}) there is a
  circuit $\gamma\subset B$ which passes through $e_1$ and $e_2$ but through no other edge incident
  to $v$.    As $(\bq \cdot \delta v)(\gamma) = 0$, we obtain $q_{e_1} = q_{e_2}$.  Applying this
  repeatedly, it follows that
  $q_{e_1} = q_{e_2}$ whenever $e_1$ and $e_2$ lie in the same block, so $\bq \in B$.
\end{proof}

\begin{proof}[Proof of Theorem~\ref{thm:moduli_bound}]
  Consider a divisor $D = N(z_1 - z_2) \in K$.  By Corollary~\ref{cor:wbounded}, we have $w_e(D) \in
  [-N, N]\cap \zed$ for each edge $e$.  In particular, given $N$ there are only finitely many
  possibilities for the splitting $w$, so we may take $w$ to be known.

  Now any possible tuple of moduli $\bm$ satisfies $\Im (\phi_\bm\circ w) = \Ker(\delta)$ by
  Proposition~\ref{prop:homology_splitting}.  By Lemma~\ref{lem:stabilizer}, if $\bm_1$ and $\bm_2$
  are two such tuples, we must have $\phi_{\bm_1} \circ \phi_{\bm_2}^{-1} \in B$, which means
  exactly that in each block the $m_i$ are determined up to scaling.

  To make this effective,  consider a tuple $(m_1, \ldots, m_n)$ of moduli from a single block $B$
  of $\Gamma$.   Given $D$ as above together with a circuit $\gamma\subset B$, we obtain a linear
  function $f$ in the $m_i$ expressing the condition that $\phi_\bm(w(D)) (\gamma) = 0$.  Each such
  $f$ has integral coefficients of absolute value at most $N$, so $h(f) \leq \log(N)$.  Among such
  linear equations, we may choose $f_1, \ldots, f_{n-1}$ which determine $(m_i)$ up to scale.  From
  Cramer's rule, one may deduce that
  \begin{equation*}
    h(m_1, \ldots, m_n) \leq (n-1) \log N + \log(n-1)!\qedhere
  \end{equation*}
\end{proof}

%%% Local Variables: 
%%% mode: latex
%%% TeX-master: "g3fin_master"
%%% End: 

\section{A general finiteness criterion} \label{sec:genfinthm}

We say that two stable forms are \emph{pantsless-equivalent} if the underlying stable curves are
isomorphic by a map which identifies the one-forms on any irreducible component which is not a pair
of pants.  A collection of \Teichmuller curves in a stratum ${\cal S}$ is \emph{pantsless-finite} if among
their cusps there are only finitely many pantsless-equivalence classes of stable forms.

In this section, we prove: 

\begin{theorem}
  \label{thm:pantsless_finiteness}
  In a fixed stratum ${\cal S}$, any pantsless-finite collection of algebraically primitive
\Teichmuller curves is finite.
\end{theorem}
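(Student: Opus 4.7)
The plan is to combine the three ingredients already developed -- a uniform torsion bound (derived from pantsless-finiteness), Theorem~\ref{thm:moduli_bound} (controlling moduli within a single block), and Theorem~\ref{thm:orthogonal_blocks} (coordinating scalings between blocks) -- and then close via the Smillie-Weiss no-small-triangles criterion from \cite{smillieweiss10}.

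First I would extract a uniform torsion bound. By Theorem~\ref{thm:torsion_condition}, for each $(X,\omega)$ in the collection and each pair of zeros $p,q$ of $\omega$, the divisor $p-q$ is torsion in $\Jac(X)$. Degenerating along a cusp, the torsion relation is visible on the stable limit: the relevant Abel map factors through the non-pants irreducible components of the nodal curve, as pants components carry a unique (normalized) meromorphic form determined by their three punctures and contribute trivially to the relation. Pantsless-finiteness thus confines the torsion data to a finite set, yielding an integer $N$ such that every surface in the collection has torsion dividing $N$.

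Next, fix a periodic direction on any $(X,\omega)$ in the collection, with dual graph $\Gamma$ and its canonical decomposition into blocks. Theorem~\ref{thm:moduli_bound} bounds, in each block $B$, the projectivized moduli tuple $(m_e)_{e\in B}$ to one of finitely many possibilities. What remains is the relative scaling $\lambda_B$ between different blocks. Two blocks $B,B'$ meet at a cut vertex $v$ of $\Gamma$, corresponding to an irreducible component $Y_v$ of the stable limit. A short combinatorial check shows $Y_v$ cannot be a pair of pants: its three punctures would then be distributed among the two blocks and possible zeros of the stable form in a way forcing $v$ to lie in a single larger $2$-edge-connected block, contradicting maximality. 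Hence $Y_v$ is non-pants, and by pantsless-finiteness its cross-ratios (and therefore the ratios $w_e/w_{e'}$ of circumferences of the attached cylinders $e\in B$, $e'\in B'$) lie in a finite, Galois-bounded set of algebraic numbers $\lambda$. Theorem~\ref{thm:orthogonal_blocks} asserts that the width vectors of cylinders in different blocks span pairwise orthogonal subspaces $B_i\subset\reals^g$, with the norm in each $B_i$ inversely proportional to the corresponding modulus; applying this to $e$ and $e'$, a bound on $\lambda$ transfers to a bound on $\lambda_B/\lambda_{B'}$.

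Iterating around $\Gamma$ pins down every $\lambda_B$ in terms of a single overall scale, so for each periodic direction on each $(X,\omega)$ in the collection, only finitely many projectivized moduli tuples $[m_{e_1}:\dots:m_{e_k}]$ occur. Combined with the finite list of stable limits provided by pantsless-finiteness, this fixes the flat geometry at every cusp of every \Teichmuller curve in the collection up to finitely many possibilities and an overall scale. The Smillie-Weiss no-small-triangles condition then gives a uniform lower bound on the injectivity radius of the $(X,\omega)$, confining the collection to a compact subset of the stratum; finiteness of the \Teichmuller curves follows.

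\textbf{Main obstacle.} The most delicate point is the block-joining step: converting a single width-ratio $\lambda$ living on a shared non-pants component into a bound on the relative scaling of \emph{all} moduli in the two adjacent blocks. This uses Theorem~\ref{thm:orthogonal_blocks} in an essential way, since a priori widths in different blocks are unrelated, and the inverse proportionality ``norm of widths $\sim$ $1/\text{modulus}$'' is a nontrivial consequence of algebraic primitivity rather than a general feature of flat surfaces. A secondary subtlety is verifying cleanly that the cut vertex between two distinct blocks can never be realized by a pants component -- a purely combinatorial statement about block decompositions of graphs with the valence constraints imposed by pants -- which is what allows pantsless-finiteness to bite at every interface between blocks.
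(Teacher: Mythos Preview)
Your plan is the paper's proof: uniform torsion bound from pantsless-finiteness (Proposition~\ref{prop:pf_torsionbound}), moduli within each block via Theorem~\ref{thm:moduli_bound}, block-to-block comparison of moduli via the orthogonality and norm--modulus relation (Propositions~\ref{prop:blocks_orthogonal}--\ref{prop:real_ratios_bounded}), then Smillie--Weiss (Theorem~\ref{thm:NST}).

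Three places where your write-up is looser than the paper and worth tightening. For the torsion bound, the paper does not ``factor the Abel map through non-pants components''; it observes that two zeros joined by a saddle connection land on the \emph{same} irreducible component of the stable limit, which therefore carries $\ge 2$ zeros and is not a pair of pants, and then reads the torsion order off that single component via Lemma~\ref{le:conseqoftorsion}. For the cut-vertex claim, the clean reason is that the dual graph of a cusp of a \Teichmuller curve has no separating edges (every node carries a nonzero residue), so each of the $\ge 2$ blocks meeting at a cut vertex contributes $\ge 2$ incident edges, forcing degree $\ge 4$; your ``forcing $v$ into a larger block'' phrasing obscures this. Finally, between the moduli-ratio bound and Smillie--Weiss the paper inserts a separate width-ratio bound (Proposition~\ref{prop:widths_bounded}), obtained by chaining adjacent-cylinder comparisons across the surface using both the moduli bound and pantsless-finiteness; you fold this into ``fixes the flat geometry,'' but it is its own step. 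Note also that Smillie--Weiss does not yield compactness or an injectivity-radius bound: it says directly that $\NST(\alpha)$ is a finite union of \Teichmuller curves, which is the conclusion you want once triangle areas are bounded below.
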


Theorem~\ref{thm:pantsless_finiteness} is an easy consequence of the methods in \cite{BaMo12}
if the \Teichmuller curves in the collection are generated by Veech surfaces with
just one zero (and thus have only irreducible degenerations).  All the results on
torsion orders are of course void in this case -- and the final proof works
without them.

\paragraph{The cusps control the torsion orders.}

We first observe that a pantsless-finite collection of \APTCs has uniform torsion order bounds,
allowing us to apply Theorem~\ref{thm:moduli_bound} to control the moduli.

\begin{prop} \label{prop:pf_torsionbound}
  For any pantsless-finite collection of algebraically primitive \Teichmuller curves, there is a uniform bound on the
  torsion orders of $D_1- D_2$ for any two zero-sections $D_i$.
\end{prop}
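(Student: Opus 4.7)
The main idea is that the torsion order of $D_1-D_2$ in $\Jac(X_t)$ is entirely determined by the limiting stable form at any cusp, and that pantsless-equivalence pins down this stable form (and the specialization of the zero-sections) up to isomorphism. Finitely many pantsless-equivalence classes therefore give only finitely many possible torsion orders.

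First I will upgrade pantsless-equivalence to plain isomorphism of pointed stable forms. Every thrice-punctured sphere is canonically $\IP^1\ssm\{0,1,\infty\}$, and the restriction of a stable form to a pants component is a meromorphic differential on $\IP^1$ with three simple poles whose residues are forced by the forms on the adjacent non-pants components (and by the residue theorem). Hence a pantsless-equivalence already matches the forms on pants components, and pantsless-finiteness yields only finitely many isomorphism classes of pointed stable forms $(X_0,\omega_0,p_1,p_2)$ among the cusps of the collection, where $p_i$ is the specialization of $D_i$ (which lies in the smooth locus by stability).

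Next I will reduce the generic torsion order to one at the cusp via the N\'eron model. Fix an \APTC\ in the collection, a cusp $t_0\in\ol{\cC}$, and write $N$ for the order of the Abel-Jacobi section $\phi=[D_1-D_2]\colon \cC\to\Jac(\cX/\cC)$. Because $f\colon \bcX\to \ol{\cC}$ has semistable reduction, $\phi$ extends uniquely to a section $\bar\phi\colon \ol{\cC}\to \mathcal{J}$ of the N\'eron model, and $\mathcal{J}_{t_0}$ is (the identity component of) the generalized Jacobian of $X_0$, with $\bar\phi(t_0)$ represented by $p_1-p_2$. In characteristic zero, the $N$-torsion subscheme $\mathcal{J}[N]\to\ol{\cC}$ is quasi-finite and \'etale, and $\mathcal{J}[N]\subset\mathcal{J}$ is closed; since $N\phi=0$ generically, the whole section $\bar\phi$ factors through $\mathcal{J}[N]$. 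A connectedness argument (applied componentwise in a neighborhood of $t_0$) then shows that the torsion order is locally constant along $\bar\phi$, so the specialization $\bar\phi(t_0)$ in the generalized Jacobian of $X_0$ has order exactly $N$.

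Combining the two steps, the torsion order $N$ of $D_1-D_2$ depends only on the isomorphism class of the pointed stable form $(X_0,\omega_0,p_1,p_2)$. By the first step there are only finitely many such classes over the entire pantsless-finite collection, so only finitely many values of $N$ arise, whence the desired uniform bound. The main obstacle I expect is the careful bookkeeping in the second step: one has to invoke the existence and properties of the N\'eron model for semistable families of curves, identify its special fiber with the generalized Jacobian, and check that the \'etaleness of $\mathcal{J}[N]$ promotes the a priori divisibility ``order at $t_0$ divides $N$'' to equality; the possible nontriviality of the component group of $\mathcal{J}_{t_0}$ has to be dealt with when setting up the connectedness argument, but it does not interfere because $\bar\phi(\ol{\cC})$ is connected and lands in a single component of $\mathcal{J}[N]$, on which the order function is constant.
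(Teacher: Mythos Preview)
Your first step has a genuine gap. You claim that the form on each pants component is forced by the residues on adjacent non-pants components, but a pants component need not be adjacent only to non-pants: two pants can share a node, and more generally the subgraph of the dual graph spanned by the pants vertices can contain cycles. Whenever it does, the residues along the internal edges are determined only up to an element of $H_1$ of that subgraph, so $\omega_0|_{P}$ is not pinned down by pantsless-equivalence. This matters directly for torsion orders, because the position of a zero $p_i$ landing on a pants component $P$ is the unique zero of $\omega_0|_P$, and that zero moves with the residues. Hence $(X_0,p_1,p_2)$ is not determined, and you cannot deduce finitely many torsion orders from finitely many pantsless-equivalence classes. A concrete obstruction already in genus three: if the dual graph is the complete graph $K_4$, every component is a pair of pants, pantsless-equivalence imposes no constraint whatsoever on the form, and the four zeros can sit anywhere.

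The paper's proof sidesteps this entirely by not analyzing an arbitrary cusp. Instead it \emph{chooses} one: take a saddle connection joining the two zeros on a smooth fiber and flow to the boundary in that direction. At the resulting cusp both zeros specialize to the \emph{same} irreducible component $Y$, which therefore carries at least two zeros and cannot be a pair of pants. Pantsless-finiteness then gives only finitely many possibilities for $(Y,\eta)$ together with its marked zeros and nodes, and Lemma~\ref{le:conseqoftorsion} (the admissible-cover / N\'eron-model argument, applied just to $Y$) shows that the torsion order of $D_1-D_2$ is determined by this data alone. Zeros not directly joined by a saddle connection are handled by a short chain of saddle connections. Your N\'eron-model reasoning in step~2 is essentially the converse part of that lemma, but the paper only needs it on the single well-chosen component $Y$, never on the whole stable curve.
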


Half of the argument is isolated in the following lemma for later use. The converse
statement implicitly appears also in \cite{Mo08}
at the end of Section~2. We refer to \cite{harrismorrison} for the notion of admissible coverings.
\par
\begin{lemma} \label{le:conseqoftorsion}
Let $f\colon \ol{\cX} \to \ol{C}$ be a family of curves, smooth over $C \subset \ol{C}$. Let
$s_1, s_2\colon \ol{C} \to \ol{\cX}$ be two sections with image $D_1, D_2$, whose difference $D_1 - D_2$ is 
$N$-torsion in the family of Jacobians $\Jac(\cX/C)$. Then for each fiber $X$ of $f$ there 
exists an admissible cover $h$ from $X$ to a tree of projective lines of degree $N$.  
\par
Let $X_\infty$ be a singular fiber of $f$ and suppose that
both sections pass through the same component $Y$ of $X_\infty$. Then there exists a partition 
$S = \{S_j, j \in J\}$ of the nodes $p_1,\ldots, p_n$ of $Y$ and $h\colon  Y \to \PP^1$ of degree $N$ 
with $h^{-1}(\{0\}) = D_1 \cap X_\infty$, with $h^{-1}(\{\infty\}) = D_2 \cap X_\infty$ and
such that $h$ is constant on each part $S_j$ of the partition.
\par
Conversely, suppose that $s_1,s_2$ are two sections, whose difference is torsion and suppose that
there is a  component $Y$ and a map $h$ of degree $N$, as above. Then $s_1-s_2$ is $N$-torsion.   
\end{lemma}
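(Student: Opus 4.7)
For the first assertion, the plan is to combine Abel's theorem on smooth fibers with the properness of the Hurwitz space of admissible covers. Over $C$, the hypothesis that $D_1 - D_2$ is $N$-torsion in $\Jac(\cX/C)$ means that $\cO(N(D_1 - D_2))|_{\cX|_C}$ is trivial, so there exists a rational function $h_C$ on $\cX|_C$ with divisor $N(D_1 - D_2)$, equivalently a degree-$N$ morphism $h_C\colon \cX|_C \to \PP^1$ with $h_C^{-1}(0) = N D_1|_C$ and $h_C^{-1}(\infty) = N D_2|_C$ as Cartier divisors. By properness of the moduli space of admissible covers \cite{harrismorrison}, $h_C$ extends across $\overline{C}$ to an admissible cover $\overline{h}\colon \overline{\cX} \to \overline{T}$, where $\overline{T} \to \overline{C}$ is a family whose fibers are trees of projective lines, and restriction to any fiber gives the desired admissible cover.

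For the second assertion, suppose both sections specialize to a single component $Y$ of $X_\infty$, and let $T_0$ denote the component of $\overline{T}|_{X_\infty}$ containing the specializations of $0$ and $\infty$. Since the ramification of $\overline{h}$ over these two points has total degree $N$ and is concentrated at $D_1\cap X_\infty, D_2\cap X_\infty \in Y$, the restriction $h := \overline{h}|_Y\colon Y \to T_0 \cong \PP^1$ has degree $N$ with the required preimage data, and no other component of $X_\infty$ maps onto $T_0$. Admissibility forces each node $p$ of $Y$ to map to a node of $T_0$, namely the attachment point of the unique branch of $\overline{T}|_{X_\infty}$ carrying the component of $X_\infty$ adjacent to $Y$ through $p$. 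Two nodes of $Y$ attached to the same adjacent component therefore map to the same node of $T_0$; grouping nodes of $Y$ by adjacent component yields the desired partition $\{S_j\}$ on which $h$ is constant.

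For the converse direction, given the map $h$ and the partition, the plan is to show that $L := \cO_{\overline{\cX}}(N(D_1 - D_2))|_{X_\infty}$ is trivial as a line bundle on the stable curve $X_\infty$, and then combine this with the torsion hypothesis to propagate triviality to every fiber. The function $h$ trivializes $L|_Y$ directly. On each other component $Y'$ we have $D_i \cap Y' = \emptyset$, so $L|_{Y'} \cong \cO_{Y'}$; I would extend the trivialization by the constant value of $h$ at the nodes $Y\cap Y'$ (well-defined by the partition hypothesis) and propagate inductively along the dual graph. Since $s_1 - s_2$ is already assumed to be torsion, $L$ defines a torsion section of the N\'eron model of $\Jac(\cX/C)$ over $\overline{C}$; triviality on the single fiber $X_\infty$ then forces triviality on all fibers by the \'etale property of the finite-order torsion subscheme, and thus $s_1 - s_2$ is $N$-torsion.

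The main technical hurdle lies in the converse when the dual graph $\Gamma$ of $X_\infty$ contains cycles passing outside $Y$: consistent propagation of constants around each such cycle amounts to the vanishing of a class in $H^1(\Gamma,\cx^*)$, the toric part of the generalized Jacobian of $X_\infty$, and this does not follow from the partition condition on $Y$ alone. The hypothesis that $s_1 - s_2$ is already torsion is what forces the offending cocycle into a finite-order subgroup, which a specialization argument (using the \'etale property of torsion) then pins to zero. Apart from this point, the argument is bookkeeping with Cartier divisors on families of stable curves once the properness of the Hurwitz space is invoked.
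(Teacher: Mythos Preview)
Your forward direction and the second assertion are correct and match the paper's approach: Abel's theorem plus properness of the Hurwitz scheme, then reading off the partition from the tree structure of the target. One small difference: you partition the nodes of $Y$ by \emph{adjacent component} of $X_\infty$, while the paper partitions them by \emph{connected component of the dual graph minus the vertex of $Y$}. Both produce a partition on which $h$ is constant, so both suffice for the second assertion, but the paper's coarser partition is the one that makes the converse clean.

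For the converse, your worry about $H^1(\Gamma,\cx^*)$ is an artifact of that finer partition. With the connected-component partition, there is no obstruction at all: for each connected component $C$ of $\Gamma\setminus\{v_Y\}$ the hypothesis gives a single value $c_C\in\cx^*$ with $h(p)=c_C$ for every node $p$ of $Y$ attaching to $C$. Then the function equal to $h$ on $Y$ and to the constant $c_C$ on every component in $C$ is a well-defined global rational function on $X_\infty$ with divisor $N(D_1-D_2)$; it glues consistently at every node, including around cycles lying entirely inside some $C$ (where both sides take the same constant $c_C$) and around cycles through $Y$ (where the partition condition forces equal values at the two $Y$-nodes). So $\cO_{X_\infty}(N(D_1-D_2))$ is trivial outright, with no cocycle to kill. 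Your proposed ``specialization argument'' to pin a torsion cocycle to zero is not spelled out and, as far as I can see, would not work: a nonzero torsion element of $(\cx^*)^{b_1(\Gamma)}$ is not forced to vanish by any \'etaleness statement.

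Once the bundle is trivial on $X_\infty$, your last step is the same as the paper's: $s_1-s_2$ extends, via the N\'eron mapping property, to a section of the identity component $\Pic^0(\overline\cX/U)$ of the N\'eron model (this is exactly where ``same component $Y$'' is used), and since reduction is injective on prime-to-$p$ torsion in characteristic $0$, the order on the generic fibre equals the order on the special fibre, which divides $N$.
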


\begin{proof} For smooth fibers, an admissible cover is just a morphism $h\colon  X \to \PP^1$,
and the existence of such a morphism follows from the definition of the Jacobian and Abel's theorem.
The space of admissible covers is compact, so such an admissible cover exists also for an appropriate
semistable model of the singular fiber. 
\par 
Remove the vertex corresponding to $Y$ from the dual graph of the semistable curve $X_\infty$. 
The remaining graph decomposes into connected components and we partition the nodes of
$Y$ according the the component they are adjacent to. Let $z_1$ and $z_2$ be two nodes
in the same partition $S_j$. Since the image of $h$ is a tree $T$, the components
of $X_\infty$ corresponding to $S_j$ map to a branch of $T \setminus h(Y)$. 
This branch intersects $h(Y)$ in a single point $b$ and $h(z_i) = b$ for all $z_i \in S_j$.
\par
For the converse statement we claim that in a neighborhood $U$ of a singular fiber with $D_1$ 
and $D_2$ limiting to the same component, $D_1 - D_2$ defines a section of the relative
Picard scheme $\Pic^0(\ol{\cX}/U)$. This scheme parameterizes line bundles on $f^{-1}(\cX)$
that are of total degree zero, when restricted to any fiber, up to bundles pulled back from 
the base $0$. In fact, any section can be extended to the \Neron model of the family of
Jacobians, by the universal property of the \Neron model (\cite[Theorem~9.5.4~b)]{BLR}). 
Since $s_1$ and $s_2$ pass through
the same component $Y$ of $X_\infty$, their difference maps to the connected component
of the identity of the \Neron model, which is $\Pic^0(\ol{\cX}/U)$ (\cite[Definition~1.2.1]{BLR}).
\par
The existence of the map $h$ shows that the bundle $\cO_{X_\infty}(N(D_1-D_2)|_{X_\infty})$ is isomorphic 
to the trivial bundle, i.e.\ $(D_1-D_2)|_{X_\infty}$ is of order $N$ in $\Pic^0(\ol{\cX}/U)$.
Since the torsion order is constant in families, this implies the claim.
\end{proof}

\begin{proof}[Proof of Proposition~\ref{prop:pf_torsionbound}]
  Suppose $s_1$ and $s_2$ are zero sections of the universal curve of some \Teichmuller curve in a
  pantsless-finite collection of \Teichmuller curves.  If in some smooth fiber there is a saddle connection
  joining $s_1$ to $s_2$, then rotating the form so that this direction is horizontal and following
  the \Teichmuller geodesic flow to the boundary, we obtain a singular fiber where $s_1$ and $s_2$
  intersect the same irreducible component $(Y, \eta)$.  Since $\eta$ has more than one zero, $Y$ is
  not pants, so by assumption, there are only finitely many possibilities for $(Y, \eta)$.  By
  Lemma~\ref{le:conseqoftorsion}, there are only finitely many possibilities for the order of $s_1 -
  s_2$, since it is determined by $Y$.  In particular, $N(s_1 - s_2) =0$ for some $N$ depending only on $\mathcal{S}$.

  It is possible that two zeros $z, z'$ of a translation surface are not joined by a saddle
  connection; however, taking any curve joining $z$ to $z$, the shortest curve in its homotopy
  class is a union of saddle connections (see \cite{flp}).  Taking $z=z_1, z_2, \ldots, z_n = z'$,
  to be the successive endpoints of these saddle connections, we obtain $z - z' = \sum (z_i -
  z_{i+1})$ with $N(z_i-z_{i+1})= 0 $ for each $i$, so $N(z-z')=0$.
\end{proof}

\paragraph{Geometry of cylinder widths.}

Given an algebraically primitive Veech surface $(X, \omega)$ with periodic horizontal direction, the
widths $(r_1,\ldots, r_n)$ and heights $(s_1, \ldots, s_n)$ of its cylinders may be regarded as
vectors in $\reals^g$ via the $g$ embeddings of the trace field $F$.  We now show that the geometry
of this set of vectors nearly determines -- and is determined by -- the tuple of moduli $(m_i)$ of
the cylinders.  This will give us more control over the moduli then provided by
Theorem~\ref{thm:moduli_bound}.

Applying the \Teichmuller flow to $(X, \omega)$ scales the widths $(r_i)$ by a constant multiple, so
generally they should be regarded as a point of $\proj^{g-1}(F)$; however, there is
an almost canonical choice for their scale.

We say that $(X, \omega)$ is \emph{normalized} if its cylinder heights and widths satisfy the
following properties:
\begin{itemize}
\item each $r_i$ and $s_j$ either belong to $F$, or for some real quadratic extension $K =
  F(\alpha)$, they belong to $W = \Ker \Tr_{K/F}$ (using in either case an implicit real embedding);
\item $s_i/r_i\in\ratls^+$ for each $i$; and
\item for every pair of oriented closed curves $\alpha, \beta$ on $X$, with $\alpha$ horizontal, we
  have
  \begin{equation}
    \label{eq:13}
    \alpha \cdot \beta = \left\langle \int_\alpha \omega , \Im \int_\beta \omega \right\rangle,
  \end{equation}
  where $\alpha\cdot\beta$ is the intersection pairing on homology, and
  $$\langle x, y\rangle = \Tr_{F/\ratls}(xy).$$
  (Note that $xy\in F$ whenever $x,y\in W$.)
\end{itemize}

\begin{prop}
  \label{prop:normalization_exists}
  There is a diagonal matrix $D\in \GLtwoRplus$ such that $D \cdot (X, \omega)$ is normalized.  Such
  a $D$ is unique up to multiplication by $\left(
    \begin{smallmatrix}
      c & 0\\
      0 & c^{-1}
    \end{smallmatrix}
    \right)$ for any $c$ with $c^2\in \ratls$. 
\end{prop}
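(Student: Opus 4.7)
The approach is to leverage the real multiplication structure on $\Jac(X)$ coming from algebraic primitivity (cf.\ \cite{moeller06}). This gives $H_1(X,\ratls)$ a canonical structure of a rank-two $F$-module, and the holonomy map $\phi\colon H_1(X,\reals)\to\cx$, $\phi(\alpha)=\int_\alpha\omega$, is $F$-linear with $\cx$ viewed as an $F$-module via the distinguished real embedding $\sigma_1$. This is because $\omega$ is the $\sigma_1$-eigenform for the real multiplication action on $H^{1,0}(X)$: for $T\in\mathcal{O}$ we have $\int_{T\alpha}\omega = \sigma_1(T)\int_\alpha\omega$. Since $H_1(X,\ratls)$ has rank two over $F$, $\phi$ descends to an $\reals$-linear isomorphism $\tilde\phi\colon H_1(X,\ratls)\otimes_{F,\sigma_1}\reals\to\cx$.

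For existence, I would select an $F$-basis $\{e_1,e_2\}$ of $H_1(X,\ratls)$ adapted to the integer symplectic structure: expressing the intersection form as $\Tr_{F/\ratls}$ of an $F$-valued skew form $B$ on $H_1(X,\ratls)\cong F^2$ (this expression exists because real multiplication is self-adjoint with respect to the polarization), one can take $\{e_1,e_2\}$ to be a symplectic basis for $B$. The map $\tilde\phi$ is then determined by the pair $(\phi(e_1),\phi(e_2))\in\cx^2$, and the Riemann bilinear relations constrain this pair strongly. A direct computation produces a diagonal $D=\text{diag}(a,b)\in\GLtwoRplus$ bringing $(\phi(e_1),\phi(e_2))$ into the standard form $(1,i)$, or, in the quadratic case, into $(1,\alpha)$ with $\alpha\in W$. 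With this normalization condition (1) holds since widths and heights of any cylinder are the coordinates of the corresponding core curve in the normalized basis, hence lie in $F$ or $W$. Condition (2) follows from the Veech group structure: any periodic direction yields a parabolic whose shear lies in the trace field $F$, and the integer Dehn multitwist decomposition forces cylinder moduli to be rational. Condition (3) is the identity $\alpha\cdot\beta=\Tr_{F/\ratls}(u_1(\alpha)u_2(\beta)-u_2(\alpha)u_1(\beta))$ read in coordinates; for $\alpha$ horizontal one has $u_2(\alpha)=0$, giving exactly the stated trace pairing.

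For uniqueness, given two normalizations $D_1,D_2$, the quotient $D_1D_2^{-1}=\text{diag}(c_1,c_2)$ must preserve all three conditions. Condition (3) applied to a horizontal curve $\alpha$ and a vertical dual $\beta$ forces $c_1c_2=1$, as the left-hand side is invariant while the right scales by $c_1c_2$. Condition (1) for the widths, together with the $F$-rationality of at least one nonzero width, forces $c_1\in F^\times$ (or more precisely $c_1$ is multiplication by an $F$-element on the relevant algebraic structure). Condition (2) on rational moduli then gives $c_1/c_2=c_1^2\in\ratls^+$, yielding the stated ambiguity $D_1D_2^{-1}=\mathrm{diag}(c,c^{-1})$ with $c^2\in\ratls$.

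The main technical obstacle is the first step of existence: showing that a purely diagonal matrix in $\GLtwoRplus$, rather than a general linear transformation, suffices to bring $\tilde\phi$ into standard form. This hinges on the eigenform property, which makes the Hodge decomposition interact with the $F$-module structure in a very constrained way; the quadratic case (where $W$ appears instead of $F$) is precisely when the Hodge splitting of $H^1(X,\reals)$ is defined not over $F$ itself but only over a real quadratic extension $K=F(\alpha)$.
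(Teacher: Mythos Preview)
Your overall strategy---exploit the $F$-module structure on $H_1(X,\ratls)$ and adjust by diagonal matrices---is the right one, and your uniqueness argument is essentially what the paper does. But the existence argument has a genuine gap, which you yourself flag at the end.

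The problem is the claim that a diagonal matrix brings $(\phi(e_1),\phi(e_2))$ to ``standard form $(1,i)$''. A diagonal matrix $\mathrm{diag}(a,b)$ acts on $\cx=\reals^2$ by $x+iy\mapsto ax+iby$; it cannot kill the real part of $\phi(e_2)$ unless $\Re\phi(e_2)=0$ already. Even choosing $e_1$ in the span $M$ of the horizontal cycles (so that $\phi(e_1)\in\reals$), there is no reason a complementary $B$-symplectic $e_2$ should have purely imaginary period. Fortunately the normalization conditions do not actually require this: they concern only the widths $r_i=\omega(\gamma_i)$ for $\gamma_i\in M$ and the heights $s_j=\Im\omega(\delta_j)$, and the latter depend only on the class of $\delta_j$ modulo $M$.

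The paper's argument exploits exactly this. Rather than choosing a basis, one works with the $F$-line $M\subset H_1(X,\ratls)$ spanned by horizontal cylinders and the quotient $F$-line $N=H_1(X,\ratls)/M$. Both $\omega|_M\colon M\to\reals$ and $\Im\omega\colon N\to\reals$ are $F$-linear maps from one-dimensional $F$-vector spaces, so a single diagonal scaling arranges $r_i,s_j\in F$. Condition~(3) is handled as a separate step: the intersection pairing and the trace pairing in \eqref{eq:13} are both $F$-compatible bilinear pairings $M\times N\to\ratls$, hence differ by an element of $F^\times$, which is absorbed by a further horizontal scaling by~$\mu$. Only at the last step---applying $\mathrm{diag}(\alpha,\alpha^{-1})$ with $\alpha=\sqrt{s_1/r_1}$ to force $s_i/r_i\in\ratls^+$---does the quadratic extension appear, and only when $s_1/r_1$ has no square root in $F$. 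So the extension $W$ arises from enforcing condition~(2), not from any failure of a Hodge splitting to descend to $F$ as you suggest.
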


\begin{proof}
  Since $(X, \omega)$ is an eigenform for real multiplication, $H_1(X; \ratls)$ is equipped with the
  structure of an $F$ vector space.  We let $M\subset H_1(X; \ratls)$ be the subspace spanned by the
  horizontal cylinders of $(X, \omega)$, and $N = H_1(X; \ratls) / M$.  As real multiplication
  preserves the span of short curves, $M$ and $N$ are  $F$ vector spaces (see for example
  \cite[Proposition~5.5]{BaMo12}).  As $\omega\colon M \to \reals$ and $\Im \omega\colon N \to
  \reals$ are $F$-linear and spanned by the $r_i$ and $s_j$ respectively, we may suppose they belong
  to $F$ after applying an appropriate diagonal element.

  We now claim we may choose uniquely a diagonal matrix $\left(
  \begin{smallmatrix}
    \mu & 0 \\
    0 & 1
  \end{smallmatrix}
  \right)$ so that \eqref{eq:13} holds.  To see this, note that both pairings in \eqref{eq:13}
  are bilinear pairings $M \times N \to \ratls$ compatible with the $F$ vector space structure in
  the sense that $(\lambda \cdot x, y) = (x, \lambda y)$.  The space of such pairings is a
  one-dimensional vector space over $F$, so a unique such $\mu$ exists.

  Making this normalization, we still have the freedom to apply a diagonal matrix $\left(
  \begin{smallmatrix}
    \alpha & 0 \\
    0 & \alpha^{-1}
  \end{smallmatrix}
  \right)$.  Even if $\alpha\not\in F$, the trace pairing in \eqref{eq:13} is still defined and
  independent of $\alpha$ as
  $(\alpha x) (\alpha^{-1}y) = xy\in F$.  We take $\alpha = \sqrt{s_1/r_1}$.  Note that as $(X,
  \omega)$ is Veech, the moduli of its cylinders have rational ratios, so
  \begin{equation*}
    \frac{\alpha^{-1}s_i}{\alpha r_i} = \frac{r_1 s_i}{r_i s_1} =
    \frac{\modulus(C_i)}{\modulus(C_1)} \in \ratls^+,
  \end{equation*}
  which is the second required property of our normalization.  Finally note that the $\alpha r_i$
  and $\alpha^{-1} s_i$ either lie in $F$ or $W\subset K=F(\alpha)$ depending on whether or not $s_1/r_1$
  has a square root in $F$.

  Uniqueness follows, since applying $\left(
  \begin{smallmatrix}
    c & 0 \\
    0 & c^{-1}
  \end{smallmatrix}
  \right)$ to $(X, \omega)$ preserves the rationality condition if and only if $c^2\in \ratls$.  
\end{proof}

\begin{rem}
  Proposition~\ref{prop:normalization_exists} is a generalization of some basic observations from
  \cite{BaMo12} in the case where $(X, \omega)$ has $g$ horizontal cylinders, namely
  that $(X, \omega)$  may be normalized so that the heights of the cylinders are dual to the
  widths with respect to the trace pairing, and the vectors $\theta(r_i)$ are orthogonal in
  $\reals^g$. 
\end{rem}

Taking such a normalization, let $\iota_1, \ldots, \iota_g$ be the $g$ real embeddings of $F$, and
if we have taken an extension $K$, extend each $\iota_i$ to the real embedding of $K$ for which
$\iota_i(\alpha) > 0$.  For $x\in F$ or $K$, we write $x^{(i)}$ for $\iota_i(x)$ and $\theta$ for
the associated embedding of $F$ or $W$ in $\reals^g$.  The vectors $\theta(r_i)$ are then a basis of
$\reals^g$.  While our normalization was not unique, a different choice of normalization only
changes this basis by a constant multiple, so the shape of this basis of $\reals^g$ does not depend
on any choices.

In the sequel, it will not matter whether we have passed for a quadratic extension of $F$ or not, so
we will simply write $W$ for either $F$ or $\Ker \Tr_{K/F}$.

Now suppose that $(X, \omega)$ is normalized as above.  Let $\Gamma$ be the corresponding dual
graph.  We orient the edges of $\Gamma$ as in \S\ref{sec:torsion-moduli}.  Assign to each edge $e$
of $\Gamma$ the weight $r_e\in W$ to be the width of the corresponding cylinder of $(X, \omega)$,
as well as the weight $m_e = s_e/r_e \in \ratls$, the modulus of this cylinder.
At each vertex of $\Gamma$, the sum of the incoming weights equals the sum of the outgoing weights
(we called such an object a $W$-weighted graph in \cite{BaMo12}), so we may define
\begin{equation}
  \label{eq:graph_homology}
  \rho_\br = \sum_e r_e [e] \in H_1(\Gamma; W) \qtq{and} \sigma_\br = \sum_e m_e r_e [e^*] \in H^1(\Gamma; W).
\end{equation}

Since the $r_e$ span $W$, $\rho$ defines an isomorphism $\rho_\br^* \colon H^1(\Gamma; \ratls) \to W$.
From this point of view, we may interpret \eqref{eq:13} as saying for all $x\in H_1(\Gamma; \ratls)$
and $y \in H^1(\Gamma; \ratls)$, 
\begin{equation}
  \label{eq:14}
  (x, y) = \langle \sigma_\br^*(x), \rho_\br^*(y)\rangle,
\end{equation}
where the right pairing is the trace pairing on $W$ defined above and the left is the canonical
pairing between homology and cohomology.  Since $\rho_\br^*$ is an
isomorphism, it follows that $\sigma_\br^*$ is an isomorphism as well.

Now let $\Gamma = \Gamma_1 \cup \ldots\cup \Gamma_k$ be its decomposition into blocks, and define
$B_i\subset W$ to be the span of the vectors $r_e$ with $e$ in $\Gamma_i$.

\begin{prop}
  \label{prop:blocks_orthogonal}
  The subspaces $B_1, \ldots, B_k$ span $W$ and are orthogonal.
\end{prop}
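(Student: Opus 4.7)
The strategy is to exploit the block decomposition of $H_1(\Gamma;\mathbb{Q})$ together with the compatibility equation \eqref{eq:14} between intersection pairing and trace pairing. The key facts from graph theory are that every cycle of $\Gamma$ is contained in a single block, so that
\[ H_1(\Gamma;\mathbb{Q}) = \bigoplus_{i=1}^k V_i, \qquad V_i := H_1(\Gamma_i;\mathbb{Q}) \]
under the natural inclusions. Dually, $H^1(\Gamma;\mathbb{Q}) = \bigoplus_i V_i^\vee$, where $V_i^\vee$ is the annihilator of $\bigoplus_{j\neq i} V_j$ with respect to the canonical pairing $(\cdot,\cdot)\colon H_1(\Gamma;\mathbb{Q}) \times H^1(\Gamma;\mathbb{Q}) \to \mathbb{Q}$.

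The first step is to show, using a spanning-tree representative argument, that any class in $V_i^\vee$ can be represented by a cocycle supported on the edges of $\Gamma_i$. Given a spanning tree $T\subset\Gamma$, modify $y$ by a coboundary so that it vanishes on $T$; then $y(e)$ for a non-tree edge $e$ equals $y(c_e)$, where $c_e$ is the unique cycle in $T\cup\{e\}$. Since $c_e$ lies in the block containing $e$, if $y\in V_i^\vee$ then $y$ vanishes on every non-tree edge outside $\Gamma_i$, hence is supported on $\Gamma_i$. Combined with the definition of $\sigma_\br^*$ and $\rho_\br^*$, this yields the two inclusions
\[ \sigma_\br^*(V_i) \subset B_i \qquad\text{and}\qquad \rho_\br^*(V_i^\vee) \subset B_i. \]

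Next comes the dimension count that upgrades these inclusions to equalities and to a direct-sum decomposition. Since $\rho_\br^*$ and $\sigma_\br^*$ are isomorphisms to $W$ (by the remark following \eqref{eq:graph_homology} and by \eqref{eq:14}), summing $\dim V_i^\vee \le \dim B_i$ over $i$ gives $g = \sum_i \dim V_i^\vee \le \sum_i \dim B_i$. On the other hand, the $B_i$ together span a subspace of $W$, which has dimension $g$, so $\sum_i \dim B_i \ge \dim(\sum_i B_i)$ with equality iff the sum is direct. These forced equalities produce $W = \bigoplus_i B_i$ (spanning and directness) and, simultaneously, the isomorphisms $\sigma_\br^*\colon V_i\xrightarrow{\sim} B_i$ and $\rho_\br^*\colon V_i^\vee \xrightarrow{\sim} B_i$.

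The orthogonality is then immediate: for $i\neq j$ pick $b_i = \sigma_\br^*(x)\in B_i$ with $x\in V_i$ and $b_j = \rho_\br^*(y)\in B_j$ with $y\in V_j^\vee$. By the definition of $V_j^\vee$ we have $(x,y)=0$, and applying \eqref{eq:14} yields $\langle b_i, b_j\rangle = 0$. I do not foresee a serious obstacle; the only subtle point is keeping straight the two a priori different characterizations of $V_i^\vee$ (as annihilator versus as classes representable by cocycles supported on $\Gamma_i$), but the spanning-tree argument pins this down, and the dimension bookkeeping then forces all inclusions to be equalities in one stroke.
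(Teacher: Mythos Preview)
Your approach is essentially the same as the paper's, and the orthogonality step via \eqref{eq:14} is exactly right. However, the dimension count in the middle has a genuine gap.

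From $\rho_{\br}^*(V_i^\vee)\subset B_i$ and the injectivity of $\rho_{\br}^*$ you correctly deduce $\dim V_i^\vee \le \dim B_i$, hence $g=\sum_i \dim V_i^\vee \le \sum_i \dim B_i$. You also have $\dim\bigl(\sum_i B_i\bigr)\le g$ since the $B_i$ sit inside $W$. But these two inequalities point the \emph{same} way: both say $\sum_i \dim B_i \ge g$. Nothing in your argument gives the reverse bound $\sum_i \dim B_i \le g$, so no squeeze occurs and you cannot conclude that the sum $\bigoplus_i B_i$ is direct, nor that the inclusions $\sigma_{\br}^*(V_i)\subset B_i$ and $\rho_{\br}^*(V_i^\vee)\subset B_i$ are equalities. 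Without those equalities, your final paragraph only proves $\sigma_{\br}^*(V_i)\perp \rho_{\br}^*(V_j^\vee)$, not $B_i\perp B_j$.

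The missing observation is that the inclusion $\rho_{\br}^*(V_i^\vee)\subset B_i$ is actually an \emph{equality}, and this is immediate: for each edge $e\in\Gamma_i$ one has $r_e=\rho_{\br}^*([e^*])$, and the cocycle $[e^*]$ annihilates every cycle in $\Gamma_j$ for $j\ne i$ (such a cycle uses no edge of $\Gamma_i$), so $[e^*]\in V_i^\vee$. Thus $B_i\subset \rho_{\br}^*(V_i^\vee)$, giving $B_i=\rho_{\br}^*(V_i^\vee)$ and $\dim B_i=\dim V_i^\vee=\dim V_i$. Now $\sigma_{\br}^*(V_i)\subset B_i$ together with $\dim\sigma_{\br}^*(V_i)=\dim V_i=\dim B_i$ forces $\sigma_{\br}^*(V_i)=B_i$, and your orthogonality argument goes through. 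This is precisely how the paper proceeds: it asserts at the outset that $B_i$ \emph{is} the image of $\rho_{\br}^*$ restricted to $H^1(\Gamma_i;\ratls)$, and then uses the resulting dimension equality to upgrade $\sigma_{\br}^*\,H_1(\Gamma_j;\ratls)\subset B_j$ to an equality. Your spanning-tree lemma correctly handles one direction of this identification; you just need to note the other direction is trivial.
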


\begin{proof}
  First, since $H^1(\Gamma; \ratls) = \oplus H^1(\Gamma_i; \ratls)$, $\rho_\br^*$ is an isomorphism, and
  $B_i$ is the image of $\rho_\br^*$ restricted to $H^1(\Gamma_i; \ratls)$, we see that the $B_i$ span
  $W$.

  Now let $e$ be an edge in $\Gamma_i$ and $s\in B_j$ for $i\neq j$.  We claim that $s =
  \sigma_\br^*(\gamma)$ for some $\gamma \in H_1(\Gamma_j; \ratls)$.  Assuming this claim, we obtain
  \begin{equation*}
    \langle r_e, s\rangle = ( [e^*], \gamma) = 0,
  \end{equation*}
  since $\gamma_i$ and $e$ are in different blocks.

  To see the claim, note that $\sigma_\br^* H_1(\Gamma_j; \ratls) \subset B_j$, since the $m_e$ are
  rational.  As they have the same dimension, these spaces are in fact equal, so we may find the
  desired $\gamma$.
\end{proof}

Consider now a single block $\Gamma_0$ in $\Gamma$.  We label its edges $e_1, \ldots, e_n$, with
edge $e_i$ having weight $r_i$ and modulus $m_i$.  We define $Q$ to be its matrix of traces
$$(Q_{ij}) = ( \langle r_i, r_j \rangle ),$$
which determines the collection of vectors $\theta(r_i)$ up to orthogonal rotation of $\reals^g$.
\begin{prop}
  \label{prop:moduli_determine_vectors}
  The matrix of traces $Q$ is determined by the moduli $m_1, \ldots, m_n$ and the graph $\gamma_0$.
  Moreover, $Q$ is inversely proportional to the $m_i$ in that if the $m_i$ are multiplied by some
  $q\in \ratls$, the matrix $Q$ is divided by $q$.

  Moreover, $h(Q) \leq C_1 h(m_1, \ldots, m_n) + C_2$ for constants $C_1$ and $C_2$ which depend
  only on the graph $\Gamma_0$.   
\end{prop}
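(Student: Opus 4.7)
The proof is a direct linear-algebra reading of the duality \eqref{eq:14} restricted to the block $\Gamma_0$. Write $n$ for the number of edges of $\Gamma_0$ and $V$ for the number of vertices. Let $Z \subset \ratls^n$ be the cycle space $H_1(\Gamma_0;\ratls)$ and let $B^1 \subset \ratls^n$ be the coboundary space (the image of $\delta\colon C^0(\Gamma_0;\ratls) \to C^1(\Gamma_0;\ratls)$); then $Z = (B^1)^\perp$ with respect to the standard pairing on $\ratls^n$ and $\dim Z = n - V + 1 = \dim B_0$. Using that $\rho^*([e_j^*]) = r_j$ and $\sigma^*(c) = \sum_i c_i m_i r_i$ for $c = \sum_i c_i [e_i] \in Z$, equation \eqref{eq:14} applied to $x = c$ and $y = [e_j^*]$ becomes
\[
\sum_i c_i\, m_i\, Q_{ij} \;=\; c_j \qquad (c \in Z,\ j = 1,\ldots,n),
\]
or, with $M = \diag(m_1,\ldots,m_n)$, the matrix equation $QMc = c$ for every $c \in Z$.

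This relation, together with the symmetry $Q^T = Q$ and the fact that $\ker Q \supset B^1$, uniquely pins down $Q$. The kernel inclusion holds because the map $\ratls^n \to B_0$ given by $[e_i^*] \mapsto r_i$ factors through $\ratls^n/B^1 \cong H^1(\Gamma_0;\ratls) \xrightarrow{\rho^*} B_0$, and $Q$ is the pull-back of the trace pairing along this factorization. Symmetry then forces the image of $Q$ into $Z$, so $Q$ descends to an endomorphism $\bar Q$ of the $(n-V+1)$-dimensional space $\ratls^n/B^1$, identified with $Z$. The condition $QMc = c$ exhibits $\bar Q$ as the inverse of the composite $\pi \circ M|_Z\colon Z \to \ratls^n/B^1$, where $\pi$ is the quotient. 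That this composite is an isomorphism follows from positivity of the moduli: if $c \in Z$ with $Mc \in B^1 = Z^\perp$, then pairing with $c$ itself gives $0 = \sum_i m_i c_i^2$, forcing $c = 0$. Thus $Q$ is determined by $(m_i)$ and the combinatorics of $\Gamma_0$. The scaling claim is immediate: replacing $M$ by $qM$ replaces $\bar Q$ by $q^{-1} \bar Q$, hence $Q$ by $q^{-1}Q$.

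For the height bound I fix, depending only on $\Gamma_0$, a basis of $Z$ (coming for instance from a spanning tree) and a complement of $B^1$ in $\ratls^n$. In these bases $\pi \circ M|_Z$ is represented by a square matrix $A(m)$ of size $n-V+1$ whose entries are $\ratls$-linear combinations of the $m_i$ with coefficients bounded in terms of $\Gamma_0$, and $Q$ is recovered as $P\,A(m)^{-1}\,P^T$ for a fixed integer matrix $P$ depending only on $\Gamma_0$. Cramer's rule expresses each entry of $Q$ as a ratio of polynomials in $(m_i)$ of bounded degree, and the same height inequalities used at the end of the proof of Theorem~\ref{thm:moduli_bound} yield $h(Q) \le C_1 h(m_1,\ldots,m_n) + C_2$ with $C_1, C_2$ depending only on $\Gamma_0$. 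I expect no serious difficulty: the one point worth stressing is the identification of the image of $Q$ with $Z$, which ultimately reflects the non-degeneracy of the trace pairing on $B_0$ given by Proposition~\ref{prop:blocks_orthogonal}.
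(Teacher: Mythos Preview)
Your argument is correct and is the same idea as the paper's, phrased coordinate-free: restrict the duality \eqref{eq:14} to the block $\Gamma_0$ and read off that $Q$, modulo its kernel $B^1$, is the inverse of the moduli-weighted map $\pi\circ M|_Z$ on the cycle space $Z$. The paper carries this out concretely by fixing a spanning tree $T\subset\Gamma_0$: the non-tree edges give the fundamental cycle basis $\gamma_1,\ldots,\gamma_m$ of $Z$, and with $L$ the matrix extending $(r_1,\ldots,r_m)$ to the unique cycle and $N=L^t$ the matrix of fundamental cycles, one obtains $P=NML=L^tML$ (your $A(m)$) and the closed formula $Q=LP^{-1}L^t$. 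This explicit presentation makes the height estimate immediate and yields the explicit constants recorded in the Remark following the proof; your version would produce the same formula once bases are chosen. The one point you flag---that the map $[e_i^*]\mapsto r_i$ factors through $H^1(\Gamma_0)$, i.e.\ that $(r_e)_{e\in\Gamma_0}$ satisfies the cycle condition even at cut vertices of $\Gamma$---does follow from the orthogonal \emph{direct sum} decomposition $W=\bigoplus_i B_i$ implicit in the proof of Proposition~\ref{prop:blocks_orthogonal} (orthogonality plus the dimension count there).
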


\begin{proof}
  Choose a spanning tree $T\subset \Gamma_0$, and reorder the edges so that $e_1, \ldots, e_m$
  belong to $\Gamma_0 \setminus T$ and $e_{m+1}, \ldots, e_n$ belong to $T$.  

  Suppose we are given $r_1, \ldots, r_m\in W$.  There is then a unique choice of $r_{m+1},
  \ldots. r_n$ such that $\bdry \sum r_i [e_i] = 0$.  Namely, let $\bdry \colon H_1(\Gamma_0; V) \to
  \tilde{H}_0(\Gamma_0; V)$ (where $V$ is the set of vertices of $\Gamma_0$) be given by the block
  matrix $(A, B)$ (using the basis $[e_i]$ of $H_1(\Gamma_0; W)$).  Since $T$ is a spanning tree,
  the $n-m \times n-m$ matrix $B$ is invertible.  Let $K = (k_{ij}) = -B^{-1}A$ and $L =\left(
  \begin{smallmatrix}
    I \\
    K
  \end{smallmatrix}\right).$  Then given  $r_1, \ldots, r_m$ in $W$, we set
  \begin{equation*}
    r_{i} = \sum_{j=1}^n l_{ij}r_j.
  \end{equation*}
  With $\br = (r_1, \ldots, r_n)$, this is the unique vector extending $(r_1, \ldots, r_m)$ such
  that $\bdry \rho_\br = 0$.
  
  Let $M$ be the diagonal matrix with entries $m_1, \ldots, m_n$ on the diagonal.

  For $1\leq i \leq m$, let $\gamma_i \in H_1(\Gamma_0)$ be the unique circuit which crosses $e_i$
  once, positively oriented, then completes the circuit by the unique path in $T$ joining the
  endpoints.  We write $ \gamma_i = \sum_i n_{ij} e_j$, and let $N = (n_{ij})$.   Finally, let $P = NML$.
  This matrix $P$ has been constructed so that if $s_i = \sigma_\br(\gamma_i)$ are the periods of
  the $\gamma_i$,  $R$ is the $m\times g$ matrix defined by
  $(R_{ij}) = (r_i^{(j)})$, and $S$ is the  matrix defined by $(S_{ij}) = (s_i^{(j)})$, then  $S = PR$.

  Now, for $1 \leq i,j \leq m$, each  $([e_i^*], \gamma_j) = \delta_{ij}$.  Then by \eqref{eq:14},
  we have $\langle r_i, s_j\rangle = \delta_{ij}$, which means that $SR^t = I$, so $RR^t = P^{-1}$.
  The full matrix of traces is then
  \begin{equation*}
    Q =
    \begin{pmatrix}
      P^{-1} & P^{-1} L^t \\
      L P^{-1} & LP^{-1} L^t
    \end{pmatrix},
  \end{equation*}
  which depends only on $\Gamma_0$ and the $m_i$, as desired.

  To prove the height bound, we use the inequalities,
  \begin{align*}
    h(MN) &\leq h(M) + h(N) + \log n, \\
    h(M^{-1}) &\leq (n-1) h(M) + \log(n-1)!,
  \end{align*}
  where in either case $n$ is the number of columns of $M$.  Since, $A, B,$ and $N$ consists only of
  zeros and ones, they have height $0$, and the bound for $h(Q)$ follows.
\end{proof}

\begin{rem}
  Keeping careful track of constants, one obtains the explicit bound,
  \begin{equation*}
    h(Q) \leq (e-v) h(m_1, \ldots, m_e) + (e-v) \log e^2(v-e)! + \log(e-v)!(v-1)!^2(e-v+1)^2, 
  \end{equation*}
  where $v$ and $e$ are the number of vertices and edges of $\Gamma_0$.
\end{rem}

Taken together, Theorem~\ref{thm:moduli_bound} and Propositions~\ref{prop:blocks_orthogonal} and
\ref{prop:moduli_determine_vectors} gives strong constraints for the geometry of the cylinder widths
$v_i=\theta(r_i)$ in a normalized periodic direction in terms of a given $N$ which is a multiple of the
torsion orders of $(X, \omega)$.  By Proposition~\ref{prop:blocks_orthogonal}, the $v_i$ lie
in orthogonal blocks $B_i \subset\reals^g$ corresponding to the partition of the dual graph $\Gamma$ into
blocks.  For each block $\Gamma_i$ of $\Gamma$, the torsion order $N$ determines its vector of
moduli $[m_1: \ldots: m_{k_i}]$ up to finitely many choices by Theorem~\ref{thm:moduli_bound}.
Proposition~\ref{prop:moduli_determine_vectors} then tells us that this vector of moduli determines
the vectors $v_j$ in $B_i$ up to scaling and orthogonal rotation. 

While Theorem~\ref{thm:moduli_bound} tells that a bound for torsion orders gives the tuple of moduli of a
periodic direction up to scaling in each block, these torsion orders don't give any information
about the scale of these tuples of moduli in different blocks.  Proposition~\ref{prop:moduli_determine_vectors}
allows us to control this scale using knowledge of the residues of the stable form where two blocks intersect.

\begin{prop}
  \label{prop:real_ratios_bounded}
  Given a pantsless-finite collection of algebraically primitive \Teichmuller curves, there is a
  constant $M$ such that for any periodic direction of a surface in this collection, the ratio of
  moduli of any two cylinders is at most $M$.
\end{prop}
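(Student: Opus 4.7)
The plan is to combine Theorem~\ref{thm:moduli_bound} (which controls moduli within each block via torsion) with the pantsless-finite hypothesis (which controls residues at cut vertices of the dual graph), using the orthogonality of blocks from Proposition~\ref{prop:blocks_orthogonal} and the Gram-matrix rigidity of Proposition~\ref{prop:moduli_determine_vectors} to translate residue data into moduli-scale data.

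First I would apply Proposition~\ref{prop:pf_torsionbound} to obtain a uniform torsion bound $N$ on the collection, so that Theorem~\ref{thm:moduli_bound} forces the projective tuple of moduli in each block of each periodic direction to belong to a finite set. Since the stratum is fixed, the dual graph $\Gamma$ has only finitely many combinatorial types, so we may reduce to a fixed $\Gamma$ with fixed block decomposition $\Gamma_1,\ldots,\Gamma_k$; it remains to bound the relative scaling of moduli between different blocks.

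Next I would check that every cut vertex $v$ of $\Gamma$ corresponds to a non-pants component $Y_v$. Since each block is $2$-vertex-connected, each block through $v$ contributes at least two edges at $v$; and since the residue theorem applied to the stable form forbids cut edges (the residue along an isolating edge would be forced to vanish, contradicting positivity of the width), blocks are genuinely $2$-edge-connected subgraphs rather than single edges. Hence $v$ has at least four edges in $\Gamma$, so $Y_v$ has at least four punctures and is not pants. By pantsless-finiteness, $(Y_v,\eta_v)$ lies in one of finitely many isomorphism classes, so for any edges $e\in\Gamma_i$ and $e'\in\Gamma_{i'}$ meeting at $v$ in different blocks, the ratio $\lambda=r_e/r_{e'}$ is an algebraic number of bounded degree taking only finitely many values. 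All of its Galois conjugates $\iota_l(\lambda)$ are therefore uniformly bounded, and writing $\|r_e\|^2=\sum_l\iota_l(\lambda)^2\iota_l(r_{e'})^2$ yields a uniform two-sided bound on the norm ratio $\|r_e\|/\|r_{e'}\|$ in $W$.

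Finally I would convert the norm ratios into moduli ratios via Proposition~\ref{prop:moduli_determine_vectors}. Within each block $\Gamma_i$ the Gram matrix takes the form $Q^{(i)}=\tilde Q^{(i)}/q_i$, with $\tilde Q^{(i)}$ drawn from a finite set and $q_i$ the scale factor of the moduli tuple; hence $\|r_e\|^2=\tilde Q^{(i)}_{ee}/q_i$, and the previous step gives a uniform two-sided bound on $q_{i'}/q_i$ for adjacent blocks. Traversing the block tree of $\Gamma$, which has bounded combinatorial size, this propagates to a uniform bound on $q_i/q_j$ for all pairs of blocks; combined with the finite set of possible moduli tuples within each block, this yields the required uniform constant $M$. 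The most delicate step is the graph-theoretic verification that no cut vertex is a pants component, which rests on the absence of cut edges (from the residue theorem applied to the stable form) together with the $2$-vertex-connectedness of blocks.
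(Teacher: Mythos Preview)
Your proposal is correct and follows essentially the same approach as the paper: torsion bounds from Proposition~\ref{prop:pf_torsionbound} fix the projective moduli within blocks via Theorem~\ref{thm:moduli_bound}, the cut-vertex component is shown to be non-pants by the degree-four argument (using absence of separating edges), the finitely many residue ratios $\lambda$ bound $\Tr(r_e^2)/\Tr(r_{e'}^2)$ via $\|\lambda^2\|_\infty$, and Proposition~\ref{prop:moduli_determine_vectors} converts this into a bound on the scale ratio $q'/q$. The paper's write-up treats only a single pair of adjacent blocks, leaving the block-tree propagation implicit, whereas you spell it out explicitly; otherwise the arguments are identical.
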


\begin{proof}
  By Proposition~\ref{prop:pf_torsionbound}, the torsion orders of any curve in this collections
  divide some constant $N$.

  Let $(X, \omega)$ be a Veech surface with periodic horizontal direction which lies on one of the
  \Teichmuller curves in our collection, let $\Gamma$ be its dual graph, and let $B,
  B'\subset\Gamma$ be adjacent blocks containing edges $e_1, \ldots, e_k$ and $e'_1, \ldots, e'_l$,
  ordered so that $e_1$ and $e_1'$ meet at the common separating vertex $v$.

  We take $(X, \omega)$ to be normalized as above, which means in particular that the horizontal
  cylinders have rational moduli.  We write $qm_1, \ldots, qm_k$ and $q' m_1', \ldots, q' m_l'$ for
  the moduli of the cylinders corresponding to $e_i$ and $e_i'$, with $q, q'\in\ratls$ chosen so
  that the tuples of $m_i$ and $m_i'$ are both relatively prime integers.  We let $r_i$ and $r_i'$
  denote the corresponding cylinder widths.

  By Theorem~\ref{thm:moduli_bound}, the $m_i$ and $m_i'$ are determined up to finitely many choices
  by $N$, so we may take them to be fixed.  It then suffices to bound $q'/q$.

  Since $\Gamma$ has no separating edges, there must be at least four edges incident to $v$, so the
  corresponding component of the limiting stable form is not pants, so it is known up to finitely
  many choices.  In particular, we may take $\lambda = r_1/r_1'$  to be fixed.  It follows that
  \begin{equation}
    \label{eq:1}
    \Tr(r_1^2) = \Tr(\lambda^2 {r_1'}^2) \leq \|\lambda^2\|_\infty \Tr({r_1'}^2),
  \end{equation}
  Where $\|\lambda\|_\infty = \max_i |\lambda^{(i)}|$.

  By Proposition~\ref{prop:moduli_determine_vectors}, we have
  \begin{equation}
    \label{eq:15}
    \Tr(r_1^2) = \frac{t}{q} \qtq{and} \Tr({r_1'}^2) = \frac{t'}{q'},
  \end{equation}
  where $t \in\ratls$ is determined by the corresponding moduli $m_i$ and graph $B$, and likewise
  for $t'$.  Combining \eqref{eq:1} and \eqref{eq:15}, we obtain
  \begin{equation*}
    \frac{q'}{q} \leq \|\lambda^2\|_\infty \frac{t'}{t}.
    \qedhere
  \end{equation*}
\end{proof}

\begin{prop}
  \label{prop:widths_bounded}
  Given a pantsless-finite collection of algebraically primitive \Teichmuller curves, there is a
  constant $K$ such that for any periodic direction of a surface in this collection, the ratio of
  widths of any two cylinders or saddle connections is at most $K$.
\end{prop}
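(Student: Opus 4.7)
The plan is to combine Proposition~\ref{prop:pf_torsionbound} (torsion bound), Theorem~\ref{thm:moduli_bound} (moduli finiteness within a block), Proposition~\ref{prop:moduli_determine_vectors} (Gram matrix determination), Proposition~\ref{prop:blocks_orthogonal} (block orthogonality), and Proposition~\ref{prop:real_ratios_bounded} (cross-block moduli ratio), extending the ``width-ratio at a shared non-pants vertex'' strategy from moduli to widths.

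First I would normalize every $(X,\omega)$ in the collection via Proposition~\ref{prop:normalization_exists} and observe that Proposition~\ref{prop:pf_torsionbound} produces a uniform $N$ dividing all torsion orders. Within each block $\Gamma_0$ of the dual graph, Theorem~\ref{thm:moduli_bound} leaves only finitely many possibilities for the projectivized moduli, and Proposition~\ref{prop:moduli_determine_vectors} then fixes the Gram matrix $Q=(\Tr(r_ir_j))$ up to the overall rational scale $1/q$ inverse to the common denominator of the moduli. The bound on $q'/q$ across adjacent blocks established in the proof of Proposition~\ref{prop:real_ratios_bounded} together with the rational-squares rigidity of the normalization makes each $\Tr(r_i^2)$ uniformly bounded, and since $(r_i^{(1)})^2\le \Tr(r_i^2)$ this yields a uniform upper bound on the actual widths $r_i^{(1)}$.

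Next I would handle ratios between cylinders. For two cylinders $C_e,C_{e'}$ whose edges lie in different blocks but share a vertex $v$, the limiting stable component at $v$ meets at least two edges from each side so has at least four distinguished points; it is therefore not a pair of pants, and pantsless-finiteness forces the residue ratio $\lambda=r_e/r_{e'}\in W$ to lie in a finite set, bounding $\lambda^{(1)}$ above and away from zero. For $C_e,C_{e'}$ in the same block, I would exploit the proof of Proposition~\ref{prop:moduli_determine_vectors}: the widths are solutions of the duality system $\langle r_i,s_j\rangle=\delta_{ij}$ coupled with the integer Kirchhoff relations at block vertices, a system that depends only on the block graph and the relative moduli. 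Since both inputs are pantsless-finite, the ratio $r_e/r_{e'}\in W$ is drawn from a finite set, so $(r_e/r_{e'})^{(1)}$ is uniformly controlled. Finally, since the dual graph has bounded size depending only on the stratum, a telescoping product along a path from $e$ to $e'$ bounds $r_e^{(1)}/r_{e'}^{(1)}$ for arbitrary pairs, and the saddle-connection case follows by writing each width as an integer combination of bordering saddle-connection lengths (and conversely) with coefficients controlled by the pantsless-finite combinatorial type of the spine.

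The main obstacle is the same-block step. The Gram-matrix determination of Proposition~\ref{prop:moduli_determine_vectors} only pins down the configuration $\{\theta(r_i)\}\subset\mathbb{R}^g$ up to orthogonal rotation of each $B_k$, which is not enough to control individual first-embedding ratios; the extra discreteness has to come from combining that quadratic datum with the rational Kirchhoff relations and the $F$-linear structure on $W$ to show that their common solution locus inside $W^n$ is zero-dimensional. Making this rigorous, in a way uniform over the pantsless-finite collection of block graphs and moduli tuples, is the key step that the remainder of the proof will have to carry out.
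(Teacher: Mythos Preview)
You correctly isolate the same-block step as the crux, but the mechanism you propose for it does not close the gap. The Gram matrix $Q=(\Tr(r_ir_j))$ together with the $\ratls$-linear Kirchhoff relations determines the tuple $(\theta(r_1),\ldots,\theta(r_n))$ only up to the orthogonal group of the block subspace $B_0$, and this ambiguity is genuinely positive-dimensional over $\ratls$ once $\dim B_0\ge 2$: already for a single free width $r$ in a real quadratic field the condition $\Tr(r^2)=c$ is a conic with a dense set of rational points, along which the first-embedding value $r^{(1)}$ takes arbitrarily small positive values. Invoking the $F$-module structure of $W$ does not help, since the Kirchhoff relations are only $\ratls$-linear and the trace pairing already encodes all of the quadratic information available. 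Because your cross-block step only controls edges actually incident to the shared cut vertex, the telescoping chain must pass through same-block pairs, so the gap propagates to the full statement. (Your opening paragraph has a related issue: bounding $\Tr(r_i^2)$, even after fixing the scale, bounds $r_i^{(1)}$ only from above and gives no ratio control.)

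The paper's argument is entirely different and makes no use of the block decomposition or the Gram matrix. For adjacent horizontal cylinders $C_1,C_2$ with $C_1$ not a loop in $\Gamma$, the top and bottom of $C_1$ carry distinct zeros; after shearing so that a vertical saddle connection $\gamma$ joins them, one chooses a parallel vertical closed geodesic $\delta$ crossing both $C_1$ and $C_2$. In the \emph{vertical} stable limit $\gamma$ lies on a component carrying at least two zeros, hence non-pants, so pantsless-finiteness bounds $\ell(\delta)/\ell(\gamma)$; since $\ell(\gamma)=h(C_1)$ and $\ell(\delta)\ge h(C_2)$, this yields $\modulus(C_2)w(C_2)\le C\,\modulus(C_1)w(C_1)$, and Proposition~\ref{prop:real_ratios_bounded} converts it into a width bound. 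If instead $C_1$ is a loop at $v$, its two poles contribute opposite residues, so a pair of pants at $v$ would force the remaining residue to vanish; hence $v$ is non-pants and pantsless-finiteness bounds the residue ratio $w(C_1)/w(C_2)$ directly. Chaining these local bounds over the adjacency graph (whose size is bounded by the stratum) handles arbitrary cylinder pairs, and saddle connections follow since each borders a cylinder of uniformly comparable width.
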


\begin{proof}
  Consider a surface $(X, \omega)$ on one of these \Teichmuller curves, and let $C_1$ and $C_2$ be
  adjacent horizontal cylinders, in the sense that their boundaries share a saddle connection.
%  Suppose that one of the cylinders, say $C_1$ does not correspond to a loop in the dual graph $\Gamma$.
%  We claim that there is a universal constant $L$ such that $w(C_2) \leq L w(C_1)$.
  Suppose that one of the cylinders, say $C_1$, has distinct zeros of $\omega$ on its upper and lower boundaries (which is true if
  $C_1$ does not correspond to a loop in the dual graph $\Gamma$).  We claim that there is a
  universal constant $L_1$ such that $w(C_2) \leq L_1 w(C_1)$.

  To see this, first shear the surface so that a vertical saddle connection $\gamma$ joins the zeros
  in $\bdry C_1$.  Let $\delta$ be a parallel geodesic which crosses $C_1$ and $C_2$.  Since the
  surface is Veech, $\delta$ is closed.  These curves limit to curves on a stable curve at infinity
  which is not pants, since it contains more than one zero.  By pantsless finiteness, $\ell(\delta)
  \leq C \ell(\gamma)$ for a universal constant $C$.  We then have,
  \begin{equation*}
    \modulus(C_2) w(C_2) = h(C_2) \leq \ell(\delta) \leq C \ell(\gamma) = C h(C_1) = C \modulus(C_1) w(C_1).
  \end{equation*}
  Since the ratios of moduli are bounded by $M$ from Proposition~\ref{prop:real_ratios_bounded}, we
  obtain $w(C_2) \leq MC w(C_1)$.

  If one of the cylinders, say $C_1$, actually corresponds to a loop, then its adjacent vertex 
  is a component of the limiting stable curve which is not a pair of pants.  Pantsless finiteness 
  then implies that there exists a universal constant $L_2$ such that 
  $w(C_2) \leq L_2 w(C_1)$ and $w(C_1) \leq L_2 w(C_2)$.

  Now consider the graph whose vertices are horizontal cylinders of  $(X, \omega)$ and
  whose edges are cylinders that share a saddle connection. Given any two vertices $C, C'$ 
  of this graph, we may find a path  $C=C_1, \ldots, C_m = C'$, since the surface $X$ is
  connected. We can find such a path of length at most the number of horizontal saddle
  connections, a constant $n$ depending only on the stratum. Using the previous bounds
  we conclude that $w(C_m) \leq \max\{L_1,L_2\}^{n-1} w(C_1)$.

  It then follows from pantsless finiteness that ratios of lengths of saddle connections are bounded
  as well, since any saddle connection has a nearby closed geodesic, and the ratio of their lengths
  is bounded above and below uniformly.
\end{proof}

Recall from \cite{smillieweiss10} that a \emph{triangle} in $(X, \omega)$ is the image of a triangle
in the plane under an affine map to $(X, \omega)$ which sends the vertices to zeros of $\omega$, and
which is injective except possibly at the vertices.  They showed that $(X, \omega)$ is Veech if the
set of areas of its triangles is bounded away from $0$.

Let $\mathcal{S}_1$ be the locus of unit area forms in our stratum, and
$\NST(\alpha)\subset\mathcal{S}_1$ to be the set of such surfaces which have no triangles of area
less than $\alpha$.  Our proof of finiteness will use the following theorem.

\begin{theorem}[\cite{smillieweiss10}]
  \label{thm:NST}
The set  $\NST(\alpha)$ consists of finitely many \Teichmuller curves.
\end{theorem}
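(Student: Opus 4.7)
The plan is to establish the result in three steps: show $\NST(\alpha)$ is compact in $\mathcal{S}_1$; show every point of $\NST(\alpha)$ is a Veech surface; and conclude there are only finitely many associated \Teichmuller curves. The first two steps are essentially technical, while the third is the real conceptual obstacle.

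For compactness, the plan is to invoke Mumford's compactness criterion adapted to strata of abelian differentials: a sequence $(X_n, \omega_n)$ in $\mathcal{S}_1$ escapes every compact set precisely when it develops a saddle connection of arbitrarily small length. I would show that a short saddle connection forces a small triangle. Given a saddle connection $\sigma$ of length $\varepsilon$, a suitable Delaunay-type triangulation of $(X, \omega)$ produces a triangle having $\sigma$ as one side, whose area is bounded by $\tfrac12 \varepsilon L$, where $L$ is the length of a shortest saddle connection transverse to $\sigma$. A simple area/pigeonhole argument using $\Area(X,\omega)=1$ forces such an $L$ to be uniformly bounded above in terms of the stratum, giving a triangle of area $O(\varepsilon)$. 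Together with continuity of the minimum triangle area as a function on $\mathcal{S}_1$, this shows $\NST(\alpha)$ is closed and precompact, hence compact.

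Since the $\SLtwoR$-action preserves triangle areas, $\NST(\alpha)$ is $\SLtwoR$-invariant. Consequently the $\SLtwoR$-orbit closure of any $(X,\omega)\in\NST(\alpha)$ is contained in $\NST(\alpha)$ and is therefore compact. I would then appeal to Smillie's characterization: a translation surface is Veech iff its $\SLtwoR$-orbit is closed in $\mathcal{S}_1$. To deduce this from compactness of the orbit closure, one argues by a Masur-criterion type argument: if the orbit were not closed, one could find accumulating directions whose \Teichmuller flow shrinks a cylinder, forcing the surface to leave every compact set and contradicting compactness of the orbit closure. Hence $(X,\omega)$ is Veech, generating a \Teichmuller curve $T_{(X,\omega)}\subset \mathcal{S}_1$.

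The hard part is finiteness, because distinct \Teichmuller curves are not isolated in $\mathcal{S}_1$: they typically accumulate on higher-dimensional affine invariant submanifolds. The cleanest route is by contradiction. Suppose there is an infinite sequence of distinct \Teichmuller curves $T_n$ meeting $\NST(\alpha)$; pick $(X_n, \omega_n)\in T_n\cap\NST(\alpha)$, and by compactness extract a convergent subsequence with limit $(X,\omega)\in\NST(\alpha)$ generating some \Teichmuller curve $T$. The critical claim is that $T_n = T$ for all sufficiently large $n$. To prove this one exploits the rigidity of Veech structures: the long horizontal cylinders of $(X_n,\omega_n)$ (whose moduli are rational and whose existence is forced by $\SL(X_n,\omega_n)$ being a lattice) persist in the limit, producing parabolic elements of $\SL(X,\omega)$ that one can match with parabolic elements of $\SL(X_n,\omega_n)$; since a non-arithmetic Fuchsian lattice is determined up to commensurability by any two non-commuting parabolic fixed points, one obtains $T_n \subset T$, and then $T_n = T$ by $\SLtwoR$-invariance. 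The main obstacle is making this rigidity argument genuinely uniform in $n$ — i.e., upgrading pointwise convergence of surfaces to the global statement that \emph{orbits} coincide — and doing so robustly enough to handle the lattice case. An alternative strategy is to produce a uniform lower bound on the Haar-volume (on $T_n=\SLtwoR/\Gamma_{T_n}$) of $T_n\cap \NST(\alpha)$ relative to the ambient Masur--Veech measure on $\NST(\alpha)$, and then conclude by finiteness of the total measure.
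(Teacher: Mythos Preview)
The paper does not prove this statement; it is cited from \cite{smillieweiss10} and used as a black box.

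Your proposal has a fundamental error at Step~1: $\NST(\alpha)$ is \emph{never} compact when non-empty. Triangle areas are $\SLtwoR$-invariant, so $\NST(\alpha)$ contains the full $\SLtwoR$-orbit of each of its points; but every Veech surface has a periodic direction (any saddle-connection direction is periodic by Veech dichotomy, and saddle connections always exist), and applying $g_t$ in such a direction pushes the surface toward a cusp of its \Teichmuller curve, producing arbitrarily short saddle connections while remaining in $\NST(\alpha)$. Concretely: on a unit-area flat torus the minimum triangle area is identically $\tfrac12$ regardless of shape, so $\NST(\tfrac12)$ in that stratum is the entire non-compact moduli space. The gap in your argument is the implication ``short saddle connection $\Rightarrow$ small triangle'': when a cylinder becomes long and thin under $g_t$, the short waist curve $\sigma$ and a long transversal bound a triangle whose area is comparable to half the cylinder area, which stays bounded away from~$0$. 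Your claim that the transverse length $L$ is uniformly bounded by an area/pigeonhole argument is exactly what fails here---$L\to\infty$ at the same rate that $|\sigma|\to 0$.

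Since Steps~2 and~3 both rest on this compactness, the argument collapses. Separately, the assertion in Step~3 that ``a non-arithmetic Fuchsian lattice is determined up to commensurability by any two non-commuting parabolic fixed points'' is false: two boundary points of $\half$ carry essentially no information about a lattice. The actual Smillie--Weiss argument proceeds quite differently. It proves directly (with no compactness of $\NST(\alpha)$) that the no-small-triangles condition forces the Veech dichotomy and hence a lattice Veech group, and then extracts finiteness from the fact that the $\NST(\alpha)$ condition gives \emph{uniform} bounds on ratios of cylinder moduli and widths in every periodic direction, confining a normalized representative of each \Teichmuller curve to a set of bounded combinatorial complexity.
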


\begin{rem}
  Smillie and Weiss actually proved a much stronger finiteness theorem that we do not need, namely that the union
  of these $\NST(\alpha)$ over all strata in any genus is finite.
\end{rem}

\begin{proof}[Proof of Theorem~\ref{thm:pantsless_finiteness}]
  Let $(X, \omega)$ be a surface in our collection of \Teichmuller curves, normalized to have unit
  area, and let $T\subset (X, \omega)$ be a triangle.  We may rotate $\omega$ so that the base of
  $T$ is horizontal, hence this horizontal direction is periodic.  Then apply a diagonal matrix so
  that $(X, \omega)$ has a horizontal cylinder of unit width.   By
  Propositions~\ref{prop:real_ratios_bounded} and \ref{prop:widths_bounded}, all horizontal
  cylinders have moduli and widths bounded above and below, as do the horizontal saddle
  connections.  Their heights are then bounded as well.  It follows that the area of $T$ is
  uniformly bounded below, and by Theorem~\ref{thm:NST}, our collection of \Teichmuller curves is finite. 
\end{proof}

%%% Local Variables: 
%%% mode: latex
%%% TeX-master: "g3fin_master"
%%% End: 

%%%%%%%%%%%%%%%%%%%%%%%%%%%%%%%%%%%%%%%%%%%%%%%%%%%%%%%%%%%%%%
\section{Bounding torsion and the principal stratum}
%%%%%%%%%%%%%%%%%%%%%%%%%%%%%%%%%%%%%%%%%%%%%%%%%%%%%%%%%%%%
\label{sec:pres-tors-princ}

In this section, we aim to apply the finiteness criterion, Theorem~\ref{thm:pantsless_finiteness}, 
to obtain finiteness for the principal stratum in genus three and prepare the grounds
for the discussion of the remaining strata in the next section.
In order to verify the pantsless-finite hypothesis, we will use the torsion condition together with
\emph{a priori} bounds on the torsion orders and height bounds from diophantine geometry to control
the possible stable forms arising as cusps of \Teichmuller curves generated by forms with many
zeros.

More precisely, given an irreducible component $(X_\infty, \omega_\infty)$ of the stable
form over a cusp of an \APTC, if $\omega_\infty$ has multiple zeros, then the torsion condition may
be interpreted as saying that certain cross-ratios involving these zeros and the cusps of $X_\infty$
are roots of unity.  Torsion order bounds, height bounds and information about the degrees of the
residues of $\omega_\infty$ often allow us to conclude that there are only finitely many
possibilities for $(X_\infty, \omega_\infty)$.  To formalize this, we introduce now the notion of a
form which is determined by torsion.
\par
We say that a meromorphic one-form on $\proj^1$ is {\em a form of type $(n; m_1,\ldots,m_k)$} if it has $n$
 poles, all of which are simple, and $k$ zeros of orders $m_1,\ldots,m_k$.  We denote the
poles $x_1, \ldots, x_n$ and the zeros $z_1, \ldots, z_k$.  For example, a pair of pants is a 
form of type $(3;1)$.
\par
\begin{definition}
  \label{def:determined_by_torsion}
  A one-form on $\proj^1$ of type $(n; m_1,\ldots,m_k)$ {\em is determined by torsion} if for every
  $N \in \IN$ and every $g \in \IN$ there are only a finite number of forms $\omega$ of this type,
  up to the action of $\Aut(\proj^1)$ and constant multiple, that satisfy the following condition.
  There exists a partition of $\{1,\ldots,n\}$ into subsets $S_j, j\in J$, none of which consists of
  a single element, such that
  \begin{compactitem} %{itemize}
  \item[i)] for each part $S_j$ of the partition $\sum_{i \in S_j} \Resi_{x_i}(\omega) = 0$,
  \item[ii)] the ratios of residues $\Resi_{x_i}(\omega) / \Resi_{x_1}(\omega)$ for $i=1,\ldots,n$
    are elements of a totally real number field of degree $g$ whose $\ratls$-span has dimension $n-
    |J|$, and
  \item[iii)] for all $a \neq b$ the cross-ratio $[z_a,z_b, x_{i_1}, x_{i_2}]$ is a root of unity of
    order dividing $N$ whenever there exists a part $S_j$ containing both $i_1$ and $i_2$.
  \end{compactitem} %{itemize}
\end{definition}

These conditions are motivated by the constraints imposed on a component of a limiting stable form
of a \Teichmuller curve.

\begin{prop}
  \label{prop:boundary_conditions}
  Suppose $(X_\infty, \omega_\infty)$ is a stable form lying over a boundary point of an
  algebraically primitive \Teichmuller curve, and $Y$ is an irreducible component of $X_\infty$.
  Then there is a partition of the nodes of $Y$ satisfying the above conditions.
\end{prop}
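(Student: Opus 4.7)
The plan is to construct the partition from the combinatorics of the dual graph of $X_\infty$, and then read off (i), (ii), (iii) from, respectively, the residue theorem, the real multiplication structure governing the boundary of $\RM$, and the torsion condition combined with Lemma~\ref{le:conseqoftorsion}. Let $\Gamma$ denote the dual graph of $X_\infty$ and let $v$ be the vertex corresponding to the component $Y$. The nodes of $Y$ are in bijection with the edges of $\Gamma$ incident to $v$. Removing $v$ from $\Gamma$ yields finitely many connected components $\Gamma_j$; defining $S_j$ to be the set of indices of nodes of $Y$ whose corresponding edge has its other endpoint in $\Gamma_j$ produces the partition $\{S_j\}_{j\in J}$. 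No $S_j$ is a singleton: otherwise the corresponding node would be a separating node joining $Y$ to a subcurve attached at a single point, which after normalizing that node would make $\omega_\infty$ reducible, contradicting algebraic primitivity (which forces the limit stable form to have connected dual graph on the non-$Y$ side of each block).

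For (i), let $X_j \subset X_\infty$ be the subcurve whose components correspond to the vertices of $\Gamma_j$. Since $\omega_\infty$ is a section of the relative dualizing sheaf, its restriction to each irreducible component of $X_j$ has only simple poles, located at the nodes; applying the residue theorem on each such component and summing, the residues at internal nodes of $X_j$ cancel in pairs (opposite residues on the two branches), leaving only the residues coming from the nodes adjacent to $Y$. These are the negatives of the residues of $\omega_\infty|_Y$ at the $x_i$ with $i\in S_j$, so $\sum_{i\in S_j}\Resi_{x_i}(\omega_\infty|_Y)=0$. For (ii), the Jacobian of a nearby smooth fiber has real multiplication by an order $\cO$ in the totally real field $F$ of degree $g$ with $\omega$ an eigenform (see \cite{moeller06}), and the description of the boundary of $\RM$ (compare \cite{BaMo12} for genus three and the general cusp structure of Hilbert modular varieties) identifies the residues of $\omega_\infty|_Y$, up to a common scalar, with an $F$-basis of an admissible $\cO$-module. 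The only $\ratls$-linear relations among these residues are thus the $|J|$ relations produced in (i); the residues span a subspace of $F$ of dimension exactly $n-|J|$, giving (ii).

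For (iii), fix two zeros $z_a$, $z_b$ of $\omega_\infty|_Y$, coming from two distinct zero sections $s_a$, $s_b$ of $\omega$ that both specialize to points of $Y$. By Theorem~\ref{thm:torsion_condition} the divisor $s_a-s_b$ is torsion in $\Jac(\cX/C)$ of order dividing some $N$; Lemma~\ref{le:conseqoftorsion} then produces an admissible cover $h\colon Y\to\IP^1$ of degree $N$ with $h^{-1}(0)=\{z_a\}$, $h^{-1}(\infty)=\{z_b\}$, and $h$ constant on every part of the partition of the nodes of $Y$ induced by the components of $\Gamma\setminus\{v\}$ — which is precisely our partition $\{S_j\}$. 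Since $Y\cong\IP^1$ and $h$ is totally ramified over $0$ and $\infty$, in an appropriate coordinate $w$ on $Y$ with $w(z_a)=0$ and $w(z_b)=\infty$ the map is $h(w)=w^N$; hence for $i_1,i_2\in S_j$ the equality $h(x_{i_1})=h(x_{i_2})$ gives $(w(x_{i_1})/w(x_{i_2}))^N=1$. Computing the cross-ratio in this coordinate, $[z_a,z_b,x_{i_1},x_{i_2}]=w(x_{i_1})/w(x_{i_2})$ (after the standard limit interpretation at $\infty$), which is an $N$-th root of unity, proving (iii).

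The main obstacle is verifying (ii) rigorously in full generality — that the residues really span a subspace of precisely codimension $|J|$ in $F$, rather than only being contained in such a subspace. This requires unpacking the description of the boundary of $\RM$ in all the strata at issue and showing that, component by component, the residues form part of an admissible $\cO$-basis so that no extra $\ratls$-linear relations appear; the genus-three case is carried out in \cite{BaMo12} and is what we will invoke.
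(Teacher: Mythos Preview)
Your construction of the partition and your arguments for (i) and (iii) match the paper's proof essentially verbatim. Two points deserve comment.

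First, your justification that no $S_j$ is a singleton is not correct. A singleton $S_j$ would correspond to a separating edge of $\Gamma$, but algebraic primitivity does not by itself rule out separating nodes; what rules them out is that $\omega_\infty$ is the limit of \emph{holomorphic} one-forms. The residue of $\omega_\infty$ at a node equals the period of $\omega$ around the corresponding vanishing cycle, which is the width of a cylinder and hence nonzero; on the other hand, at a separating node the residue theorem on either side forces the residue to be zero. The paper simply invokes ``$X_\infty$ has no separating edges'' for this reason.

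Second, and more substantively, your approach to (ii) via the boundary description of $\RM$ in \cite{BaMo12} is not the route the paper takes, and as you note it would require work to make general. The paper instead uses the homological machinery set up in \S\ref{sec:genfinthm}: the residues define a class $\rho_\br=\sum_e r_e[e]\in H_1(\Gamma;F)$, and the normalization of \S\ref{sec:genfinthm} (via the trace pairing identity \eqref{eq:13}) makes the induced map $\rho_\br^*\colon H^1(\Gamma;\ratls)\to F$ an isomorphism. One then observes that deleting one edge from each part $S_j$ leaves a set $S\subset E(v)$ whose dual cocycles $\{[e^*]:e\in S\}$ are linearly independent in $H^1(\Gamma;\ratls)$ --- because for each $e\in S$ there is a circuit of $\Gamma$ crossing $e$ and no other edge of $S$ --- and hence their images $r_e$ are $\ratls$-linearly independent in $F$. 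This gives $\dim_\ratls\langle r_i\rangle\ge n-|J|$ directly, with the opposite inequality coming from the $|J|$ relations in (i). This argument is self-contained and works in any genus, whereas your proposed route through admissible bases is genus-three specific.
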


\begin{proof}
  Consider the dual graph $\Gamma$ of $X_\infty$ with edges labeled by the corresponding residues of
  $\omega_\infty$ as in \S\ref{sec:genfinthm}, and consider the vertex $v$ corresponding to $Y$.
  We partition the set of edges $E(v)$ incident to $v$ according to the component of $\Gamma
  \setminus \{v\}$ in which they lie.  As $X_\infty$ has no separating edges, each part of this
  partition has at least two elements.

  Condition i) is then a consequence of the residue theorem.  The condition iii) rephrases that the
  difference between any two zeros is a torsion section with torsion order divisible by $N$, as
  stated in Lemma~\ref{le:conseqoftorsion}.

  To obtain ii), consider the set $S$ of edges of $\Gamma$ obtained by deleting from $E(v)$ one edge
  from each component of $\Gamma \setminus \{v\}$.  We wish to show that the corresponding residues
  are linearly independent over $\ratls$.  Recall from \S\ref{sec:genfinthm} that assigning the
  residues of $\omega_\infty$ to the edges of $\Gamma$ determines a homology class $$\rho_\br =
  \sum_e r_e [e] \in H_1(\Gamma; F)$$ (after normalizing the form by dividing by
  $\Resi_{x_1}(\omega_\infty)$ so that the residues belong to the trace field $F$).  The induced map
  $\rho_\br^*\colon H^1(\Gamma; \ratls) \to F$ is an isomorphism, so it suffices to show that the
  cohomology classes $\{[e^*]: e\in S\}$ are linearly independent.  This follows from the fact that
  for each $e\in S$ there is a circuit of $\Gamma$ which crosses $e$ and no other edges in $S$.
\end{proof}

Forms with sufficiently many zeros are determined by torsion.  We state below the relevant results,
which we will prove in \S\ref{sec:prooffewzeros} and \S\ref{sec:proof4zeros}.
\par
\begin{prop} \label{prop:3detbytorsion}
A  form of type $(n; m_1,\ldots,m_k)$ having three or more 
zeros, i.e.\ $k \geq 3$, is determined by torsion.
\end{prop}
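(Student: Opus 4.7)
The plan is to exploit the hypothesis $k \geq 3$ to rigidify the action of $\Aut(\proj^1)$ by normalizing three of the zeros to $z_1 = 0$, $z_2 = 1$, $z_3 = \infty$. I claim that after this normalization conditions (i) and (iii) alone force the remaining data --- the poles $x_1, \dots, x_n$ and the extra zeros $z_4, \dots, z_k$ --- into a finite set depending only on $N$ and the combinatorial type $(n; m_1, \dots, m_k)$. Condition (ii) plays no role in the finiteness argument, and condition (i) is used only through the stipulation that no part of the partition be a singleton, which is what guarantees that (iii) imposes a non-trivial constraint inside every part.

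The key computation takes place inside a single part $S_j$. For any two distinct poles $x = x_{i_1}$, $y = x_{i_2}$ in $S_j$, applying condition (iii) with $(a,b) = (1,3)$ and $(a,b) = (2,3)$ yields
\begin{equation*}
\frac{x}{y} = [0, \infty, x, y] = \zeta \qquad\text{and}\qquad \frac{1-x}{1-y} = [1, \infty, x, y] = \xi
\end{equation*}
for $N$-th roots of unity $\zeta, \xi$. Since $x, y \notin \{0, 1\}$ and $x \neq y$ one must have $\zeta, \xi \neq 1$ and $\zeta \neq \xi$, so the pair is uniquely recovered as $y = (\xi - 1)/(\xi - \zeta)$, $x = \zeta y$; as $(\zeta, \xi)$ varies, this gives at most $N^2$ possibilities for the ordered pair $(x, y)$. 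Fixing a reference pole in $S_j$ and applying this to each other pole shows that the set of poles inside $S_j$ takes only finitely many configurations. The different parts are constrained independently, and there are only finitely many partitions of $\{1, \dots, n\}$ without singletons, so the full tuple $(x_1, \dots, x_n)$ lies in a finite set.

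If $k > 3$, each extra zero $z_a$ ($a \geq 4$) is pinned down in the same way: choose any pair $(x_{i_1}, x_{i_2})$ in a part of size $\geq 2$ (available by (i)); condition (iii) applied to $(z_1, z_a)$ forces $[0, z_a, x_{i_1}, x_{i_2}] = \eta$ to be an $N$-th root of unity, which is a linear equation in $z_a$ with the unique solution
\begin{equation*}
z_a = \frac{x_{i_1} x_{i_2}(1 - \eta)}{x_{i_1} - \eta x_{i_2}},
\end{equation*}
so $z_a$ takes at most $N$ values. Once all zeros and all poles are determined, the form of type $(n; m_1, \dots, m_k)$ with prescribed divisor is unique up to a scalar multiple, and we end with only finitely many forms modulo $\Aut(\proj^1)$ and constants, which is exactly the definition of being determined by torsion. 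The only subtle point in the argument is the bookkeeping of degenerate choices of roots of unity (such as $\zeta = \xi$, $\zeta = 1$ or $\xi = 1$), all of which are immediately excluded by the distinctness of zeros and poles; no deeper obstacle arises. The assumption $k \geq 3$ is essential, since it is precisely what allows us to eliminate the three-dimensional continuous ambiguity coming from $\Aut(\proj^1)$.
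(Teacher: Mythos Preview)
Your proof is correct and follows essentially the same idea as the paper: normalize three zeros using $\Aut(\proj^1)$, then observe that the torsion constraint on cross-ratios $[z_a,z_b,x_{i_1},x_{i_2}]$ pins down the poles in each part (and the remaining zeros) to finitely many positions.

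The packaging differs slightly. The paper first proves a small geometric lemma: with $k\ge 3$ and the zeros normalized to $\infty,0,1$, all poles in a given part $S_j$ lie simultaneously on a circle about $0$ and a circle about $1$; since two circles meet in at most two points, every part has size exactly $2$. This lets the paper label the poles as pairs $(x_j,y_j)$ and define a single cross-ratio morphism $\CR^{\min}\colon \moduli[0,k+n]\to\IA_{\min}$ built from the $R_{ij}=[z_1,z_i,y_j,x_j]$, which is shown to be injective. The image of our form is then a torsion point, hence lies in a finite set, and injectivity finishes. Your argument bypasses the size-2 lemma by handling parts of arbitrary size $\ge 2$ directly via the explicit formulas $y=(\xi-1)/(\xi-\zeta)$, $x=\zeta y$, iterating over each pole against a reference pole. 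This is a touch more elementary for the purposes of this proposition; the paper's size-2 lemma, however, is reused in the subsequent torus-containment analysis for ``prescribes torsion'', so there it pays to have it.
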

\par
\begin{prop} \label{prop:twozerosdetbytorsion}
A  form of type $(4; 1,1)$ is determined by torsion.
\end{prop}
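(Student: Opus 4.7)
\medskip
\noindent\textbf{Proposal.} The plan is to reduce matters to a direct algebraic analysis showing that, for each $N\in\IN$, there are finitely many forms of type $(4;1,1)$ on $\proj^1$ (up to $\Aut(\proj^1)$ and constant multiple) satisfying conditions (i) and (iii) of Definition~\ref{def:determined_by_torsion}. Since this will be uniform in $N$ and not rely on (ii), it yields the stronger conclusion that the count is finite independently of $g$. A partition $J$ of $\{1,2,3,4\}$ with no singletons has either one block (Case $|J|=1$) or two blocks of size two (Case $|J|=2$, one of three pairings $\{\{a,b\},\{c,d\}\}$). Using $\Aut(\proj^1)$ to set $x_1=0,\ x_2=1,\ x_4=\infty$, and normalizing the scalar so that
\[
\omega = \frac{(z-z_1)(z-z_2)}{z(z-1)(z-x_3)}\,dz,
\]
the remaining moduli are $(x_3, z_1, z_2)$ and the residue at infinity is $r_4=-1$.

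\medskip
\noindent\textbf{Case $|J|=1$.} Condition (i) is the automatic identity $\sum r_i=0$, so only (iii) remains and requires all six cross-ratios $R_{ij}=[z_1,z_2,x_i,x_j]$ to be $N$-th roots of unity. The cocycle identity $R_{ij}R_{jk}=R_{ik}$ reduces this to three cross-ratios based at $x_4$:
\[
\alpha=\frac{z_1}{z_2},\qquad \beta=\frac{z_1-1}{z_2-1},\qquad \gamma=\frac{z_1-x_3}{z_2-x_3}.
\]
For each triple of $N$-th roots of unity $(\alpha,\beta,\gamma)$ away from the degenerate loci ($\alpha=1$, $\beta=1$, $\gamma=1$, or $\alpha=\beta$), the first two equations are linear in $(z_1,z_2)$ and solve uniquely, and the third recovers $x_3$. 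This bounds the number of forms in Case $|J|=1$ by $N^3$.

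\medskip
\noindent\textbf{Case $|J|=2$.} By relabeling it suffices to analyze the pairing $\{\{1,2\},\{3,4\}\}$. Condition (i) combined with $r_4=-1$ and the residue-sum identity forces $r_3=1$; unwinding $r_3=(x_3-z_1)(x_3-z_2)/\bigl(x_3(x_3-1)\bigr)$ then yields
\[
x_3 = \frac{z_1 z_2}{z_1+z_2-1}.
\]
Substituting into the two cross-ratios required by (iii) produces the symmetric pair
\[
\zeta_1 = \frac{z_1(z_2-1)}{z_2(z_1-1)},\qquad \zeta_2 = \frac{z_1(z_1-1)}{z_2(z_2-1)},
\]
so that $\zeta_1\zeta_2 = (z_1/z_2)^2$ and $\zeta_2/\zeta_1=((z_1-1)/(z_2-1))^2$. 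Consequently $\alpha=z_1/z_2$ and $\beta=(z_1-1)/(z_2-1)$ are $2N$-th roots of unity; as in Case~1 they determine $(z_1,z_2)$ and then $x_3$. Summing over the three pairings bounds Case $|J|=2$ by $12N^2$, completing the proof.

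\medskip
\noindent\textbf{Main obstacle.} The real work is in Case~$|J|=2$: the crux is verifying the algebraic reduction that after substituting the residue-enforced value $x_3 = z_1z_2/(z_1+z_2-1)$, the cross-ratio $[z_1,z_2,x_3,x_4]$ collapses to $z_1(z_1-1)/\bigl(z_2(z_2-1)\bigr)$, so that both cross-ratios factor through the two simpler quantities $z_1/z_2$ and $(z_1-1)/(z_2-1)$. Once this identity is in hand, together with the standard exclusion of the degenerate loci where two $z_i$ or $x_j$ coincide, the proposition follows directly from the parametrization by roots of unity, without invoking Laurent's theorem or the algorithm of \S\ref{sec:torusalgo}.
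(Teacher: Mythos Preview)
Your argument is correct and follows the same elementary strategy as the paper: show that the cross-ratio conditions (iii), together with the residue constraint (i), pin down the form by a bounded number of roots of unity, so finiteness follows by a direct count. You also correctly observe that condition (ii) is not needed for the ``determined by torsion'' statement.

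The one genuine difference is the choice of normalization. The paper places the two \emph{zeros} at $0$ and $\infty$ rather than fixing three poles. With that choice every relevant cross-ratio becomes $[0,\infty,x_i,x_j]=x_i/x_j$, so the poles in each part of the partition differ by roots of unity outright. In the two-part case this gives $x_2=\zeta_x^2 x_1$ and $u_2=\zeta_u^2 u_1$, and after setting $x_1=1$ the two conditions (vanishing of the $z^2$- and constant coefficients of the numerator) solve explicitly to $u_1=-\zeta_x/\zeta_u$ and $r_2=(\overline{\zeta_x}-\zeta_x)/(\overline{\zeta_u}-\zeta_u)$. The algebraic reduction you flag as the ``main obstacle'' (collapsing $[z_1,z_2,x_3,\infty]$ to $z_1(z_1-1)/z_2(z_2-1)$ after substituting $x_3$) is a consequence of your less symmetric normalization; with the paper's choice it disappears. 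On the other hand, the paper's normalization is chosen with the companion statement (``prescribes torsion'') in mind: the closed form for $r_2$ feeds directly into McMullen's ratio-of-sines theorem, which your setup does not reach as cleanly.

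Minor remark: once you say ``by relabeling it suffices to analyze the pairing $\{\{1,2\},\{3,4\}\}$'', the factor of $3$ in your final bound is unnecessary; but this is only over-counting and does not affect the logic.
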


\begin{proof}[Proof of Theorem~\ref{thm:torsion_implies_finite}]
  Suppose we have an \emph{a priori} bound for torsion orders of \APTCs in $\omoduli(1^{2g-2})$.
  Propositions~\ref{prop:3detbytorsion} and \ref{prop:twozerosdetbytorsion} imply that there are
  only finitely many possibilities for any non-pants component of a stable curve at a cusp of an
  \APTC in this stratum.  Theorem~\ref{thm:pantsless_finiteness} then implies finiteness.
\end{proof}
\par
To apply these results, we need \emph{a priori} bounds on torsion orders.  When an irreducible
component $(X_\infty, \omega_\infty)$ of the stable form over a cusp contains two zeros, one can
often bound the torsion order of the difference of these two zeros by appealing to Laurent's theorem
\cite{laurent} on torsion points in a subvariety of $\Gm$.  Whether this works depends on the
combinatorial type of $(X_\infty, \omega_\infty)$.  To formalize this, we introduce the notion of
prescribing torsion.

\begin{definition}
  \label{def:prescribes_torsion}
  A form of type $(n; m_1,\ldots,m_k)$ {\em prescribes torsion} if for every $g \in \IN$
  there exists $N_0 \in \IN$ such that for every partition of $\{1,\ldots,n\}$ into subsets $S_j,
  j\in J$, none of which consists of a single element, and for every form $\omega$ on $\PP^1$, such
  that
  \begin{compactitem}
  \item[i)] for each part $S_j$ of the partition $\sum_{i \in S_j} \Resi_{x_i}(\omega) = 0$,
  \item[ii)] the ratios of residues $\Resi_{x_i}(\omega) / \Resi_{x_1}(\omega)$ for $i=1,\ldots,n$
    are elements of a totally real number field of degree $g$ whose $\ratls$-span has dimension $n -
    |J|$, and
  \item[iii)] for all $a \neq b$ the cross-ratio $[z_a,z_b, x_{i_1}, x_{i_2}]$ is a root of unity
    whenever $i_1$ and $i_2$ belong to the same part of the partition,
  \end{compactitem} %{itemize}
  then all of the roots of unity appearing in iii) have order dividing $N_0$.
\end{definition}
%\medskip
We will prove the following results on which types of forms prescribe torsion.
\par
\begin{prop} \label{prop:3presctorsion}
A  form of type $(6; 1,1,1,1)$ prescribes torsion.
\end{prop}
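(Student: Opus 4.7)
The strategy, which is the content of Section~\ref{sec:proof4zeros} in outline, parallels the proof of Theorem~\ref{thm:bounded_torsion}. A form $\omega$ of type $(6;1,1,1,1)$ together with a labeling of its four zeros and six poles determines a point of $\moduli[0,10]$. Choosing an appropriate tuple of nine cross-ratios of these ten marked points, we obtain a closed embedding $\iota\colon\moduli[0,10]\hookrightarrow \IG_m^9$, so that $Y=\iota(\moduli[0,10])$ is an irreducible seven-dimensional subvariety of $\IG_m^9$. For any partition $P$ of $\{1,\ldots,6\}$ with no singletons, condition (iii) of Definition~\ref{def:prescribes_torsion} translates into the requirement that $\iota(\omega)$ lie in a torsion coset of a subtorus $T_P\subset\IG_m^9$, whose character lattice records precisely which cross-ratios $[z_a,z_b,x_{i_1},x_{i_2}]$ are forced to be roots of unity. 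The residue equations of (i) cut out an algebraic subvariety $V_P\subset\moduli[0,10]$, and I set $Y_P=\iota(V_P)$.

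Fix $g\in\IN$ and assume the proposition fails. The partitions of $\{1,\ldots,6\}$ with no singletons fall into only four combinatorial types, namely $(6), (4,2), (3,3), (2,2,2)$, so after passing to a subsequence there is a single such $P$ and an infinite sequence $\omega_k$ of forms satisfying (i)--(iii) for $P$ whose torsion orders in (iii) tend to infinity. The images $\iota(\omega_k)$ are then pairwise distinct torsion points of $T_P$ lying in $Y_P\cap T_P$. By Laurent's theorem \cite{laurent} applied inside $T_P$, the Zariski closure of $\{\iota(\omega_k)\}$ contains a torsion translate of a positive-dimensional subtorus $S\subset T_P$, and hence $Y_P$ contains a positive-dimensional torus translate parallel to such an $S$.

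I would then rule such an $S$ out by running Algorithm~\ref{cap:algo} from Section~\ref{sec:torusalgo} on $Y_P$. A direct search in $\IG_m^9$ is infeasible, so the central step is a reduction, carried out once for each partition type, to subtori whose character group lies in a fixed subgroup of $\IZ^9$ of small rank, in the spirit of the reduction in Section~\ref{sec:proof4zeros} where only three three-dimensional tori needed to be tested. This reduction is the main obstacle: condition (i) is used to pin down the combinatorial structure of the character lattice of $S$, while condition (ii) is essential to exclude degenerations in which residues collapse and the form leaves the class under consideration, since such degenerations would otherwise produce spurious positive-dimensional torus translates. With the reduction in place, Algorithm~\ref{cap:algo} verifies by computer that no positive-dimensional torus translate of the required type lies in $Y_P$, contradicting the existence of the sequence $\omega_k$ and yielding the desired uniform bound $N_0=N_0(g)$.
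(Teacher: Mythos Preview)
Your high-level outline—embed in $\IG_m^9$ via cross-ratios, invoke Laurent, then run Algorithm~\ref{cap:algo}—is the paper's, but the two steps you wave at are the actual content, and your account of them is either wrong or missing the key idea. First, you propose handling all four partition types $(6),(4,2),(3,3),(2,2,2)$; the paper's opening lemma in \S\ref{sec:proof4zeros} shows that only $(2,2,2)$ survives when there are at least three zeros: normalizing $z_1=\infty$, $z_2=0$, $z_3=1$, the torsion condition forces all poles in a part $S_j$ to lie simultaneously on a circle about $0$ and on a circle about $1$, hence $|S_j|\le 2$ and in fact $y_j=\overline{x_j}$. Without this your embedding $\iota$ and subtori $T_P$ do not even line up, since for the other partition types the cross-ratios constrained by (iii) are not among your nine coordinates.

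Second, you correctly name the reduction to small-rank subgroups of $\IZ^9$ as the ``main obstacle'' but supply no mechanism for it. The paper's mechanism is a degeneration argument: a one-dimensional torus translate $T\subset\CR^\red(\moduli[0,10])$ is a one-parameter family and hence has a stable limit $T(0)\in\partial\barmoduli[0,10]$. Lemma~\ref{le:treeconstraints}—which is where the $\IQ$-linear independence and constancy of the residues (coming from (ii) and Lemma~\ref{lemma:3zerosprescres}) actually enter, not to ``exclude degenerations in which residues collapse'' as you write—constrains the dual tree of $T(0)$ to one of three decorated types (Figure~\ref{fig:Degeneration graphs}). Proposition~\ref{prop:exptuple} then reads the exponent vector of $T$ off the Dehn-twist monodromy about $t=0$, forcing it into the row span of one of three explicit $3\times 9$ matrices (Lemma~\ref{lem:three_matrices}). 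Only then is the algorithm run, with the opposite-residue condition applied as a separate filter. Your proposal does not supply this geometric input.
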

\par
It might well be true -- and would suffice to prove finiteness
in the principal stratum in all genera -- that this proposition
holds for all components of type $(2k; 1^{2k-2})$. The following
lemma is also part of checking the hypothesis for pantsless-finiteness
and works for all $k$.
\par
\begin{lemma} \label{lemma:3zerosprescres} If a form of type $(n; m_1,\ldots,m_k)$ with three or
  more zeros, i.e.\ $k \geq 3$, occurs as an irreducible component of a stable curve lying over a
  boundary point of an algebraically primitive \Teichmuller curve, then $n$ is even. Moreover, there
  is only a finite number (depending on $n$) of tuples of residues (up to constant multiple) that
  occur for such boundary points.
\end{lemma}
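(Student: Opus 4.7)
The plan is to combine Proposition~\ref{prop:boundary_conditions} with a dimension count and a torsion analysis to establish both assertions. First, invoking Proposition~\ref{prop:boundary_conditions} produces a partition $\{S_j\}_{j \in J}$ of the node set $\{x_1, \ldots, x_n\}$ of $Y$, with each $|S_j| \geq 2$, satisfying conditions (i)--(iii) of Definition~\ref{def:determined_by_torsion}. From (ii), the $\ratls$-span of the residue ratios $r_i/r_1$ has dimension $n - |J| \leq g$, while the part-size bound gives $|J| \leq n/2$. Combining these yields $n \leq 2g$; together with $n = 2 + \sum m_i \geq k + 2 \geq 5$, this reduces the problem to a finite and small list of combinatorial possibilities for $n$ in any fixed genus.

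Second, to establish finiteness of residue tuples up to scale, I would use that a meromorphic $1$-form on $\proj^1$ is determined up to scale by its divisor; it therefore suffices to bound the number of configurations $(z_1, \ldots, z_k; x_1, \ldots, x_n)$ modulo $\Aut(\proj^1)$. Using $k \geq 3$, normalize three of the zeros to $\{0, 1, \infty\}$. The cross-ratio torsion conditions in (iii) then translate into explicit monomial equations asserting that certain expressions in the remaining $x_\ell$ and $z_i$ lie in the torsion subscheme of $\IG_m^M$ for appropriate $M$. Applying Laurent's theorem, combined with the analysis underlying Proposition~\ref{prop:3detbytorsion}---which is designed precisely to rule out positive-dimensional torsion translates in this setup---yields the desired finiteness.

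Third, for the evenness of $n$, the combinatorial bound reduces us to ruling out a handful of odd values (e.g.\ $n \in \{5, 7, \ldots, 2g-1\}$ when $g$ is known). For each such candidate, some part $S_j$ must have odd size $\geq 3$. The plan is to combine the $\binom{k}{2} \geq 3$ pairwise torsion relations among the zeros with the residue relation $\sum_{i \in S_j} r_i = 0$ and the requirement that the $r_i$ lie in a totally real field of degree $g$, showing that the resulting cross-ratio subsystem has no admissible solutions. Concretely, one recasts the system as the non-existence of torsion points on an auxiliary subvariety of some $\IG_m^M$ and invokes the torus-containment algorithm of \S\ref{sec:torusalgo} to verify each case.

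The hard part will be the evenness claim. While the dimension bound is elementary and the finiteness reduction follows a standard Laurent-theorem template, genuinely ruling out each odd $n$ likely requires either a structural algebraic argument exploiting the interplay between the residue relations forced by the dual-graph combinatorics and the torsion cross-ratios imposed by algebraic primitivity, or a case-by-case computer-assisted enumeration of the kind that Section~\ref{sec:torusalgo} is designed to handle.
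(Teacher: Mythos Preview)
Your proposal misses the paper's key insight for both parts of the lemma, and your approach to finiteness contains a genuine confusion.

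\textbf{Evenness of $n$.} You treat this as ``the hard part'' and propose a case-by-case computer verification after reducing to $n \le 2g$. The paper instead gives a two-line geometric argument. Normalize three zeros to $z_1 = \infty$, $z_2 = 0$, $z_3 = 1$ (possible since $k \ge 3$). For poles $x_{i_1}, x_{i_2}$ in the same part $S_j$, the cross-ratio $[z_1,z_2,x_{i_1},x_{i_2}]$ being a root of unity says $|x_{i_1}| = |x_{i_2}|$, so all poles in $S_j$ lie on a common circle about $0$; likewise using $z_2,z_3$ they lie on a common circle about $1$. Two distinct circles meet in at most two (complex-conjugate) points, so $|S_j| = 2$ for every $j$, and $n$ is even. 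No case analysis, no torus algorithm.

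\textbf{Finiteness of residue tuples.} Your plan is to bound the number of pole/zero configurations via Laurent's theorem, but this conflates two different finiteness statements. There is no torsion-order bound in the hypothesis, so there are genuinely infinitely many configurations; Laurent's theorem would not give you finiteness of configurations without substantial further work, and in any case that is not what is needed. The paper's argument is instead a height bound: the cross-ratio map $\CR^{\min}$ is birational onto its image, so the residues are values of a fixed rational map at tuples of roots of unity; since roots of unity have height zero, the residue tuples have bounded height by the elementary estimate \eqref{eq:htupperbd}. Combined with the hypothesis that residues lie in a degree-$g$ field, Northcott's theorem gives finiteness of residue \emph{tuples} even though the underlying configurations are infinite.
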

\par
\begin{prop} \label{prop:twozerospresctorsion}
A  form of type $(4; 1,1)$ prescribes torsion.
\end{prop}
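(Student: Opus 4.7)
The plan is to reduce the prescribes-torsion problem for type $(4;1,1)$ to a finiteness question about torsion points on fibres of an explicit rational map $\Psi\colon\Gm^2\to\Gm$, which I settle by combining Laurent's theorem, Northcott's theorem, and a direct classification of the exceptional fibres of $\Psi$.

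Normalize by sending $z_1=0$, $z_2=\infty$, $x_1=1$; then $\omega=C\,z\,dz/\prod_{i=1}^4(z-x_i)$ and the cross-ratios become $[z_1,z_2,x_i,x_j]=x_i/x_j$. The only partitions of $\{1,2,3,4\}$ with no singletons are the full set and the three pairings; by symmetry I treat one pairing, say $\{1,2\}\cup\{3,4\}$. Setting $p=x_1/x_2$ and $q=x_3/x_4$ (both roots of unity by (iii)), the residue condition (i) $r_1+r_2=0$ (the relation $r_3+r_4=0$ is then automatic by the residue theorem) collapses to $x_1x_2=x_3x_4$; writing $w=x_3$, this becomes $w^2=q/p$. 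A short calculation gives the closed form
\[
  \alpha \;:=\; r_3/r_1 \;=\; \sqrt{q/p}\cdot(1-p)/(q-1).
\]
Invariance under the cyclotomic automorphism $\zeta\mapsto\zeta^{-1}$ (a direct consequence of the relation $w^2=q/p$) shows $\alpha\in\IQ(\zeta_{2M})^+$, where $M=\mathrm{lcm}(\ord(p),\ord(q))$, so the reality portion of (ii) is automatic.

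Consider the morphism $\Psi\colon\Gm^2\to\Gm$ defined by $\Psi(p,q)=\alpha^2=q(1-p)^2/(p(q-1)^2)$. A direct factorization yields $\Psi^{-1}(1)=\{q=p\}\cup\{pq=1\}$; a one-parameter check along each subtorus $\{p^aq^b=c\}$ shows that $\Psi$ is non-constant on every other subtorus of $\Gm^2$. Hence for each $\beta\ne 1$ the curve $\Psi^{-1}(\beta)\subset\Gm^2$ contains no positive-dimensional subtorus, and by Laurent's theorem its torsion points form a finite set whose orders are effectively bounded in terms of $[\IQ(\beta):\IQ]$. On the two exceptional subtori in $\Psi^{-1}(1)$ one computes $\alpha=\pm 1\in\IQ$, contradicting the dimension-two requirement of (ii). Meanwhile, $\alpha^2$ has absolute logarithmic Weil height bounded by an absolute constant (its coordinates $p,q$ are roots of unity of height zero), and $[\IQ(\alpha^2):\IQ]\le [\IQ(\alpha):\IQ]\le g$, so Northcott's theorem yields only finitely many possibilities for $\alpha^2$ as $g$ is fixed. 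Combining these, there are only finitely many non-degenerate torsion $(p,q)$ with $[\IQ(\alpha):\IQ]\le g$, and their orders are uniformly bounded by some $N_0=N_0(g)$.

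The full-partition case $\{1,2,3,4\}$ imposes two independent residue relations, producing an analogous morphism from a three-dimensional torus of cross-ratios to $\Gm^2$, and the same strategy applies. The main obstacle in the general setting is the uniform classification of the exceptional fibres of the relevant morphism, i.e.\ the subtori on which the residue ratios are constant; for type $(4;1,1)$ this reduces to the two-dimensional and completely explicit calculation above, and in higher-dimensional settings it falls within the scope of the algorithm of Section~\ref{sec:torusalgo}, paralleling its role in the proof of Proposition~\ref{prop:3presctorsion}.
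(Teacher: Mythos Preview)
Your treatment of the two-part case $\{1,2\}\cup\{3,4\}$ is correct and self-contained.  Your formula $\alpha=\sqrt{q/p}\,(1-p)/(q-1)$ is, after writing $p=e^{2i\theta_p}$ and $q=e^{2i\theta_q}$, precisely $-\sin\theta_p/\sin\theta_q$; the paper recognizes this and simply cites McMullen's ratio-of-sines theorem \cite{mcmullentor} rather than redoing the Northcott plus Laurent argument you give.  Your subtorus analysis for $\Psi$ is right (only the cosets $\{p=q\}$ and $\{pq=1\}$ make $\Psi$ constant, and only for the value $1$), so this half of the proof is a legitimate alternative to the paper's.

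The gap is in the full-partition case, which you dispatch in one sentence.  First, the phrase ``two independent residue relations'' is wrong: condition (i) gives a single relation $r_1+r_2+r_3+r_4=0$, and (ii) forces $r_1,r_2,r_3$ to be $\IQ$-linearly independent.  More importantly, you do not carry out the subtorus exclusion, which is the whole content of ``prescribes torsion'' here.  The paper does this explicitly: normalize $u_1=1$, $u_i=\zeta_i$; the zero conditions at $0$ and $\infty$ read $\sum r_i\zeta_i=0$ and $\sum r_i\zeta_i^{-1}=0$.  After using Northcott to pin down the $r_i$, any infinite set of torsion solutions would force a torsion coset $\zeta_i=\eta_i t^{a_i}$ inside this curve; examining the top and bottom $t$-exponents forces (up to renumbering) $a_2=0$ and $a_3=a_4\ne 0$, whence $r_3\eta_3+r_4\eta_4=0$, i.e.\ $r_3/r_4=\pm 1$, contradicting the linear independence from (ii).  This hands-on argument is short and does not require the machinery of \S\ref{sec:torusalgo}; you should supply it (or an equivalent) rather than deferring to ``the same strategy applies.''
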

\par
From these statements we will deduce the  main theorem in the case of the principal stratum. 
\par
\begin{proof}[Proof of Theorem~\ref{thm:intromainfin}, case ${\omoduli[3]}(1,1,1,1)$]
  Consider a Veech surface generating an algebraically primitive \Teichmuller curve in this stratum,
  and suppose that two of its zeros $z_i$ and $z_j$ can be joined by a saddle connection of slope
  $\theta$.  Consider the stable curve $X_\infty$ obtained by applying the \Teichmuller geodesic
  flow in the direction $\theta$.  The two zeros $z_i$ and $z_j$ will limit to the same component of
  $X_\infty$.  By Lemma~\ref{lemma:3zerosprescres}, there are two possibilities: either $X_\infty$
  is irreducible, or it has two components, each containing two zeros.  The
  Propositions~\ref{prop:3presctorsion} and~\ref{prop:twozerospresctorsion} and
  Lemma~\ref{le:conseqoftorsion} then bound in either case the torsion order of the corresponding difference of sections
  $s_i -s_j$ by a universal constant.
  \par
  More generally, any two zeros can be joined by a finite chain of saddle connections, where the
  length of the chain is at most the number of zeros.  Consequently, the torsion order of any
  difference of sections $s_i -s_j$ is bounded by a universal constant.
  \par
  Theorem~\ref{thm:torsion_implies_finite} then implies finiteness of \APTCs in this stratum.
\end{proof}

%%%%%%%%%%%%%%%%%%%%
\subsection{Forms with few zeros} 
\label{sec:prooffewzeros}
%%%%%%%%%%%%%%%%%%%%

\begin{proof}[Proof of Proposition~\ref{prop:twozerosdetbytorsion} and 
Proposition~\ref{prop:twozerospresctorsion}]
We consider a form $\omega_\infty$ of type $(4;1,1)$ and use $\Aut(\proj^1)$ to
normalize the  form $\omega_\infty$ to have its two zeros
at $0$ and $\infty$. The partition of $\{1,2,3,4\}$ may have one or two parts.
\par 
In the case of two parts, 
$$ \omega_\infty = \left(\frac{r_1}{z-x_1} - \frac{r_1}{z-x_2} + \frac{r_2}{z-u_1}
-\frac{r_2}{z-u_2}\right) dz = \frac{C z \,dz}{\prod_{i=1}^2(z-x_i)\prod_{i=1}^2(z-u_i)},$$
where $\{x_1,x_2\}, \{u_1,u_2\}$ is the partition of the nodes 
and where $C \in \cx$ is some constant.
\par
The torsion conditions imply that there exist two roots
of unity $\zeta_x$ and $\zeta_u$ such that
$$ x_2 = \zeta^2_x x_1, \quad u_2 = \zeta^2_u u_1.$$
The resulting equation for the $z^2$ and constant terms
of the numerator of $\omega_\infty$ to vanish imply, with the normalization
$x=1$ and $r_1 = 1$, that 
$$ r_2 = \frac{\zeta_u}{\zeta_x}
\frac{1-\zeta_x^2}{1-\zeta_u^2} = \frac
{\ol{\zeta_x}-\zeta_x}{\ol{\zeta_u}-\zeta_u} \quad \text{and}
\quad u_1 = - \frac{\zeta_x}{\zeta_u}. $$
This is the situation of where McMullen's theorem on ratio of sines \cite{mcmullentor} applies.
As a consequence, since the $r_i$ belong to a cubic field, there are only a finite number of choices 
for the torsion orders of the roots of unity and a finite number of possibilities for the
$r_i$. This concludes the proof of both propositions in this case.
\par
In the case of one part 
\begin{equation} \label{eq:om1111case2}
 \omega_\infty = \left(\sum_{i=1}^4 \frac{r_i}{z-u_i} \right) dz = \frac{C  z\, dz}
{\prod_{i=1}^4 (z-u_i) },
\end{equation}
where $r_4 = -(r_1+r_2+r_3)$, and $r_1, r_2, r_3$ are linearly independent over $\ratls$. The
torsion condition now implies that the pairwise ratios of the $u_i$ are roots of unity. We may
normalize $u_1 = 1$ and $u_i = \zeta_i$ for some $N$th roots of unity $\zeta_i$.  There are rational
functions $f_i$ such that $r_i = f_i(u_1, \ldots, u_4)$, so if the roots of unity $\zeta_i$ are
known, then so is $\omega_\infty$.  This proves Proposition~\ref{prop:twozerosdetbytorsion}.
\par
To prove Proposition~\ref{prop:twozerospresctorsion}, we need to show that in the same
situation, there are only finitely many choices for the roots of unity $\zeta_i$ for which the
residues $r_i$ belong to a field of bounded degree.

The degrees of the $f_i$ are independent of $g$. The image of a tuple of roots of unity,
which are of height zero, is thus of bounded height by \eqref{eq:htupperbd}.  Since the ratios of
residues moreover lie in a field of degree bounded by $g$, Northcott's theorem implies that for
each $g$ there are only a finite number of possible residue tuples up to scale.
\par
For each such tuple, we have to check that there are only finitely many roots of unity that give
rise to this residue.  For fixed residues $r_i$, the possible $\zeta_i$
belong to the curve cut out by the equations,
\begin{equation} \label{eq:solveforzeta}
 \sum_{i=1}^4 r_i \zeta_i = 0 \quad \text{and} \quad 
\sum_{i=1}^4 r_i \zeta^{-1}_i  = 0,
\end{equation}
with the normalization $\zeta_1=1$.  If there were an infinite 
number of solutions in roots of unity, they would lie in a translate
of a torus by a torsion point. That is there exist $a_2,a_3,a_4 \in \IZ,$
not all zero, and roots of unity $\eta_i$, such that
$\zeta_i = \eta_i t^{a_i}$ for $i=2,3,4$  is a solution to
\eqref{eq:solveforzeta} for all $t$. Considering the limit $t \to 0$
and $t \to \infty$ implies that the highest and lowest exponent
must not be isolated. We may thus renumber the terms such that $a_2=0$
and $a_3 = a_4\neq 0$. Substitution into \eqref{eq:solveforzeta} implies that
$r_3 \eta_3 + r_4 \eta_4 =0$, hence $r_3/r_4 = \pm 1$. This contradicts
$\IQ$-linear independence.
\end{proof}

%%%%%%%%%%%%%%%%%%%%
\subsection{Forms with many zeros} 
\label{sec:proof4zeros}
%%%%%%%%%%%%%%%%%%%%

For forms with more zeros, the basic idea of the proofs is similar, but the extra dimensions
involved create significant difficulties.

\begin{lemma}
  Let $\omega_\infty$ be a form of type $(n;m_1,\ldots,m_k)$ with $k \geq 3$.  Suppose
  that $\omega_\infty$ satisfies the conditions i), ii) and iii) in
  Definition~\ref{def:prescribes_torsion}. Then $n$ is even and $|S_j|=2$ for all parts of
  the partition.
  \par
  Moreover, if we normalize $z_1=\infty$ and $z_2 = 0$ and renumber the poles such that $S_j = \{j,
  \ell+j\}$, where $\ell = n/2$, then $x_j$ is the complex conjugate of $x_{\ell+j}$.
\end{lemma}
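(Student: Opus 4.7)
The strategy is to exploit the cross-ratio condition (iii) in the normalization $z_1=\infty$, $z_2=0$, together with a third zero $z_3$ (available since $k\ge 3$), and to observe that a root of unity has absolute value $1$, so each such cross-ratio condition translates into an equality of Euclidean distances. The rest is elementary circle geometry.

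First I would compute, for $i_1,i_2$ in a common part $S_j$,
\[
  [z_1,z_2,x_{i_1},x_{i_2}] \;=\; \frac{x_{i_2}}{x_{i_1}},\qquad
  [z_1,z_3,x_{i_1},x_{i_2}] \;=\; \frac{z_3-x_{i_2}}{z_3-x_{i_1}}.
\]
By (iii) both are roots of unity, so all the poles of $S_j$ share a common modulus and lie on a common circle $|z|=\rho_j$ centered at the origin, while $z_3$ is equidistant from any two of them. If $|S_j|\ge 3$ then $z_3$ would be equidistant from at least three distinct points of this circle; but a line meets a circle in at most two points, so three distinct points on a circle are never collinear, and the only point equidistant from three non-collinear points on a circle is the center. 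This forces $z_3=0=z_2$, contradicting the distinctness of the zeros. Hence $|S_j|\le 2$, and together with the standing assumption that no part is a singleton this gives $|S_j|=2$, so $n=2\ell$ is even.

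Writing $S_j=\{j,\ell+j\}$, the perpendicular bisector $L_j$ of the chord joining $x_j$ and $x_{\ell+j}$ passes through the center of the circle containing these two points, namely the origin. The previous paragraph shows that every zero $z_a$ with $3\le a\le k$ lies on $L_j$, and as $j$ varies this forces all these zeros to lie on a single common line $L$ through $0$, with $L_j=L$ for every $j$. The residual \Mobius freedom preserving $z_1=\infty$ and $z_2=0$ is the homothety $z\mapsto cz$ with $c\in\IC^*$; I would choose $c$ so that $L$ becomes the real axis. Reflection across $L$ is then complex conjugation, which interchanges the distinct points $x_j$ and $x_{\ell+j}$ (they are symmetric about $L$). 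After possibly swapping the two labels within each pair, this yields $x_{\ell+j}=\overline{x_j}$, as required.

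The main obstacle will be bookkeeping around this residual rotational freedom: the complex-conjugation statement is not invariant under an arbitrary \Mobius map fixing $\{0,\infty\}$, so one has to establish, purely from the cross-ratio conditions, that all additional zeros lie on a common line through the origin before using a rotation to align that line with the real axis.
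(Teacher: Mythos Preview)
Your proof is correct and follows essentially the same geometric idea as the paper: the cross-ratio conditions with $(z_1,z_2)$ and $(z_1,z_3)$ place the poles of each part $S_j$ on two distinct circles, which meet in at most two points that are symmetric about the line joining the centers. The paper simply normalizes $z_3=1$ at the outset so that both circle centers are real and the two intersection points are immediately complex conjugates, whereas you keep $z_3$ general and rotate at the end; the content is the same.
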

\par
We will write subsequently $y_j$ instead of $x_{\ell+j}$. We use the normalization $z_1=\infty$ and $z_2 = 0$ 
and $z_3 = 1$ and we call this {\em standard normalization} in the rest of this section. 
\par
\begin{proof} 
Fix a part $S_j$. By condition iii) all the points $x_i$ for $i \in S_j$
lie on the same circle around zero and they also lie on the same circle around one.
Since the two circles intersect in precisely two points that
are complex conjugate, all of the claims follow.
\end{proof}
\par
Such a form $\omega_\infty$ is determined, up to scale, 
by the location of its zeros and poles, i.e.\ by a point
$P=(z_1,\ldots,z_k,x_1,\ldots,x_\ell,y_1,\ldots,y_\ell)$  in $\moduli[0,k+n]$.
Coordinates on this moduli space are cross-ratios, among which we
select
$$R_{abj} = [z_a,z_b,y_j,x_j], \quad 1\leq a < b \leq k, \quad 1\leq j\leq \ell,$$
since these will be roots of unity for a form satisfying iii).

We define $\IA = \cx^{\ell k(k-1)/2}$ with coordinates $t_{abj}$ ($a,b,j$ as above), and we define
$\IA'\subset \IA$ to be the complement of the hyperplanes of the form
$V(t_{abj})$, $V(t_{abk} -1)$, $V(t_{abj} - t_{a'bj})$, and $V(t_{abj} - t_{ab'j}).$  The
cross-ratios $R_{abj}$ then define a morphism $\CR\colon  \moduli[0,k+n] \to \IA'$.

Since $R_{abj}R_{bcj} = R_{acj}$, we may without loss of information restrict our
attention to the cross-ratios where the first zero is fixed to be $z_1$, which we denote by
$$R_{ij} = [z_1,z_i,y_j,x_j], \quad 1 < i \leq k, \quad 1\leq j\leq \ell.$$
We define $\IA_\red = \cx^{\ell(k-1)}$ with coordinates $t_{ij}$ (with $i,j$ as above), and we
define $\IA_\red' \subset \IA_\red$ to be complement of the hyperplanes of the form $V(t_{ij})$,
$V(t_{ij} -1)$, and $V(t_{ij} - t_{i'j})$.  The cross-ratios $R_{ij}$ then define a  morphism $\CR^\red\colon  \moduli[0,k+n] \to \IA_\red'$.

Finally, we define $\IA_\min = \cx^{n+k-3}$ with coordinates $t_{ij}$ for $i=2,3$ and $1 \leq j \leq
\ell$ together with $t_{i1}$ for $4 \leq i \leq k$, and we define $\IA_\min'$ to be the complement
of the hyperplanes of the form $V(t_{ij})$, $V(t_{ij} -1)$, and $V(t_{ij} - t_{i'j})$.  The
cross-ratios $R_{ij}$ then define a morphism $\CR^\min\colon  \moduli[0,k+n] \to \IA_\min$.  There is a
canonical projection $p_\min \colon \IA_\red \to \IA_\min$, forgetting the indices which do not
appear for $\IA_\min$.
 
\begin{lemma} \label{le:CRproperties}
The morphism $\CR^\min$ is injective and dominant.
The morphism $\CR^\red$ is injective and dominant onto the subvariety $\cY$ of  
$\IA_\red'$ cut out by the equations 
\begin{equation*}
{\rm Eq}(i,j,j')\colon  \quad [1, t_{2j}, t_{3j}, t_{ij}] - [1, t_{2j'}, t_{3j' }, t_{ij'}] = 0
\end{equation*} 
for $i \geq 4$ and $1 \leq j < j' < \ell$.
\end{lemma}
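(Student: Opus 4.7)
The plan is to exploit the standard normalization $z_1=\infty$, $z_2=0$, $z_3=1$, which identifies $\moduli[0,k+n]$ with an open subset of $\IA^{k+n-3}$ having coordinates $(z_4,\ldots,z_k,x_1,\ldots,x_\ell,y_1,\ldots,y_\ell)$. For each $j$, introduce the M\"obius transformation
\begin{equation*}
\phi_j(z) = \frac{z-x_j}{z-y_j},
\end{equation*}
which sends $x_j, y_j, \infty$ to $0, \infty, 1$ respectively. By $\Aut(\proj^1)$-invariance of the cross-ratio one computes $R_{ij} = [z_1,z_i,y_j,x_j] = [\phi_j(\infty),\phi_j(z_i),\infty,0] = \phi_j(z_i)$. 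In particular
\begin{equation*}
R_{2j} = \frac{x_j}{y_j}, \qquad R_{3j} = \frac{1-x_j}{1-y_j}, \qquad R_{i1} = \frac{z_i-x_1}{z_i-y_1}.
\end{equation*}

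First I would handle $\CR^\min$. Injectivity is shown by explicit inversion: from $R_{2j}$ and $R_{3j}$ one recovers $(x_j,y_j)$ via $y_j = (1-R_{3j})/(R_{2j}-R_{3j})$ and $x_j = R_{2j}y_j$, and then $z_i$ (for $i\ge 4$) is recovered from $R_{i1}$ and $(x_1,y_1)$ via $z_i = (x_1 - R_{i1}y_1)/(1-R_{i1})$. This produces a rational inverse defined on a Zariski dense open subset of $\IA_\min'$; since source and target have the same dimension $k+n-3$, dominance follows at once.

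Next I would show $\CR^\red(\moduli[0,k+n]) \subset \cY$ by the key computation
\begin{equation*}
[1,R_{2j},R_{3j},R_{ij}] = [\phi_j(\infty),\phi_j(z_2),\phi_j(z_3),\phi_j(z_i)] = [\infty,0,1,z_i] = z_i,
\end{equation*}
which is independent of $j$. Thus every equation ${\rm Eq}(i,j,j')$ vanishes on the image. Injectivity of $\CR^\red$ follows from the factorization $\CR^\min = p_\min\circ \CR^\red$ together with the already-proven injectivity of $\CR^\min$. For the dominance of $\CR^\red$ onto $\cY$, note that on $\cY$ the equation ${\rm Eq}(i,1,j)$ uniquely determines $t_{ij}$ as a rational function of $(t_{21},t_{31},t_{i1},t_{2j},t_{3j})$ by inverting the cross-ratio in the last slot. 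Hence the projection $p_\min$ restricted to $\cY$ is birational onto $\IA_\min$, and composing its rational inverse with the rational inverse of $\CR^\min$ produces a rational inverse of $\CR^\red$ on a dense open subset of $\cY$.

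The one point requiring genuine care is verifying that the rational inverse lands in $\moduli[0,k+n]$, i.e.\ that the extracted points $z_i$, $x_j$, $y_j$ are pairwise distinct and distinct from $0,1,\infty$ on a Zariski open set. This is precisely why $\IA'_\red$, $\IA'_\min$ were defined by removing the hyperplanes $V(t_{ij}),V(t_{ij}-1),V(t_{ij}-t_{i'j})$, and one checks that non-vanishing of the corresponding cross-ratios translates into the distinctness required. This bookkeeping is the main (but essentially routine) technical obstacle; the geometric content is entirely carried by the identity $[1,R_{2j},R_{3j},R_{ij}] = z_i$.
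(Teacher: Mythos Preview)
Your proof is correct and follows essentially the same approach as the paper's: the same normalization, the same explicit cross-ratio formulas, the same key identity $[1,R_{2j},R_{3j},R_{ij}]=z_i$, and the same argument for dominance onto $\cY$ by uniquely solving ${\rm Eq}(i,1,j)$ for $t_{ij}$. Your framing via the M\"obius map $\phi_j$ is a clean way to derive that identity, but the content is identical to the paper's direct computation.
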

\par
Obviously, the same subvariety is also cut out by all the equations
${\rm Eq}(i,1,j')$ for $i \geq 4$ and $2 \leq j' < \ell$.
\par
\begin{proof} Again, normalize so that $z_1=\infty$ and $z_2 = 0$ 
and $z_3 = 1$. Then $R_{2j} = x_j/y_j$ and  $R_{3j} = (1-x_j)/(1-y_j)$.
Since $R_{2j}\neq R_{3j}$, the knowledge of these two cross-ratios thus determines the location
of $x_j$ and $y_j$. Since $R_{i1} = (z_i-x_1)/(z_i-y_i)$, and $R_{i1}\neq 1$, this cross-ratio
determines the location of $z_i$. This proves injectivity. Dominance
of $\CR^\min$ follows since the dimensions agree.
\par
This argument states more precisely that for any fixed $j$ 
the knowledge of $R_{2j}, R_{3j}$ and $R_{ij}$ determines the location
of $z_i$. In fact, a straightforward calculation yields
$$z_i = [1,R_{2j}, R_{3j}, R_{ij}].$$
These of course have to agree for any pair of indices $j$ 
and $j'$, which is expressed by ${\rm Eq}(i,j,j')$. Consequently, the image of
$\CR^\red$ is contained in the subvariety cut out by all the ${\rm Eq}(i,j,j')$.
Given $R_{2j'}$, $R_{3j'}$, $R_{21}$,$R_{31}$, and $R_{j1}$, one can solve
${\rm Eq}(i,1,j')$ uniquely for $R_{1j'}$. Since $\CR^\min$ is dominant, 
it follows that $\CR^\red$ is dominant to $\cY$.
\end{proof}
\par
\begin{proof}[Proof of Proposition~\ref{prop:3detbytorsion} and 
Lemma~\ref{lemma:3zerosprescres}]
Consider a form $\omega_\infty$ satisfying the conditions i), ii) and iii) in
Definition~\,\ref{def:determined_by_torsion} or Definition\,\ref{def:prescribes_torsion}.  Its image under
$\CR$ and hence also $\CR^\min$ is then a torsion point. By Lemma~\ref{le:CRproperties} the map
$\CR^\min$ has an inverse rational map, hence $\omega_\infty$ is determined up to scale and finitely
many choices by a bound on its torsion orders.  This proves Proposition~\ref{prop:3detbytorsion}.

Consider now the rational map 
$$\Resi\colon  \moduli[0,k+n] \dashrightarrow \proj^n$$
that associates to a point in $\moduli[0,k+n]$
the projective tuple of residues of the corresponding one-form
$$\omega_\infty = \frac {\prod_{j=1}^k (z-z_j)^{m_j} \, dz}
{\prod_{i=1}^\ell(z-x_i)(z-y_i)}.$$
The
rational map $\Resi\circ(\CR^\min)^{-1}$ depends only on the type of the stratum. Consequently, by
\eqref{eq:htupperbd} the $\Resi\circ(\CR^\min)^{-1}$-image of the set of torsion points
has bounded height.
By Northcott's theorem, there are at most finitely many possible residue tuples of degree at most
$g$, 
proving Lemma~\ref{lemma:3zerosprescres}.
\end{proof}
\par
We prepare now for the proof of Proposition~\ref{prop:3presctorsion}.
Suppose that its statement was false for a stratum of some fixed type
$(n;m_1,\ldots,m_k)$ with $k \geq 3$, not specializing to $n=6$
and $m_i=1$ yet. By Laurent's Theorem \cite{laurent},
 this means that there exists a subvariety
$T \subset \IA'$ which is 
\begin{compactitem} %{itemize}
\item[a)] the translate of a positive-dimensional torus 
by a torsion point, 
\item[b)] generically contained in the image of $\CR$, 
\item[c)] generically contained in the image of the locus of stable forms (meaning
  $\Resi_{x_i}\omega_\infty = -\Resi_{y_i} \omega_\infty$), and 
\item[d)] contained in a fiber of $\Resi'$ (since there are only finitely many possible
  residue tuples by Lemma~\ref{lemma:3zerosprescres}). 
\end{compactitem} %{itemize}
Here $\Resi'$ is the morphism $\Resi'=\Resi\circ(\CR^\min)^{-1}\circ
p_\min$. 
\par
In a proof by contradiction we may restrict to
the case $\dim T= 1$, i.e.\ 
$$T=\{(c_{abj}t^{e_{abj}}),\,\, 1\leq a < b \leq k,\,\, 1\leq j \leq \ell, \quad  t \in \cx^*\}.$$
We next discuss the possibilities for the limit point $T(0)$ of
the $(\CR^\min)^{-1} \circ p_\min$-image of $T$ as $t \to 0$. This
is a well-defined point in the Deligne-Mumford
compacification $\barmoduli[0,k+n]$ and corresponds to a 
stable curve $Y_0$ in $\partial \barmoduli[0,k+n]$ together
with the limiting stable form $\eta = \lim_{t\to 0} \omega_\infty(t)$.
\par
We represent the dual graph of the stable curve $Y_0$ determined by $T$ 
by a tree $\cT_T$, 
whose vertices are decorated by the zeros and poles that limit
in the corresponding component.
\par
\begin{lemma} \label{le:treeconstraints}
The limiting stable form $(Y_0,\eta)$
associated with $T(0)$ has the following properties.
\begin{compactitem}
\item[i)] Whenever a curve $\gamma$ is pinched
as $t \to 0$, no pair of poles $x_i$ and $y_i$ lies on one side
of $\gamma$ such that two zeros lie on the other side. 
\item[ii)] None of the pinched curves has $\eta$-period equal to zero, in particular
$\eta$ has a pole at each of the nodes of $Y_0$. 
\item[iii)] Each component $Y$ of $Y_0$ has at least one zero.
%\item[iv)]  For a component $Y$ of $Y_0$, let $z_j$, $j \in J_Y$ be
%the zeros on $Y$, let $x_t$ ??!? he sum of zeros of $\eta$
%counted with multiplicity, equals 
\end{compactitem}
\end{lemma}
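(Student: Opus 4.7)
The proof exploits two features of $T$: as a one-dimensional torus coset, each cross-ratio coordinate restricts on $T$ to a monomial $R_{abi}(t)=\tau_{abi}c_{abi}t^{e_{abi}}$, and $T$ lies in a fiber of $\Resi'$, so after normalization the residues $r_1,\ldots,r_n$ of $\omega_\infty(t)$ are constant along $T$.

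For~(i), I would argue by contradiction. If a pinched curve $\gamma$ separates a pair $\{x_i,y_i\}$ from at least two zeros $z_a,z_b$, then as $t\to 0$ the four marked points $z_a,z_b,y_i,x_i$ undergo a two-against-two split across the corresponding node with zeros on one side and poles on the other; a direct evaluation of the cross-ratio in this degeneration gives $R_{abi}(t)\to 1$. Since the monomial $\tau_{abi}c_{abi}t^{e_{abi}}$ can converge to $1$ only when $e_{abi}=0$ and $\tau_{abi}c_{abi}=1$, one has $R_{abi}\equiv 1$ identically on $T$, contradicting the hypothesis $T\subset\IA'$, which excludes the hyperplane where this coordinate equals $1$.

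For~(ii), the $\eta$-period around a pinched curve equals $2\pi i\sum_{i\in A}r_i$, where $A\subset\{1,\ldots,n\}$ is the set of original poles on one chosen side. By the preceding lemma the partition has parts $S_j=\{j,\ell+j\}$ with $r_{\ell+j}=-r_j$, and condition~(ii) of Definition~\ref{def:prescribes_torsion} forces $r_1,\ldots,r_\ell$ to be $\ratls$-linearly independent. Thus vanishing of the period requires $A$ to be a union of conjugate pairs, so every pair $\{x_j,y_j\}$ sits entirely on one side of the node. Applying~(i) to each side, any side containing a conjugate pair carries at most one zero; since the total number of zeros is $k\geq 3$, at least one side of the node contains no conjugate pair at all, hence no original pole of $\omega$. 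The residue theorem on each component of that pole-free side then forces every internal node in it to also have vanishing $\eta$-residue, so at a leaf component $Y$ the restriction $\eta|_Y$ is a meromorphic one-form on $\proj^1$ with no poles, hence holomorphic of negative degree, hence identically zero, contradicting that $\eta$ is a nonzero stable form.

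For~(iii), suppose a component $Y$ carries no zero of $\eta$. Then $\eta|_Y$ is a meromorphic form on $\proj^1$ of degree $-2$ with only simple poles, so it has exactly two poles. Stability of a genus-zero component requires $Y$ to carry at least three special points, so some node on $Y$ must fail to be a pole of $\eta|_Y$, i.e.\ must have vanishing $\eta$-residue and hence vanishing $\eta$-period around the corresponding vanishing cycle, contradicting~(ii). The principal technical step is the inductive leaf-propagation in~(ii), which requires carefully tracking how the zero-residue condition transfers from the original node down through the pole-free subtree to produce the leaf on which $\eta$ must vanish.
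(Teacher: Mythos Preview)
Your arguments for (i) and (iii) are correct and essentially match the paper. The gap is in (ii).

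There is first a slip: you write ``any side containing a conjugate pair carries at most one zero,'' but (i) constrains the \emph{opposite} side. The intermediate conclusion---one side of $\gamma$ is pair-free---still follows, since if both sides carried a pair then applying (i) in each direction bounds the total number of zeros by $2<k$. The substantive gap is your final contradiction: you correctly deduce that $\eta$ vanishes identically on a leaf component $Y$ of the pole-free side, but this does \emph{not} contradict $\eta$ being a nonzero stable form. A section of the dualizing sheaf of a nodal curve can vanish identically on some irreducible components while remaining nonzero on others, and here $\eta$ is certainly nonzero on every component carrying an original pole $x_j$. So the argument does not close as stated.

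The fix is to invoke stability, exactly as you already do in (iii). Applying (i) correctly, the pair-carrying side forces the pole-free side to carry at most one marked zero and no marked poles, hence at most one marked point in total. Any leaf of that subtree then has at most two special points, contradicting that $Y_0\in\barmoduli[0,k+n]$ is stable. The paper compresses all of this to one line: once no pair is separated by $\gamma$, stability forces each side to carry at least two marked points, necessarily zeros on the pole-free side; then a pair on the other side faces two zeros across $\gamma$, directly violating (i).
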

\par
\begin{proof}
Since $T$ is generically contained in $\CR(\moduli[k+n])$,
for each index $(a,b,j)$ either $e_{abi} \neq 0$ or $c_{abi} \neq 1$.
In particular the limit $t \to 0$ of $c_{abi}t^{e_{abi}}$ is not $1$.
This implies the first statement.

Since the residues  $\Resi_{x_i}\omega_\infty(t)$
are $\ratls$-linearly independent and constant, the period of a curve could only be zero if it does not separate
any pair of poles $\{x_i, y_i\}$.  As there are at least four zeros, such a curve must violate i).

Now, since $Y_0$ is a stable curve, each component must have at least three nodes and poles.  Since
each node is a simple pole of the stable form by ii), each component must have at least one zero as well.
\end{proof}
\par
\begin{cor} In the case of a stratum of type $(6;1,1,1,1)$
a complete list of decorated trees arising as $\cT_T$  
is given by the  three possibilities in Figure~\ref{fig:Degeneration graphs}
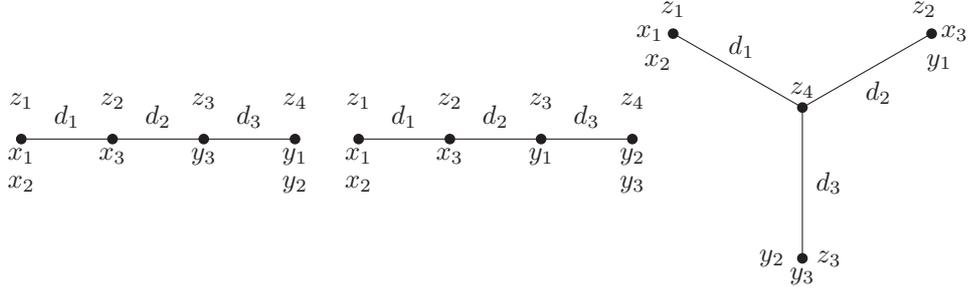
\begin{figure}
\begin{minipage}[hbt]{2.9cm}
	\centering
       \begin{tikzpicture}
	\coordinate[label=below:$x_1$] (x1) at (0,0);
	\coordinate[label=below:$x_3$] (x3) at (1.2,0);
        \coordinate[label=below:$y_3$] (y3) at (2.4,0);
        \coordinate[label=below:$y_1$] (y1) at (3.6,0);
        \node (x2) at (0,-0.6) {$x_2$};
        \node (y2) at (3.6,-0.6) {$y_2$};
        \node (z1) at (0,0.5) {$z_1$};
        \node (z2) at (1.2,0.5) {$z_2$};
        \node (z3) at (2.4,0.5) {$z_3$};
        \node (z4) at (3.6,0.5) {$z_4$};
        \node (d1) at (0.6,0.3) {$d_1$};
        \node (d2) at (1.8,0.3) {$d_2$};
        \node (d3) at (3,0.3) {$d_3$};       
	\draw (x1) -- (y1);
	\fill (x1) circle (2pt);
	\fill (x3) circle (2pt);
        \fill (y3) circle (2pt);
        \fill (y1) circle (2pt);
	\label{(A)}
       \end{tikzpicture}
\end{minipage}
\hspace{1.3cm}
\begin{minipage}[hbt]{2.9cm}
	\centering
       \begin{tikzpicture}
	\coordinate[label=below:$x_1$] (x1) at (0,0);
	\coordinate[label=below:$x_3$] (x3) at (1.2,0);
        \coordinate[label=below:$y_1$] (y1) at (2.4,0);
        \coordinate[label=below:$y_2$] (y2) at (3.6,0);
        \node (x2) at (0,-0.6) {$x_2$};
        \node (y3) at (3.6,-0.6) {$y_3$};
        \node (z1) at (0,0.5) {$z_1$};
        \node (z2) at (1.2,0.5) {$z_2$};
        \node (z3) at (2.4,0.5) {$z_3$};
        \node (z4) at (3.6,0.5) {$z_4$};
        \node (d1) at (0.6,0.3) {$d_1$};
        \node (d2) at (1.8,0.3) {$d_2$};
        \node (d3) at (3,0.3) {$d_3$};       
	\draw (x1) -- (y2);
	\fill (x1) circle (2pt);
	\fill (x3) circle (2pt);
        \fill (y1) circle (2pt);
        \fill (y2) circle (2pt);
	\label{(B)}
       \end{tikzpicture}
\end{minipage}
\hspace{0.7cm}
\begin{minipage}[hbt]{2.9cm}
	\centering
       \begin{tikzpicture}
        %Gitter zeichnen
        %\draw[very thin,color=gray!40!white] (0,0) grid (5,5);
	\coordinate[label=above:$z_4$] (z4) at (3,3);
        \coordinate[label=below:$y_3$] (y3) at (3,1);
        \coordinate[label=left:$x_1$] (x1) at (1.3,3.98);
        \coordinate[label=right:$x_3$] (x3) at (4.7,3.98);
        \fill (z4) circle (2pt);
	\fill (y3) circle (2pt);
        \fill (x1) circle (2pt);
        \fill (x3) circle (2pt);
        \node (x2) at (1.1,3.6) {$x_2$};
        \node (z1) at (1.3,4.3) {$z_1$};
        \node (z2) at (4.6,4.3) {$z_2$};
        \node (y1) at (4.8,3.6) {$y_1$};
        \node (y2) at (2.6,1) {$y_2$};
        \node (z3) at (3.35,1) {$z_3$};
        \node (d1) at (2.2,3.8) {$d_1$};
        \node (d2) at (4,3.2) {$d_2$};
        \node (d3) at (3.35,2) {$d_3$};
        \draw (3,1) -- (3,3) -- ++ (30:2cm);
        \draw (3,1) (3,3) -- ++ (150:2cm);
        \label{(C)}
       \end{tikzpicture}
\end{minipage}
\caption{Possible stable limits of $T$} \label{fig:Degeneration graphs}
\end{figure}
up to renumbering zeros and poles,  together with trees
obtained by collapsing one or more edges of these three trees.
\end{cor}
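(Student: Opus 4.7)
My plan is to enumerate all decorated trees $\cT_T$ compatible with Lemma~\ref{le:treeconstraints} by combining two inputs. First, on each irreducible component $V\cong\proj^1$ the limiting form $\eta|_V$ is meromorphic with simple poles at the marked poles of $\omega_\infty$ on $V$ and at the nodes of $Y_0$ lying on $V$, and with zeros of the prescribed orders at the marked zeros; the degree formula for differentials on $\proj^1$ therefore reads
\begin{equation*}
  n_V^{\rm poles} + n_V^{\rm nodes} - n_V^{\rm zeros} = 2,
\end{equation*}
where $n_V^{\rm zeros}$ is counted with multiplicity. Combined with stability (each component carries at least three special points) and Lemma~\ref{le:treeconstraints}(iii), this forces every component to contain at least one zero, so in the stratum $(6;1,1,1,1)$ the tree has at most four vertices.

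I would first treat the maximal case $v=4$, in which each vertex carries exactly one zero. On four vertices there are only two tree topologies, the path $P_4$ and the star $K_{1,3}$. For $P_4$, the degree formula fixes the pole distribution as $(2,1,1,2)$. The middle edge separates two zeros from two zeros, so Lemma~\ref{le:treeconstraints}(i) forces every pair $\{x_i,y_i\}$ to split across it. A short case check on which pairs share a side with each endpoint then yields, up to relabelling of zeros and poles, exactly the two decorations (A) and (B). For $K_{1,3}$, the formula fixes the pole distribution as $0$ at the center and $2$ at each leaf, with the center carrying one zero; Lemma~\ref{le:treeconstraints}(i) forbids the two poles on any leaf from coming from the same pair, so the incidence between the three pairs and the three leaves is a $2$-regular bipartite graph. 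Being bipartite it contains no odd cycle, hence consists of a single $6$-cycle, which up to relabelling determines the decoration uniquely and yields (C).

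For the remaining cases $v\le 3$ I would verify by direct enumeration that every admissible tree arises by contracting edges of (A), (B), or (C). Trees on three vertices are necessarily paths, and the degree formula together with Lemma~\ref{le:treeconstraints}(i) leaves only a short list of pole distributions, each matching a single-edge contraction of one of the three maximal trees; similarly, trees on two vertices arise as double contractions. The case $v=1$ is excluded since $T$ is assumed positive-dimensional and so must limit to the boundary. The main obstacle is the uniqueness argument for (C): one has to rule out that the two poles sitting on a single leaf come from the same pair, which reduces to the small but essential combinatorial observation that a $2$-regular bipartite graph on $3+3$ vertices must be a single $6$-cycle. Once this is established, the rest of the case analysis for $v<4$ is routine.
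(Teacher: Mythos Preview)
Your proof is correct and follows essentially the same approach as the paper's: bound the number of vertices by four via Lemma~\ref{le:treeconstraints}(iii), enumerate the two tree shapes on four vertices, and use Lemma~\ref{le:treeconstraints}(i) to pin down the pole decorations. You simply make explicit what the paper leaves to the reader—the Riemann--Roch count $n_V^{\mathrm{poles}}+n_V^{\mathrm{nodes}}-n_V^{\mathrm{zeros}}=2$ that fixes the pole distribution on each component, and the bipartite $6$-cycle argument for the star—whereas the paper compresses this into ``the remaining case distinction is now easily completed.'' One small wording issue: in the $K_{1,3}$ case you write ``being bipartite it contains no odd cycle, hence consists of a single $6$-cycle,'' but the relevant exclusion is of $2$-cycles (multi-edges), not odd cycles; this is exactly what your application of Lemma~\ref{le:treeconstraints}(i) provides, so the argument is sound once the connective is fixed.
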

\par
\begin{proof}
The number of vertices is bounded by four by Lemma~\ref{le:treeconstraints}~iii)
and there are only two possible trees with four vertices, as listed
in Figure~\ref{fig:Degeneration graphs}, each with one zero that we label as in the figure.
Denote by $x_1$ one of the two poles on the component of $z_1$.
Then $y_1$ lies on the component of $z_3$ or of $z_4$ by 
Lemma~\ref{le:treeconstraints}~i). The remaining case distinction
is now easily completed.
\end{proof}
\par
We attach to every edge $e$ of  $\cT_T$,
equivalently to every curve that is pinched when degenerating to $Y_0$,
the number $d_e$ of right Dehn twists performed by the mododromy of a simple loop around $t=0$
in $T$.  As the monodromy must perform a nontrivial twist around each pinched curve, we interpret
$d_e=0$ as indicating that the edge $e$ has been deleted from $T$.
\par
\begin{prop} \label{prop:exptuple}
  Given two zeros $z_a$ and $z_b$ let $S$ be the oriented segment in $\cT_T$
  joining $z_a$ to $z_b$. For any $j \in \{1,\ldots,\ell\}$ let $S_j$ be the oriented subsegment of $S$
  joining the projection $\bar{y_j}$ of $y_j$ onto $S$ to the projection $\bar{x_j}$ of $x_j$ onto
  $S$.

  Then the exponent in $T$ of the coordinate $R_{abj}$ 
  is $e_{abj} = \pm\sum_{e \in S_j} d_e$, where the sign is positive if $S$ and $S_j$ have the same
  orientation, and negative otherwise.
\end{prop}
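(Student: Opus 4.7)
The plan is to identify $e_{abj}$ with the order of vanishing of $R_{abj}|_T$ at $t=0$ and then to compute this order combinatorially from the metric tree structure on $\cT_T$ (edge $e$ having length $d_e$). Since $R_{abj}|_T = c_{abj} t^{e_{abj}}$ with $c_{abj}\in \cx^*$, we have $\operatorname{ord}_{t=0}(R_{abj}|_T)=e_{abj}$. The restriction of the universal family of stable curves to $T$ is a one-parameter flat family $\mathcal{X}_T\to T$ degenerating to $Y_0$ at $t=0$, in which the four sections $z_a,z_b,y_j,x_j$ specialize to distinct smooth marked points of $Y_0$ by Lemma~\ref{le:treeconstraints}. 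By construction $d_e$ is the order at $t=0$ of the node-deformation parameter at $e$, i.e.\ the family is locally given by $uv=t^{d_e}$ near that node.

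First I will establish the following tropical degeneration formula for cross-ratios: for any such one-parameter family and any four disjoint sections $p_1,p_2,p_3,p_4$ whose limits are smooth points of $Y_0$,
$$\operatorname{ord}_{t=0}[p_1,p_2,p_3,p_4] = \sum_{e\in E(\cT_T)} \epsilon_e\, d_e,$$
where $\epsilon_e\in\{-1,0,+1\}$ depends only on the partition of $\{p_1,p_2,p_3,p_4\}$ induced by removing the edge $e$ from $\cT_T$: $\epsilon_e=+1$ if the partition is $\{p_1,p_3\}\,\|\,\{p_2,p_4\}$, $\epsilon_e=-1$ if it is $\{p_1,p_4\}\,\|\,\{p_2,p_3\}$, and $\epsilon_e=0$ otherwise (either $\{p_1,p_2\}\,\|\,\{p_3,p_4\}$, or all four points on the same side of $e$). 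This is a standard tropicalization statement for cross-ratios on $\overline{\mathcal M}_{0,n}$; it is proved by choosing plumbing coordinates $uv=t^{d_e}$ at each node, inductively pulling $[p_1,p_2,p_3,p_4]$ back through each pinch, and reading off the exponent of $t$ from the quartet topology of the four marked points in the subtree on either side. The hypothesis that each $p_i$ limits to a smooth point of $Y_0$ (guaranteed by Lemma~\ref{le:treeconstraints}) ensures no cancellation with further factors.

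Applying this formula to $(p_1,p_2,p_3,p_4)=(z_a,z_b,y_j,x_j)$, a nonzero $\epsilon_e$ requires that $e$ simultaneously separate $\{z_a,z_b\}$ (forcing $e\in S$) and $\{y_j,x_j\}$. For $e\in S$, the standard fact that two vertices of a tree lie on the same side of an edge of $S$ iff their projections onto $S$ do, shows that $e$ separates $y_j$ from $x_j$ iff $e\in S_j$. For such an edge, if $S_j$ and $S$ carry the same orientation then $\bar y_j$ is closer to $z_a$ and $\bar x_j$ closer to $z_b$, so $y_j$ lies on the $z_a$-side of $e$ and $x_j$ on the $z_b$-side; the induced partition is $\{z_a,y_j\}\,\|\,\{z_b,x_j\}$, giving $\epsilon_e=+1$. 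If $S_j$ and $S$ are oppositely oriented, the partition is $\{z_a,x_j\}\,\|\,\{z_b,y_j\}$ and $\epsilon_e=-1$. Summing over $e\in S_j$ yields $e_{abj}=\pm\sum_{e\in S_j} d_e$ with the stated sign. The main obstacle is the tropical degeneration formula for the cross-ratio; once this is proved by the local plumbing computation, everything else is combinatorial bookkeeping on $\cT_T$.
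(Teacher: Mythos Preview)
Your argument is correct but takes a genuinely different route from the paper. The paper normalizes so that $z_a=0$, $z_b=\infty$, $x_j=1$, whence $y_j=c_{abj}t^{e_{abj}}$; it then compares two computations of the monodromy action on a path $\gamma$ from $y_j$ to $x_j$: directly one gets $\gamma\mapsto\gamma+e_{abj}\delta$ with $\delta$ a loop around $0$, while summing the Dehn twists about the vanishing cycles in $S_j$ gives $\gamma\mapsto\gamma+(\sum_{e\in S_j}d_e)\delta$. Equating the two yields the formula. Your approach instead reads $e_{abj}$ as $\operatorname{ord}_{t=0}(R_{abj}|_T)$ and invokes the tropical degeneration formula for cross-ratios in plumbing coordinates $uv=t^{d_e}$, after which the result is tree combinatorics. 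Both are short; the paper's argument is entirely self-contained, while yours trades that for a cleaner conceptual picture (the edge-by-edge $\epsilon_e\in\{0,\pm1\}$ contribution is exactly the tropicalized cross-ratio on a metric tree) at the cost of citing a standard lemma you sketch rather than prove.

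Two small remarks. First, your appeal to Lemma~\ref{le:treeconstraints} to guarantee that $z_a,z_b,x_j,y_j$ limit to distinct smooth points of $Y_0$ is not quite the right reference: this is immediate from the fact that $Y_0$ is a point of $\barmoduli[0,k+n]$, hence stable with distinct smooth marked points. Second, the identification of $d_e$ (defined in the paper as the number of Dehn twists in the monodromy) with the exponent in the local model $uv=t^{d_e}$ is the standard Picard--Lefschetz statement; it is worth saying so explicitly, since the paper's definition is topological rather than in plumbing coordinates.
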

\par
\begin{proof}
We assume that $S$ and $S_j$ have the same orientation, as swapping $x_j$ and $y_j$ has the effect
of inverting $R_{abj}$.
  
Normalize the zeros and poles of $\omega_\infty(t)$ so that $z_a=0$, $z_b = \infty$ and $x_j = 1$.
Then $y_j = c_{abj}t^{e_{abj}}$ by definition of the torus.  Let $\gamma$ be a path joining $y_j$ to
$x_j$.  Then the monodromy around $t=0$
sends $\gamma$ to $\gamma + e_{abj} \delta$, where $\delta$ is a loop winding once around $0$.

On the other hand, the edges $S_j$ correspond the pinching loops  which
separates $z_a$ and $y_j$ from $z_b$ and $x_j$.  As
each loop intersects $\gamma$ once,  performing $d_e$ Dehn twists about each loop
sends $\gamma$ to $\gamma+ \sum_{e\in S_j} d_e$.  Comparing these two computations of the monodromy,
the formula for $e_{abj}$ follows.
\end{proof}
\par
The torus-translate $T$ defines via the projection $p_\red\colon  \IA \to \IA_\red$
a torus translate $T_\red$ in $\IA_\red$.
Conversely every translate of a torus  $T_\red \subset  \IA_\red$ determines a
torus-translate $T \subset \IA$, since $R_{abj} = R_{bj}/R_{aj}$. We will thus
subsequently work with tori in $\IA_\red$ only. Such a torus is parameterized as
$$T_\red=\{(c_{ij}t^{e_{ij}}) \colon  1 < i \leq k,\,\, 1\leq j \leq \ell, \quad  t \in \cx^*\}.$$
\par
From now on we restrict our attention to $n=6$ and $m_i=1$, i.e.\ to a
stratum of type $(6;1,1,1,1)$.
\par
\begin{lemma}
  \label{lem:three_matrices}
  Suppose that $T_\red = p_\red(T)$ for some torus translate $T$ satisfying conditions a), b), c),
  and d) above. Then the tuple of exponents $e_{ij}$ is in the row span of one of the following
  three matrices
  $$ M_1 = \left(\begin{smallmatrix}
      1 & 1 & 1 & 1 & 1 & 1 & 0 & 0 & 0 \\
      0 & 1 & 1 & 0 & 1 & 1 & 0 & 1 & 1 \\
      0 & 0 & 1 & 0 & 0 & 1 & 0 & 0 & 0 \\
    \end{smallmatrix}
  \right), \quad M_2 = \left(\begin{smallmatrix}
      1 & 1 & 1 & 1 & 1 & 1 & 0 & 0 & 0 \\
      0 & 1 & 1 & 0 & 1 & 1 & 0 & 1 & 1 \\
      0 & 0 & 0 & 0 & 0 & 1 & 0 & 0 & 1 \\
    \end{smallmatrix}
  \right), \quad M_3 = \left(\begin{smallmatrix}
      1 & 0 & 0 & -1 & 0 & 0 & 0 & 0 & 0 \\
      0 & -1 & 0 & 0 & 0 & 0 & 0 & 1 & 0 \\
      0 & 0 & 0 & 0 & 0 & 1 & 0 & 0 & -1 \\
    \end{smallmatrix}
  \right).
  $$
  The columns appear in the order
  $(R_{21},R_{31},R_{41},R_{22},R_{32},R_{42},R_{23},R_{33},R_{43})$.
\end{lemma}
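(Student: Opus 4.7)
The plan is a direct case analysis based on the enumeration of dual trees $\cT_T$ given by the corollary preceding Proposition~\ref{prop:exptuple}. Since $\dim T = 1$, the exponent tuple $(e_{ij}) \in \IZ^9$ has the form $d_1 v_1 + d_2 v_2 + d_3 v_3$, where $d_k$ is the Dehn twist number along the $k$-th edge of $\cT_T$ and $v_k \in \IZ^9$ is the vector obtained by setting $d_k = 1$ and the other twists to zero in the formula of Proposition~\ref{prop:exptuple}. It thus suffices, for each of the three trees A, B, C of Figure~\ref{fig:Degeneration graphs}, to compute the triple $(v_1, v_2, v_3)$ and identify its $\IQ$-span with the row span of one of $M_1, M_2, M_3$; for trees obtained by collapsing one or more edges, the corresponding $d_k$ vanish and the exponent vector automatically lies in the same row span.

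For tree~A in the labeling shown, the segment $S^{(i)}$ from $z_1$ to $z_i$ traverses $i-1$ consecutive edges and each subsegment $S_j$ is read off the figure. A direct application of Proposition~\ref{prop:exptuple}, with careful bookkeeping of the orientations of the $S_j$ relative to $S$, shows that $(v_1, v_2, v_3)$ equals $-1$ times the three rows of $M_1$. The same procedure for tree~B produces $-1$ times the rows of $M_2$, the only qualitative change from tree~A being that the pole $y_1$ now lies on the $z_3$-component instead of the $z_4$-component, which alters the contribution of $d_3$ to $e_{41}$ and $e_{43}$.

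For the Y-shaped tree~C, I would first invoke the renumbering freedom from the corollary above to place the central-vertex zero at position $z_1$, so that the three leaf components carry $z_2, z_3, z_4$ together with two poles each. Lemma~\ref{le:treeconstraints}(i) forces each pair $(x_j, y_j)$ to have its two members on two distinct leaves; combined with the freedom to permute the three pair-indices and to swap $x_j \leftrightarrow y_j$ within each pair, this allows me to normalize the labels so that each leaf carries one $x$-pole and one $y$-pole of different indices in a cyclic pattern around the three leaves. With this canonical labeling every segment $S^{(i)}$ consists of a single edge emanating from the center, each $S_j$ is either empty or equal to that edge, and a short calculation then yields precisely the three rows of $M_3$.

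The main obstacle is the normalization step for tree~C: a naive pole labeling that already satisfies Lemma~\ref{le:treeconstraints}(i) can produce basis vectors whose row span differs from that of $M_3$, so the identification with $M_3$ emerges only after the canonical choice described above. Verifying that every admissible labeling of tree~C can be brought to this canonical form by a combination of zero renumberings, permutations of pair-indices, and swaps $x_j \leftrightarrow y_j$ within pairs is a short combinatorial exercise resting only on the leaf-partition constraint of Lemma~\ref{le:treeconstraints}(i) and the requirement that the two poles on each leaf come from different pairs.
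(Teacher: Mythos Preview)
Your approach is correct and coincides with the paper's: for each of the three tree types one applies Proposition~\ref{prop:exptuple} with $(d_1,d_2,d_3)$ running over the standard basis vectors, and the resulting three exponent vectors span the row space of the corresponding matrix. Your treatment of tree~C is in fact more careful than the paper's one-line proof: as you observe, the matrix $M_3$ arises only after relabeling so that $z_1$ sits at the central vertex (the figure places $z_4$ there), and your invocation of the renumbering freedom from the preceding corollary, together with the cyclic pole arrangement on the leaves, is exactly what is needed to land on $M_3$.
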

\par
\begin{proof}
The rows correspond to the exponents associated with $$(d_1,d_2,d_3) \in\{(1,0,0),(0,1,0),(0,0,1)\},$$
calculated according to Proposition~\ref{prop:exptuple}, in each of the three cases in 
Figure~\ref{fig:Degeneration graphs}.
\end{proof}
\par

\begin{proof}[Proof of Proposition~\ref{prop:3presctorsion}]\footnote{This proof is heavily
    computer-based.  All of the assertions below were verified by {\tt sage} \cite{sage}.  The file {\tt
      g3fin_ch9.sage} contains all of the calculations, and is available 
on the authors' web pages.}
  By the above discussion and Lemma~\ref{lem:three_matrices}, we must show that there is no vector
  $N$ contained in the row-span of one of the matrices $M_i$ and corresponding torus-translate $\bba
  T_N\subset\IA_\red'$ satisfying these properties:
  \begin{itemize}
  \item $\bba T_N$ is generically contained in the image $\CR^\red(\moduli[0,10]) \subset
    \mathcal{Y}\subset\IA_\red'$.
  \item  The opposite-residue condition $\Resi_{x_i}\omega_\infty = -\Resi_{y_i}$  is
    satisfied along $\bba T_N$.
  %\item The ratios $\Resi_{x_i}\omega_\infty/\Resi_{y_i}\omega_\infty$ are constant along $\bba T_N$.  
  \end{itemize}

  For computational convenience, we work in the full affine plane $\IA_\red\supset \IA_\red'$.  Let
  $h_1,h_2,h_3 \in \ratls[t_{ij}]$ be the numerators of the rational functions ${\rm Eq}(4,1,2)$,
  ${\rm Eq}(4,1,3)$, and ${\rm Eq}(4,2,3)$, which cut out $\mathcal{Y}$ by
  Lemma~\ref{le:CRproperties}.  The ideal $I = (h_1,h_2,h_3)\subset\ratls[t_{ij}]$ is prime, so it
  cuts out the $\ratls$-Zariski-closure $\barcY$ of $\mathcal{Y}$ in $\IA_\red$ (by the same
  argument as in Lemma~\ref{lem:cYgeomirred}, $\barcY$ is the Zariski closure of $\cY$ as $\cY$ has
  smooth rational points, though we do
  not need this).  The rational map $\CR^\red$ is a birational equivalence between $\moduli[0,10]$
  and $\barcY$.

  As we are only interested in torus-translates contained in the image of $\moduli[0,10]$, we define
  the \emph{peripheral divisor} $D_0 = \IA_\min \setminus \CR^\min(\moduli[0,10])$, which we compute
  as follows.  We identify $\moduli[0,10]$ with an open subset of $\cx^7$ with coordinates $z_4,
  x_i, y_i$ (with $i=1,2,3$) using the standard normalization of the zeros, $z_1 = \infty$, $z_2 =
  0$, and $z_3 =1$.  The divisor $D_1 = \cx^7 \setminus \moduli[0,10]$ consists of $21$ hyperplanes,
  each corresponding to the collision of two of the marked points.  Given a hyperplane $H\subset
  D_1$ cut out by an affine polynomial $h$, the numerator of the rational function $h\circ
  (\CR^\min)^{-1}$ cuts out a divisor in $\IA_\min\setminus\CR^\min(\moduli[0,10])$.  We collect the
  irreducible factors of the numerators of these $21$ rational functions, as well as the irreducible
  factors of the denominators of the coordinates of $(\CR^\min)^{-1}$ (which express the condition
  that a marked point is colliding with $z_1=\infty$).  Together these polynomials cut out
  $36$ $\ratls$-irreducible divisors in $\CR^\min$ whose union is the peripheral divisor $D_0$.

  We also define the peripheral divisor $D = p_\min^{-1}(D_0) \subset \IA_\red$, and
  $J\subset\ratls[t_{ij}]$ the corresponding ideal.  The rational map $\CR^\red$ then induces an
  isomorphism $\CR^\red \colon \moduli[0,10] \to \barcY \setminus D$.

  We now apply the torus-containment algorithm to the variety $\barcY$ with initial subspaces $M_1,
  M_2, M_3$.  The result is a list of $554$ subspaces of $\zed^9$ ($3$ of rank three, $97$ of rank
  two, and $454$ of rank one), each contained in the row span of one $M_i$.  For each subspace $N$,
  there is a subvariety $V_N\subset \cx^9$ cut out by an ideal $I_N\subset\ratls[a_{ij}]$
  (with indices $i=2,3,4$, $j=1,2,3$) such that the torus-translate $\bba T_N$ is contained in $\barcY$ if and
  only if $\bba\in V_N$.  We now check that none of these varieties yield a torus-translate
  satisfying all of our constraints.

  First, we rule out those subspaces $N$ which do not in fact yield any torus-translates, that is,
  when $V_N$ is contained in a coordinate hyperplane.  In terms of the torus-containment algorithm,
  this occurs when for one of the polynomials $h_i$ defining $\barcY$, the partition of a
  set of exponent-vectors induced by $N$ has a singleton.  This condition rules out all but $78$ of
  the $554$ subspaces.  

  We then rule out subspaces $N$ parameterizing only peripheral torus-translates, that is, contained
  in the peripheral divisor $D$.  There is a peripheral divisor $D_N\subset \cx^9$ parameterizing
  coefficients $\bba$ such that $\bba T_N \subset D$, cut out by the coefficient ideal (defined in
  \S\ref{sec:torusalgo}) $J_N\subset
  \ratls[a_{ij}]$.  Nonperipheral torus-translates are then
  parameterized by $V_N \setminus D_N$, whose Zariski-closure is cut out by the saturation ideal
  $K_N = \bigcup_{i=1}^\infty I_N:J_N^i$, which we compute for each remaining $N$ (see
  for example \cite{GreulPfister}[\S1.8.9] for information on saturation ideals).  In particular,
  if $K_N = (1)$, then $V_N\subset D_N$, and this $N$ yields only peripheral torus-translates.  This
  condition rules out all but $17$ subspaces, and the ones which remain are only rank-one.

  We now apply the opposite-residue condition to each of the remaining vectors $N$.  The numerators
  of the rational functions $\Resi_{x_i}\omega_\infty / \Resi_{y_i} \omega_\infty +1$ ($i=1,2,3$)
  generate an ideal $L\subset\ratls[t_{ij}]$ with coefficient ideal $L_N\subset\ratls[a_{ij}]$,
  which cuts out the variety parameterizing those $\bba$ such that $\bba T_N$ satisfies the
  opposite-residue condition.  We then compute the saturation ideals $\mathfrak{I}_N =
  \bigcup_{i=1}^\infty (K_N + L_N):J_N^i$.
  For all but one of the remaining vectors $N$, we have $\mathfrak{I}_N =
  (1)$, meaning there is no nonperipheral torus-translate corresponding to $N$ which satisfies the
  opposite-residue condition.

  For the final remaining vector $N$, we were not able to compute the saturation ideal
  $\mathfrak{I}_N$ directly.  In this case, we instead compute the primary decomposition of $K_N$.
  For each associated prime $P_i$, we compute that $\bigcup_{i=1}^\infty (P_i + L_N):J_N^i = (1)$,
  which implies that $\mathfrak{I}_N = (1)$.
\end{proof}

\section{Intermediate strata: Using the torsion condition}
%%%%%%%%%%%%%%%%%%%%%%%%%%%%%%%%%%%%%%%%%%%%%%%%%%%%%%%%%%%%
\label{sec:other-strata-genus}

In this section we prove Theorem~\ref{thm:intromainfin} for all remaining strata.  We maintain the
general hypothesis that $f\colon \cX \to C$ is the family over a \Teichmuller curve, generated by an
algebraically primitive Veech surface $(X,\omega)$.
%Recall that we write ${\rm div}(\omega) = \sum_{i=1}^r k_i z_i$ and denote by $D_i \subset
%\ol{\cX}$ the section determined by $z_i$.
\par
%%%%%%%%%%%%%%%%%
\subsection{The stratum $\omoduli[3](2,1,1)$}
%%%%%%%%%%%%%%%%%

Consider a stable form $(X_\infty, \omega_\infty)$ which is the limit of a cusp of an \APTC in
$\omoduli[3](2,1,1)$.  In order to apply Theorem~\ref{thm:pantsless_finiteness}, we need to check
that there are finitely many possibilities for the non-pants components of $(X_\infty,
\omega_\infty)$.  There are three cases to consider:
\begin{itemize}
\item $(X_\infty, \omega_\infty)$ has a component of type $(4; 2)$ and either two pants components
  or a component of type $(4; 1,1)$.
\item $(X_\infty, \omega_\infty)$ has a component of type $(5; 2,1)$ and one pants component.
\item $(X_\infty, \omega_\infty)$ is irreducible.
\end{itemize}

We now establish finiteness for each of these cases in turn.

\begin{prop}
  \label{prop:42}
  A limiting stable form of an \APTC in $\omoduli[3](2,1,1)$ has no irreducible components of type
  $(4; 2)$.
\end{prop}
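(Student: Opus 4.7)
The plan is to argue by contradiction: suppose $(X_\infty,\omega_\infty)$ has an irreducible component $Y$ of type $(4;2)$. Identify the normalization of $Y$ with $\proj^1$, placing the double zero of $\omega$ at $\infty$ and the four nodes at distinct finite points $x_1,\dots,x_4\in\IC$. Then
\[
\omega_\infty|_Y \;=\; \frac{c\,dz}{\prod_{i=1}^4(z-x_i)},\qquad r_i := \Resi_{x_i}\omega_\infty|_Y = c/N_i,\qquad N_i := \prod_{j\neq i}(x_i-x_j).
\]
By Proposition~\ref{prop:HNFisEigsplit} applied to $\omoduli[3](2,1,1)$, the second eigenform $\omega^\sigma$ vanishes along the double-zero section $D_1$, so $\omega^\sigma_\infty|_Y$ has a zero at $\infty$. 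If $\omega^\sigma_\infty|_Y$ were proportional to $\omega_\infty|_Y$, then every $\sigma(r_i)/r_i$ would equal a common scalar in $F$, forcing every $r_i/r_j\in\ratls$ and contradicting algebraic primitivity of the trace field $F$. Hence $\omega^\sigma_\infty|_Y$ has exactly a simple zero at $\infty$ and one further simple zero at a point $\alpha\in\IC$:
\[
\omega^\sigma_\infty|_Y \;=\; \frac{c'(z-\alpha)\,dz}{\prod_{i=1}^4(z-x_i)}.
\]

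Real multiplication identifies the residue of $\omega^\sigma$ at $x_i$ with the Galois conjugate $\sigma(r_i)$ of $r_i$, and comparing with the explicit form gives
\[
\sigma(r_i)/r_i \;=\; (c'/c)(x_i-\alpha)\in F,\qquad i=1,\dots,4.
\]
In particular $(c'/c)(x_i-x_j)\in F$ for all $i,j$, so an affine change of coordinates on $Y$ fixing $\infty$ reduces us to the situation $x_1,\dots,x_4,\alpha,c,c'\in F$. Substituting $\sigma(r_i)/r_i=(\sigma(c)/c)\cdot N_i/\sigma(N_i)$ then converts the above into the polynomial system
\[
(x_i-\alpha)\,\sigma(N_i) \;=\; K\,N_i,\qquad i=1,2,3,4,
\]
with $K\in F^\times$ independent of $i$. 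Taking the product over $i$ and using $\prod_i N_i = D^2$, with $D:=\prod_{i<j}(x_i-x_j)$, yields the auxiliary identity $P(\alpha)\,\sigma(D)^2 = K^4\,D^2$ for $P(z)=\prod_i(z-x_i)$.

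The main obstacle will be to convert this overdetermined system into actual nonexistence of solutions, since \emph{a priori} it could still admit finitely many spurious points. The plan is to follow the template of the proofs in $\omoduli[3](4)^\hyp$ and $\omoduli[3](4)^\odd$: after the affine normalization $x_1=0,\,x_2=1$, interpret the four coupled relations above as an intersection of the two-dimensional moduli surface $\cY$ of type-$(4;2)$ forms (with effective coordinates $(x_3,x_4,\alpha)$) with a codimension-two coupled subgroup of $\IG_m^n\times\IG_a^n$, apply Theorem~\ref{thm:htlogbound} to bound the height of such cusp points, and combine with the degree bound on $[\IQ(\text{cusp}):\IQ]$ coming from real multiplication and Northcott's theorem, reducing to a finite explicit list that is then ruled out directly or by the torus-containment algorithm of \S\ref{sec:torusalgo}. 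A secondary ingredient available for handling the exceptional (semi-torsion anomalous) locus where the height bound degenerates is the third eigenform $\omega^\tau|_Y = f(z)\,dz/\prod_i(z-x_i)$, whose degree-two numerator $f\in F[z]$ is pinned down by the four Galois-interpolation constraints $f(x_i) = \tau(c)\,N_i/\tau(N_i)$; the compatibility of these four conditions on a three-dimensional space of polynomials supplies the extra rigidity needed to verify the analogue of the $\Qta{\cY}$-hypothesis of Theorem~\ref{thm:htlogbound}.
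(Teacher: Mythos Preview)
Your proposal is a programme, not a proof, and the programme misfires at the crucial step. Theorem~\ref{thm:htlogbound} applies to points on a surface satisfying a \emph{coupled} pair of relations $\prod x_i^{b_i}=1$ and $\sum b_i y_i=0$ with a common integer vector $(b_i)$. In the arguments for $\omoduli[3](4)$ that you are trying to imitate, the multiplicative relation is the cross-ratio equation \eqref{eq:CREq}, a feature of the real-multiplication boundary for \emph{irreducible} trinodal degenerations; there is no analogue available on a single component $Y$ of a reducible stable curve. The Galois system $(x_i-\alpha)\,\sigma(N_i)=K\,N_i$ that you write down is not of the required shape, and you give no mechanism for extracting an integer-exponent multiplicative constraint from it. Even if a height bound could be salvaged, you would only obtain finiteness, with a residual finite list still to be eliminated --- none of which you attempt.

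The paper's argument is a one-paragraph parity obstruction that bypasses all of this. The four poles of $Y$ come in two pairs $(x_i,y_i)$ with opposite residues $\Resi_{x_i}\eta=-\Resi_{y_i}\eta$; the unique M\"obius involution $J$ swapping each pair then satisfies $J^*\eta=-\eta$, so $J$ fixes the double zero $p$. Since $J$ also negates the Galois-conjugate residues, one has $J^*\eta^\sigma=-\eta^\sigma$ as well, and hence $\eta^\sigma$ can only vanish to even order at the $J$-fixed point $p$. But Proposition~\ref{prop:HNFisEigsplit} forces $\eta^\sigma(p)=0$, and the zero is simple (as you yourself note, a double zero would make $\eta^\sigma$ proportional to $\eta$). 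This is already a contradiction. The structural ingredient you are missing is the residue pairing and the involution it induces; once you see it, the proposition is immediate and no diophantine machinery is needed.
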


\begin{proof}
  Consider an irreducible component $(Y, \eta)$ of a limiting stable form of type $(4; 2)$.  In
  either of the configurations described above, the four poles of $(Y, \eta)$ come in two pairs
  $(x_i, y_i)$, with $i=1,2$, such that $\Resi_{x_i}\eta = - \Resi_{y_i}\eta$ for each $i$.  The
  involution $J$ of $Y$ swapping each pair $(x_i, y_i)$ then satisfies $J^*\eta = -\eta$, in
  particular the unique zero $p$ of $\eta$ is fixed by $J$.

  By Proposition~\ref{prop:HNFisEigsplit}, the form $\eta^\sigma$ with Galois-conjugate residues
  also vanishes at $p$.  This must be a simple zero, since $\eta^\sigma$ is not a constant multiple
  of $\eta$.  But $\eta$ can only vanish to even order at a fixed point of $J$, as $J^*\eta =
  -\eta$, a contradiction.
\end{proof}

\begin{prop} \label{prop:521}
A stable curve of type $(5;2,1)$ prescribes torsion and is determined by
torsion.
\end{prop}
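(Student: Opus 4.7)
The strategy parallels Proposition~\ref{prop:twozerospresctorsion} for the $(4;1,1)$ case. Normalize via $\Aut(\proj^1)$ so that $z_1 = \infty$ is the double zero, $z_2 = 0$ is the simple zero, and $x_1 = 1$. The form then has the shape $\omega = Cz \,dz/\prod_{i=1}^5(z-x_i)$, and the cross-ratios in condition (iii) reduce to $[z_1,z_2,x_i,x_j] = x_j/x_i$. Its residues $r_k = C x_k/\prod_{j \neq k}(x_k-x_j)$ are determined by the $x_i$ up to the scaling $C$ and are subject to four linear relations $\sum r_k = 0$, $\sum r_k x_k = 0$, $\sum r_k x_k^2 = 0$, and $\sum r_k/x_k = 0$. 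The partitions of $\{1,\ldots,5\}$ whose parts all have size at least two are the single-part partition and the partitions into parts of sizes $2$ and $3$, so the argument splits into two cases.

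For the single-part partition, every ratio $x_i/x_j$ is an $N$-th root of unity, so $x_i = \zeta_i$ with $\zeta_1 = 1$. The residues are rational functions in $\zeta_2,\ldots,\zeta_5$ of bounded degree. For ``determined by torsion,'' a bound on $N$ gives finitely many $(\zeta_i)$-tuples, hence finitely many forms. For ``prescribes torsion,'' the height-zero $\zeta_i$ together with~\eqref{eq:htupperbd} bound the residue heights; combined with the degree-$g$ bound on the residue field, Northcott's theorem yields finitely many candidate residue tuples. For each such tuple the preimage in $\IG_m^4$ is a subvariety, and Laurent's theorem bounds the order of its torsion points unless it contains a torsion-translate of a positive-dimensional subtorus.

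For a two-part partition, say $\{1,2\}\sqcup\{3,4,5\}$ after relabeling, we have $x_2=\zeta$ and $x_4=\eta x_3$, $x_5=\mu x_3$ with $\zeta,\eta,\mu$ roots of unity while $x_3$ is an additional free parameter. The residue partition forces $r_1+r_2=0$. The two vanishing conditions at $0$ and $\infty$ each give a linear equation in $x_3$ with coefficients depending on the residues and $(\zeta,\eta,\mu)$; eliminating $x_3$ between them produces a polynomial identity relating only the residues and the roots of unity, to which the same Northcott-plus-Laurent pipeline applies.

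The principal difficulty is excluding torsion-translated subtori in the residue-fiber varieties. In the single-part case the variety lies in $\IG_m^4$ and is cut out by explicit polynomials, within reach of a direct analysis analogous to Lemma~\ref{lem:So} or of the torus-containment algorithm of Section~\ref{sec:torusalgo}. In the two-part case one must, after elimination of $x_3$, rule out subtori in a three-dimensional variety in $(\zeta,\eta,\mu)$; here either a McMullen-style ratio-of-sines argument (as used in the proof of Proposition~\ref{prop:twozerospresctorsion}) or the same algorithmic check should close the argument.
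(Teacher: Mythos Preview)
Your overall strategy is right and parallels the paper's, but the proposal has a genuine gap and a confusion in the setup of the two-part case.

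\textbf{Setup of the two-part case.} Once you write the form as $\omega = Cz\,dz/\prod_{i=1}^5(z-x_i)$, the simple zero at $0$ and the double zero at $\infty$ are automatic; there are no further ``vanishing conditions at $0$ and $\infty$'' to impose. The only constraint beyond the torsion normalizations is the partition condition $r_1+r_2=0$. In your coordinates ($x_1=1$, $x_2=\zeta$, $x_4=\eta x_3$, $x_5=\mu x_3$) this reads
\[
(\zeta-x_3)(\zeta-\eta x_3)(\zeta-\mu x_3)=\zeta(1-x_3)(1-\eta x_3)(1-\mu x_3),
\]
a single cubic in $x_3$ with leading coefficient $\eta\mu(\zeta-1)\neq 0$. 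This is precisely the paper's equation~\eqref{eq:21residue} up to relabeling (the paper normalizes a pole in the three-element part instead). So ``two linear equations in $x_3$ to be eliminated'' is not what happens; you get one cubic, and its degree is what gives boundedness of the height of $x_3$ over the roots of unity via Lemma~\ref{lem:htcompare}.

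\textbf{The missing step.} You correctly flag that excluding torsion-translated subtori in the residue fibers is the crux, but you do not carry it out; appealing to ``the algorithm'' or ``a ratio-of-sines argument'' is not a proof here. The paper uses neither of these. Instead it runs a limiting argument in the spirit of \cite[Proposition~13.9]{BaMo12}: assuming a one-dimensional torsion coset in $(\zeta,\eta_2,\eta_3)$, parametrize it by $a\mapsto(\zeta^a,\eta_2^a,\eta_3^a)$, giving a family $\omega_\infty(a)$ with the same fixed residues. The zero conditions give two independent ways to solve for the free pole as a function of $a$ (from the $z^3$-coefficient and from the linear coefficient of the numerator); taking $a\to 0$ with l'H\^opital yields two expressions for $x_1(0)$ that are reciprocals of one another, hence a polynomial identity
\[
\bigl((q_2-q_3)r_2-q_3r_1-q_1r_4\bigr)\bigl((q_2-q_3)r_2-q_3r_1+q_1r_4\bigr)=0
\]
with rational $q_i$. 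Either factor gives a $\ratls$-linear dependence among $r_1,r_2,r_4$, contradicting condition~(ii). The single-part case is dispatched the same way as in the proof of Proposition~\ref{prop:twozerospresctorsion}: a $t\to 0$/$t\to\infty$ analysis of the exponents forces two residues to satisfy $r_i/r_j=\pm 1$, again contradicting linear independence. Until you supply an argument of this kind, the proof is incomplete.
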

\par
\begin{proof} We compare with the case of type $(4;1,1)$. Here, too, 
there are at most two connected components of the complement of $Y$
in the dual graph. The case that there is only one connected component
follows exactly along the same lines as for  $(4;1,1)$. 
\par
In the case of two connected components we may suppose that
the one-form is 
\begin{equation}
\label{eq:om21zeros}
 \omega_\infty|_Y = \left(\sum_{i=1}^3 \frac{r_i}{z-u_i} + \frac{r_4}{z - x_1} - \frac{r_4}{z - x_2} 
\right) dz 
 = \frac{z^2 dz}{\prod_{i=1}^{3}(z-u_i)\prod_{i=1}^2 (z-x_i)}.
\end{equation}
We may normalize $u_1=1$ and by the torsion condition $u_i = \eta_i$
is a root of unity for $i=2,3$ and $x_2 =  \zeta x_1$  for some
root of unity $\zeta \neq 1$. The residues at $x_1$ and $x_2$ add
up to zero and this amounts to the condition
\begin{equation}
\label{eq:21residue} 
\zeta^2 \prod_{i=1}^3 (x_1-u_i) -  \prod_{i=1}^3(\zeta x_1 - u_i) = 0.
\end{equation}
Since  $\zeta \neq 1$, this polynomial (with coefficients
in $\ratls(\zeta,\eta_i)$) has degree exactly three in $x_1$.
For fixed torsion order, there is a finite number of choices for
the roots of unity and for each of them there are at most three
possibilities for $x_1$. This shows that forms of this type are determined
by torsion.
\par
We may view \eqref{eq:21residue} as defining a hypersurface $H$
in $\IA^4$ with coordinates $x_1,\zeta$,$u_2$ and $ u_3$. Over the open set 
 $\zeta \neq 1$ the projection $Q$ to $\IA^3$ forgetting $x_1$ is finite,
in fact degree three. Since we are interested in points on
$H$ whose image consists of roots of unity, all the possibilities
for $x_1$ have bounded height by Lemma~\ref{lem:htcompare}. Being residues, 
the $r_i$ are images of a rational map $\Resi$ on $H$, and consequently
their heights are bounded, too. Since the ratios $r_i/r_j$ lie in a field
of degree three, we conclude that there is only a finite number
of possibilities for the ratios $r_i/r_j$.
\par
We now normalize $r_1=1$ and fix one of the finitely many choices for the
other $r_i$. We still have to show that there is only a finite number of
roots of unity that give rise to such a tuple of residues. We follow
the argument of \cite[Proposition~13.9]{BaMo12}. Since there is only
one relative period, and since $Q$ is finite, $Q(\Resi^{-1}(r_1:r_2:r_3:r_4))$
is a curve inside $(\cx^*)^3$. If the claim was false, this curve has
to be a translate of a subtorus, in fact a subtorus as explained in loc.\ cit.
If this were true, we could find roots of unity $\eta_2, \eta_3$ and $\zeta$ 
and a function $x_1(a)$ such that (defining $\eta_1 =1$)
\begin{equation}
\label{eq:witha_om21}
 \omega_\infty|_Y = \left(\sum_{i=1}^3 \frac{r_i}{z-\eta_i^a} + \frac{r_4}{z - x_1(a)} - \frac{r_4}{z - \zeta^ax_1(a)} 
\right) dz 
\end{equation}
has a double zero at $z=0$ and a simple zero at $z=\infty$. Clearing denominators, 
we can either use the $z^3$-term or the linear term to solve for $x_1(a)$ and
take the limit $a \to 0$. We obtain
$$ x_1(0) = \frac{(q_2-q_3)r_2 - q_3r_1}{q_1r_4} \quad \text{and} \quad x_1(0) = 
\frac{q_1r_4}{(q_2-q_3)r_2 - q_3r_1}, $$
where $\eta_j = e^{2\pi i q_j}$ and $\zeta = e^{2\pi i q_1}$, which implies that
$$ ((q_2-q_3)r_2-q_3 r_1-q_1r_4)((q_2-q_3)r_2-q_3 r_1+ q_1r_4) = 0.$$
The vanishing of either factor contradicts the fact that $\{r_1,r_2,r_4\}$ is a
$\ratls$-basis of $F$ and this completes the proof that such a stratum prescribes
torsion.
\end{proof}
\par
\begin{prop} \label{prop:6:211}
A stable curve of type $(6;2,1,1)$ at the cusp of a \Teichmuller curve
generated by $(X,\omega) \in \omoduli[3](2,1,1)$ 
prescribes torsion and is determined by torsion.
\end{prop}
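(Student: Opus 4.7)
The statement has two parts which I would handle separately. The ``determined by torsion'' assertion is immediate from Proposition~\ref{prop:3detbytorsion}, since a form of type $(6;2,1,1)$ has $k=3\ge 3$ zeros, so no new argument is needed there.

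For the ``prescribes torsion'' assertion my plan is to follow the strategy of Proposition~\ref{prop:3presctorsion}, adapted to the present combinatorial type. Normalize so that the double zero sits at $z_1=\infty$ and the two simple zeros at $z_2=0$, $z_3=1$. With $k=3$ and $\ell=3$, Lemma~\ref{le:CRproperties} gives that $\CR^\red$ is a birational equivalence between $\moduli[0,9]$ and an open subset of $\IA_\red\cong\IA^6$; no ${\rm Eq}(i,j,j')$ equations arise because those require $i\ge 4$. Let $\cV\subset\IA_\red$ be the subvariety cut out by the stability condition $\Resi_{x_j}\omega_\infty+\Resi_{y_j}\omega_\infty=0$ for $j=1,2,3$, the condition that $z_1=\infty$ is a \emph{double} zero (one extra polynomial equation coming from the vanishing of a further Laurent coefficient at infinity), and the Harder--Narasimhan condition from Proposition~\ref{prop:HNFisEigsplit} that the second eigenform $\omega^{(2)}_\infty$ also vanishes at $z_1$.

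If the proposition failed, there would be an infinite sequence of admissible forms with unbounded torsion orders, and by Laurent's theorem $\cV$ would have to contain a positive-dimensional torus-translate. Reducing to a one-parameter family $T$ and passing to the limit $t\to 0$, I would associate to $T$ a dual tree $\cT_T$ subject to the constraints of Lemma~\ref{le:treeconstraints}. For $k=3$, $n=6$ with a marked double zero the list of combinatorial types is short, in analogy with Figure~\ref{fig:Degeneration graphs}, and Lemma~\ref{lemma:3zerosprescres} restricts the residue tuples to finitely many possibilities. For each allowable tree I would compute the exponent matrix via Proposition~\ref{prop:exptuple} and feed the ideal defining $\cV$, together with that matrix as the initial subspace, into the torus-containment algorithm from Section~\ref{sec:torusalgo}. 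One then checks with computer-algebra assistance that in every case the resulting variety of coefficients lies entirely in the peripheral divisor (the locus where two marked points collide).

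The hard part is exactly this final computational check, entirely parallel to what is done in the proof of Proposition~\ref{prop:3presctorsion}. The extra double-zero and Harder--Narasimhan equations reduce the ambient dimension by two, which ought to shorten rather than lengthen the enumeration, but the analysis still requires computing the coefficient ideals and saturating against the peripheral ideal, exactly as in Section~\ref{sec:proof4zeros}. I would expect the bookkeeping to be the main practical obstacle; conceptually, nothing beyond the techniques already assembled in Sections~\ref{sec:torusalgo} and \ref{sec:pres-tors-princ} should be required.
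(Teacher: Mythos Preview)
Your handling of ``determined by torsion'' via Proposition~\ref{prop:3detbytorsion} is correct and matches the paper's implicit use of that result.

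For ``prescribes torsion'', your plan is plausible in outline but considerably heavier than what the paper does, and there is one soft spot. You propose to fold the Harder--Narasimhan condition into the defining ideal of a variety $\cV$ and then run the torus-containment algorithm. But the HN condition $\sum_j r_j^\sigma(x_j-y_j)=0$ involves the Galois-conjugate residues $r_j^\sigma$, which are not polynomial functions of the cross-ratio coordinates; they only become fixed constants \emph{after} the residue tuple has been pinned down. So the logical order has to be: first restrict to one of the finitely many residue tuples (via Lemma~\ref{lemma:3zerosprescres}), then impose the HN equation, then search for tori. Your write-up blurs this, and running the algorithm with symbolic $r_j^\sigma$ does not make sense.

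More importantly, the paper avoids the algorithm entirely. It observes that the double-zero condition at $\infty$ gives $\sum_j r_j(x_j-y_j)=0$, and the HN condition gives $\sum_j r_j^\sigma(x_j-y_j)=0$; together these force $(x_1-y_1:x_2-y_2:x_3-y_3)$ to equal the Galois conjugate of the dual basis, hence to be \emph{constant} along any putative torus $T$. Writing $x_j-y_j=(1-R_{2j})(1-R_{3j})/(R_{2j}-R_{3j})$ in the cross-ratio parameters $R_{2j}=c_jt^{e_j}$, $R_{3j}=d_jt^{f_j}$, an elementary valuation argument at $t=0$ then rules out every exponent configuration by hand. No tree enumeration, no coefficient ideals, no saturation is needed. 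Your approach would likely succeed with enough care, but the paper's route is both shorter and computer-free; the key idea you are missing is that the HN condition does not merely cut down dimension, it rigidifies the \emph{ratios} $(x_i-y_i)/(x_j-y_j)$ completely.
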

\par
The proof combines the fact that the residues are determined up to
finitely many choices by height bounds as in Section~\ref{sec:pres-tors-princ} 
and properties of the Harder-Narasimhan filtration.
\par
\begin{proof} 
The stable form is determined by the location of the 6 poles
and the three zeros, these we may assume to be at $z_1=\infty$,
$z_2 = 0$ and $z_3 = 1$ with $z_1$ 
corresponding to the double zero. As in the 
cases in the principal stratum we obtain a rational map 
$$\Resi\colon  \moduli[0,9] \dashrightarrow \proj^3$$
that associates to a point in $\moduli[0,9]$
the projective tuple of residues of the corresponding one-form
$$\omega_\infty = \frac {z(z-1) \, dz}
{\prod_{j=1}^3(z-x_j)(z-y_j)}.$$
The cross ratios $R_{2j} = y_j/x_j$ and $R_{3j} = (y_j-1)/(x_j-1)$
are roots of unity by the torsion condition.
By Lemma~\ref{lemma:3zerosprescres}, the fact that roots of unity
have bounded height implies there are at most finitely many
residue tuples $(r_1:r_2:r_3)$ lying in a field of degree three.
\par
Fix one of these residue tuples. If the statement of the proposition
was wrong, then there is a translate of a torus contained
in the fibers of $\Resi$ over such a tuple. We will rule out that
there is a one-dimensional such torus $T$, given by 
$$R_{2j} = c_jt^{e_j} \quad \text{and} \quad R_{3j} = d_jt^{f_j}.$$
We now use the conditions imposed by the Harder-Narasimhan filtration
to conclude that on the one hand 
\begin{equation*}
 \omega_\infty  = \left(\sum_{j=1}^3 \frac{r_j}{z-x_j} - \frac{r_j}{z - y_j} 
\right) dz 
\end{equation*}
has a double zero at $\infty$, and that on the other hand by Proposition~\ref{eq:HNfiltg3}
\begin{equation*}
 \omega_\infty^\sigma  = \left(\sum_{j=1}^3 \frac{r_j^\sigma}{z-x_j}
 - \frac{r_j^\sigma}{z - y_j} \right) dz 
\end{equation*}
also has a zero at $z_1=\infty$. Equating for the top degree terms in 
the numerator we find
that $\sum_{j=1}^3 r_j (x_j - y_j) = 0$ and  $\sum_{j=1}^3 r_j^\sigma (x_j - y_j) = 0$.
This implies that
$$(x_1-y_1: x_2-y_2 : x_3-y_3) = (s_1^\tau :  s_2^\tau:  s_3^\tau)$$
where the $s_i$ are the dual basis of $(r_1,r_2,r_3)$. From this we
deduce that the ratios $(x_i-y_i)/(x_j-y_j)$ have to be constant along $T$. Now
this differences are expressed in cross-ratios as 
$ x_j-y_j = (1-R_{2j})(1-R_{3j})/(R_{2j}-R_{3j})$, so that we have to rule out
that
$$ \frac{(1-c_1t^{e_1})(1-d_1t^{f_1}) \, (c_2t^{e_2}-d_2t^{f_2})}
{(1-c_2t^{e_2})(1-d_2t^{f_2}) \, (c_1t^{e_1}-d_1t^{f_1})}  = {\rm const}.$$ 
Using the valuation at $t=0$ of the numerator and denominator we deduce
that one of $e_1 = e_2$,  $e_1 = f_1$, $e_2 = f_2$ or $f_1 = f_2$ holds.
Switching the roles of $x_1$ and $y_1$ or $x_2$ and $y_2$ we may assume
that in fact $e_1 = e_2$ together with $e_1<f_1$ and $e_2<f_2$ hold. 
Degree considerations now imply that $f_1 = f_2$.
If $e_1 = 0$ then both $d_1 = d_2$ and $d_2/c_2 = d_1 /c_1$ hold or
$d_1 = d_1/c_1$ and $d_2 = d_2/c_2$. The first case is a contradiction since
$x_1 = y_1$ and the second case is a contradiction since $R_{21} = 1$, i.e.\ the 
pole of $\omega_\infty$ coincides with the zero along $T$. If $e_1>0$ then
$t^{f_i}$ is the largest $t$-power appearing in one of the linear factors.
We deduce that  $d_1=d_2$ and hence $R_{31} = R_{32}$ along $T$.
The case $e_1<0$ is ruled out the same way.
\end{proof}
\par
\begin{proof}[Proof of Theorem~\ref{thm:intromainfin}, case ${\omoduli[3]}(2,1,1)$]
  By Propositions~\ref{prop:42}, \ref{prop:521}, and \ref{prop:6:211}, the collection of \APTCs in
  this stratum is pantsless-finite, so finiteness follows by Theorem~\ref{thm:pantsless_finiteness}.
\end{proof}

% Take a direction joining the double zero $z_1$ to a simple zero $z_j$. 
% Either the corresponding stable curve in the limit is irreducible 
% or it consists of two components, one of type $(5;2,1)$ joined by three nodes 
% to a pair of pants. The torsion order of the divisor $z_j - z_1$
% is prescribed in both cases, by Proposition~\ref{prop:6:211} and 
% Proposition~\ref{prop:521} respectively.
% \par
% In order to apply Theorem~\ref{thm:pantsless_finiteness} we need to show that
% a component $Y$ of stable curve of type $(4;2)$ arising as the limit of a
% \Teichmuller curve in ${\omoduli[3]}(2,1,1)$ is impossible due to the
% conditions imposed by the Harder-Narasimhan filtration. In fact, we 
% normalize the holomorphic one-form $\omega_\infty|_Y$ to have poles
% at $z=0$ and $z = \infty$ has a double zero at $z=1$ if and only if
% the two remaining poles are $z=a$ and $z=1/a$. But such a surface has
% an involution $z \mapsto 1/z$. Hence any one-form on this $4$-pointed
% $\proj^1$ with a zero at $z=1$ has a double zero there. By Proposition~\ref{eq:HNfiltg3}
% and Proposition~\ref{prop:HNFisEigsplit}
% one of the other eigenforms $\omega^\sigma_\infty|_Y$ has a zero at $z=1$. 
% Consequently, this form agree with $\omega_\infty|_Y$ on $Y$. This contradicts
% that the normalized periods $r_1=1, r_2$ of  $\omega_\infty|_Y$ are part of
% a $\ratls$-basis of $F$.

%%%%%%%%%%%%%%%%%
\subsection{The stratum $\omoduli[3](2,2)^\hyp$}
%%%%%%%%%%%%%%%%%

\begin{proof}[Proof of Theorem~\ref{thm:intromainfin}, case ${\omoduli[3]}(2,2)^\hyp$]
See  \cite[Theorem~3.1]{Mo08}
\end{proof}
\par
In the language introduced above, the main ingredient of this paper (besides
a special case of Theorem~\ref{thm:moduli_bound} using N\'eron models) is 
that on a hyperelliptic
curve, a stable form of type $(2g; g-1,g-1)$ prescribes torsion and is
determined by torsion.

%%%%%%%%%%%%%%%%%
\subsection{The stratum $\omoduli[3](3,1)$}
%%%%%%%%%%%%%%%%%

\begin{proof}[Proof of Theorem~\ref{thm:intromainfin}, case ${\omoduli[3]}(3,1)$]
See \cite[Theorem~13.1]{BaMo12}
\end{proof}

Part of this proof is \cite[Lemma~13.5]{BaMo12} which states in the language
introduced above, that an irreducible stable curve of type $(6;3,1)$ is
{\em not} determined by torsion. This lemma states that this holds only
if we exclude three-torsion.
\par
We will see another instance of this phenomenon in the next case.
\par
%%%%%%%%%%%%%%%%%
\subsection{The stratum $\omoduli[3](2,2)^\odd$}
%%%%%%%%%%%%%%%%%

The hyperelliptic locus in this stratum has been dealt with in \S\ref{sec:hypin22odd} using
\cite{MatWri13}.  It remains to establish finiteness in the nonhyperelliptic locus.
The proof in this case parallels \cite[Section~13]{BaMo12}.  

We will see that
a component of type $(6;2,2)$ is determined by torsion only if we exclude $2$-torsion.
Using the hyperelliptic open-up of \cite{chenmoeller}, these excluded forms only arise as cusps of
\Teichmuller curves in the hyperelliptic locus.
\par
Let $\moduli[0,8]$ be the moduli space of $8$ distinct labeled points 
on $\PP^1$, corresponding to two points $z_1$ an $z_2$  (zeros) and three pairs
of points $x_i$,$y_i$, $i=1,2,3$ (poles). We associate to such a point the
one-form 
$$\omega_P = \frac{z^2dz}{\prod_{j=1}^3 (z-x_j)(z-y_j)}.$$
We normalize usually the two zeros to be at $z_1=0$ and $z_2=\infty$.
With this normalization, $\moduli[0,8]$ is naturally a subset of $\PP^5$.
Let $S(2,2) \subset \moduli[0,8]$  be the locus where $\omega_P$
satisfies the opposite residue condition $\Resi_{x_j} \omega_P = - \Resi_{y_j} \omega_P$
for $j=1,2,3$. The variety $S(2,2)$ is locally parameterized by the projective
4-tuple consisting of the three residues and one relative period, so $S(2,2)$ is 
three-dimensional.
\par
 Define the cross-ratio morphisms $Q_i\colon S(2,2)\to\Gm$ and $R_i\colon S(2,2)\to\Gm$ by
\begin{equation*}
  Q_i = [z_1,z_2,y_i,x_i] \qtq{and} R_i = [x_{i+1},y_{i+1}, x_{i+2}, y_{i+2}],
\end{equation*}
with indices taken mod $3$.  In the above normalization $Q_i = y_i/x_i$.  We
define $Q,\CR\colon S(2,2)\to\Gm^3$ by $Q = (Q_1,Q_2,Q_3)$ and $\CR = (R_1,R_2,R_3)$, and define
$\Resi\colon S(2,2)\to\proj^2$ by $\Resi(P) = (\Resi_{x_i}\omega_P)_{i=1}^3$.  Finally, 
given $\zeta = (\zeta_1, \zeta_2, \zeta_3)\in\Gm^3$, we define $S_\zeta(2,2)\subset S(2,2)$ 
to be the locus where $Q_i = \zeta_i$ for each $i$.
\par
\begin{lemma} \label{lem:22cusp_normal_form} Any irreducible stable form $(X,\omega) \in
  \omoduli[3](2,2)^\odd$ lying over a cusp of an algebraically primitive \Teichmuller curve $C$
  generated by $(X,\omega) \in \omoduli[3](2,2)^\odd$ is equal to $\omega_P$ for some $P \in
  S_{(\zeta_1,\zeta_2,\zeta_3)}(2,2) \in \CR^{-1}(T)$, where the $\zeta_i$ are non-identity roots of
  unity and where $T \subset \Gm^3$ is a proper algebraic subgroup. Moreover, if we normalize the
  components $(r_1:r_2:r_3)$ of $\Resi(P)$ such that $r_1 \in \ratls$, then $\{r_1,r_2,r_3\}$ is a
  basis of some totally real cubic number field.
\end{lemma}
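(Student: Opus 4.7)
The plan is to successively enforce the structural constraints on $(X,\omega)$ coming from (a) stability and being an irreducible limit, (b) algebraic primitivity and the real multiplication condition, and (c) the torsion condition of Theorem~\ref{thm:torsion_condition}. First, since $C$ is algebraically primitive and the cusp is irreducible, the normalization of $X$ has geometric genus zero, so $X$ is a rational curve with three pairs of identified points. On the normalization, $\omega$ is a meromorphic differential with six simple poles (occurring in opposite-residue pairs $(x_i,y_i)$, $i=1,2,3$) and with its zero divisor equal to $2z_1+2z_2$, which matches the stratum $\omoduli[3](2,2)^\odd$. After acting by $\Aut(\proj^1)$ to put $z_1 = 0$, $z_2 = \infty$, the shape of $\omega$ forces it to equal $\omega_P$ (up to a nonzero scalar) for a unique $P \in \moduli[0,8]$, and the opposite-residue condition is precisely the defining condition of $S(2,2)$.

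Next I would invoke the torsion condition: the divisor $z_1 - z_2$ represents a torsion point of $\Jac(X)$. By Lemma~\ref{le:conseqoftorsion} applied to $Y = X$ with $D_1, D_2$ the sections collapsing to $z_1, z_2$, there exists an admissible cover $h\colon X \to \proj^1$ with $h^{-1}(0) = \{z_1\}$ (with multiplicity) and $h^{-1}(\infty) = \{z_2\}$, constant on each part of the partition of the nodes. The partition here has pairs $\{x_i,y_i\}$ (since opposite residues yield the only possible partition of size at least $2$), so $h(x_i) = h(y_i)$ for each $i$. Transporting this to the normalization $\proj^1$ (with $z_1=0$, $z_2=\infty$), the map $h$ is a Laurent monomial $z \mapsto c z^N$, and the equality $h(x_i) = h(y_i)$ gives $y_i/x_i = \zeta_i$ for an $N$-th root of unity $\zeta_i$. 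Thus $Q_i \in \mu_\infty$. The roots $\zeta_i$ are non-identity: $\zeta_i = 1$ would force $x_i = y_i$, contradicting that the $x_i, y_i$ are distinct marked points of a stable form. This places $P$ in $S_{(\zeta_1,\zeta_2,\zeta_3)}(2,2)$.

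Then I would apply the real multiplication analysis from \S\ref{sec:background}. The stable form lies in the boundary of the real multiplication locus $\RM$, which by the cross-ratio equation~\eqref{eq:CREq} imposes a multiplicative relation $R_1^{a_1}R_2^{a_2}R_3^{a_3} = \zeta_E$ with the $a_i \in \IZ\ssm\{0\}$ from an admissible triple. This is exactly the condition that $\CR(P)$ lies in a proper algebraic subgroup $T \subset \Gm^3$ (the translate, by the torsion point $\zeta_E$, of the codimension-one subtorus cut out by $(a_1,a_2,a_3)$). For the last assertion, algebraic primitivity of $(X,\omega)$ and the fact that cusps correspond to periodic directions with $g=3$ cylinders imply, by the result from \cite{BaMo12} recalled at the end of \S\ref{sec:background}, that the tuple of residues $(r_1,r_2,r_3)$ is (up to rescaling and reordering) proportional to an admissible triple forming a basis of the totally real cubic trace field $F$; normalizing $r_1 \in \ratls$ then forces $\{r_1,r_2,r_3\}$ to be a $\ratls$-basis of $F$.

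The main obstacle I anticipate is step two: carefully identifying the partition of the nodes produced by Lemma~\ref{le:conseqoftorsion} with the opposite-residue pairing $(x_i,y_i)$. A priori, the admissible cover could be compatible with some coarser partition that merges pairs; one must rule this out using the $\ratls$-linear independence of the residues $(r_1,r_2,r_3)$, which forces each equivalence class of the partition to be a single residue-opposite pair. Once this identification is in place, the remaining steps are direct consequences of results already established in \cite{BaMo12} and the preceding sections.
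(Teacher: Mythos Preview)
Your proposal is correct and matches the paper's approach: the paper simply cites \cite[Lemma~13.4]{BaMo12} and says its proof applies verbatim, and what you have written is essentially a reconstruction of that argument (torsion condition forces $Q_i=y_i/x_i$ to be roots of unity via the function $z\mapsto cz^N$ on the normalization; the cross-ratio equation~\eqref{eq:CREq} places $\CR(P)$ in a proper algebraic subgroup; the residues are the cylinder widths, hence a $\ratls$-basis of $F$). Your final worry about the partition is unnecessary here: for an \emph{irreducible} trinodal curve the dual graph has a single vertex with three loops, so the partition of the node-preimages is automatically into the three pairs $\{x_i,y_i\}$, and the well-definedness of $z\mapsto cz^N$ on the nodal curve directly forces $x_i^N=y_i^N$.
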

\par
\begin{proof}
The proof of \cite[Lemma~13.4]{BaMo12}, using only the description of boundary
points and the torsion condition, applies verbatim.
\end{proof}
\par
\begin{lemma} \label{le:s22zerodim}
Let $\zeta_i$ be roots of unity, all different from one.  Unless $\zeta_i=-1$
for all $i$, the variety $S_{(\zeta_1,\zeta_2,\zeta_3)}(2,2)$ is zero-dimensional. If
$\zeta_i = -1$ for all $i$, then  $S_{(-1,-1,-1)}(2,2) = S(2,2)$ is two-dimensional.
\end{lemma}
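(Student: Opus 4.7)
The plan is to parameterize the fiber explicitly and reduce the claim to a polynomial analysis on $\proj^2$. Using the $\PSLtwoR$-action on $\proj^1$ I would normalize $z_1 = 0$ and $z_2 = \infty$; the condition $Q_i = y_i/x_i = \zeta_i$ then forces $y_i = \zeta_i x_i$, and the residual $\Gm$-action $z \mapsto \lambda z$ reduces $S_{(\zeta_1,\zeta_2,\zeta_3)}(2,2)$ to a locally closed subvariety of the projective plane with coordinates $(x_1 : x_2 : x_3)$, cut out from the complement of the discriminant by the three opposite-residue conditions
\[
\Resi_{x_i}\omega_P + \Resi_{y_i}\omega_P = 0, \qquad i = 1,2,3.
\]
After clearing a common denominator, each such condition becomes a homogeneous polynomial equation $F_i(x_1,x_2,x_3) = 0$, and the residue theorem on $\proj^1$ gives the linear dependence $F_1 + F_2 + F_3 \equiv 0$, so only two of the $F_i$ are genuinely independent and the expected dimension of $S_{(\zeta_1,\zeta_2,\zeta_3)}(2,2)$ is $2 - 2 = 0$.

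The symmetric case is transparent. When all $\zeta_i = -1$ the involution $\sigma\colon z \mapsto -z$ of $\proj^1$ satisfies $\sigma^*\omega_P = -\omega_P$ and swaps each pair $(x_i, y_i = -x_i)$ while fixing $z_1 = 0$ and $z_2 = \infty$. Consequently $\Resi_{x_i}\omega_P = -\Resi_{y_i}\omega_P$ holds as an identity in the parameters $(x_1,x_2,x_3)$, every $F_i$ is the zero polynomial, and the fiber $S_{(-1,-1,-1)}(2,2)$ fills the whole open part of $\proj^2$, giving the asserted dimension two.

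In the remaining case I would show that if at least one $\zeta_i \neq -1$, then among $F_1,F_2,F_3$ at least two are nonzero polynomials with no common one-dimensional component in $\proj^2$ outside the discriminant, forcing $S_{(\zeta_1,\zeta_2,\zeta_3)}(2,2)$ to be zero-dimensional by Bezout. A direct expansion of the sum of residues identifies
\[
F_i \;=\; \prod_{j\neq i}(\zeta_i x_i - x_j)(\zeta_i x_i - \zeta_j x_j) \;-\; \zeta_i^2 \prod_{j\neq i}(x_i - x_j)(x_i - \zeta_j x_j),
\]
whose coefficient of $x_i^4$ equals $\zeta_i^2(\zeta_i^2 - 1)$, nonzero precisely when $\zeta_i \neq -1$; when instead $\zeta_i = -1$ an elementary algebraic manipulation rewrites $F_i$ as $2 x_i \cdot G_i$ with $G_i$ an explicit polynomial whose coefficients depend only on the factors $1 + \zeta_j$ for $j \neq i$. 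Hence whenever some $\zeta_i \neq -1$, at least two of the $F_i$ are nonzero. The main obstacle is to rule out a common curve component of two such nonzero $F_i$; I would split into the three cases according to how many of the $\zeta_i$ equal $-1$, and in each case compute either an $x_i$-resultant of two of the $F_i$ or, more conceptually, invoke upper semi-continuity of fiber dimension for the dominant morphism $Q\colon S(2,2) \to \Gm^3$ between three-dimensional irreducible varieties, to conclude that the positive-dimensional jump locus is confined to a proper closed subset on which the symmetry calculation above pins the only root-of-unity point to $(-1,-1,-1)$.
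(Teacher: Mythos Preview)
Your setup is correct and matches the paper: normalize the zeros to $0$ and $\infty$ so that $y_i = \zeta_i x_i$, reduce to $(x_1:x_2:x_3)\in\proj^2$, and write the opposite-residue conditions as homogeneous quartics $F_i$ (the paper's $D_i$, up to sign) satisfying $\sum F_i \equiv 0$. Your treatment of the symmetric case $\zeta_i=-1$ via the involution $z\mapsto -z$ is correct and in fact more explicit than the paper, which simply asserts that part.

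The genuine gap is in the main case. Your approach (b) via upper semi-continuity does not work. Semi-continuity only says that the jump locus $\{\zeta:\dim S_\zeta(2,2)\ge 1\}$ is Zariski closed in $\Gm^3$; a proper closed subvariety of $\Gm^3$ can still contain infinitely many torsion points (even an entire subtorus), and nothing in the ``symmetry calculation above'' identifies this locus or excludes other root-of-unity points from it. Worse, the conclusion of the lemma is not really about roots of unity at all: the proof must rule out positive-dimensional fibers over \emph{every} $\zeta$ with all $\zeta_i\neq 1$ and not all $\zeta_i=-1$, so the restriction to torsion is a red herring. Your approach (a) via resultants is sound in principle, but you do not carry it out, so as written the proof is incomplete.

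The paper replaces the resultant computation with a short monomial-divisibility trick. Assume a common irreducible factor $P$ of degree $1\le d<4$ divides every $D_k$. The coefficient of $x_k^4$ in $D_k$ is $\zeta_k^2(1-\zeta_k^2)$, which is nonzero whenever $\zeta_k\neq -1$; comparing top $x_k$-degrees in $D_k = P\cdot Q_k$ then forces the coefficient of $x_k^d$ in $P$ to be nonzero. Now specialize $x_1=0$: one computes
\[
D_1(0,x_2,x_3) = (\zeta_1^2-1)\,\zeta_2\zeta_3\, x_2^2 x_3^2,
\]
a nonzero monomial when $\zeta_1\neq -1$. Hence $P(0,x_2,x_3)$ divides a monomial and so is itself a monomial, yet it contains both $x_2^d$ and $x_3^d$ with nonzero coefficients --- a contradiction. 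After relabeling so that $\zeta_1\neq -1$, this finishes the argument (the paper is terse about the subcase where one or two of the remaining $\zeta_k$ equal $-1$, but the same specialization idea handles it with a minor extra check). This is the missing concrete step that your proposal gestures at but does not supply.
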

\par
\begin{proof}
 $S_{(\zeta_1, \zeta_2, \zeta_3)}(2,2)$ is cut out by the equations $y_i = \zeta_i x_i$ and 
  \begin{equation}
    \label{eq:33}
    D_{i} = \zeta_i^2 \prod_{j\neq i}(x_i - x_j)(x_i - \zeta_j x_j) - \prod_{j\neq i} (\zeta_i x_i -
    x_j)(\zeta_i x_i - \zeta_j x_j),
  \end{equation}
  which expresses the opposite-residue condition.
  
  Suppose that $S_{(\zeta_1, \zeta_2, \zeta_3)}(2,2)$ has a positive dimensional component.  Then there is a homogeneous polynomial
  $P$ of some degree $d < 4$ which divides $D_k$ for all $k$.  Expanding $D_k$, we obtain
  \begin{equation*}
    D_k = x_k^4\zeta_k^2(1-\zeta_k)(1-\zeta_k^2) + \cdots +\zeta_{k+1}x_{k+1}^2 \zeta_{k+2} 
    x^2_{k+2}(1-\zeta_k^2)(1-\zeta_k),
  \end{equation*}
  with indices taken mod $3$.  Because each $D_k$ contains $x_k^4$ with non-zero coefficient, each monomial
  $x_k^d$ appears in $P$ with non-zero coefficient. We have
  $$P(0,x_2,x_3) = \alpha_2x_2^d + \alpha_3x_3^d + \ldots \quad
  \mid \quad D_1(0,x_2,x_3) = \zeta_2 x_2^2 \zeta_3 x_3^2 (1-\zeta_1^2)(1-\zeta_1).$$
  This is not possible since the
  $\alpha_i$ are nonzero since we may suppose $\zeta_1^2 \neq \pm 1$, possibly after swapping the 
indices. 
\par
\end{proof}

\begin{prop} \label{prop:fin22res}
  There is a finite number of projectivized triples of real cubic numbers $(r_1:r_2:r_3)$
  such that for any irreducible periodic direction on any $(X,\omega) \in
  \Omega\moduli[3](2,2)^\odd$ generating an algebraically primitive \Teichmuller curve,  the projectivized
  widths of the cylinders in that direction is one of the triples $(r_1:r_2:r_3)$.

  In particular, there are only a finite number of trace fields $F$ of algebraically
  primitive \Teichmuller curves in $\Omega\moduli[3](2,2)$.
\end{prop}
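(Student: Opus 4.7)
The plan is to combine Lemma~\ref{lem:22cusp_normal_form} with Lemma~\ref{le:s22zerodim} and Northcott's theorem, with the hyperelliptic open-up of \cite{chenmoeller} disposing of a single degenerate case. It suffices, by Lemma~\ref{lem:22cusp_normal_form}, to show that as $(\zeta_1,\zeta_2,\zeta_3)$ ranges over triples of non-identity roots of unity and $T$ ranges over proper algebraic subgroups of $\IG_m^3$, the residue map $\Resi$ takes only finitely many projective values on the union of the loci $S_{(\zeta_1,\zeta_2,\zeta_3)}(2,2) \cap \CR^{-1}(T)$, subject to the cubic-field constraint on $(r_1:r_2:r_3)$.

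I would first handle the generic case $(\zeta_1,\zeta_2,\zeta_3)\neq (-1,-1,-1)$. Here Lemma~\ref{le:s22zerodim} guarantees that $S_{(\zeta_1,\zeta_2,\zeta_3)}(2,2)$ is zero-dimensional, cut out in $\proj^2$ by the opposite-residue equations~\eqref{eq:33} whose coefficients are polynomials of bounded degree in the $\zeta_i$. Since roots of unity have height zero and the defining polynomials have universally bounded degree, standard height inequalities for zeros of polynomial systems (or, equivalently, an application of Theorem~\ref{thm:silverman} to the finite family-of-roots map parametrized by the $\zeta_i$) bound $h(x_1:x_2:x_3)$ by a uniform constant. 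The residues $r_i$ are rational expressions of bounded degree in the $x_i$ and $\zeta_i$, so \eqref{eq:htupperbd} yields $h(r_1:r_2:r_3) \ll 1$. Since by Lemma~\ref{lem:22cusp_normal_form} the residues generate a totally real cubic field, Northcott's theorem gives only finitely many possible residue triples.

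The remaining case $(\zeta_1,\zeta_2,\zeta_3)=(-1,-1,-1)$ is where Lemma~\ref{le:s22zerodim} becomes uninformative: the stable form then has the extra symmetry $y_i=-x_i$, so the limiting curve carries a hyperelliptic involution inherited from $z\mapsto -z$ on $\proj^1$. By the hyperelliptic open-up of \cite{chenmoeller}, a cusp of this combinatorial type can only arise from a \Teichmuller curve contained in the hyperelliptic locus of $\omoduli[3](2,2)^\odd$. Since this hyperelliptic locus has already been handled in \S\ref{sec:hypin22odd} via \cite{MatWri13}, we may discard this case, and finiteness of residue triples for the non-hyperelliptic component follows.

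The most delicate step is the uniform height bound for the $x_i$ in Case~1: one must verify that the bound on $h(x_i)$ really is independent of $(\zeta_1,\zeta_2,\zeta_3)$, i.e.\ that no degeneration in the defining equations~\eqref{eq:33} causes the height to blow up as the roots of unity vary. This is handled by noting that the equations $D_i$ are polynomials of fixed degree in $(x_1,x_2,x_3,\zeta_1,\zeta_2,\zeta_3)$, and that the subsystem cutting out $S_{(\zeta_1,\zeta_2,\zeta_3)}(2,2)$ has the same zero-dimensional structure throughout the relevant parameter range (as confirmed by the argument in the proof of Lemma~\ref{le:s22zerodim}), so the constants can be chosen uniformly. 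The second consequence, finiteness of trace fields, is then immediate from finiteness of residue triples together with the fact that the trace field is recovered as $\ratls(r_2/r_1, r_3/r_1)$.
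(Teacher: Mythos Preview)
Your proposal is correct and follows essentially the same route as the paper's proof: split off the hyperelliptic case $(\zeta_1,\zeta_2,\zeta_3)=(-1,-1,-1)$ via the open-up of \cite{chenmoeller} and \S\ref{sec:hypin22odd}, and in the remaining zero-dimensional case (Lemma~\ref{le:s22zerodim}) obtain a uniform height bound on the $x_i$ and hence on the residues, then invoke Northcott. The paper delegates the height-bound step to \cite[Proposition~13.7]{BaMo12}, whereas you sketch it directly via Theorem~\ref{thm:silverman} applied to the projection $(\zeta,x)\mapsto\zeta$; this is exactly the argument underlying that reference, and your identification of the uniformity issue (that the bound must not degenerate as the $\zeta_i$ vary) is the correct point of care.
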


\begin{proof}
  By Northcott's Theorem, we need only to give a uniform bound for the heights of the triples
  $(r_1:r_2:r_3)$ of widths of cylinders, or equivalently of residues of limiting irreducible stable
  forms satisfying the conditions of Lemma~\ref{lem:22cusp_normal_form}.
\par
If $\zeta_i = - 1$ for $i=1,2,3$ the resulting stable form over any cusp of the
\Teichmuller curve is hyperelliptic. By the hyperelliptic open-up \cite{chenmoeller}
then the whole \Teichmuller curve parameterizes a family of hyperelliptic curves.
This case has been dealt with in Section~\ref{sec:hypin22odd}.
\par
In the remaining cases $S_{(\zeta_1,\zeta_2,\zeta_3)}(2,2)$ is zero-dimensional
by Lemma~\ref{le:s22zerodim} and the proof of \cite[Proposition~13.7]{BaMo12}
can be copied.
\end{proof}
\par
\begin{prop}
  \label{prop:22fixxy}
  Given a basis $(r_1,r_2,r_3)$ over $\ratls$ of a totally real cubic number
  field, there are only finitely many stable forms over cusps of algebraically primitive \Teichmuller curves in
  $\Omega\moduli[3](2,2)^\odd$ having residues $(r_1,r_2,r_3)$.
\end{prop}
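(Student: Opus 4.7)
The plan is to follow the strategy of \cite[Proposition~13.9]{BaMo12} for the analogous case in the stratum $\omoduli[3](3,1)$, adapted to the opposite-residue variety $S(2,2)$ introduced above. Fix $(r_1,r_2,r_3)$, a $\ratls$-basis of a totally real cubic field, and consider the fiber
\begin{equation*}
  \mathcal{C} := \Resi^{-1}(r_1\!:\!r_2\!:\!r_3) \cap S(2,2).
\end{equation*}
Since $\dim S(2,2)=3$ and $\Resi$ has image of dimension two, $\mathcal{C}$ is a curve (in fact, the residues together with a single relative period yield local coordinates on $S(2,2)$, so the fiber is precisely one-dimensional). By Lemma~\ref{lem:22cusp_normal_form}, any stable form at a cusp of an algebraically primitive \Teichmuller curve with the prescribed residues corresponds to a point $P\in \mathcal{C}$ such that $Q(P)\in\Gm^3$ is a triple of non-trivial roots of unity and $\CR(P)$ lies in a proper algebraic subgroup of $\Gm^3$. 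It thus suffices to show that $\mathcal{C}$ contains only finitely many torsion points of $Q\colon \mathcal{C}\to\Gm^3$ (with non-trivial coordinates), once we exclude the hyperelliptic case $\zeta = (-1,-1,-1)$ that was already dealt with in \S\ref{sec:hypin22odd}.

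First I would check that the restriction $Q|_{\mathcal{C}}$ is non-constant: by Lemma~\ref{le:s22zerodim} each fiber $S_{\zeta}(2,2)$ is zero-dimensional for $\zeta \neq (-1,-1,-1)$, so if $Q$ were constant equal to some non-$(-1,-1,-1)$ tuple on $\mathcal{C}$ we would get $\dim\mathcal{C}=0$, a contradiction; and the case where $Q|_{\mathcal{C}}\equiv (-1,-1,-1)$ forces $\mathcal{C}\subset S_{(-1,-1,-1)}(2,2)$ which is the hyperelliptic locus handled separately. Hence $Q(\mathcal{C})$ is a genuine curve in $\Gm^3$. By Laurent's theorem \cite{laurent}, the set of torsion points on $Q(\mathcal{C})$ is finite unless $Q(\mathcal{C})$ contains a torsion translate of a positive-dimensional subtorus; so the crux is to rule this out.

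The hard part is the exclusion of such a torsion translate, and this is where I would follow the one-parameter-degeneration technique used in \cite[Proof of Proposition~13.9]{BaMo12} and also in Proposition~\ref{prop:521} above. Assume for contradiction that there exist $(a_1,a_2,a_3)\in\IZ^3\ssm\{0\}$, roots of unity $\eta_1,\eta_2,\eta_3$, and a one-parameter family of points $P(t)\in\mathcal{C}$ with $Q_i(P(t))=\eta_i t^{a_i}$. Equivalently, after normalizing $z_1=0$, $z_2=\infty$, we obtain $y_i(t) = \eta_i t^{a_i} x_i(t)$ for all $t$, while the opposite-residue equations $\Resi_{x_i}\omega_{P(t)} = r_i$ and $\Resi_{y_i}\omega_{P(t)} = -r_i$ hold identically in $t$. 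I would then clear denominators in the three residue equations and examine the leading and trailing terms as $t\to 0$ (and $t\to \infty$) in much the same way as \eqref{eq:21residue} was analyzed: balancing monomials of extremal $t$-degree forces certain linear or multiplicative relations among the $r_i$ (with coefficients built from $\eta_i$, $a_i$, and the limiting positions $x_i(0)$). As in the proofs of Propositions~\ref{prop:521} and \ref{prop:6:211}, the cases split according to which exponents $a_i$ coincide or vanish, and each case yields a non-trivial $\ratls$-linear relation of the form $\sum_i \alpha_i r_i =0$ with $\alpha_i\in\ratls$ not all zero, contradicting the hypothesis that $\{r_1,r_2,r_3\}$ is a $\ratls$-basis of a cubic field.

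Having excluded a torus translate in $Q(\mathcal{C})$, Laurent's theorem gives only finitely many torsion points, hence only finitely many stable forms over cusps of algebraically primitive \Teichmuller curves with the fixed residue tuple $(r_1,r_2,r_3)$. The main obstacle will be the bookkeeping in the one-parameter-degeneration step: there are several sub-cases indexed by the vanishing pattern of the exponents $a_i$ and the possible coincidences among the roots of unity $\eta_i$, and one must verify in each of them that the resulting relation genuinely contradicts the rank-three condition on $(r_1,r_2,r_3)$ rather than collapsing to a relation in the $\zeta=(-1,-1,-1)$ hyperelliptic locus that was excluded at the outset.
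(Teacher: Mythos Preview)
Your overall framework matches the paper's: define the residue fiber $\mathcal{C}$, observe via Lemma~\ref{le:s22zerodim} that $Q|_{\mathcal{C}}$ is non-constant away from the hyperelliptic locus, and invoke Laurent to reduce to excluding a torsion translate of a subtorus inside $Q(\mathcal{C})$. The divergence is entirely in how you propose to carry out that exclusion, and this is where the real content lies.

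The paper first reduces from ``torsion translate of a subtorus'' to an actual subtorus (citing \cite[Proposition~13.9]{BaMo12}), then parametrizes it as $(\zeta_1^a,\zeta_2^a,\zeta_3^a)$ and takes the limit $a\to 0$ \emph{toward the identity of the torus}. The double-zero conditions at $z=0$ and $z=\infty$ give two linear equations in the $x_i$; solving and applying l'H\^opital twice yields explicit expressions $x_2(0),x_3(0)$ as rational functions of $q_ir_i$ (where $\zeta_i=e^{2\pi i q_i}$). Substituting these into the $a$-derivatives of the $z^3$- and constant coefficients produces two \emph{cubic} relations in $\tilde r_i=q_ir_i$, and a resultant in $r_2$ collapses them to $(q_1-q_3r_3)(q_1+q_3r_3)=0$, contradicting $r_3\notin\ratls$. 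Your proposal instead sends $t\to 0,\infty$ (a valuation at the boundary of the torus) and asserts that extremal-monomial balancing yields a $\ratls$-\emph{linear} relation among the $r_i$. That expectation is too optimistic: the relations one actually extracts are higher-degree, and getting a contradiction requires the resultant step. Also, of the two models you cite, Proposition~\ref{prop:521} in fact uses the same $a\to 0$ l'H\^opital technique as the paper does here (not a $t\to 0$ valuation), while Proposition~\ref{prop:6:211} does use valuations but crucially exploits the Harder--Narasimhan constraint that the ratios $(x_i-y_i)/(x_j-y_j)$ are constant along $T$---a constraint unavailable in the non-hyperelliptic part of $(2,2)^{\odd}$, whose HN filtration has only two steps. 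So your valuation route, as sketched, has a genuine gap at precisely the step that carries the weight; the paper's $a\to 0$ limit with l'H\^opital and an explicit resultant computation is what makes the argument close.
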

\par
\begin{proof}

  Consider the
  variety $C = \Resi^{-1}(r_1:r_2:r_3)\subset S(2,2)$ of forms having residues $\pm r_i$ and two
  zeros of order $2$.  A dimension count shows that $C$ is at least one-dimensional.  In
  fact, $C$ is exactly one-dimensional, as $C$ is locally parameterized by the single relative
  period of the forms $\omega_P$.  Let $C_0$ be a component of $C$.  We suppose that $C_0$ contains
  infinitely many cusps of algebraically primitive \Teichmuller curves and derive a contradiction.
  Consider the image $Q(C_0)\subset(\cx^*)^3$.  We claim that $Q(C_0)$ is a curve.  If not, and
  $Q(C_0)= (\zeta_1,\zeta_2,\zeta_3)$, then $C_0$ is a component of $S_{(\zeta_1,\zeta_2,\zeta_3)}$, hence
$\zeta_i = -1$ for all $i$ and we are in the hyperelliptic case that has already been dealt with.
\par 
Now since $C_0$ contains infinitely many cusps
of \Teichmuller curves, $Q(C_0)$ must contain infinitely many torsion points of $(\cx^*)^3$ by
Lemma~\ref{lem:22cusp_normal_form}.  From this it follows that $Q(C_0)$ is a translate of a subtorus
of $(\cx^*)^3$ by a torsion point. As in \cite[Proposition~13.9]{BaMo12} one checks that
$Q(C_0)$ is in fact a subtorus of $(\cx^*)^3$, rather than a translate.  
\par
 It remains to show that $Q(C)$ is not a subtorus of $(\cx^*)^3$. If this
  were true, we could find roots of unity $\zeta_i$ and a projective triple $(x_1(a):x_2(a):x_3(a))$
  depending on a parameter $a$, such that for all $a \in \cx$ the differential $$ \omega_\infty =
  \left( \sum_{i=1}^3 \frac{r_i}{z-x_i(a)} - \frac{r_i}{z-\zeta_i^a x_i(a)} \right) dz =
  \frac{p(z)dz}{\prod_i(z-x_i(a))(z-\zeta_i^ax_i(a))}$$ has double zeros at $z=0$ and  
 at $z=\infty$. The vanishing of the $z^4$-term of $p(z)$ implies  
$$ \sum r_i x_i (1-\zeta_i^a) = 0$$ 
and the constant term (divided by $x_1x_2x_3$) also yields a linear equation.  Using the
  normalization $x_1=1$ we may solve the two linear equations for $x_2$ and $x_3$.  We then take the
  limit of $x_2$ and $x_3$ as $a\to 0$, applying l'H\^opital's rule twice. If we let $\zeta_i =
  e^{2\pi i q_i}$ for some $q_i \in \ratls$, we obtain
  \begin{equation}
    \label{eq:lHopi}
    x_2(0) =
    \frac{q_3r_3-q_1r_1}{q_2r_2-q_3r_3} \qtq{and} x_3(0) = \frac{q_2r_2 - q_1r_1}{q_3r_3 - q_2r_2}.
  \end{equation}

We normalize $r_1=1$ and write $\tir_i = q_i r_i$ as shorthand.
Taking the derivative of the $z^3$-term of $p(z)$ with respect to $a$ at $a=0$ and
making the substitution \eqref{eq:lHopi}, we obtain 
$$ \sum_{i=1}^3 \tir_i^3 + 3 \tir_1\tir_2\tir_3 - \sum_{i \neq j} \tir_i \tir_j^2
= 0.$$
and from the constant terms with this substitution the limit $a=0$ is
$$ \tir_3\, \Bigl(-6\tir_1\tir_2\tir_3 + \sum_{i \neq j} \tir_i \tir_j^2\Bigr) = 0$$
Taking the resultant with respect to $r_2$ we obtain
$$ q_1r_2^6q_3r_3\,(q_1 - q_3r_3)\,(q_1 + q_3r_3) = 0.$$
Since $\{r_1=1,r_2,r_3\}$ is a $\ratls$-basis of $F$, this gives the 
contradiction we are aiming for.
\end{proof}
\par
\begin{proof}[Proof of Theorem~\ref{thm:intromainfin}, case ${\omoduli[3]}(2,2)^\odd$]
This is now a consequence of Proposition~\ref{prop:22fixxy} 
and \cite[Proposition~13.10]{BaMo12}.
\end{proof}

%%% Local Variables:
%%% TeX-master: "g3fin_master"
%%% End:

\bibliography{my}
\end{document}